\theoremstyle{plain}
\newtheorem{theorem}{Theorem}
\newtheorem{lemma}{Lemma}[section]
\newtheorem{proposition}[lemma]{Proposition}
\newtheorem{corollary}[lemma]{Corollary}
\theoremstyle{definition}
\newtheorem{definition}[theorem]{Definition}
\newtheorem{example}[theorem]{Example}
\let\originalleft\left
\let\originalright\right
\renewcommand{\left}{\mathopen{}\mathclose\bgroup\originalleft}
\renewcommand{\right}{\aftergroup\egroup\originalright}
\newcommand{\LLR}{\operatorname{LLR}}
\newcommand{\CLLR}{\operatorname{cLLR}}
\newcommand{\NCLLR}{\operatorname{ncLLR}}
\newcommand{\SC}{{\rm SC}}
\newcommand{\Lap}{\mathrm{Lap}}
\newcommand{\var}{\text{var}}
\newcommand{\loc}{\rm loc}
\newcommand{\KL}{\text{KL}}
\newcommand{\TV}{\text{\rm TV}}
\newcommand{\Hell}{\text{H}}
\newcommand{\thetarange}{\Phi}
\newcommand{\secondstephigh}{\mathtt{Count}}
\newcommand{\secondsteplow}{{\rm nCLLRE}}
\newcommand{\thirdmoment}{\zeta}
\newcommand{\fourthmoment}{\zeta}
\newcommand{\nB}{D_2}
\newcommand{\Qest}{\mathcal{F}}
\newcommand{\Qtest}{\mathcal{F}^{\text{\rm test}}}
\newcommand{\Qesteps}{\Qest_\epsilon}
\newcommand{\Qtesteps}{\mathcal{F}_{\epsilon}^{\text{\rm test}}}
\newcommand{\LinfoP}{J_{\TV, P}^{-1}}
\newcommand{\initial}{\mathcal{M}_{\fourthmoment, C}}
\newcommand{\berryesseen}{\nu}
\newcommand{\Ahigh}{\mathcal{A}_{{\rm high}, C, \fourthmoment}}
\newcommand{\anyAhigh}{\mathcal{A}_{{\rm high}}}
\newcommand{\anyinitial}{\mathcal{M}}
\newcommand{\Alow}{\mathcal{A}_{{\rm low}, C, \fourthmoment}}
\newcommand{\anyAlow}{\mathcal{A}_{{\rm low}}}
\newcommand{\est}{\hat{\theta}}
\newcommand{\epsn}{\epsilon_n}
\newcommand{\deltan}{\delta_n}
\newcommand{\error}{\mathcal{E}}
\renewcommand{\epsilon}{\varepsilon}
\newcommand{\eps}{\epsilon}
\newcommand{\ierror}[3]{\mathfrak{R}_{#2}(#3, #1)}
\newcommand{\errorloc}[3]{\mathcal{E}^{\rm loc}_{#1}(#2, #3)}
\newcommand{\errorlocall}[5]{\mathcal{E}^{\rm loc}_{#1}(#2, #3, #4, #5)}
\newcommand{\MOC}[3]{\omega_{#1}(#2, #3)}
\newcommand{\MOCall}[5]{\omega_{#1}(#2, #3, #4, #5)}
\newcommand{\MOCallt}[5]{\omega_{#1}(#2, #3, #4, #5)}
\newcommand{\QSC}{{\rm SC}_{\Qtest}}
\newcommand{\ntarget}{m}
\newcommand{\cP}{\mathcal{P}}
\newcommand{\set}[1]{\left\{ {#1} \right\}}
\newcommand{\paren}[1]{\left( {#1} \right)}
\newcommand{\AS}[1]{}
\newcommand{\AM}[1]{}
\newcommand{\JU}[1]{}
\newcommand{\as}[1]{}
\newcommand{\ju}[1]{}
\newcommand{\am}[1]{}
\newcommand{\mypar}[1]{\smallskip\noindent\emph{#1}}
\begin{document}

\title{Instance-Optimal Differentially Private Estimation}

    \author{Audra McMillan\thanks{\texttt{audra\_mcmillan@apple.com}. Apple, Inc.} \and Adam Smith\thanks{\texttt{ads22@cs.bu.edu}. Department of Computer Science, Boston University.} \and Jonathan Ullman\thanks{\texttt{jullman@ccs.neu.edu} Khoury College of Computer Sciences, Northeastern University.}}

\date{}

\maketitle
\pagenumbering{roman}
\thispagestyle{empty}

\begin{abstract}
In this work, we study local minimax convergence estimation rates subject to $\epsilon$-differential privacy. Unlike worst-case rates, which may be conservative, algorithms that are locally minimax optimal must adapt to \emph{easy} instances of the problem.  We construct locally minimax differentially private estimators for one-parameter exponential families and estimating the tail rate of a distribution.  In these cases, we show that optimal algorithms for simple hypothesis testing, namely the recent optimal private testers of \citet{Canonne:2019}, directly inform the design of locally minimax estimation algorithms.
\end{abstract}

{\footnotesize
\setcounter{tocdepth}{2}
}

\pagenumbering{arabic}
\section{Introduction}
While the primary goal of statistical inference is to reveal properties of a population, many statistical estimators also reveal a significant amount of information about their sample, and this becomes a serious problem when the sample contains sensitive private information about individuals.  As a response, \emph{differential privacy}~\citep{DworkMNS06} has emerged as a strong formal criterion for a statistical procedure to protect individual privacy. Differentially private algorithms are deployed in a variety of settings, from the public data products for the 2020 US decennial census to Google's keyboard prediction models~\citep{Google-FL-blog} and Apple device analytics~\citep{Apple17}.

Differential privacy is a constraint on an estimator that requires the distribution of the estimator's outputs to be insensitive to changing a single individual's data, and it offers a strong semantic guarantee that no attacker can infer much more about any individual than they could have inferred had that individual's data never been collected~\citep{KasiviswanathanS08}.  This semantic guarantee does not rely on any assumptions about the adversary's background knowledge and capabilities. In contrast, alternative approaches to protecting privacy have often been undermined by underestimating the abilities of the attacker.  Although differential privacy is a constraint that significantly limits inference with small sample sizes,
most statistical tasks are compatible with differential privacy given a large enough sample.  

There is now a large body of work on differentially private estimation, which includes \emph{minimax optimal} differentially private estimators for many estimation tasks (e.g.~\citet{DuchiJW13,BunUV14,DworkSSUV15}).  A minimax optimal estimator is one that minimizes the maximum loss over all distributions in some family.  However, even a minimax optimal estimator can be undesirable in practice because it might achieve the same error on all distributions, even if some distributions are \emph{easier} than the worst-case distributions.

A more refined guarantee is called \emph{local minimax optimality}.  While the actual definition is necessarily subtle, intuitively a local minimax optimal estimator simultaneously has the best possible error on every distribution, which means the error must automatically adapt to distributions that are easier.  To illustrate this with a simple example from the non-private setting, suppose we are given a sample of size $n$ from a Bernoulli distribution $P = \mathrm{Ber}(\theta)$ and want to estimate the parameter $\theta \in (0,1)$. The empirical mean has mean-squared error $\theta(1-\theta)/n \leq 1/4n$.  No estimator can have better error than $1/4n$ on all Bernoulli distributions (roughly because samples of size $n$ from $\mathrm{Ber}(\frac12 - \frac 1{2\sqrt{n}})$ and $\mathrm{Ber}(\frac12 + \frac 1{2\sqrt{n}})$ are hard to reliably distinguish), so the empirical mean is (globally) minimax optimal. But it is also \textit{locally} minimax optimal because it adapts automatically to the ``easy" values of $\theta$ close to 0 or 1. In contrast, a hypothetical estimator that had mean-squared error exactly $1 /{4n}$ for all values of $\theta$ would be minimax optimal but \textit{not} locally minimax optimal.

We study the design of locally minimax differentially private estimators. We provide:
\begin{itemize}
    \item A connection between locally minimax differentially private estimators and differentially private simple hypothesis testing: namely, the local estimation rate for the class of differentially private estimators is given by inverting the sample complexity of the optimal differentially private hypothesis test.
    Such a connection was previously shown in the non-private setting \citep{Donoho:1991} and in the more restrictive locally differentially private setting \citep{Duchi:2018,Rohde:2019}.
    
    \item Locally minimax differentially private estimators for one-parameter exponential families. In the small $\epsilon$ (that is, $\epsilon=O(1/\sqrt{n})$) regime, our estimator is directly informed by the locally\footnote{\textit{Local differential privacy} refers to the model of differential privacy where data subjects randomize their own data points before sending it to the server. It is a more restricted model than central differential privacy, the main privacy model of interest in this paper. See Section~\ref{sec:related} } DP estimator introduced in \citep{Duchi:2018}, who show the locally differentially private version of this estimator is locally minimax optimal. For larger $\epsilon$, our estimators are directly informed by the structure of the approximately optimal differentially private simple hypothesis tests of \citet{Canonne:2019}.  In particular, our estimator critically relies on a refined version of their optimal test, introduced in this work, with additional properties. 
    
    \item A general approach to nonparametric estimation of one-dimensional functionals. We illustrate its application to estimating tail decay rates. 
\end{itemize}

\mypar{Simple Hypothesis Testing and Local Estimation Rates (Section~\ref{sec:testing}).} 
As shown by \citet{Donoho:1991}, local minimax estimation is closely related to simple hypothesis testing.  The connection was originally developed in the non-private setting, but applies more generally to any restricted estimation setting.  Suppose we have a population $P$ from some family $\mathcal{P}$ and want to estimate a statistic $\theta(P)$.  We have a sample $X \sim P^n$ and an estimator $\hat\theta(X)$. Given two distribution $P,Q \in \mathcal{P}$ we can use $\hat\theta$ as the basis for a simple hypothesis test that distinguishes $P$ and $Q$ by looking at $\hat\theta(X)$ and checking if it's closer to $\theta(P)$ or $\theta(Q)$, and this approach will be a successful hypothesis test if and only if $\hat\theta$ has sufficiently small error for both populations $P$ and $Q$.  See Figure~\ref{graphofconnection} for a pictorial representation of how $\hat\theta$ can distinguish $P$ and $Q$.  Some pairs $P,Q \in \mathcal{P}$ cannot be reliably distinguished with a sample of size $n$ and some can.  
Informally, we say that $\hat\theta$ is \textit{locally minimax optimal} if it can be used in this fashion to obtain a hypothesis test for \emph{any} pair of distributions in $\mathcal{P}$ that can be distinguished using $n$ samples. This formulation makes it clear that lower bounds for simple hypothesis testing automatically give lower bounds on the local estimation rate.  Although hypothesis tests for specific pairs of distributions do not inherently yield optimal estimators, the structure of optimal tests can guide the construction of locally minimax estimators.  We show that this process of converting hypothesis testing results into estimation rates can be carried out in the private setting, and instantiate it for several univariate estimation problems.  

\begin{figure}
	\centering
        \includegraphics[scale=0.4]{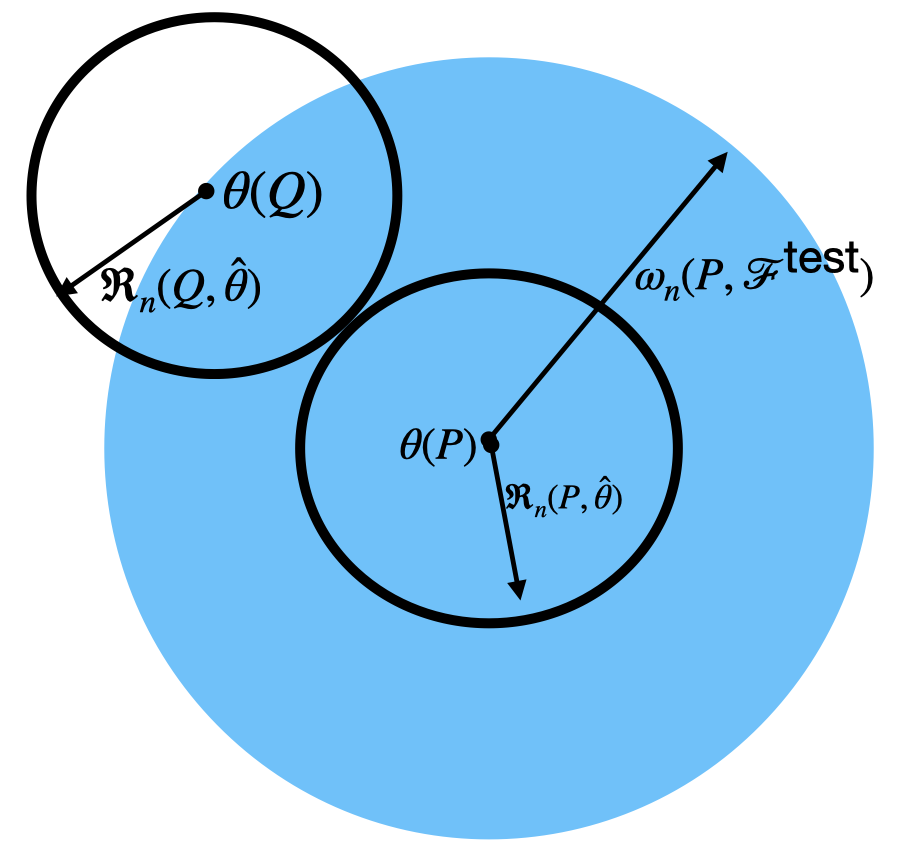}
        \caption{Graphical representation of the connection between simple hypothesis testing and local estimation rates.}
        \label{graphofconnection}
\end{figure}

In the non-private setting, the sample complexity of distinguishing between two distributions $P$ and $Q$ is $\Theta(1/H^2(P,Q))$, where $H(P,Q)$ is the Hellinger distance, and hence the Hellinger distance is the relevant distance when characterising local estimation rates in the non-private setting.
\cite{Duchi:2018} showed that in the \textit{local DP} setting, the sample complexity is $\Theta(1/\epsilon^2 \TV(P,Q)^2)$, where $\TV(P,Q)$ is the total variation distance. \cite{Canonne:2019} showed that the sample complexity in the central DP setting is more nuanced. However, in this work, we show that it has a simple form in the high privacy regime. When $\eps=O(1/\sqrt{n})$, the sample complexity is $\Theta(1/\epsilon \TV(P,Q))$, the square root of the sample complexity in the local DP setting.
This move from the Hellinger distance to the total variation distance has implications for how well one can expect estimation algorithms to adapt to problem-specific difficulty. For example, the fact that non-private algorithms for Bernoulli parameter estimation can adapt to problem-specific difficulty, while local DP algorithms and central DP algorithms in the high privacy regime cannot, is a direct consequence of the fact $H({\rm Bernoulli}(\theta),{\rm Bernoulli}(\theta+\alpha))$ is a function of $\theta$, while $\TV({\rm Bernoulli}(\theta),{\rm Bernoulli}(\theta+\alpha))$ is independent of $\theta$.  We discuss this further in Section~\ref{generalhighprivacy}.

While we show that this framework is suitable for univariate estimation problems, it is not generally suitable for estimating multivariate statistics, as this simple-hypothesis-testing formulation does not fully capture private estimation for multivariate statistics.  In particular,  one provably cannot achieve the local estimation rate even for simple tasks like estimating the mean of a multivariate Gaussian with identity covariance~\citep{BunUV14,DworkSSUV15} since the lower bounds on hypothesis testing and estimation depend on the dimension in different ways\footnote{For example, the sample complexity for privately distinguishing between two Gaussian distributions with identity covariance at total variation distance $\alpha$ is 
$O(\sqrt{d})$ (for constant $\alpha$ and $\eps$)
(see, e.g., \citet{Narayanan2022PrivateHH}), while the sample complexity required for privately estimating a Gaussian with identity covariance to within total variation distance $\alpha$ is 
$\Omega(d / \log(d))$
\citep{KamathLSU19}.
}. 
We leave it to future work to develop a suitable notion of local minimax estimation for higher-dimensional problems.

\mypar{Exponential Families (Section~\ref{expfams})} We give a DP estimator for one-parameter exponential families that uniformly achieves the private, locally minimax-optimal error under suitable regularity conditions. The estimator works (and is optimal) for any setting of $\eps=O(1)$. We identify two qualitatively different regimes: the ``low privacy'' regime, $\eps = \Omega(1/\sqrt{n})$, and the ``high privacy'' regime, $\eps = O(1/\sqrt{n})$.  In the low-privacy regime, privacy can be achieved without increasing the asymptotic error of the estimator, while in the high-privacy regime, the error due to privacy dominates the sampling error.  A weaker version of the low-privacy result appears in \citet{Smith11}; however, that result matches the best nonprivate error only for $\eps = \omega(1/\sqrt[4]{n})$, instead of $\eps = \Omega(1/\sqrt{n})$.

In both regimes,  our algorithm first uses a subroutine of \citet{Karwa:2018} to identify a rough, initial approximation $\hat\theta_0$ to the true parameter. 
The next step is to compute and release a (noisy) test statistic $\hat f = f_{n,\eps,\hat\theta_0}(X)$. In the low privacy regime, this statistic is, very roughly, 
the same one that arises in
the private simple hypothesis test of \citet{Canonne:2019} for distinguishing $\hat\theta_0$ from $\hat\theta_0 + \alpha$, where $\alpha$ is roughly the local minimax error at $\hat\theta_0$. 
The exact form of the statistic is more subtle, and relies on a linearization of the model in a neighborhood of $\hat\theta_0$.
The statistic takes a simpler form in the high privacy regime.
Finally, we take the  estimate $\hat \theta$ to be the unique solution to $\hat f = \mathbb{E}_{X\sim P_{\hat \theta}^n}\paren{f_{n,\eps,\hat\theta_0}(X)}$, which finds the value $ \hat \theta$ for which the expected value of the test statistic matches the observation $\hat f$. The key in both regimes is to prove that $f_{n,\eps,\hat\theta_0}$ is a good test statistic not only for distinguishing $\hat \theta_0$ from $\hat \theta_0+\alpha$, but for distinguishing all pairs of the form $(\theta, \theta+\alpha)$ for $\theta$ in a neighborhood of $\hat \theta_0$.

Our approach parallels that of \citet{Duchi:2018}, who developed a similar result for the more restricted setting of \textit{locally differentially private} algorithms. Indeed, in the high privacy regime, the structure of the optimal estimator is very similar to theirs, and the asymptotic sample complexity of the optimal (central-model) private estimator is exactly the square root of that of the optimal locally-private estimator. In the low-privacy regime, however, the estimators' structure differs. In all cases, the lower bound techniques are quite different.

\mypar{Estimation of More General Functionals (Section~\ref{sec:noparametric}).} In addition to parametric estimation problems, our framework applies to the estimation of one-dimensional functionals $T(P)$ of distributions, even when the functional of interest does not completely describes the underlying data distribution $P$. We discuss general approaches to such problems and explore the estimation of tail decay rates in real-valued distributions, an example also studied in depth by \citet{Donoho:1991}.

There are several natural meanings to local optimality in such a setting. 
Following \citet{Donoho:1991}, we seek estimation algorithms that, for each $\theta$, achieves error rate $\mathfrak{R}(\theta)$ for all distributions $P$ in the subfamily $\set{P \in \cP: T(P)=\theta}$, where $\cP$ is the family of distributions to which the true population is assumed to belong and $\mathfrak{R}(\theta)$ is the optimal local estimation rate for at least one distribution in this set.
Fairly generically, one can devise near-optimal differentially private algorithms whenever testing the compound hypothesis $(\set{P \in \cP: T(P) \leq \theta_0},\set{P \in \cP: T(P) \geq \theta_1})$ is equivalent to a simple hypothesis testing problem of distinguishing two specific distributions (with parameters $\theta_0$ and $\theta_1$, respectively). We illustrate this with the design of near-optimal estimators for tail decay rates.

\subsection{Related Work}
\label{sec:related}

While the literature on differentially private statistical inference is too vast to survey, we give an overview of the most closely related work.  For additional discussion of the literature, we direct the reader to the survey of \citet{KamathU20}.

\mypar{Minimax Optimality Under Privacy Constraints.} There is now an extensive body of literature on differentially private estimation, which is too large to fully survey here.  The most technically relevant prior work to our work are the results of \citet{Canonne:2019} characterizing optimal differentially private simple hypothesis testing.  The first global minimax lower bounds for multivariate differentially private estimation were given by \citet{BunUV14,DworkSSUV15, SteinkeU17}, based on a technique called \emph{fingerprinting} or \emph{tracing}.  Work by \citet{DuchiF14, KamathSU20} also gave minimax lower bounds for private mean estimation of univariate heavy-tailed statistics, and \citet{AlonLMM19} give minimax lower bounds for privately estimating a univariate distribution in CDF distance.

There are also numerous constructions of minimax optimal differentially private estimators for specific tasks.  Perhaps most closely related to our work are the estimators of~\citet{Karwa:2018} who construct locally minimax optimal estimators for the parameters of a univariate Gaussian, which is a special case of our constructions.

\mypar{Beyond Global Sensitivity.} Several works in the differential privacy literature give general purpose techniques for privately estimating \emph{empirical quantities} in a way that adapts to easy datasets (datasets on which the empirical quantity is stable). These techniques include smooth sensitivity \citep{NissimRS07}, propose-test-release \citep{DworkL09} and the use of Lipschitz extensions to extend regions of low variablility in the quantity of interest 
\citep{Chen:2013, BlockiBDS13, KasiviswanathanNRS13}. 
The most closely related work to ours is that of \cite{Asi:2020}, who give a general class of differentially private estimators for computing empirical quantities that are locally optimal (under some regularity assumptions). 
However, in this work we study estimators for \emph{population quantities}.  While estimating empirical and population quantities are very related, they are fundamentally distinct.  To see why,  consider the example of computing the mean of a Gaussian random variable $N(\mu,\sigma^2)$.  In the non-private setting, the empirical mean gives a locally minimax optimal estimator for $\mu$.  However, applying the locally minimax optimal estimator of Asi and Duchi for the empirical mean will have mean-squared error $\infty$ for any sample size.  In contrast, there is a differentially private estimator for the quantity $\mu$ that has mean-squared error roughly $\sigma^2 / n + 
\sigma^2 / \varepsilon^2 n^2$ for $\eps\leq 1$ (e.g., \cite{Karwa:2018}).
Thus, we have to reason directly about population statistics when we try to construct locally minimax private estimators, and cannot simply apply the transformation of Asi and Duchi to an arbitrary locally minimax non-private estimator.

\mypar{Local Differential Privacy.} Our work studies the standard \emph{centralized model} of differential privacy, where we assume that the estimator $M$ receives the samples $X_1,\dots,X_n$ as input.  There is also a large body of research on so-called \emph{local differential privacy}~\citep{KasiviswanathanLNRS08}, where we assume that differential privacy is applied to each sample before it is collected.  In its most basic non-interactive form, this means that the mechanism can be written in the form $A(M(X_1),\dots,M(X_n))$ where $M$ is differentially private and $A$ is arbitrary.

Locally differentially private estimators are known to have significantly worse rates than general differentially private estimators~\citep{KasiviswanathanLNRS08,BeimelNO08,ChanSS11,DuchiJW13, EdmondsNU20}.  Recent work gives locally minimax optimal estimators subject to local differential privacy~\citep{Duchi:2018,Rohde:2019}.  In addition to different minimax rates, there are key conceptual differences between the local and centralized settings that make the centralized setting more complex to reason about. In particular: (1) The complexity of simple hypothesis testing under local differential privacy is characterized by the total variation distance between the two distributions, whereas a much more subtle notion is required for centralized differential privacy, and (2) The local minimax rate subject to local differential privacy is always larger than that of non-private estimation, whereas our results show that the local minimax rate subject to centralized differential privacy can be either the same or larger than non-private estimation in different ranges of the privacy parameter.

\vspace{0.1in}

\section{Local Estimate Rates and Simple Hypothesis Testing}

\subsection{Local Estimation Rates and Uniform Achievability}

Let $\Delta(\chi)$ be the set of all distributions on a space $\chi$ and $\mathcal{P}\subset\Delta(\chi)$ be a set of distributions on $\chi$. Let $\theta: \mathcal{P}\to \mathbb{R}$ be a functional on $\mathcal{P}$, so for any distribution $P\in\mathcal{P}$, $\theta(P)$ is the parameter that we want to estimate. 
Let $\Qest$ be a class of (potentially randomised) functions $\est:\chi^n \to \mathbb{R}$. For any estimator $\est$ in $\Qest$, $\est$ has local error rate $\ierror{\est}{n}{P}$ if for all $P\in\mathcal{P}$ and $n\in\mathbb{N}$, if $X_1, \cdots, X_n\sim P$ then with probability 0.75: \[|\est(X_1, \cdots, X_n)-\theta(P)|\le \ierror{\est}{n}{P}.\]
Notice that this error rate is \emph{instance specific} in the sense that the error rate is a function of the distribution being sampled from. Since worst-case analysis can be too pessimistic in practice, and the local rate allows the error rate to adapt to \emph{easy} instances of the problem. Defining a notion of \emph{instance optimality} is nuanced since no algorithm can be optimal for all $P$; that is, one can not define an algorithm $\est$ such that $\ierror{\est}{n}{P}\le \min_{\theta'\in\Qest}\ierror{\hat{\theta'}}{n}{P}$ for all $P$. This is easy to see since for any algorithm $\est$ and distribution $P$, the algorithm $\hat{\theta}'(X_1, \cdots, X_n)=P$ satisfies $\ierror{\hat{\theta'}}{n}{P}=0\le \ierror{\est}{n}{P}$. Of course, this algorithm is not a good point of comparison because it does poorly on distributions that are not $P$. Thus, we want to compare to algorithms that perform well on at least two distributions. This leads us to the following definition of the
\emph{optimal local estimation rate} at $P$ by:
\begin{equation}\label{localerrorrate}\errorlocall{n}{P}{\Qest}{\mathcal{P}}{\theta} = \sup_{Q\in\mathcal{P}}\inf_{\est\in\Qest}\max\{\ierror{\est}{n}{P}, \ierror{\est}{n}{Q}\}. \end{equation}
We call this definition the local estimation rate based on the intuition that the hardest distributions to distinguish from $P$ are those that are ``close'' or ``local'' to $P$ (Fig.~\ref{graphofconnection}). The local estimation rate is also sometimes to referred to as the rate of the \emph{hardest one dimensional sub-problem}.
We say an algorithm $\est$ is \emph{instance optimal} if $\ierror{\est}{n}{P} = \errorlocall{n}{P}{\Qest}{\mathcal{P}}{\theta}$ for all $P\in\mathcal{P}$. Intuitively, if $\hat{\theta}$ is instance optimal then for every distribution $P$, if $\hat{\theta}$ performs poorly on $P$, then there exists another distribution $Q$ such that no algorithm $\est'$ performs well on both $P$ and $Q$. In contrast, the trivial algorithm $\hat{\theta}'(X_1, \cdots, X_n)=P$ performs well on $P$, but unnecessarily sacrifices performance on distributions $Q$ far from $P$. Hence the optimal local estimation rate gives a specific kind of lower bound on the performance of any algorithm.

The estimator $\est$ in eqn~\eqref{localerrorrate} has the advantage of being told the two distributions $P$ and $Q$. Hence, unlike worst-case optimality, which is always achieved by some algorithm, an instance optimal algorithm does not necessarily  exist for every estimation problem. 
In fact, a main question in this area is \emph{when} do instance optimal algorithms exist? When an instance optimal algorithm exists we will say the estimation problem satisifies \emph{uniform achievability}. This question of uniform achievability, under the constraint of differential privacy, is the main question of interest in this work. This question has been studied previously in the non-private setting \citep{Donoho:1991} and under the constraint of local differential privacy \citep{Rohde:2019, Duchi:2018}. 
We will refer to the subset of $\Qest$ that contains all $\epsilon$-DP estimators (defined in Section~\ref{DP}) as $\Qesteps$. 

\subsection{Simple Hypothesis Testing}

The crucial insight for understanding the optimal local estimation rate is the connection to simple hypothesis testing. In simple hypothesis testing, we are given two distributions $P$ and $Q$ and the goal is to design an algorithm that given $n$ samples drawn from either $P$ or $Q$, will, with high probability, correctly guess which distribution the samples were drawn from. 
We say a test $T:\chi^n\to\{0,1\}$ distinguishes between $P$ and $Q$ with $n$ samples if $\mathbb{P}(T(P^n)=0)\ge0.75$ and $\mathbb{P}(T(Q^n)=1)\ge0.75$, where the probability is taken over both the randomness in the sample, and the randomness in $T$. Let $\chi^*=\cup_{n\in\mathbb{N}}\chi^n$. We will use $\SC_T(P,Q)$ to denote the sample complexity of a test $T$, i.e., \[\SC_T(P,Q) = \inf\{ n\in\mathbb{N}\;|\; \text{for all } N\ge n, \mathbb{P}(T(P^N)=0)\ge 0.75 \text{ and } \mathbb{P}(T(Q^N)=1)\ge 0.75\}.\]

For every estimator class $\Qest$, we can define an associated class of binary testing algorithms, $\Qtest$, to be the class of binary (potentially randomised) functions $T:\chi^n\to\{0,1\}$ obtained from $\Qest$ by thresholding: \begin{equation}
    \Qtest = \left\{ T_{f,\tau}(X) = \begin{cases} 0 & {\rm if } f(X)<\tau \\ 1 & {\rm otherwise} \end{cases} \;\Bigg|\; f\in\Qest, \tau\in\mathbb{R}\right\}.
    \label{eq:ftest}    
\end{equation}
We will use this translation throughout this work.
Given a class of tests $\Qtest$, define $\SC_{\Qtest}(P,Q) = \inf_{T\in\Qtest} \SC_T(P,Q)$. That is,  $\QSC(P,Q)$ is the smallest $n$ such that there exists a test $T\in\Qtest$ that distinguishes $P$ and $Q$.

\subsection{Connecting Local Estimation Rates and Simple Hypothesis Testing}

Consider the definition of the optimal local estimation rate given in Eqn~\ref{localerrorrate}. Given two distributions $P$ and $Q$, if $\theta(P)$ and $\theta(Q)$ are close then it is easy to find an estimator that performs well on both $P$ and $Q$ (e.g. the estimator that outputs $\frac{1}{2}|\theta(P)-\theta(Q)|$). Similarly, if there exists a test that distinguishes $P$ and $Q$, then it is easy to define an estimator that performs well on both $P$ and $Q$ (e.g. by outputting the test result). Thus, the supremum in the definition is achieved at a distribution $Q$ that is as far as possible from $P$, while still being indistinguishable from $P$. This intuition gives rise to the definition of the \emph{modulus of continuity} at $P\in\mathcal{P}$:
\[\MOCall{n}{P}{\Qest}{\mathcal{P}}{\theta} = \sup\{\;|\theta(P)-\theta(Q)|\;\mid\; \QSC(P,Q)> n \text{ and } Q\in\mathcal{P}\}.\]

The following theorem formalises the intuition above and allows us to translate the question of characterizing $\errorlocall{n}{P}{\Qest}{\mathcal{P}}{\theta}$ into characterizing $\QSC$. This is useful since characterizations of $\QSC$ in a variety of settings already exist, in particular a characterization of $\QSC$ when $\Qest$ is the class of all differentially private estimators was given in \cite{Canonne:2019}.
We say $\Qest$ is closed under post-processing if for any $\est\in\Qest$ and $f:\mathbb{R}\to\mathbb{R}$, $f\circ\est\in\Qest$. \cite{Donoho:1987} studied the characterisation of $\errorlocall{n}{P}{\Qest}{\mathcal{P}}{\theta}$ where $\Qest$ is the class of all possible estimators; their work can be extended to work for any class of estimators closed under post-processing.

\begin{restatable}{proposition}{prop:rlowerbound}
\label{lowerbound} 
For any $\mathcal{P}\subset\Delta(\chi)$, statistic $\theta$, and class of estimators $\Qest$, if $\Qest$ is closed under post-processing and contains all constant functions then 
for all $P\in\mathcal{P}$ and $n\in\mathbb{N}$, 
\[\errorlocall{n}{P}{\Qest}{\mathcal{P}}{\theta}=\ \tfrac 1 2  \MOCall{n}{P}{\Qtest}{\mathcal{P}}{\theta},\] 
where $\Qtest$ is as defined in eqn~\eqref{eq:ftest}.
\end{restatable}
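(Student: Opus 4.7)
The plan is to prove both inequalities by leveraging the tight correspondence between estimators with small local error at two distributions and tests that distinguish those distributions: thresholding an estimator yields a test (exactly the construction of $\Qtest$ from $\Qest$), and post-processing a two-valued test yields an estimator.

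For $\errorlocall{n}{P}{\Qest}{\mathcal{P}}{\theta}\geq\tfrac{1}{2}\MOCall{n}{P}{\Qtest}{\mathcal{P}}{\theta}$, I would fix any $Q\in\mathcal{P}$ with $\QSC(P,Q)>n$ and argue by contrapositive. If some $\est\in\Qest$ satisfied both $\ierror{\est}{n}{P}<\tfrac{1}{2}|\theta(P)-\theta(Q)|$ and $\ierror{\est}{n}{Q}<\tfrac{1}{2}|\theta(P)-\theta(Q)|$, then, taking WLOG $\theta(P)<\theta(Q)$, the test $T_{\est,\tau}$ with threshold $\tau=(\theta(P)+\theta(Q))/2$, which lies in $\Qtest$ by definition, would classify samples from $P$ (resp.\ $Q$) correctly with probability at least $0.75$, since the error bound forces $\est$ onto the correct side of $\tau$. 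This contradicts $\QSC(P,Q)>n$, so $\inf_{\est\in\Qest}\max\{\ierror{\est}{n}{P},\ierror{\est}{n}{Q}\}\geq\tfrac{1}{2}|\theta(P)-\theta(Q)|$; taking $\sup$ over admissible $Q$ yields the inequality.

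For the reverse direction, I would exhibit for each $Q\in\mathcal{P}$ an estimator in $\Qest$ with error at most $\tfrac{1}{2}\MOCall{n}{P}{\Qtest}{\mathcal{P}}{\theta}$ on both $P$ and $Q$. If $\QSC(P,Q)\leq n$, some $T=T_{f,\tau}\in\Qtest$ distinguishes $P$ from $Q$, and the estimator ``output $\theta(P)$ when $T(X)=0$, else output $\theta(Q)$''—a deterministic post-processing of $f\in\Qest$, hence in $\Qest$ by closure under post-processing—achieves zero error on both. If instead $\QSC(P,Q)>n$, the modulus definition gives $|\theta(P)-\theta(Q)|\leq\MOCall{n}{P}{\Qtest}{\mathcal{P}}{\theta}$, and the constant estimator equal to $(\theta(P)+\theta(Q))/2$ (in $\Qest$ by assumption that constants are available) attains error exactly $\tfrac{1}{2}|\theta(P)-\theta(Q)|$ on each. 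Taking $\sup_Q$ over both cases completes the argument.

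The main technical care point is reconciling the ``for all $N\geq n$'' condition in the definition of $\SC_T$ with the single-sample-size guarantee baked into $\ierror{\cdot}{n}{\cdot}$: this is handled by treating $\est$ and $T$ as families indexed by sample size and applying the thresholding construction sample-size-wise, so that error control at each $N\geq n$ translates into distinguishability at each $N\geq n$. Otherwise the argument is the classical Donoho reduction, with the observation that the only two hypotheses on $\Qest$ we actually use—closure under post-processing and the presence of all constant functions—are both preserved by the $\epsilon$-DP constraint, so the proposition applies in particular to $\Qesteps$.
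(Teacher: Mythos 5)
Your argument matches the paper's proof in structure: the same threshold-test/post-processing correspondence, the same case split on $\QSC(P,Q)\lessgtr n$, and the same constant-estimator and two-valued post-processing constructions (you in fact state the constant estimator correctly as the midpoint $(\theta(P)+\theta(Q))/2$, where the paper has a small slip). The technical caveat you raise about the ``for all $N\ge n$'' clause in the definition of $\SC_T$ is real and is elided in the paper's proof as well; your sample-size-indexed resolution is the right way to close it.
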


When $\mathcal{P}$ and $\theta$ are clear from context, we write $\MOC{n}{P}{\Qtest}$ for $\MOCall{n}{P}{\Qtest}{\mathcal{P}}{\theta}$, and similarly $\errorloc{n}{P}{\Qest}$ for $\errorlocall{n}{P}{\Qest}{\mathcal{P}}{\theta}$.
We will primarily be concerned with the class of differentially private estimators in this paper, which is closed under post-processing and contains all constant functions. We include the proof below to build intuition for this connection. 

\begin{proof} 

Let us first prove that $\error_n^{\loc}(P, \mathcal{P}, \Qest, \theta)\ge \frac12 \MOCall{n}{P}{\Qtest}{\mathcal{P}}{\theta}$. 
Suppose for sake of contradiction that $\error_n^{\loc}(P, \mathcal{P}, \Qest, \theta)< \frac12 \MOCall{n}{P}{\Qtest}{\mathcal{P}}{\theta}$. Then there exists $\hat\theta\in\Qest$ and $Q\in\mathcal{P}$ such that $\QSC(P,Q)> n$, $\frac12 \MOCall{n}{P}{\Qtest}{\mathcal{P}}{\theta}~\le~\frac12 |\theta(P)~-~\theta(Q)|$ and, 
\[\ierror{\est}{n}{P}< \tfrac12 \MOCall{n}{P}{\Qtest}{\mathcal{P}}{\theta} \;\;\; \text{and }\;\;\;\ierror{\est}{n}{Q}< \tfrac12 \MOCall{n}{P}{\Qtest}{\mathcal{P}}{\theta}\]
Therefore, $T_{\est, \frac{1}{2}(\theta(P)+\theta(Q))}$ (as defined in eqn~\eqref{eq:ftest}) distinguishes $P$ and $Q$ with $n$ samples, which is a contradiction since $\QSC(P,Q)> n$. Figure~\ref{graphofconnection} gives a graphical representation of this, if the balls do not overlap then we have a test that distinguishes $P$ and $Q$.

For the opposite inequality, we need to show that for all $Q\in\mathcal{P}$, there exists an estimator $\est~\in~\Qest$ such that $\max\{\ierror{\est}{n}{P}, \ierror{\est}{n}{Q}\}\le \tfrac12 \omega_{n, \QSC}(P) $. First suppose that $\QSC(P,Q)>n$, that $Q$ lies inside the blue ball around $P$ in Figure~\ref{graphofconnection}, and $\frac12 |\theta(P)-\theta(Q)|\le \frac12 \omega_{n, \QSC}(P)$. Let $\est$ be the constant function that always outputs $\frac12 |\theta(P)~-~\theta(Q)|$ so $\ierror{\est}{n}{P}=\frac12 |\theta(P)-\theta(Q)|$ and $\ierror{\est}{n}{Q}=\frac12 |\theta(P)-\theta(Q)|$, so we are done. Finally, suppose that $\QSC(P,Q)\le n$ so there exists $\est\in\Qest$ and $\tau\in\mathbb{R}$ such that 
\[\mathbb{P}[\est(P^n)\le\tau]\ge 0.75 \;\;\;\text{ and }\;\;\; \mathbb{P}[\est(Q^n)\le\tau]\ge 0.75.\] 

Let $f:\mathbb{R}\to\mathbb{R}$ be defined by $f(x)=\theta(P)$ if $\hat{\theta}(x)\le\tau$ and $f(x)=\theta(Q)$ is $\hat{\theta}(x)\ge \tau$ so $f\circ\est\in\Qest$ and $\ierror{f\circ\est}{n}{P}=0$ and $\ierror{f\circ\est}{n}{Q}=0$, so we are done. 
\end{proof}

For distributions $P$ and $Q$, let $$\Hell(P,Q) = \sqrt{\int_{\chi} (P(x)-Q(x))^2~\mathrm{d}x}$$ be the Hellinger distance between $P$ and $Q$. It is well known that for the class of all estimators, \[\QSC(P,Q)~=~\Theta\left(\frac{1}{\Hell^2(P,Q)}\right),\] so the following corollary is an immediate consequence of Theorem~\ref{lowerbound}. Define the $\Hell$-information by:
\[J^{-1}_{\Hell, P}(\beta) = \sup\left\{|\theta(P)-\theta(Q)|\;\Big|\; \Hell(P,Q)\le \beta, Q\in\mathcal{P}\right\}.\]

\begin{corollary}[Non-private optimal local estimation rate \citep{Donoho:1987}]\label{nonprivLER}
Let $\Qest$ be the set of all functions, then there exists constants $C_1$ and $C_2$ such that for any family $\mathcal{P}$, any statistic $\theta$,
any distribution $P$, and $n\in\mathbb{N}$, \[\error_n^{\loc}(P, \mathcal{P}, \Qest, \theta)\in\\\left[J^{-1}_{\Hell, P}\left(\frac{C_1}{\sqrt{n}}\right), J^{-1}_{\Hell, P}\left(\frac{C_2}{\sqrt{n}}\right)\right].\]
\end{corollary}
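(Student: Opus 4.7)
The plan is to read the corollary as a direct substitution of the classical Le Cam characterization into Proposition~\ref{lowerbound}. Since $\Qest$ is the class of all (potentially randomized) functions, it is trivially closed under post-processing and contains every constant function, so Proposition~\ref{lowerbound} applies and gives
\[
\errorlocall{n}{P}{\Qest}{\mathcal{P}}{\theta} \;=\; \tfrac{1}{2}\,\MOCall{n}{P}{\Qtest}{\mathcal{P}}{\theta},
\]
where $\Qtest$ is the set of all binary tests. Thus it suffices to sandwich the modulus of continuity between $J^{-1}_{\Hell,P}(C_1/\sqrt{n})$ and $J^{-1}_{\Hell,P}(C_2/\sqrt{n})$ for suitable constants.

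Next I invoke the classical characterization (Le Cam's two-point method for the lower direction, the likelihood-ratio test for the matching upper direction) that for the unrestricted class $\Qtest$ there exist absolute constants $a_1,a_2>0$ with $a_1/\Hell^2(P,Q)\le\QSC(P,Q)\le a_2/\Hell^2(P,Q)$. Reading off the contrapositive implications,
\[
\Hell(P,Q)\le\sqrt{a_1/n}\;\Longrightarrow\;\QSC(P,Q)>n\;\Longrightarrow\;\Hell(P,Q)\le\sqrt{a_2/n},
\]
where the strict/non-strict endpoint mismatch between the definitions of $\MOC{\cdot}{\cdot}{\cdot}$ (using $\QSC>n$) and $J^{-1}_{\Hell,P}$ (using $\Hell\le\beta$) is absorbed by shrinking $a_1$ by an arbitrarily small amount.

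Third, I take the supremum of $|\theta(P)-\theta(Q)|$ over each of the nested sets of $Q\in\mathcal{P}$ and read off
\[
J^{-1}_{\Hell,P}\!\left(\sqrt{a_1/n}\right) \;\le\; \MOCall{n}{P}{\Qtest}{\mathcal{P}}{\theta} \;\le\; J^{-1}_{\Hell,P}\!\left(\sqrt{a_2/n}\right),
\]
using only the definition of $J^{-1}_{\Hell,P}$ and the monotonicity of the sup in the Hellinger radius. Combining with the factor $\tfrac{1}{2}$ from Proposition~\ref{lowerbound} gives the upper bound with $C_2=\sqrt{a_2}$ (since $\tfrac{1}{2}J^{-1}\le J^{-1}$), and gives the lower bound $\errorloc\ge\tfrac{1}{2}J^{-1}_{\Hell,P}(\sqrt{a_1/n})$, which is absorbed into the form $J^{-1}_{\Hell,P}(C_1/\sqrt{n})$ for an appropriately small absolute $C_1$, again using monotonicity of $J^{-1}_{\Hell,P}$.

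There is no serious obstacle: the only substantive input is the Hellinger/$\QSC$ equivalence used as a black box. The mildly delicate points are bookkeeping, namely absorbing the factor $\tfrac{1}{2}$ from Proposition~\ref{lowerbound} and the open-versus-closed endpoint mismatch into the constants $C_1,C_2$; both are resolved by the fact that the corollary is stated up to constants and that $J^{-1}_{\Hell,P}$ is a monotone non-decreasing function of its radius argument.
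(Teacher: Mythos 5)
Your overall route matches the paper's: it treats the corollary as an immediate consequence of Proposition~\ref{lowerbound} together with the classical equivalence $\QSC(P,Q) = \Theta(1/\Hell^2(P,Q))$, and you spell out the set-inclusion bookkeeping that the paper elides. The steps where you apply Proposition~\ref{lowerbound}, invoke the two-sided Hellinger sample-complexity bounds, handle the open-versus-closed endpoint mismatch by shrinking $a_1$, and deduce the two-sided sandwich on the modulus $\MOCall{n}{P}{\Qtest}{\mathcal{P}}{\theta}$ are all correct.

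The one step that does not go through as written is the last one: absorbing the factor $\tfrac{1}{2}$ from Proposition~\ref{lowerbound} into $C_1$ ``using monotonicity of $J^{-1}_{\Hell,P}$.'' Monotonicity gives $J^{-1}_{\Hell,P}(C_1/\sqrt{n}) \le J^{-1}_{\Hell,P}(\sqrt{a_1}/\sqrt{n})$ whenever $C_1 \le \sqrt{a_1}$, but what you need is the reverse-direction inequality $\tfrac{1}{2}J^{-1}_{\Hell,P}(\sqrt{a_1}/\sqrt{n}) \ge J^{-1}_{\Hell,P}(C_1/\sqrt{n})$, and a multiplicative factor outside a monotone function cannot in general be traded for a shrunk argument. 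Concretely, if $\mathcal{P}=\{P,Q\}$ with $\Hell(P,Q)=\beta_0$ and $|\theta(P)-\theta(Q)|=1$, then $J^{-1}_{\Hell,P}$ is the $\{0,1\}$-valued step at $\beta_0$; for any $C_1>0$ and any $n\le C_1^2/\beta_0^2$ one has $\tfrac{1}{2}J^{-1}_{\Hell,P}(\sqrt{a_1}/\sqrt{n}) = \tfrac{1}{2} < 1 = J^{-1}_{\Hell,P}(C_1/\sqrt{n})$, so the absorption fails. What your argument actually establishes is $\error_n^{\loc}(P,\mathcal{P},\Qest,\theta) \ge \tfrac{1}{2}J^{-1}_{\Hell,P}(C_1/\sqrt{n})$. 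The paper's own derivation is terse and does not address this factor either (contrast the $L_1$-information in Equation~\eqref{L1info}, which builds the $\tfrac{1}{2}$ into the definition precisely so the analogous statement in Theorem~\ref{smallepslower} comes out clean, whereas the Hellinger information here does not); to close the gap honestly you should either carry the $\tfrac{1}{2}$ explicitly in the lower bound, or add a regularity hypothesis on the modulus --- for instance that there is a $c\in(0,1)$ with $J^{-1}_{\Hell,P}(c\beta)\le \tfrac{1}{2} J^{-1}_{\Hell,P}(\beta)$ --- under which the absorption is legitimate.
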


\subsection{Super efficiency}
\label{sec:supereff}

The optimal local estimation rate $\errorlocall{n}{\cdot}{\Qest}{\mathcal{P}}{\theta}$ has the property that for any estimator $\hat{\theta}$, if $\hat{\theta}$ achieves better accuracy than $\errorlocall{n}{P}{\Qest}{\mathcal{P}}{\theta}$ at some distribution $P$, then there exists a distribution $Q$ such that the accuracy of $\hat{\theta}$ at $Q$ is \emph{at least} as bad as $\errorlocall{n}{Q}{\Qest}{\mathcal{P}}{\theta}$.
One can also ask if an estimation rate satisfies the stronger condition of having a \emph{super-efficiency} result. Roughly, an estimation rate $R$ has a super-efficiency result if for any estimator $\hat{\theta}$ that achieves better accuracy than $R(P)$ at a particular value $P$, there exists another value $Q$ where the accuracy of $\hat{\theta}$ is \emph{strictly} worse than $R(Q)$. 
A super-efficiency result for a given rate $R$ shows, in a sense, that that $R$ is a meaningful target rate.
The optimal local estimation rate does not necessarily satisfy a super-efficiency result for general families.
Super-efficiency may hold for \emph{specific} families but a general result seems to require further assumptions. We leave the question of super-efficiency of the optimal local estimation rate to future work, since our focus is the general regime.

\section{Differentially Private Simple Hypothesis Testing and the Optimal Local Estimation Rate in the High Privacy Setting}
\label{sec:testing}

In this section, we will discuss the optimal test statistic for differentially private simple hypothesis testing and characterise the optimal local estimation rate in the high privacy setting. The test statistic we give is a slight variant on that presented in \cite{Canonne:2019}, who first characterised the sample complexity of differentially private simple hypothesis testing. The test statistic given here is more efficient and more amenable to the estimation problem. The characterisation of the local estimation rate in the high privacy regime is simpler than in other regimes, and offers a direct comparison to the local estimation rates in the non-private and local differential privacy regimes.

\subsection{Differential Privacy}\label{DP} 
In this work we are concerned with estimators that satisfy \emph{differential privacy}, which we will formally define in this section. Let $\mathcal{X}$ be a data universe 
and $\mathcal{X}^n$ be the space of datasets of size $n$. Two datasets $d, d' \in \mathcal{X}^n$ are neighboring, denoted $d \sim d'$, if they differ on a single record. 
Let $\mathcal{Y}$ be an output space. 

\begin{definition}[$\epsilon$-Differential Privacy \citep{DworkMNS06}]\label{def:DP} Given privacy parameters $\epsilon\ge0$ and $\delta\in[0,1]$,
a randomized mechanism $M: \mathcal{X}^n \rightarrow \mathcal{Y}$ is $\epsilon$-\emph{differentially private} if for all datasets $d \sim d' \in \mathcal{X}^n$, and events $E\subseteq\mathcal{Y}$,

\begin{align*} \label{def:dp-with-inputs}
&\Pr[M(d, \text{hyperparams}) \in E]
\leq e^\epsilon \cdot\Pr[M(d', \text{hyperparams}) \in E]+\delta,
\end{align*}

where the probabilities are taken over the randomness induced by $M$. 
\end{definition}

The key intuition for this definition is that the distribution of outputs on input dataset $d$ is almost indistinguishable from the distribution of outputs on input dataset $d'$. Therefore, given the output of a differentially private mechanism, it is impossible to confidently determine whether the input dataset was $d$ or $d'$. 
For strong privacy guarantees, the privacy-loss parameter is typically taken to be a small constant less than $1$ (note that $e^\epsilon \approx 1+\epsilon$ as $\epsilon \rightarrow 0$) and $\delta$ is taken to be very small (say $10^{-6}$). In fact, for simple hypothesis testing, we can show that if $\epsilon>1$, then for any $\delta\in[0,1]$, the private sample complexity within a constant factor of the non-private sample complexity, i.e., $\SC_{\epsilon, \delta}(P,Q)=\Theta(\SC(P,Q))$. Hence, for the remainder of this work, we will assume that $\epsilon\le 1$.
Note if $\Qesteps$ is the set of all $\epsilon$-DP estimators, then $\Qtesteps$ is the set of all $\epsilon$-DP tests.

\subsection{An Optimal Differentially Private Simple Hypothesis Test}\label{DPtestingsec}

A characterisation of the sample complexity of differentially private simple hypothesis testing was given in \cite{Canonne:2019}. They showed that a simple \emph{noisy and clamped} version of the log likelihood ratio test gave an optimal sample complexity differentially private simple hypothesis test. Given distributions $P$ and $Q$, let $\CLLR_a^b$ be the clamped log-likelihood statistic with thresholds $a$ and $b$, and $\NCLLR_a^b$ be a noisy version: 
\begin{equation}\label{cllr}
\CLLR_a^b(X) = \sum_{i=1}^n \left[\ln\frac{P(x_i)}{Q(x_i)}\right]_a^b\;\; \text{ and }\;\; \NCLLR_a^b(X)=\CLLR_a^b(X)+\Lap\left(\frac{|b-a|}{\epsilon }\right).
\end{equation}
In the original version of this test, the authors' proved that this test statistic gave rise to an optimal test when one set $b=\epsilon$ and $a=-\epsilon'$, where $\epsilon'$ is some function of $\epsilon, P$ and $Q$. In Appendix~\ref{DPtestingappendix}, we improve on their results to show that setting $b=\Theta(\epsilon)$ and $-a=\Theta(\epsilon)$ is sufficient. This
extension is crucial to us in our estimation algorithm where $\epsilon'$ can not be computed. This is also of independent interest as an improvement of the testing result: unlike the original test presented in \cite{Canonne:2019}, setting $a=-\epsilon$ and $b=\epsilon$, results in an efficient test which only requires oracle access to $P$ and $Q$. The original result in \cite{Canonne:2019} required full knowledge of the distributions $P$ and $Q$ in order to compute $\epsilon'$. 
In order to simplify notation we use $\SC_{\epsilon}(P,Q):= \SC_{\Qesteps}(P,Q)$ to denote the optimal sample complexity for distinguishing $P$ and $Q$ using an $\epsilon$-DP algorithm.
The proof of the following proposition is found in Section~\ref{arobustthresholds}

\begin{restatable}{proposition}{restaterobustthresholds}
\label{robustthresholds}
If $\epsilon=O(1)$ then for all $a=\Theta(\epsilon)$ and $b=\Theta(\epsilon)$, there exists constants $C_1$ and $C_2$ such that for all distributions $P$ and $Q$,
\[\SC_{\NCLLR_a^b}(P,Q)\in[C_1 \cdot \SC_{\epsilon}(P,Q), C_2 \cdot \SC_{\epsilon}(P,Q)].\]

\end{restatable}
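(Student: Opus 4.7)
The plan is to establish the two inclusions separately. The lower bound $\SC_{\NCLLR_a^b}(P,Q) \geq \SC_\eps(P,Q)$ (so $C_1 = 1$ works) is immediate: each summand in $\CLLR_a^b$ lies in $[a,b]$, so the global sensitivity is at most $|b-a|$, and adding $\Lap(|b-a|/\eps)$ noise makes $\NCLLR_a^b$ an $\eps$-DP statistic; every threshold test derived from it is a post-processing and hence lies in $\Qtesteps$.

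For the upper bound, I would analyze $T_{\NCLLR_a^b, \tau}$ via Chebyshev's inequality, as in \citet{Canonne:2019}. Setting $\tau = n(\mu_P + \mu_Q)/2$ with $\mu_P := \mathbb{E}_{x\sim P}\bigl[[\ln(P(x)/Q(x))]_a^b\bigr]$ and $\mu_Q$ analogous, a standard calculation shows the test distinguishes $P^n$ from $Q^n$ with constant probability whenever
\begin{equation*}
n \cdot \Delta_a^b \;\gtrsim\; \sqrt{n\cdot V_a^b}\;+\;|b-a|/\eps,
\end{equation*}
where $\Delta_a^b := |\mu_P - \mu_Q|$ and $V_a^b := \max(\mathrm{Var}_P, \mathrm{Var}_Q)([\ln(P/Q)]_a^b)$. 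Rearranging gives $\SC_{\NCLLR_a^b}(P,Q) = \Theta\paren{V_a^b/(\Delta_a^b)^2 + |b-a|/(\eps\,\Delta_a^b)}$. I would then compare this expression against the identical one for a reference choice of thresholds $(a_0, b_0) = (-c_0\eps, c_0\eps)$, which \citet{Canonne:2019} already show equals $\Theta(\SC_\eps(P,Q))$.

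The heart of the argument is a robustness lemma: for any $a, b$ with $|a|, b \in [c_1\eps, c_2\eps]$, each of $\Delta_a^b$, $V_a^b$, and $|b-a|$ lies within a constant factor of the corresponding reference quantity. Robustness of $|b-a|$ is trivial. For $\Delta_a^b$, I would decompose
\begin{equation*}
\Delta_a^b = \int_\chi (P(x) - Q(x))\, [\ln(P(x)/Q(x))]_a^b\,dx
\end{equation*}
into three regions via $L := \ln(P/Q)$: $\set{L > b_0}$, $\set{|L| \leq b_0}$, and $\set{L < -b_0}$. On each region $(P-Q)$ and $[L]_a^b$ share a sign so the integrand is non-negative, and the pointwise ratio $[L]_a^b / [L]_{a_0}^{b_0}$ lies in a bounded interval depending only on $c_1, c_2$, yielding $\Delta_a^b \asymp \Delta_{a_0}^{b_0}$. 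For $V_a^b$, the decomposition $[L]_a^b = [L]_{a_0}^{b_0} + g(L)$, with $g$ supported on $\set{|L| > b_0}$ and of magnitude $O(\eps)$, gives $V_a^b \leq 2 V_{a_0}^{b_0} + O(\eps^2\,\Pr[|L|>b_0])$. The extra term is controlled via the likelihood inequality $\Pr_P[L > b_0] - \Pr_Q[L > b_0] \geq (1-e^{-b_0})\Pr_P[L > b_0] \gtrsim b_0 \Pr_P[L > b_0]$ (and its symmetric analogue for the lower tail), which ultimately yields $\eps^2 \Pr[|L|>b_0] \lesssim \Delta_{a_0}^{b_0}$.

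I expect the main obstacle to be this variance step: the naive bound $V_a^b \leq V_{a_0}^{b_0} + O(\eps^2)$ is too weak to absorb into $\SC_\eps(P,Q)$ when $\Delta_{a_0}^{b_0} \ll \eps$, because the resulting variance-to-mean-squared term would then exceed the Laplace-noise term in Canonne et al.'s bound. The sharper likelihood inequality above saves the argument by converting the tail probability $\Pr[|L|>b_0]$ into a mean-gap quantity, ensuring the variance perturbation contributes at most $O(1/\Delta_{a_0}^{b_0})$ to the sample complexity --- a term already dominated by the Laplace-noise contribution to $\SC_\eps(P,Q)$.
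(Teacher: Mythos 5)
Your lower bound ($C_1 = 1$ via post-processing) matches the paper's approach. The upper bound, however, has a genuine gap that happens to be precisely the main content of the paper's proof.

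You write that ``\citet{Canonne:2019} already show'' that the sample complexity with symmetric reference thresholds $(a_0,b_0) = (-c_0\eps, c_0\eps)$ is $\Theta(\SC_\eps(P,Q))$. That is not what Canonne et al.\ proved. Their optimal test uses the asymmetric thresholds $(-\eps', \eps)$, where $\eps' \le \eps$ is an instance-dependent quantity determined by $P$, $Q$, and $\eps$ (it can be much smaller than $\eps$, even $0$). Establishing that one may replace $(-\eps',\eps)$ by the symmetric $(-\Theta(\eps), \Theta(\eps))$ is exactly the new contribution of this proposition; in the paper it is carried out in Lemma~\ref{getridofepsprime}. Your robustness lemma does not fill this gap: the pointwise-ratio argument for $\Delta_a^b$ requires both thresholds to be comparable to $\eps$ on each side, but $[L]^{\eps}_{-\eps'}$ vanishes on $\{-b_0 < L < -\eps'\}$ when $\eps'$ is tiny, so the ratio $[L]_a^b/[L]^{\eps}_{-\eps'}$ is unbounded there and the $\asymp$ claim fails. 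Similarly, the variance decomposition $[L]_a^b = [L]^{\eps}_{-\eps'} + g(L)$ would have $g$ supported on $\{L < -\eps'\}$, a set that can carry much more mass than $\{L < -c_0\eps\}$, and your tail inequality no longer controls it. Once you grant yourself the symmetric reference, the rest of your sketch (the three-region decomposition of $\Delta$, the variance perturbation bound $V_a^b \le 2V_{a_0}^{b_0} + O(\eps^2\Pr[|L|>b_0])$, and the conversion of $\eps^2\Pr[|L|>b_0]$ into a mean gap via the likelihood inequality) is sound and in the same spirit as the paper's Lemma~\ref{threshrobust}; but the hard step is the one you attributed away. As a minor note, the paper also invokes a secrecy-of-the-sample argument to relate $\SC_{C\eps}$ to $\SC_{\eps}$, which you would need to make your constant-rescaling of thresholds quantitatively precise.
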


The sample complexity of $\NCLLR_{\epsilon}^{\epsilon}$, characterised in \cite{Canonne:2019}, has a nuanced dependence on $\epsilon$, $P$ and $Q$. If $\epsilon$ is large enough, privacy comes for free, and $\SC_{\epsilon}(P,Q)=\Theta\left(\SC(P,Q)\right)$. As $\epsilon$ decreases the dependence becomes more complicated. However, in Lemma~\ref{smalleps} we will show that once $\epsilon$ is small enough, $\epsilon_n\le\frac{1}{\sqrt{n}}$, the dependence is once again simple. 

For hypothesis tests with constant error probabilities, the sample complexity bounds are equivalent, up to constant factors, for pure $\epsilon$-differential privacy, and the less strict notions of approximate $(\epsilon,\delta)$-differential privacy and concentrated differential privacy \citep{Dwork2016ConcentratedDP, Bun:CDP} (see \citet[Lemma 5]{Acharya:2018}). Consequently, the test $\NCLLR_{\epsilon}^{\epsilon}$ is optimal (up to constants) for each of these weaker notions. The class of estimators defined by each of these notions is closed under post-processing and thus, by Theorem~\ref{lowerbound}, the optimal local estimation rate is, up to constants, the same for each of these notions. This may seem like a contradiction since there are many well-known cases of asymptotic gaps in the estimation rate of pure differential privacy and approximate differential privacy. However, the optimal local estimation rate need not be \emph{uniformly achievable} under all (or any) of these notions of privacy, leaving room for a gap in the achievable estimation rate under pure, concentrated and approximate DP.

\subsection{A Lower Bound for Instance Optimal Estimation in the High Privacy Regime}\label{generalhighprivacy}

The characterization of the local estimation rate is significantly more complex in the central DP regime than in the local DP or non-private regimes. This is a direct consequence of the characterisation of the optimal sample complexity of simple hypothesis testing being more nuanced in the central DP regime than the local DP or non-private regimes. However, the existence of a simple characterisation of the sample complexity in the high privacy regime allows us to give a simple characterisation of the local estimation rate in that regime.

For distributions $P$ and $Q$, let $\TV(P,Q) = (1/2) \int |P(x)-Q(x)|dx$ be the total variation distance. For $\beta\in[0,1]$, we define the $L_1$-information at a distribution $P\in\mathcal{P}$ by
\begin{equation}\label{L1info}
J_{\TV, P}^{-1}(\beta) = (1/2)\cdot\sup\left\{|\theta(P)-\theta(Q)|\;\Big|\; \TV(P, Q)\le\beta, Q\in\mathcal{P}\right\}.
\end{equation}

Note that the $L_1$-information is the analogue of the $\Hell$-information, which characterizes the sample complexity in the non-private setting, using the total variation distance (also known as the $L_1$-norm) instead of the Hellinger distance. Our estimation rate in the high privacy regime is characterized by the $L_1$-information. This follows immediately from Theorem~\ref{lowerbound} and Lemma~\ref{smalleps} which we'll state below. 

\begin{theorem}\label{smallepslower} Let $\Qesteps$ be the set of all $\epsilon$-differentially private estimators. For any constant $k$ there exists a constants $C_1$ and $C_2$ such that if $\epsn\le \frac{k}{\sqrt{n}}$, then for all families $\mathcal{P}$, $P\in\mathcal{P}$, and $n\in\mathbb{N}$, \[\LinfoP\left(\frac{C_1}{n\epsn}\right)\le \error^{\loc}_n(P, \mathcal{P}, \Qesteps, \theta) \le \LinfoP\left(\frac{C_2}{n\epsn}\right).\]
\end{theorem}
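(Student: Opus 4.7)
The plan is to derive Theorem~\ref{smallepslower} by combining two ingredients already in the paper: the general reduction of Proposition~\ref{lowerbound} (which equates the local estimation rate with the modulus of continuity with respect to the sample-complexity metric), and the high-privacy characterization of differentially private simple hypothesis testing provided by Lemma~\ref{smalleps}, which says that when $\epsilon_n \le k/\sqrt n$ one has $\SC_{\epsilon}(P,Q) = \Theta\bigl(1/(\epsilon\,\TV(P,Q))\bigr)$ uniformly in $P,Q$.

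First I would note that $\Qesteps$ is closed under post-processing and contains all constants (the latter is immediate; the former is a standard property of $\epsilon$-DP). Thus Proposition~\ref{lowerbound} applies with $\Qest = \Qesteps$ and $\Qtest = \Qtesteps$, giving
\[
\errorlocall{n}{P}{\Qesteps}{\mathcal{P}}{\theta}
\;=\;
\tfrac{1}{2}\,\MOCall{n}{P}{\Qtesteps}{\mathcal{P}}{\theta}
\;=\;
\tfrac{1}{2}\,\sup\bigl\{\,|\theta(P)-\theta(Q)|\;:\;Q\in\mathcal{P},\;\SC_\epsilon(P,Q)>n\bigr\}.
\]
So the theorem reduces to translating the constraint ``$\SC_\epsilon(P,Q) > n$'' into a constraint on $\TV(P,Q)$.

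Next I would invoke Lemma~\ref{smalleps}. Under the hypothesis $\epsilon_n \le k/\sqrt n$, there are absolute constants $c_1,c_2 > 0$ (depending only on $k$) such that
\[
\frac{c_1}{\epsilon_n \TV(P,Q)} \;\le\; \SC_{\epsilon_n}(P,Q) \;\le\; \frac{c_2}{\epsilon_n \TV(P,Q)}.
\]
From the upper bound, if $\TV(P,Q) \le c_1/(n\epsilon_n)$, then $\SC_{\epsilon_n}(P,Q) \ge c_1/(\epsilon_n \cdot c_1/(n\epsilon_n)) = n$, so $Q$ is (essentially) feasible for the supremum defining the modulus; thus the supremum is at least $\sup\{|\theta(P)-\theta(Q)| : \TV(P,Q) \le c_1/(n\epsilon_n)\} = 2\,\LinfoP(c_1/(n\epsilon_n))$, giving the lower bound with $C_1 = c_1$ (modulo adjusting by a factor to convert the weak inequality $\SC_\epsilon\ge n$ to the strict inequality $\SC_\epsilon > n$, e.g.\ by replacing $c_1$ with $c_1/2$). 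Conversely, from the lower bound, $\SC_{\epsilon_n}(P,Q) > n$ forces $\TV(P,Q) < c_2/(n\epsilon_n)$, so the supremum is at most $2\,\LinfoP(c_2/(n\epsilon_n))$, yielding the upper bound with $C_2 = c_2$. Dividing by the factor of $1/2$ from Proposition~\ref{lowerbound} cancels the factor of $1/2$ built into the definition of $\LinfoP$ in \eqref{L1info}.

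The only step that requires care is the boundary between weak and strict inequalities (``$\SC_\epsilon > n$'' vs.\ ``$\SC_\epsilon \ge n$'', and ``$\TV \le \beta$'' vs.\ ``$\TV < \beta$'') when translating between the two sups; this is handled by allowing the constants $C_1, C_2$ to absorb a factor of $2$ so that the strict/weak distinction is immaterial. The genuinely nontrivial content is entirely carried by Lemma~\ref{smalleps}, which supplies the total-variation characterization of $\SC_\epsilon$ in the high-privacy regime; once that is in hand, the present theorem is a direct substitution into Proposition~\ref{lowerbound}. Thus the main obstacle lies not in the proof of Theorem~\ref{smallepslower} itself but in establishing Lemma~\ref{smalleps}, which requires analyzing the sample complexity of $\NCLLR_{-\epsilon}^{\epsilon}$ (together with a matching lower bound) in the regime $\epsilon \le k/\sqrt n$.
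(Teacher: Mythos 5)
Your proof is correct and takes exactly the route the paper intends: the theorem is stated there to follow ``immediately from Theorem~\ref{lowerbound} and Lemma~\ref{smalleps},'' which is precisely the combination you carry out, including the careful bookkeeping of the factor of $\tfrac12$ and the strict-versus-weak inequality at the boundary. One small slip in the exposition: where you derive ``$\TV \le c_1/(n\epsilon_n) \Rightarrow \SC_{\epsilon_n} \ge n$'' you are using the \emph{lower} bound $\SC_{\epsilon_n} \ge c_1/(\epsilon_n\TV)$ from Lemma~\ref{smalleps} (not the upper bound, as written), and symmetrically the converse direction uses the upper bound $\SC_{\epsilon_n} \le c_2/(\epsilon_n\TV)$; the algebra itself is right, only the labels are swapped.
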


Theorem~\ref{smallepslower} is interesting to contrast with Corollary~\ref{nonprivLER}, which gives the estimation rate in the non-private regime. Note first that if $\epsn=O(\frac{1}{\sqrt{n}})$ then $n\epsn<\sqrt{n}$, so the estimation rate is indeed slower under the constraint of privacy. Further, the metric characterizing the problem changes from the Hellinger distance to the total variance distance. A similar phenomenon is observed under local differential privacy constraints in \cite{Duchi:2018}.

\begin{theorem}[Local DP \citep{Duchi:2018}]
Let $\Qest_{\text{local},\epsilon}$ be the set of all $\epsilon$-locally differentially private functions, there exists constants $C_1$ and $C_2$ such that for all families $\mathcal{P}$, estimators $\theta$, 
any $P\in\mathcal{P}$, and $n\in\mathbb{N}$, \[\LinfoP\left(\frac{C_1}{\epsilon\sqrt{n}}\right)\le \error_n^{\loc}(P, \Qest_{\text{local},\epsilon}) \leq \LinfoP\left(\frac{C_2}{\epsilon\sqrt{n}}\right).\]
\end{theorem}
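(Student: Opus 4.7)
The plan is to derive Theorem~\ref{smallepslower} as a direct composition of two ingredients: Proposition~\ref{lowerbound}, which equates the optimal local estimation rate with half the modulus of continuity taken with respect to the optimal DP test's sample complexity, and Lemma~\ref{smalleps}, which asserts that in the high-privacy regime $\epsn = O(1/\sqrt{n})$ the central-DP sample complexity collapses to $\SC_{\epsn}(P,Q) = \Theta(1/(\epsn\, \TV(P,Q)))$. Given these two results, only a short substitution and constant-chasing is required.

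First I would apply Proposition~\ref{lowerbound} with $\Qest = \Qesteps$. The class of $\epsilon$-DP estimators is closed under post-processing and trivially contains all constant functions, so the hypothesis of the proposition is satisfied and we get
\[\error_n^{\loc}(P, \mathcal{P}, \Qesteps, \theta) \;=\; \tfrac12\, \MOC{n}{P}{\Qtesteps}.\]
Next, I would unfold the modulus using its definition and substitute the two-sided sample-complexity bound from Lemma~\ref{smalleps}. That lemma supplies constants $c_1 < c_2$ such that $\TV(P,Q) \le c_1/(n\epsn)$ implies $\SC_{\epsn}(P,Q) > n$, while $\SC_{\epsn}(P,Q) > n$ forces $\TV(P,Q) \le c_2/(n\epsn)$. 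Plugging these sandwiching conditions into the supremum defining $\MOC{n}{P}{\Qtesteps}$ and comparing with the definition of $L_1$-information in eqn~\eqref{L1info} yields
\[\LinfoP\!\left(\frac{c_1}{n\epsn}\right) \;\le\; \tfrac12\, \MOC{n}{P}{\Qtesteps} \;\le\; \LinfoP\!\left(\frac{c_2}{n\epsn}\right),\]
which, after absorbing the prefactors into the constants $C_1$ and $C_2$, is the desired statement.

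The real substance lies in Lemma~\ref{smalleps}, and that is where I expect the main obstacle to be. The upper bound on $\SC_{\epsn}(P,Q)$ should come from analyzing the $\NCLLR_{\epsn}^{\epsn}$ test (Proposition~\ref{robustthresholds}) in the high-privacy regime: since the clamp window has width $2\epsn$, the added Laplace noise has scale $\Theta(1)$, and in the regime $\epsn = O(1/\sqrt{n})$ this noise dominates (or matches) the intrinsic $O(\sqrt{n}\,\epsn) = O(1)$ fluctuation of the clamped log-likelihood sum. Distinguishing therefore reduces to detecting a mean shift of order $n\epsn\, \TV(P,Q)$ against $\Theta(1)$ noise, which gives sample complexity $O(1/(\epsn\, \TV(P,Q)))$. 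The matching lower bound should come from a standard group-privacy/coupling argument: two product distributions whose single-sample total variation distance is $\alpha$ admit a coupling that agrees on all but $\Theta(n\alpha)$ coordinates in expectation, and an $\epsn$-DP mechanism cannot reliably distinguish datasets that differ in fewer than $\Omega(1/\epsn)$ coordinates, so $n\alpha \gtrsim 1/\epsn$. With Lemma~\ref{smalleps} in hand, the theorem itself is essentially immediate by the composition sketched above.
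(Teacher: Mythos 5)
You have proved the wrong theorem. The statement you were asked to address is the cited local-DP result of Duchi and Ruan, which concerns the class $\Qest_{\text{local},\epsilon}$ of $\epsilon$-\emph{locally} differentially private estimators and gives the rate $\LinfoP\left(\Theta(1)/(\epsilon\sqrt{n})\right)$. Your entire argument, however, targets Theorem~\ref{smallepslower}: you invoke Lemma~\ref{smalleps}, which characterizes the \emph{central}-DP sample complexity as $\SC_{\epsn}(P,Q) = \Theta\left(1/(\epsn\,\TV(P,Q))\right)$ in the high-privacy regime, and arrive at the rate $\LinfoP\left(\Theta(1)/(n\epsn)\right)$. These are different theorems with genuinely different rates ($n\epsilon$ versus $\epsilon\sqrt{n}$ in the argument of $\LinfoP$), and the local-DP statement carries no restriction to $\epsn = O(1/\sqrt{n})$.

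The correct proof of the local-DP theorem follows the same high-level template you used, but with the local-DP testing characterization swapped in for Lemma~\ref{smalleps}. Specifically: apply Proposition~\ref{lowerbound} to $\Qest_{\text{local},\epsilon}$, which is closed under post-processing and contains all constant functions, to get $\error_n^{\loc}(P, \Qest_{\text{local},\epsilon}) = \tfrac12\,\omega_n\left(P,\Qtest_{\text{local},\epsilon}\right)$. Then substitute the result of Duchi, Jordan, and Wainwright (2014), cited immediately after the theorem in the paper, that the $\epsilon$-local-DP sample complexity for distinguishing $P$ from $Q$ is $\Theta\left(1/(\epsilon^2\,\TV^2(P,Q))\right)$ (uniformly in $\epsilon$, not just in a high-privacy regime). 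Unwinding the modulus: $\SC_{\text{local},\epsilon}(P,Q) > n$ is equivalent (up to constants) to $\TV(P,Q) \lesssim 1/(\epsilon\sqrt{n})$, and comparing with the definition of $\LinfoP$ in eqn~\eqref{L1info} gives $\error_n^{\loc}(P, \Qest_{\text{local},\epsilon}) \in \left[\LinfoP\left(C_1/(\epsilon\sqrt{n})\right), \LinfoP\left(C_2/(\epsilon\sqrt{n})\right)\right]$, as claimed. As a secondary remark, even your sketched upper bound for Lemma~\ref{smalleps} diverges from the paper: the paper proves that bound via a noisy Scheff\'e test, not via $\NCLLR$, though both lines of reasoning are sound for that (different) theorem.
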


The corresponding class of testing functions $\Qtest$ contains the set of all $\epsilon$-local DP binary functions. \cite{Duchi:2014} showed that the sample complexity for distinguishing between two distributions $P$ and $Q$ under local differential privacy is 
$\Theta\left(\frac{1}{\epsilon^2TV^2(P,Q)}\right)$.

As discussed in \cite{Duchi:2018}, the change from the Hellinger modulus of continuity to the total variation modulus of continuity has implications for how well one can expect estimation algorithms in the high privacy setting to adapt to problem-specific difficulty. For example in the case of Bernoulli estimation, the non-private local estimation rate for a Bernoulli with parameter $p\in[0,1]$ is  $\Theta(\sqrt{p(1-p)/n})$, which shows that estimation algorithms in the non-private (and low central privacy setting) are able to adapt to ``easy`` instances of the problem. In contrast, in the high privacy setting, the local estimation rate is $\Theta\left(\frac{1}{\epsilon n}\right)$, which is the same for all $p$, showing that private algorithms in this regime are not able to adapt to ``easy" instances. As mentioned earlier, this is a direct consequence of the fact that the Hellinger distance between ${\rm Bernoulli}(p)$ and ${\rm Bernoulli}(p+\alpha)$ is a function of $p$, while the total variation distance between these two distributions is independent of $p$. 

Theorem~\ref{smallepslower} is a direct consequence of the following characterisation of the sample complexity of private hypothesis testing in the high privacy regime. The proof follows from the fact that in the high privacy regime, $\epsilon\le k/\sqrt{n}$, a noisy Scheff\'e test performs as well as the optimal test $\NCLLR_{\epsilon}^{\epsilon}$.

\begin{lemma}[High Privacy Sample Complexity Characterisation]
\label{smalleps}
For any constant $k$, there exists constants $C_1$ and $C_2$ such that for any distributions $P$ and $Q$, if $\epsilon_n\le\frac{k}{\sqrt{n}}$ then \[SC_{\epsilon_n}(P,Q)\in\left[\frac{C_1}{\epsilon_n \cdot \TV(P,Q)}, \frac{C_2}{\epsilon_n \cdot \TV(P,Q)}\right].\]
\end{lemma}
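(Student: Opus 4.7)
My plan is to prove the two inclusions by independent arguments: a noisy Scheff\'e test for the upper bound, and a coupling/group-privacy argument for the lower bound.

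\textbf{Upper bound.} I will analyze the noisy Scheff\'e test. Let $S = \{x : P(x) \geq Q(x)\}$ and $Z(X) = \#\{i : X_i \in S\}$, so $\mathbb{E}_{P^n}[Z] - \mathbb{E}_{Q^n}[Z] = n \TV(P,Q)$. Since $Z$ has global sensitivity $1$, $\tilde Z := Z + \Lap(1/\epsilon_n)$ is $\epsilon_n$-DP, and thresholding $\tilde Z$ at $\tau = \tfrac{1}{2} n(P(S) + Q(S))$ gives a valid $\epsilon_n$-DP test. Because $\mathrm{Var}(\tilde Z) \leq n/4 + 2/\epsilon_n^2$, Chebyshev's inequality on each side shows the test succeeds with probability $\geq 3/4$ provided $n^2 \TV(P,Q)^2 \gtrsim n + 1/\epsilon_n^2$. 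Setting $n = C_2 / (\epsilon_n \TV(P,Q))$ handles the $1/\epsilon_n^2$ term automatically; the remaining inequality reduces to $\TV(P,Q) \gtrsim \epsilon_n$. This is exactly where the high-privacy hypothesis enters: substituting $n = C_2/(\epsilon_n \TV(P,Q))$ into $\epsilon_n \leq k/\sqrt n$ rearranges to $\TV(P,Q) \geq C_2 \epsilon_n / k^2$, so a sufficiently large $C_2 = C_2(k)$ closes the loop.

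\textbf{Lower bound.} Let $M$ be any $\epsilon_n$-DP mechanism distinguishing $P^n$ from $Q^n$ with error $\leq 1/4$ on each side; then $\TV(M(P^n), M(Q^n)) \geq 1/2$. I will couple $X \sim P^n$ and $Y \sim Q^n$ by the maximal coordinate-wise coupling, so that $X_i \neq Y_i$ independently with probability $\TV(P,Q)$ and $K := \#\{i : X_i \neq Y_i\} \sim \mathrm{Binomial}(n, \TV(P,Q))$. Group privacy says $M$ is $(k \epsilon_n)$-DP across datasets differing in $k$ entries, so
\[
\TV(M(X), M(Y) \mid K = k) \;\leq\; 1 - e^{-k \epsilon_n} \;\leq\; \min(1, k \epsilon_n).
\]
Since $t \mapsto \min(1, t)$ is concave, Jensen's inequality yields
\[
\TV(M(P^n), M(Q^n)) \;\leq\; \mathbb{E}\bigl[\min(1, K \epsilon_n)\bigr] \;\leq\; \min\bigl(1,\, n \epsilon_n \TV(P,Q)\bigr),
\]
and combining with $\TV(M(P^n), M(Q^n)) \geq 1/2$ forces $n \geq 1/(2 \epsilon_n \TV(P,Q))$, so $C_1 = 1/2$ suffices.

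\textbf{Main obstacle.} The delicate step is the upper bound: the high-privacy assumption $\epsilon_n \leq k/\sqrt n$ is used precisely to guarantee that the binomial sampling noise in $Z$ is dominated by the signal $n \TV(P,Q)$. Without this assumption, the simple Scheff\'e test would fail, and one would need a more refined statistic such as $\NCLLR_{\epsilon_n}^{\epsilon_n}$ (cf.~Proposition~\ref{robustthresholds}). The lower bound is more routine once the coupling is in place; its one elegant point is the concavity of $\min(1, \cdot)$, which collapses the full binomial distribution of $K$ into a single mean-based bound.
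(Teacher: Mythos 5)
Your proposal is correct, and the upper bound follows the paper's argument essentially verbatim: both construct the noisy Scheff\'e statistic (counting landings in the set where $P$ dominates $Q$, plus Laplace noise calibrated to the sensitivity-$1$ count), note that the expectation gap equals $n\,\TV(P,Q)$, and invoke the high-privacy hypothesis $\epsilon_n \le k/\sqrt{n}$ to absorb the binomial sampling variance $O(n)$ into the Laplace variance $O(1/\epsilon_n^2)$, after which Chebyshev finishes. Your bookkeeping at $n = C_2/(\epsilon_n\TV)$ is a little more roundabout than the paper's direct bound $\tfrac{1}{n}+\tfrac{1}{\epsilon^2 n^2} \le \tfrac{1+k^2}{\epsilon^2 n^2}$, but it reaches the same place.

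The genuine difference is the lower bound. The paper simply cites \citet[Theorem 11]{Acharya:2018}, while you reprove it from first principles: take the independent coordinate-wise maximal coupling of $P^n$ and $Q^n$ so that the Hamming distance $K$ is $\mathrm{Binomial}(n,\TV(P,Q))$, bound $\TV(M(x),M(y)) \le 1-e^{-k\epsilon_n} \le \min(1,k\epsilon_n)$ for any fixed pair at Hamming distance $k$ via group privacy, then use convexity of total variation over couplings together with concavity of $t\mapsto\min(1,t)$ and Jensen to collapse to $\min(1, n\epsilon_n\TV(P,Q))$; comparing with $\TV(M(P^n),M(Q^n))\ge 1/2$ gives $n \gtrsim 1/(\epsilon_n\TV)$. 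This is exactly the coupling-plus-group-privacy argument that underlies the cited theorem, so you have not found a new proof strategy, but you have made the section self-contained, and you correctly observe in passing that this direction does not actually use the high-privacy hypothesis at all (it holds for every $\epsilon_n$). That observation is worth keeping explicit, since it clarifies that only the upper bound degrades outside the regime $\epsilon_n\le k/\sqrt{n}$.
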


Before we prove Lemma~\ref{smalleps}, a quick note on the privacy parameters. We will allow our privacy parameter, $\epsilon$, to vary with the size of the database, $n$, so let $\epsilon_n$ be a sequence and $n:[0,\infty)\to\mathbb{N}$ be such that $\epsilon_{n(\epsilon)}=\epsilon$ and $n(\epsilon_n)$. We will often abuse notation and drop the argument of the function, e.g., referring to $\epsilon_n$ as simply $\epsilon$. We will assume that $\epsilon$ is decreasing, so the larger the dataset, the more private we require our algorithm to be. We will say a simple hypothesis testing problem has sample complexity $n=SC_{\epsilon_n}(P,Q)$ if $n=n(\epsilon)$ is the smallest value such that $SC_{\epsilon}(P,Q)$ and $n(\epsilon)$ intersect.

\begin{proof}
The lower bound portion of this lemma is not specific to the high privacy setting; there exists $C_1$ such that for all $\epsilon$, $SC_{\epsilon}(P,Q)\ge\frac{C_1}{\epsilon\TV(P,Q)}$. One way to prove this is as a direct consequence of \cite[Theorem 11]{Acharya:2018}. This theorem argues that one can lower bound the sample complexity of an $\epsilon$-DP test by upper bounding the Hamming distance between two datasets of size $n$ drawn from either $P$ and $Q$, i.e., the Hamming distance between $X$ and $Y$ where $X\sim P^n$ and $Y\sim Q^n$. 

For the upper bound, we will show that a noisy version of the simple Scheff\'e test has sample complexity $O(1/\epsilon TV(P,Q))$ in the high privacy regime. Let $E=\{x\;\|\; P(x)>Q(x)\}$ be the Scheff\'e set and define the test statistic $f_E$ by, for any database $X=\{x_1, \cdots, x_n\}$, \[f_E(X) = \frac{1}{n}\sum_{i=1}^n \mathds{1}_{x_i\in E}+\text{Lap}\left(\frac{1}{\epsilon n}\right).\]
Then by definition of the total variation distance, \[\mathbb{E}_{X\sim P^n}[f_E(X)]-\mathbb{E}_{X\sim Q^n}[f_E(X)] = \Pr_{x\sim P}(x\in E)-\Pr_{x\sim Q}(x\in E) = \text{TV}(P,Q).\]
Further, 
\[\max\{\var_{X\sim P^n}(f_E(X)), \var_{X\sim Q^n}(f_E(X))\}\le \frac{1}{n}+\frac{1}{\epsilon^2n^2}\le \frac{1+k^2}{\epsilon^2 n^2},\]
where the last inequality follows since $\epsilon\le\frac{k}{\sqrt{n}}$. Therefore, if $n\ge \sqrt{\frac{1+k^2}{12}}\frac{1}{\epsilon_n \text{TV}(P,Q)}$, we have that \[\mathbb{E}_{X\sim P^n}[f_E]-\mathbb{E}_{X\sim Q^n}[f_E]\ge \frac{1}{12}\max\{\sqrt{\var_{X\sim P^n}(f_E(X))}, \sqrt{\var_{X\sim Q^n}(f_E(X))}\}.\] A simple application of Chebyshev's inequality (for details see \cite[Lemma 2.6]{Canonne:2019}) implies that there exists a threshold $\tau$ such that the test that outputs $P$ if $f_E(X)\ge \tau$ and $Q$ otherwise, distinguishes between $P$ and $Q$ with sample complexity $\sqrt{\frac{1+k^2}{12}}\frac{1}{\epsilon_n \text{TV}(P,Q)}$.
\end{proof}

\section{One-Parameter Exponential Families: Characterising the Optimal Local Estimation Rate and Uniform Achievability}\label{expfams}

We now turn our attention to an example where uniform achievability is possible under differential privacy: one-parameter exponential families. In this section we will characterize the optimal local estimation rate of estimating the parameter in a one-parameter exponential family, then show that this optimal local estimation rate is uniformly achievable under differential privacy. In particular, we will see how the results of Section~\ref{DPtestingsec} on the form of the optimal DP simple hypothesis test, and it's sample complexity, inform the design of the locally minimax estimator.

One parameter exponential families are a broad class of families of distributions that encompasses many natural distributions. Examples of exponential families include Poisson distributions, Binomial distributions, normal distributions with known variance and normal distributions with known mean.
Formally, a \emph{one parameter exponential family}, $\mathcal{P_{\mu}}=\{P_{\theta}\}$, is determined by a base measure $\mu$ such that for each $\theta$, the distribution $P_{\theta}$ has density \[p_{\theta}(x) := e^{\theta x-A(\theta)} \quad  \text{(relative to $\mu$)},\] where $A(\theta) = \ln\int e^{\theta x} d\mu(x)$ is the normalisation.\footnote{It is common to see a sufficient statistic, $T(x)$, included in the definition of an exponential family so that $p_{\theta}(x) := e^{\theta T(x)-A(\theta)}$. Defined in this way, an exponential family can be defined over any space, not simply $\mathbb{R}$. However, for the purpose of estimating $\theta$, the two definitions are equivalent up to a change in the base measure, $\mu$.} Note that the mean and the variance have the following simple formulations: $\mathbb{E}_{\theta}[x]=A'(\theta)$ and $\var_{\theta}(x)=A''(\theta)$. The formula for $p_{\theta}$ does not give a well defined distribution for values of $\theta$ for which $A(\theta)=\infty$, so each measure $\mu$ has an associated range which we will denote $\thetarange_{\mu}=\{\theta\;|\; A(\theta)<\infty\}$. When $\mu$ is clear from context, we will drop the dependence on $\mu$ and refer to $\thetarange_{\mu}$ simply as $\thetarange$. 

 Let us begin with the characterization of the optimal local estimation rate. The formal version of this theorem is a combination of Corollary~\ref{smallepslowerexp} and Corollary~\ref{largeepslowerexp}, which characterize the optimal local estimation rate separately for the high and low privacy regimes.

\begin{theorem}[Characterization of Optimal Local Estimation Rate---Simplified from Corollaries \ref{smallepslowerexp} and \ref{largeepslowerexp}]\label{thm:informalrate}
For all exponential families (i.e., measures $\mu$), $\delta>0$, all sequences of privacy parameters $\epsn\in[0,1]$, $n\in\mathbb{N}$, and $\theta_0\in\thetarange_{\mu}$,
  \[\omega_n(P_{\theta_0}, \Qtesteps) = \Theta_{\mu}\left( \frac{1}{\sqrt{A''(\theta_0)}\min\{n\epsn, \sqrt{n}\}} \right) ,\]
  where the $\Theta_{\mu}$ notation hides constants depending only on $\mu$ (but not $\theta_0$).
\end{theorem}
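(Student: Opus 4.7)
The plan is to compute $\omega_n(P_{\theta_0}, \Qtesteps)$ directly from its definition as the supremum of $|\alpha|$ such that $\SC_{\epsn}(P_{\theta_0}, P_{\theta_0+\alpha}) > n$, by separately characterizing the $\epsn$-DP sample complexity in the high-privacy regime $\epsn = O(1/\sqrt{n})$ (where $\min\{n\epsn,\sqrt{n}\} = n\epsn$) and the low-privacy regime $\epsn = \Omega(1/\sqrt{n})$ (where $\min\{n\epsn,\sqrt{n}\} = \sqrt{n}$), then inverting for $|\alpha|$. The main work is substituting in the correct local behaviour of $\TV$ or $\Hell$ between two nearby members of the exponential family.

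In the high-privacy regime, Lemma~\ref{smalleps} gives $\SC_{\epsn}(P_{\theta_0},P_{\theta_0+\alpha}) = \Theta(1/(\epsn \TV(P_{\theta_0},P_{\theta_0+\alpha})))$, so the task reduces to establishing the two-sided local estimate $\TV(P_{\theta_0},P_{\theta_0+\alpha}) = \Theta_\mu(|\alpha|\sqrt{A''(\theta_0)})$ for $|\alpha|$ in the relevant range. The upper bound follows from the first-order expansion $\partial_\theta p_\theta = (x - A'(\theta))p_\theta$, which yields
\[ \TV(P_{\theta_0}, P_{\theta_0+\alpha}) \le \frac{|\alpha|}{2}\,\mathbb{E}_{\theta_0}|X - A'(\theta_0)| + O(\alpha^2) \le \frac{|\alpha|}{2}\sqrt{A''(\theta_0)} + O(\alpha^2) \]
using Jensen's inequality. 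The matching lower bound requires a Paley--Zygmund-type argument combined with a uniform bound $\mathbb{E}_{\theta_0}(X - A'(\theta_0))^4 = O_\mu(A''(\theta_0)^2)$ for $\theta_0$ in the interior of $\thetarange_\mu$, which together imply $\mathbb{E}_{\theta_0}|X - A'(\theta_0)| = \Theta_\mu(\sqrt{A''(\theta_0)})$. Setting $\SC_{\epsn}(P_{\theta_0}, P_{\theta_0+\alpha}) \asymp n$ and solving for $|\alpha|$ gives $\omega_n = \Theta_\mu(1/(n\epsn \sqrt{A''(\theta_0)}))$, matching the claim in this regime.

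In the low-privacy regime, the remark in the paper immediately following Proposition~\ref{robustthresholds} (together with Corollary~\ref{nonprivLER}) shows that the $\epsn$-DP sample complexity coincides, up to constants, with the non-private sample complexity: $\SC_{\epsn}(P_{\theta_0},P_{\theta_0+\alpha}) = \Theta(1/\Hell^2(P_{\theta_0},P_{\theta_0+\alpha}))$. The classical Hellinger expansion in a regular one-parameter exponential family gives $\Hell^2(P_{\theta_0},P_{\theta_0+\alpha}) = \Theta(\alpha^2 A''(\theta_0))$ for small $|\alpha|$, because the Fisher information at $\theta_0$ equals $A''(\theta_0)$. Solving $n \asymp 1/(\alpha^2 A''(\theta_0))$ for $|\alpha|$ gives $\omega_n = \Theta_\mu(1/\sqrt{n A''(\theta_0)})$, matching the theorem in this regime. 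Assembling the two regimes yields the claimed $\Theta_\mu(1/(\sqrt{A''(\theta_0)}\min\{n\epsn,\sqrt{n}\}))$.

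The principal technical obstacle is the lower bound $\TV(P_{\theta_0},P_{\theta_0+\alpha}) \gtrsim_\mu |\alpha|\sqrt{A''(\theta_0)}$: without some regularity the first absolute central moment of the sufficient statistic can be arbitrarily small relative to its standard deviation $\sqrt{A''(\theta_0)}$. This is presumably the role of the $\delta$-quantifier in the theorem, restricting $\theta_0$ to a $\delta$-interior of $\thetarange_\mu$ so that all exponential moments of $X$ under $P_{\theta_0}$ are controlled uniformly, yielding the needed fourth-moment bound. A secondary concern is verifying that the local $\TV$ and $\Hell$ expansions are valid throughout the range of $|\alpha|$ that matters for the modulus --- i.e., that the solved-for $|\alpha|$ is $o(1/\sqrt{A''(\theta_0)})$ for large $n$, which is immediate in both regimes --- and that $\SC_{\epsn}(P_{\theta_0},P_{\theta_0+\alpha})$ is monotone in $|\alpha|$ so that the supremum in the definition of $\omega_n$ is attained at the boundary of indistinguishability.
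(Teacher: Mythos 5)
Your high-privacy analysis is essentially parallel to the paper's. The paper bounds $\TV(P_{\theta_0},P_{\theta_0+\alpha})\le |\alpha|\sqrt{\max A''}$ via a KL/Pinsker computation in the proof of Corollary~\ref{monotonecontinuous}, whereas you use the first-order expansion $\partial_\theta p_\theta=(x-A'(\theta))p_\theta$ plus Jensen; both give the same upper bound, and your variant is fine (and in fact holds exactly, with no $O(\alpha^2)$ remainder, after integrating the derivative from $\theta_0$ to $\theta_0+\alpha$). The matching lower bound on $\TV$ is obtained by Paley--Zygmund using a bounded kurtosis, which is exactly what Lemma~\ref{boundonmodulus} does. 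Plugging into Lemma~\ref{smalleps} and inverting recovers Corollary~\ref{smallepslowerexp}. So far so good.

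The gap is in the low-privacy regime. You assert that for $\epsn=\Omega(1/\sqrt{n})$ the DP sample complexity coincides with the non-private one, citing the remark after Proposition~\ref{robustthresholds}. But that remark only says privacy is ``free'' once $\eps$ is large enough, where ``large enough'' is a problem-dependent threshold controlled by the quantity $\tau(P,Q)$ in the Canonne et al.\ decomposition; there is no general principle that $\eps\ge k/\sqrt{n}$ guarantees $\SC_\eps(P,Q)=\Theta(\SC(P,Q))$ for an arbitrary pair $(P,Q)$. What the paper actually proves (Lemma~\ref{lownopriv} and Proposition~\ref{lowprivmodcontinuity}) is that for the \emph{specific} pairs $(P_\theta,P_{\theta+h})$ with $h\asymp 1/\sqrt{nA''(\theta)}$ that determine the modulus at sample size $n$, the per-sample clamped log-likelihood ratio is with constant probability already within $[-\epsilon,\epsilon]$ when $\epsilon\ge k/\sqrt{n}$, so the clamped test behaves like the unclamped one and the clamped LLR test with Laplace noise matches the non-private rate. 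This linearization-of-LLR argument is the substantive content of the low-privacy half of the theorem and is what you would need to reproduce; asserting ``$\SC_{\epsn}=\Theta(1/H^2)$'' is assuming the conclusion. Two smaller issues: the $\delta$ in the theorem is the privacy parameter in $(\epsilon,\delta)$-DP, not an interior-restriction parameter (the implicit regularity is the kurtosis bound $\zeta$ and the radius $\kappa(\theta)$ entering the $\Theta_\mu$ constants, as spelled out in Corollaries~\ref{smallepslowerexp} and \ref{largeepslowerexp}); and the paper's ``$\Hell$'' is actually defined as an $L^2$ distance rather than the standard Hellinger metric, which doesn't affect the argument here but should make you pause before invoking the classical Fisher-information expansion verbatim.
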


This convergence result is uniform in a fairly strong sense. Given a family defined by a measure $\mu$, there exists constants $C_1,C_2$ such that for all sequences $\epsn\in[0,1]$, sufficiently large $n$, and $\theta_0\in\thetarange_{\mu}$,  $$\omega_n(P_{\theta_0}, \Qtesteps) \in \left[\frac{C_1}{\sqrt{A''(\theta_0)}\min\{n\epsn, \sqrt{n}\}}, \frac{C_2}{\sqrt{A''(\theta_0)}\min\{n\epsn, \sqrt{n}\}}\right].$$
The formal statements of this theorem are slightly stronger than Theorem~\ref{thm:informalrate} in that we show that the constants $C_1$ and $C_2$ depend only on a few properties of $\mu$. We will discuss these properties later in this section. The non-private local estimation rate for exponential families is $\omega_n(P_{\theta_0}, \Qtest)~=~\Theta_{\mu}\left( \frac{1}{\sqrt{A''(\theta_0)n}} \right)$, so we can see that in the low privacy regime, privacy comes for free. In the high privacy regime, this characterisation matches the L1-information at $P$ as expected from Theorem~\ref{smallepslower}.

Under some mild conditions, this optimal local estimation rate is actually uniformly achievable. That is, there exists an algorithm that achieves the optimal local estimation rate. The following is an informal statement of Proposition~\ref{prop:highprivupper} and Proposition~\ref{mainexp}, which contain the formal uniform achievability statements in the high and low privacy regimes separately.

\begin{theorem}[Uniform Achievability------Simplified from Propositions~\ref{prop:highprivupper} and~\ref{mainexp}]\label{informalmainexp} For all exponential families (i.e., all measures $\mu$), there is an algorithm $\mathcal{A}_\mu$ such that for all $\theta_0\in\thetarange$, and for all sequences $\epsn\in[0,1]$, $\delta\in[0,1]$ and $n\in\mathbb{N}$, $\mathcal{A}_{\mu}(\epsilon, \delta, \cdot)$ is $(\epsilon, \delta)$-DP and
  \[\ierror{\mathcal{A}_{\mu}}{n}{\theta_0}= O_\mu( \omega_n(P_{\theta_0}, \Qtesteps) )\, .\]

The $O_\mu$ notation hides constants depending on $\mu$ (but not $\theta_0$). 
\end{theorem}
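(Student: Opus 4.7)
The plan is to construct $\mathcal{A}_\mu$ as a two-stage estimator that instantiates the informal strategy outlined earlier in the paper, with the regime split $\epsn = O(1/\sqrt n)$ vs.\ $\epsn = \Omega(1/\sqrt n)$ determining the form of the second-stage test statistic. The first stage is a coarse private estimator adapted from the \citet{Karwa:2018} quantile/histogram machinery applied to the sufficient statistic $x$: it produces $\hat\theta_0$ satisfying $|\hat\theta_0 - \theta|\le \alpha_0$ with probability at least $0.95$ (say), where $\alpha_0$ is much larger than the target rate $\omega_n(P_{\theta_0}, \Qtesteps)$ but small enough that the cumulants $A''(\theta), A'''(\theta)$ vary by at most a constant factor over the $\alpha_0$-neighborhood of $\hat\theta_0$. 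By basic composition we allocate $(\epsilon/2, \delta)$ to this stage and $(\epsilon/2, 0)$ to the second stage, giving overall $(\epsilon,\delta)$-DP.

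The second stage computes a noisy test statistic $\hat f = f_{n,\epsn,\hat\theta_0}(X)$ tailored to distinguish $P_{\hat\theta_0}$ from $P_{\hat\theta_0+\alpha}$, where $\alpha = \Theta\!\left(\tfrac{1}{\sqrt{A''(\hat\theta_0)}\min\{n\epsn,\sqrt n\}}\right)$ matches the target rate. In the high-privacy regime I use a Laplace-noised Scheff\'e count; since for a one-parameter exponential family the Scheff\'e set between $P_{\hat\theta_0}$ and $P_{\hat\theta_0+\alpha}$ is a half-line $\{x: x > c_{\hat\theta_0}\}$, the statistic has the form $\frac{1}{n}\sum_i \mathds{1}[x_i > c_{\hat\theta_0}] + \Lap(1/(\epsn n))$, mirroring the construction behind Lemma~\ref{smalleps}. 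In the low-privacy regime I use $\NCLLR_{-b}^{b}$ from \eqref{cllr} instantiated for $P_{\hat\theta_0}$ vs.\ $P_{\hat\theta_0+\alpha}$ with clamping thresholds $b = \Theta(\epsn)$; the exponential-family identity $\ln\bigl(p_{\theta_1}/p_{\theta_2}\bigr)(x) = (\theta_1-\theta_2)x - (A(\theta_1) - A(\theta_2))$ makes the LLR linear in $x$, so the clamped version is an explicit bounded function of $x$ that is Lipschitz in $\theta_1-\theta_2$.

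The output $\hat\theta$ is then the (essentially unique) solution of $g(\hat\theta) = \hat f$, where $g(\theta) := \mathbb{E}_{X\sim P_\theta^n}\!\left[f_{n,\epsn,\hat\theta_0}(X)\right]$; this is pure post-processing so the privacy guarantee is preserved. The error analysis has two ingredients: (i) $\hat f$ concentrates around $g(\theta)$ with deviation $O(\sigma)$, where $\sigma^2$ is the variance of $f_{n,\epsn,\hat\theta_0}(X)$ under $P_\theta^n$ plus the Laplace contribution $O(1/(\epsn n)^2)$; and (ii) $g$ is monotone on the relevant neighborhood with $|g'(\theta)| \gtrsim \sigma / \alpha$. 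Chebyshev plus monotone inversion then yields $|\hat\theta - \theta| = O(\alpha) = O(\omega_n(P_{\theta_0}, \Qtesteps))$, matching the lower bound from Theorem~\ref{thm:informalrate}.

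The main obstacle is step (ii): showing that a statistic designed around the single pair $(\hat\theta_0, \hat\theta_0+\alpha)$ still has signal-to-noise ratio $\Omega(1)$ for every pair $(\theta, \theta+\alpha)$ with $\theta$ inside the $\alpha_0$-neighborhood of $\hat\theta_0$. Concretely I need a uniform lower bound on $|g'(\theta)|$ and a matching upper bound on $\sigma(\theta)$. I exploit analyticity of $A$ on the interior of $\thetarange_{\mu}$ and the choice of $\alpha_0$ to Taylor-expand: in the high-privacy regime this is direct since $\tfrac{d}{d\theta}\Pr_{X\sim P_\theta}[x > c_{\hat\theta_0}]$ equals a tail integral times $(x-A'(\theta))$; in the low-privacy regime it additionally requires that the Canonne et al.\ clamped LLR test remain sample-optimal when the clamping thresholds are chosen from $\hat\theta_0$ rather than the true $\theta$, which is exactly what Proposition~\ref{robustthresholds} provides (and is the reason that robustness result was proved). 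A secondary technical point is the failure event of the initial estimator: because this occurs with small probability and the estimator is truncated to $\thetarange_\mu$, its contribution to the expected error is absorbed into the $O_\mu(\cdot)$ constant.
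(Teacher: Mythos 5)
Your proposal matches the paper's approach essentially step-for-step: a coarse \citet{Karwa:2018}-style first stage with budget splitting, followed by a Scheff\'e-count statistic (Algorithm $\secondstephigh$) in the high-privacy regime and a clamped-LLR statistic (Algorithm $\secondsteplow$, which is exactly your $\NCLLR$ linearized as $[x_i - \hat t]_{-\epsilon/C\tilde\alpha}^{\epsilon/C\tilde\alpha}$) in the low-privacy regime, with the final estimate obtained by inverting the expectation of the released statistic. You also correctly identify the key technical burden---uniformity of the test's signal-to-noise ratio over a neighborhood of $\hat\theta_0$, enabled by Proposition~\ref{robustthresholds}---so this is the same route the paper takes; the only cosmetic mismatch is that the paper's error metric is a $0.75$-probability bound rather than expected error, so the initial-estimator failure is handled by a union bound on failure probabilities rather than by absorbing it into an expectation.
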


In the low privacy regime, the optimal test is based on the clamped log-likelihood ratio test from \cite{Canonne:2019}. 
Much of the work of both Theorem~\ref{thm:informalrate} and Theorem~\ref{informalmainexp} goes into finding the right conditions for uniform convergence. There are several key quantities that determine the optimal local estimation rate, and when it is uniformly achievable.

\begin{itemize}
    \item Define the radius of smoothness of $A''$ around $\theta$ as \[\kappa(\theta)= \max\left\{r\;\Big|\; \forall \theta'\in[\theta-r, \theta+r], \frac{A''(\theta)}{A''(\theta')}\in\left[\frac{1}{2}, 2\right]\right\}.\] By the continuity of $A''$, $\kappa(\theta)>0$ for all $\theta$. Recall that $A''(\theta)$ is the variance of $P_{\theta}$ so $\kappa(\theta)$ is related to the smoothness of the variance. Our theorems will be strongest for families where $\kappa(\theta)$ is large for most $\theta$ of interest. 
    Given the characterisation of $\omega_n(P_{\theta_0}, \Qtesteps)$ in Theorem~\ref{thm:informalrate}, if $\kappa(\theta)$ is large, then this means that the local rate varies slowly. 
     The parameter $\kappa(\theta)$ affects the achievability in two main ways.
    \begin{itemize}
    
        \item The form we give for the local estimation rate holds for sample sizes $n$ above some threshold that depends on $\kappa(\theta)$. Specifically, one requirement is that  $\LinfoP(\kappa(\theta)) \geq \frac{1}{\epsilon n}$. 
        That is, $n$ must be large enough that if $\theta$ satisfies $TV(P_\theta, P_{\theta'}) \leq 1/\epsilon n$, then $\theta'\in\thetarange_{\mu}(\theta)$, where $\thetarange_{\mu}(\theta)= [\theta - \kappa(\theta), \theta + \kappa(\theta)]
        $. 
        This condition ensures that with high probability our private estimate lies within $\kappa(\theta)$ of $\theta$.
    
        \item In order for our procedure to succeed (that is, produce an accurate estimate) with probability at least $1-\beta$, we require that there exists a constant $C>0$ such that  $\kappa(\theta)~\geq~\frac 1 C \cdot \frac {\sqrt{\log(2/\beta)}}{ \sqrt{A''(\theta)}}$ for all $\theta$.
        Under this condition, the distributions $P_{\theta}$ are sub-Gaussian, that is 
        \[\mathbb{P}_{x\sim P_{\theta}}\left(|x-A'(\theta)|\ge (2+C)\sqrt{A''(\theta)}\sqrt{\ln(2/\beta)}\right)\le\beta.\] 
        This light-tailed property ensures that with high probability a dataset sampled from $P_{\theta}$ lies mostly in an interval of width $O(\sqrt{A''(\theta)}\sqrt{\ln(2/\beta)})$. 
        This allows us to limit the amount of noise added for privacy to also scale with the standard deviation $\sqrt{A''(\theta_0)}$. Without a light tailed assumption, additional noise needs to be added to maintain privacy, resulting in a worse estimation rate. We see this effect in estimating the parameter of a Bernoulli distribution, where the scale of the noise needed to maintain privacy scales with $\frac{1}{\epsilon n}$, rather than $\frac{\sqrt{p(1-p)}}{\epsilon n}$ which would be predicted by Theorem~\ref{thm:informalrate}. The family of Bernoulli distributions fails to satisfy this assumption unless we constrain $\min(p,1-p)$ to be at least a constant.
    \end{itemize}
    \item We will also require that the central standardised fourth moment is bounded. That is, there exists a constant $\thirdmoment$ such that \[\frac{\mathbb{E}_{\theta}\Big(x-A'(\theta)\Big)^4}{A''(\theta)^2} = \frac{\mathbb{E}_{\theta}\Big(x - \mathbb{E}_{\theta} x\Big)^4}{\var(P_{\theta})^2} \le\thirdmoment.\] 
    The central standardised fourth moment is also known as the \emph{kurtosis}, this assumption allows us to give a \emph{lower} bound on the tails of $P_{\theta}$. That is, there exists a constant $c$ such that $\Pr_{P_{\theta}}[X\ge A'(\theta)+\frac{1}{2}\sqrt{A''(\theta)}]\ge c$. This assumption is required for our algorithm to properly estimate the standard deviation $\sqrt{A''(\theta)}$, which plays a crucial role in our estimator. It is possible that this assumption can be weakened with an improved private variance estimator. 
\end{itemize}

\subsection{Examples of Exponential Families}

Before we move onto the proofs of Theorems~\ref{thm:informalrate} and~\ref{informalmainexp}, let us consider a few examples of simple exponential families and the implications of these theorems.

\begin{example}[Gaussian mean with known variance] Note we can write \[\frac{1}{\sigma\sqrt{2\pi}}e^{-\frac{1}{2}\left(\frac{x-\mu}{\sigma}\right)^2} = e^{x\frac{\mu}{\sigma}-\frac{1}{2}\left(\frac{\mu}{\sigma}\right)^2} e^{-\frac{1}{2}\frac{x^2}{\sigma^2}}.\] Thus, if $\sigma$ is known we can define an exponential family by $\theta=\mu/\sigma$, $A(\theta)=\frac{1}{2}\theta^2$ and $d\mu(x) = e^{-\frac{1}{2}\frac{x^2}{\sigma^2} }dx$. Notice that $A''(\theta)=1$ so $\kappa(\theta)=\infty$ for all $\theta$. Further, the central standardised fourth moment is 3. This is the ideal behavior for the conditions needed for Theorem~\ref{thm:informalrate} and Theorem~\ref{informalmainexp} to hold.
Therefore, the local minimax optimal rate for privately estimating $\theta$ is $\left[\frac{C_1}{\min\{n\epsn, \sqrt{n}\}}, \frac{C_2}{\min\{n\epsn, \sqrt{n}\}}\right]$, which implies that local minimax optimal rate for privately estimating the mean $\mu$ is \[\left[\frac{C_1\sigma}{\min\{n\epsn, \sqrt{n}\}}, \frac{C_2\sigma}{\min\{n\epsn, \sqrt{n}\}}\right].\] This recovers a result of \cite{Karwa:2018}.
\end{example}

\begin{example}[Poisson family] Recall that the Poission distribution, characterized by paramater $\lambda>0$, assigns mass to nonnegative integers according to \[\frac{\lambda^xe^{-\lambda}}{x!} = e^{x \ln\lambda-\lambda}\frac{1}{x!}.\] We can define an exponential family by taking  $\theta=\ln\lambda$, $A(\theta)=\lambda=e^{\theta}$ and the base measure $\mu$ that assigns mass $\frac 1 {x!}$ to all nonnegative integers $x$. 
Thus, $A''(\theta) = e^{\theta}=\lambda$, which implies that $\kappa(\theta)=\ln 2$ for all $\theta$. Further, the central fourth moment is $3\lambda^2+\lambda$; normalized by the square of the variance, we get $3+\frac 1 \lambda$.
Suppose there exists a constant $c>0$ such that it is guaranteed that $\lambda>c$. Once $n$ is sufficiently large, the local minimax optimal rate for privately estimating $\theta$ is $\left[\frac{C_1}{\sqrt{\lambda}\min\{n\epsn, \sqrt{n}\}}, \frac{C_2}{\sqrt{\lambda}\min\{n\epsn, \sqrt{n}\}}\right]$. Using the first-order Taylor approximation for  $\lambda=e^{\theta}$, we see that the local minimax optimal rate for privately estimating $\lambda$ is in \[\left[\frac{C_1\sqrt{\lambda}}{\min\{n\epsn, \sqrt{n}\}}, \frac{C_2\sqrt{\lambda}}{\min\{n\epsn, \sqrt{n}\}}\right]\]
for constants $C_1$ and $C_2$ depending on $c$.
\end{example}

\subsection{Basic Facts about Exponential Families}

Let us begin by reviewing some basic properties of exponential families. A family $\{p_{\theta}(x)\}$ has monotone likelihood ratio if for all $ \theta<\theta'$, $\frac{p_{\theta'}(x)}{p_{\theta}(x)}$ is a non-decreasing function of $x$. Exponential families have monotone likelihood ratio.

\begin{lemma}[Lehmann \& Romano, Lemma 3.4.2]\label{monotonelikelihood}
Let $\{p_{\theta}(x)\}$ be a family with monotone likelihood ratio, then
\begin{itemize}
\item If $g$ is a nondecreasing function of $x$, then $\mathbb{E}_{\theta}g(x)$ is a nondecreasing function of $\theta$.
\item For any $\theta<\theta'$, and any $t$, $\mathbb{P}_{\theta}(x>t)<\mathbb{P}_{\theta'}(x>t).$
\end{itemize}
\end{lemma}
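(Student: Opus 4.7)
The plan is to derive the second claim from the first, and prove the first using the standard single-crossing argument for monotone likelihood ratio families. The key observation is that under the MLR hypothesis, if $\theta < \theta'$, then the ratio $r(x) = p_{\theta'}(x)/p_{\theta}(x)$ is nondecreasing in $x$, so the set $A = \{x : p_{\theta'}(x) \geq p_{\theta}(x)\} = \{x : r(x) \geq 1\}$ is an upper set, of the form $[x_0, \infty)$ for some threshold $x_0$ (up to a measure-zero boundary). On $A$ the density $p_{\theta'}$ dominates $p_\theta$, and on $A^c$ it is dominated; this ``single crossing'' is the entire engine of the proof.

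For the first claim, I would use the fact that $\int (p_{\theta'}(x) - p_{\theta}(x))\,d\mu(x) = 0$ to subtract the constant $g(x_0)$ and write
\[
\mathbb{E}_{\theta'}\,g(x) - \mathbb{E}_{\theta}\,g(x) = \int \bigl(g(x) - g(x_0)\bigr)\bigl(p_{\theta'}(x) - p_{\theta}(x)\bigr)\,d\mu(x).
\]
The integrand is pointwise nonnegative: on $A = [x_0, \infty)$ both factors are $\geq 0$ (since $g$ is nondecreasing, $g(x) \geq g(x_0)$ there, and $p_{\theta'} \geq p_\theta$ by definition of $A$); on $A^c$ both factors are $\leq 0$. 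Hence the integral is nonnegative, giving the monotonicity.

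For the second claim, I would simply apply the first claim with $g(x) = \mathds{1}[x > t]$, which is nondecreasing, yielding $\mathbb{P}_{\theta}(x > t) \leq \mathbb{P}_{\theta'}(x > t)$.

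The main obstacle is the strict inequality asserted in the second claim, which clearly fails when $t$ lies outside the common support (both sides are $0$ or both are $1$) and requires nondegeneracy of the family. The strictness is recovered whenever the integrand above is positive on a set of positive $\mu$-measure, i.e.\ whenever both $\{x > t\} \cap (x_0, \infty)$ and $\{x > t\} \cap (-\infty, x_0)$ carry distinct mass under $p_{\theta'}$ and $p_\theta$. For one-parameter exponential families, which is the sole setting in which this lemma is invoked later in the paper, the ratio $p_{\theta'}/p_{\theta}(x) = \exp\bigl((\theta'-\theta)x - (A(\theta') - A(\theta))\bigr)$ is \emph{strictly} increasing in $x$ whenever $\theta < \theta'$, so $x_0$ is uniquely determined and the strict inequality holds automatically for every $t$ in the interior of the common support. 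I would state the lemma with this implicit caveat (reading the second inequality as ``$\leq$, strict when $t$ lies in the interior of the support'') and note the specialization to exponential families, where both conclusions hold in the form stated.
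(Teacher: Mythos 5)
The paper does not prove this lemma; it cites it as Lemma 3.4.2 of Lehmann and Romano's \emph{Testing Statistical Hypotheses}, so there is no in-paper proof to compare against. Your argument is the standard textbook proof (the single-crossing argument), and it is correct. Centering $g$ at the crossing point $x_0$ so that both factors $g(x)-g(x_0)$ and $p_{\theta'}(x)-p_\theta(x)$ change sign at the same place, and then observing the product is pointwise nonnegative, is exactly the mechanism Lehmann and Romano use. Specializing to $g = \mathds{1}[x>t]$ immediately yields the second claim.

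You are also right to flag the strict inequality: what the single-crossing argument delivers, and what Lehmann and Romano actually state, is the weak inequality $\mathbb{P}_\theta(x>t)\le\mathbb{P}_{\theta'}(x>t)$; the strict version written in the paper fails whenever $t$ is outside the common support, and more generally whenever the likelihood ratio is constant on the region where the two densities differ. As you observe, the strict version does hold for one-parameter exponential families at every $t$ interior to the support, because the ratio $\exp((\theta'-\theta)x - (A(\theta')-A(\theta)))$ is strictly increasing, so the conclusion is available in every place this lemma is invoked in the paper (Corollary~\ref{monotonecontinuous} and Lemma~\ref{mainlemmaexp}). Your proposal to read the displayed inequality as ``$\le$, strict for exponential families on the interior of the support'' is the right fix.
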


\begin{restatable}{corollary}{rmonotonecontinuous}
\label{monotonecontinuous} Assume that $A''$ is continuous. Then 
for all $\theta$, the function $h\mapsto \TV(P_{\theta}, P_{\theta+h})$ is continuous and monotonically increasing on $h\ge 0$.
\end{restatable}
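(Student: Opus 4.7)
The plan is to handle monotonicity and continuity separately, both leveraging the monotone likelihood ratio (MLR) structure of exponential families encoded in Lemma~\ref{monotonelikelihood}, combined with the Scheff\'e identity $\TV(P,Q) = P(E^*) - Q(E^*)$ for $E^* = \{x : p(x) > q(x)\}$.

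For monotonicity, I would observe that $p_{\theta+h}(x)/p_\theta(x) = e^{hx - (A(\theta+h) - A(\theta))}$ is strictly increasing in $x$ when $h > 0$, so the Scheff\'e-optimal event $E_h = \{x : p_{\theta+h}(x) > p_\theta(x)\}$ is a half-line $(t_h, \infty)$ and $\TV(P_\theta, P_{\theta+h}) = P_{\theta+h}(E_h) - P_\theta(E_h)$. For $0 \leq h < h'$, using the (possibly suboptimal) test set $E_h$ for the second distance gives $\TV(P_\theta, P_{\theta+h'}) \geq P_{\theta+h'}(E_h) - P_\theta(E_h)$. Applying the second bullet of Lemma~\ref{monotonelikelihood} at parameters $\theta + h < \theta + h'$ yields $P_{\theta+h'}(E_h) > P_{\theta+h}(E_h)$, which chains to $\TV(P_\theta, P_{\theta+h'}) > \TV(P_\theta, P_{\theta+h})$. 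The base case $h = 0$ is immediate, since $\TV(P_\theta, P_\theta) = 0$ while $\TV(P_\theta, P_{\theta+h'}) > 0$ for any $h' > 0$ by identifiability of the exponential family.

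For continuity, the triangle inequality for TV gives $|\TV(P_\theta, P_{\theta+h_1}) - \TV(P_\theta, P_{\theta+h_2})| \leq \TV(P_{\theta+h_1}, P_{\theta+h_2})$, which reduces the problem to showing that $\eta \mapsto P_\eta$ is TV-continuous. Continuity of $A''$ implies smoothness of $A$ on $\thetarange_\mu$, so $p_{\eta'}(x) = e^{\eta' x - A(\eta')} \to p_\eta(x)$ pointwise in $x$ as $\eta' \to \eta$. Since each density integrates to one against $\mu$, Scheff\'e's theorem yields $\int |p_\eta - p_{\eta'}| \, d\mu \to 0$, i.e., $\TV(P_\eta, P_{\eta'}) \to 0$. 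The main subtlety, and the one place care is needed, is the strict MLR inequality invoked for monotonicity: one must verify that $E_h$ carries different mass under $P_{\theta+h}$ and $P_{\theta+h'}$ rather than merely non-strictly more, which follows from the fact that distinct natural parameters give distinct distributions in an exponential family. Everything else is a routine application of Scheff\'e.
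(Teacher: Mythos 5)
Your proof is correct, and the monotonicity half is essentially the paper's argument: both express $\TV(P_\theta, P_{\theta+h})$ via the Scheff\'e set, observe that it is a half-line by monotone likelihood ratio, and then apply the second bullet of Lemma~\ref{monotonelikelihood} to show that the same half-line carries at least as much (indeed strictly more) mass under $P_{\theta+h'}$ than under $P_{\theta+h}$ for $h'>h$.

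For continuity, however, you take a genuinely different route. The paper computes $\KL(P_\theta\|P_{\theta+h})=A(\theta+h)-A(\theta)-hA'(\theta)\le h^2\max_{\theta'\in[\theta,\theta+h]}A''(\theta')$ and applies Pinsker's inequality to obtain the quantitative modulus $\TV(P_\theta,P_{\theta+h})\le |h|\sqrt{\max_{\theta'\in[\theta,\theta+h]}A''(\theta')}$ (Equation~\eqref{upperboundonTV}), from which continuity of the map $h\mapsto\TV(P_\theta,P_{\theta+h})$ follows via the reverse triangle inequality plus continuity of $A''$. You instead note $p_{\eta'}(x)=e^{\eta'x-A(\eta')}\to p_\eta(x)$ pointwise and invoke Scheff\'e's theorem to get $L^1$ (hence $\TV$) convergence. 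Your argument is more elementary and requires only continuity of $A$, not of $A''$, so it is lighter on hypotheses. The trade-off is that Scheff\'e gives no rate, whereas the paper's Pinsker route produces the explicit Lipschitz-type bound $\TV(P_\theta,P_{\theta+h})\le|h|\sqrt{\max A''}$, which is not merely a proof device here but is reused downstream (for example in the proofs of Lemma~\ref{boundonmodulus} and Lemma~\ref{lownopriv}). So both are valid; the paper front-loads the quantitative estimate it will need later, while your version establishes the qualitative statement of the corollary with minimal machinery.
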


\begin{lemma}\label{belongsinkappa} For any $\theta, \theta'$,
if $|A'(\theta')-A'(\theta)|\le \frac{1}{2}A''(\theta)\kappa(\theta)$ then $\theta'\in\thetarange(\theta).$
\end{lemma}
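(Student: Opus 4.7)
The plan is to prove the contrapositive: assume $\theta' \notin \thetarange(\theta) = [\theta - \kappa(\theta), \theta + \kappa(\theta)]$ and deduce that $|A'(\theta') - A'(\theta)| > \tfrac{1}{2}A''(\theta)\kappa(\theta)$. By the symmetry of the claim under reflection around $\theta$, I would handle only the case $\theta' > \theta + \kappa(\theta)$, the other case being identical.

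The key identity I would use is the fundamental theorem of calculus, since $A''$ is the derivative of $A'$:
\[
A'(\theta') - A'(\theta) \;=\; \int_\theta^{\theta'} A''(\xi)\, d\xi.
\]
Because $A''(\xi) = \var_{\xi}(x) \geq 0$ for every $\xi$ in the domain, this integral is at least $\int_\theta^{\theta+\kappa(\theta)} A''(\xi)\, d\xi$. Now I invoke the defining property of $\kappa(\theta)$: for every $\xi \in [\theta - \kappa(\theta), \theta + \kappa(\theta)]$ the ratio $A''(\theta)/A''(\xi)$ lies in $[1/2,2]$, so in particular $A''(\xi) \geq A''(\theta)/2$ on the interval $[\theta,\theta+\kappa(\theta)]$. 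Substituting this pointwise bound inside the integral yields $\int_\theta^{\theta+\kappa(\theta)} A''(\xi)\, d\xi \geq \tfrac{1}{2} A''(\theta)\kappa(\theta)$.

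To get the strict inequality needed to contradict the hypothesis, I would exploit that $\theta' > \theta+\kappa(\theta)$ strictly: since $A''$ is positive on a nondegenerate exponential family (the distribution $P_{\theta''}$ is not a point mass for any $\theta'' \in \thetarange_\mu$), the additional piece $\int_{\theta+\kappa(\theta)}^{\theta'} A''(\xi)\,d\xi$ is strictly positive. Hence $A'(\theta')-A'(\theta) > \tfrac{1}{2}A''(\theta)\kappa(\theta)$, contradicting the assumed bound and proving the contrapositive. The only substantive step is the pointwise lower bound $A''(\xi)\geq A''(\theta)/2$, which is immediate from the definition of $\kappa(\theta)$; the rest is routine calculus, so I would not expect any real obstacle.
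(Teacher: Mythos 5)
Your proof is correct and follows the same basic contrapositive strategy as the paper's, but it is actually the more careful of the two. The paper's one-line argument reads, for $|\theta-\theta'|\ge\kappa(\theta)$,
\[
|A'(\theta)-A'(\theta')|\ge \min_{\theta''\in[\theta,\theta']}A''(\theta'')\,|\theta-\theta'|\ge \tfrac{1}{2}A''(\theta)\kappa(\theta),
\]
but the second step is not valid as literally written when $\theta'$ lies strictly outside $\thetarange(\theta)$: the minimum of $A''$ over $[\theta,\theta']$ is taken over points outside $[\theta-\kappa(\theta),\theta+\kappa(\theta)]$, where the defining bound $A''(\theta'')\ge \tfrac12 A''(\theta)$ no longer applies, and the larger factor $|\theta-\theta'|$ need not compensate. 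The intended fix is exactly what you do explicitly: truncate at the boundary $\theta+\kappa(\theta)$, bound the inner piece $\int_\theta^{\theta+\kappa(\theta)} A''$ pointwise from below by $\tfrac12 A''(\theta)\kappa(\theta)$, and discard the outer piece $\int_{\theta+\kappa(\theta)}^{\theta'} A''$ using only $A''\ge 0$ (equivalently, the monotonicity of $A'$). Your additional remark about strict positivity of $A''$ to obtain a strict inequality correctly matches the closed interval $\thetarange(\theta)=[\theta-\kappa(\theta),\theta+\kappa(\theta)]$ in the statement. So: same approach, but your write-up closes a small gap that the paper's terse proof leaves to the reader.
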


\begin{proof}
If $|\theta-\theta'|\ge\kappa(\theta)$ then $|A'(\theta)-A'(\theta')|\ge \min_{\theta''\in[\theta,\theta']}A''(\theta'')|\theta-\theta'|\ge \frac{1}{2}A''(\theta)\kappa(\theta).$ 
\end{proof}

The following concentration inequality is proved in Section~\ref{aconcentrationexp}.

\begin{restatable}{lemma}{rconcentrationexp}{\em [Concentration Inequality for Exponential Families]}
\label{concentrationexp} For all measures $\mu$, $\theta\in\thetarange$, and $\beta\in[0,1]$, 

{\color{black} 
\[\mathbb{P}_{x\sim P_{\theta}}\left(|x-A'(\theta)|\ge 2\sqrt{A''(\theta)}\sqrt{\ln(2/\beta)} + \frac{\ln(2/\beta)}{\kappa(\theta)}\right)\le\beta.\]

In particular, if $\kappa(\theta)\geq \frac 1 C \cdot \frac {\sqrt{\log(2/\beta)}}{ \sqrt{A''(\theta)}}$,
\[\mathbb{P}_{x\sim P_{\theta}}\left(|x-A'(\theta)|\ge (2+C)\sqrt{A''(\theta)}\sqrt{\ln(2/\beta)}\right)\le\beta.\]

}

\end{restatable}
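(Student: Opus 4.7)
The strategy is a classical Chernoff--Bernstein argument, exploiting the fact that for an exponential family the moment generating function of $x$ under $P_\theta$ has the closed form
\[ \mathbb{E}_{x\sim P_\theta}\bigl[e^{tx}\bigr] = e^{A(\theta+t) - A(\theta)}, \]
valid whenever $\theta + t \in \thetarange$. Setting $Y := x - A'(\theta)$, the centered MGF is therefore $\mathbb{E}[e^{tY}] = \exp\bigl(A(\theta+t) - A(\theta) - tA'(\theta)\bigr)$. The plan is to bound this MGF on the window $|t|\le \kappa(\theta)$, and then optimize $t$ in the Chernoff bound to recover the claimed tail.

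For the MGF bound, define $g(t) := A(\theta+t) - A(\theta) - tA'(\theta)$. Since $g(0) = g'(0) = 0$ and $g''(s) = A''(\theta + s)$, Taylor's theorem with integral remainder gives $g(t) = \int_0^t (t - s)\, A''(\theta + s)\, ds$. For $|t|\le\kappa(\theta)$, the definition of $\kappa(\theta)$ guarantees $A''(\theta+s) \le 2A''(\theta)$ throughout the integration range, so $g(t) \le A''(\theta)\,t^2$. Hence $\mathbb{E}[e^{tY}] \le e^{t^2 A''(\theta)}$ for every $|t|\le\kappa(\theta)$, a sub-exponential-type bound.

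For the tail, abbreviate $L := \ln(2/\beta)$, $\sigma^2 := A''(\theta)$, $\kappa := \kappa(\theta)$, set $u := 2\sigma\sqrt L + L/\kappa$, and choose $t_0 := \min\{u/(2\sigma^2),\,\kappa\}\in (0,\kappa]$. Chernoff combined with the MGF bound yields $\mathbb{P}(Y\ge u)\le \exp(-t_0 u + t_0^2\sigma^2)$, and it suffices to show $t_0 u - t_0^2\sigma^2 \ge L$. If $u \le 2\kappa\sigma^2$ then $t_0 = u/(2\sigma^2)$ and the exponent is $u^2/(4\sigma^2)\ge L$ since $u \ge 2\sigma\sqrt L$. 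The main work lies in the opposite case $u > 2\kappa\sigma^2$: here $t_0 = \kappa$ and the exponent becomes $\kappa u - \kappa^2\sigma^2 = 2\kappa\sigma\sqrt L + L - \kappa^2\sigma^2$, so the desired inequality reduces to $\kappa\sigma\le 2\sqrt L$. I will close this gap by a short argument from the case hypothesis: if instead $\kappa\sigma > 2\sqrt L$, then $L/\kappa < \sigma\sqrt L/2$ and consequently $u < \tfrac{5}{2}\sigma\sqrt L < 2\kappa\sigma^2$, contradicting $u > 2\kappa\sigma^2$. Applying the same argument with $-t_0$ in place of $t_0$ handles the lower tail (the MGF bound is symmetric in $t$), giving $\mathbb{P}(|Y|\ge u)\le \beta$. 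The ``in particular'' clause then follows by direct substitution: $\kappa(\theta)\ge \sqrt L/(C\sqrt{A''(\theta)})$ implies $L/\kappa(\theta) \le C\sqrt{A''(\theta)\,L}$, so $u$ collapses to $(2+C)\sqrt{A''(\theta)\ln(2/\beta)}$.
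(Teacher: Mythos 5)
Your proof is correct and takes essentially the same route as the paper: both use the closed-form MGF for exponential families, the Taylor bound $A(\theta+t)-A(\theta)-tA'(\theta)\le A''(\theta)t^2$ valid for $|t|\le\kappa(\theta)$, and a Chernoff/Markov step with an optimized exponent truncated to the window $|t|\le\kappa(\theta)$. The only cosmetic differences are that the paper bounds $\mathbb{E}[e^{\lambda|x-A'(\theta)|}]\le 2e^{\lambda^2 A''(\theta)}$ in one shot with $\lambda=\min\{\kappa(\theta),\sqrt{\ln(2/\beta)}/\sqrt{A''(\theta)}\}$ (which sidesteps your contradiction argument in the boundary case), whereas you run two one-sided Chernoff bounds with $t_0=\min\{u/(2\sigma^2),\kappa\}$ and union them.
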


Lemma~\ref{concentrationexp} shows that the tail of a distribution in an exponential family transitions from exponential to Gaussian as we move further out into the tail. How far into the tail one has to move for the tails to be Gaussian is a function of the standard deviation $\sqrt{A''(\theta)}$ and the stability of the standard deviation $\kappa(\theta)$. 

\subsection{Non-Private Estimation}

Before we start designing our differentially private locally optimal estimator, let us first discuss the locally optimal estimator in the non-private setting. 
Given a sample $X\sim P_{\theta}^n$, let \[\mathcal{A}_{opt}(X) = A'^{-1}\left(\frac{1}{n}\sum_{i=1}^n x_i\right) \, .\]

\begin{proposition}[Characterization of Optimal Local Estimation Rate in Non-Private Regime \citep{BN:1978}] 
\label{nonprivupper}
$\mathcal{A}_{opt}$ is the optimal non-private estimation algorithm and, for all measures $\mu$ and $\theta_0\in\thetarange$, has rate \[\error_n^{\rm loc}(P_{\theta}, \mathcal{P_{\mu}}, \mathcal{Q}_{\rm est}, \theta_0) = \Theta\left(\frac{1}{\sqrt{A''(\theta_0) n}}\right).\] 
\end{proposition}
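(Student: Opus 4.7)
The plan is to prove the upper and lower bounds separately. For the upper bound I would directly analyze $\mathcal{A}_{opt}$ via a second-moment calculation combined with the local smoothness of $A'$, and for the lower bound I would invoke Corollary~\ref{nonprivLER} and estimate the Hellinger modulus of continuity at $P_{\theta_0}$ by a Taylor expansion of the log partition function $A$.

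\emph{Upper bound.} Let $\bar{x} = \tfrac{1}{n}\sum_{i=1}^n x_i$, so that $\mathbb{E}_{\theta_0}[\bar{x}] = A'(\theta_0)$ and $\mathrm{Var}_{\theta_0}(\bar{x}) = A''(\theta_0)/n$. Chebyshev's inequality gives $|\bar{x} - A'(\theta_0)| \leq 2\sqrt{A''(\theta_0)/n}$ with probability at least $0.75$. Once $n$ is large enough that $2\sqrt{A''(\theta_0)/n} \leq \tfrac{1}{2} A''(\theta_0)\kappa(\theta_0)$, Lemma~\ref{belongsinkappa} places $\hat{\theta} = (A')^{-1}(\bar{x})$ inside $[\theta_0 - \kappa(\theta_0), \theta_0 + \kappa(\theta_0)]$, on which $A'' \geq A''(\theta_0)/2$ by the definition of $\kappa(\theta_0)$. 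Applying the mean value theorem to $A'$ on this interval then gives
\[|\hat{\theta} - \theta_0| \;\leq\; \frac{|\bar{x} - A'(\theta_0)|}{\min_{\theta' \in [\theta_0 - \kappa(\theta_0),\, \theta_0 + \kappa(\theta_0)]} A''(\theta')} \;\leq\; \frac{4}{\sqrt{n\, A''(\theta_0)}},\]
which is the claimed $O$-bound.

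\emph{Lower bound.} By Corollary~\ref{nonprivLER}, it suffices to show $J^{-1}_{\Hell, P_{\theta_0}}(c/\sqrt{n}) = \Omega(1/\sqrt{n\, A''(\theta_0)})$. The product structure of the family gives the closed form
\[H^2(P_{\theta_0}, P_{\theta_0 + h}) \;=\; 2 - 2\exp\!\bigl(A(\theta_0 + h/2) - \tfrac{1}{2}(A(\theta_0) + A(\theta_0 + h))\bigr).\]
A second-order Taylor expansion of $A$ about $\theta_0$ shows the exponent equals $-\tfrac{1}{8} A''(\theta_0)\, h^2 (1 + o(1))$ as $h \to 0$, and hence $H^2(P_{\theta_0}, P_{\theta_0 + h}) = \tfrac{1}{4} A''(\theta_0)\, h^2 (1 + o(1))$. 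Choosing $h$ proportional to $1/\sqrt{n\, A''(\theta_0)}$ with a sufficiently small implicit constant makes $H(P_{\theta_0}, P_{\theta_0 + h}) \leq c/\sqrt{n}$, while $|\theta(P_{\theta_0 + h}) - \theta(P_{\theta_0})| = |h| = \Omega(1/\sqrt{n\, A''(\theta_0)})$, as required.

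\emph{Main subtlety.} The only delicate point is the range of validity of the Taylor expansions: both halves of the argument require the relevant $h$ to remain within the smoothness radius $\kappa(\theta_0)$, which is exactly the condition that $1/\sqrt{n\, A''(\theta_0)}$ be small compared to $\kappa(\theta_0)$. This is the precise meaning of ``$n$ sufficiently large'' in the asymptotic statement; for smaller $n$ the claimed rate exceeds $\kappa(\theta_0)$ and the statement is essentially vacuous. The Cramér--Rao viewpoint (Fisher information $=A''(\theta)$) gives the same answer and could serve as a sanity check, but does not by itself yield the local minimax formulation used here.
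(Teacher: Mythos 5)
The paper does not itself prove Proposition~\ref{nonprivupper}; it defers to the cited reference (Barndorff-Nielsen, 1978). What the paper \emph{does} prove is the immediately-following Proposition~\ref{nonprivuniformachievability}, whose proof is essentially identical to your ``upper bound'' half: a Chebyshev bound on $\bar{x}$, an appeal to Lemma~\ref{belongsinkappa} to keep $(A')^{-1}(\bar{x})$ inside $\thetarange(\theta_0)$, and a mean-value-theorem step using $\min_{\theta' \in \thetarange(\theta_0)} A''(\theta') \ge A''(\theta_0)/2$. Your constant is $4$ with probability $0.75$; theirs is $6$ with probability $8/9 - 0.1$, but the structure is the same.

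Your ``lower bound'' half is a self-contained argument the paper does not spell out. The Hellinger formula $H^2(P_{\theta_0},P_{\theta_0+h}) = 2 - 2\exp\bigl(A(\theta_0+h/2)-\tfrac12(A(\theta_0)+A(\theta_0+h))\bigr)$ and the second-order Taylor expansion giving exponent $-\tfrac18 A''(\theta_0) h^2 (1+o(1))$ are both correct, and combined with Corollary~\ref{nonprivLER} they yield $\errorlocall{n}{P_{\theta_0}}{\Qest}{\mathcal{P}_\mu}{\theta} = \Omega\bigl(1/\sqrt{n A''(\theta_0)}\bigr)$ as you argue. This is a more elementary route than the Fisher-information-based local-asymptotic-normality machinery one would find in Barndorff-Nielsen, and it fits the paper's modulus-of-continuity framework more cleanly.

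One small structural gap you should be aware of: your two halves do not literally assemble into the two-sided claim $\errorlocall{n}{P_{\theta_0}}{\Qest}{\mathcal{P}_\mu}{\theta} = \Theta\bigl(1/\sqrt{n A''(\theta_0)}\bigr)$. Showing that $\mathcal{A}_{opt}$ achieves error $O\bigl(1/\sqrt{n A''(\theta_0)}\bigr)$ at $\theta_0$ does not by itself bound $\errorlocall{n}{P_{\theta_0}}{\Qest}{\mathcal{P}_\mu}{\theta}$ from \emph{above}, because the $\sup_{Q}$ in that definition also depends on $\mathcal{A}_{opt}$'s error at $Q$, which scales with $A''$ at $Q$ rather than at $\theta_0$. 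The clean way to close this is to observe that your Taylor expansion is actually two-sided, $H^2 = \Theta(A''(\theta_0) h^2)$ for $|h| \le \kappa(\theta_0)$, and that $h \mapsto H(P_{\theta_0},P_{\theta_0+h})$ is monotone in $|h|$ for exponential families (by convexity of $A$, the exponent $\tfrac12(A(\theta_0)+A(\theta_0+h)) - A(\theta_0+h/2)$ is nondecreasing in $|h|$; compare Corollary~\ref{monotonecontinuous} for the TV analogue). This gives $J^{-1}_{\Hell,P_{\theta_0}}(\beta) = \Theta(\beta/\sqrt{A''(\theta_0)})$ for small $\beta$, and then \emph{both} inequalities of Corollary~\ref{nonprivLER} give the two-sided $\Theta$-bound on $\errorlocall{n}{P_{\theta_0}}{\Qest}{\mathcal{P}_\mu}{\theta}$. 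Your direct analysis of $\mathcal{A}_{opt}$ is then exactly the uniform-achievability claim, which is the content of the paper's own Proposition~\ref{nonprivuniformachievability}.
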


\begin{restatable}{proposition}{rnonprivuniformachievability}{\em [Uniform Achievability in Non-Private Regime]}\label{nonprivuniformachievability}
For all measures $\mu$ and $\theta_0\in\thetarange$ and $n\in\mathbb{N}$, if $n\ge \frac{36}{\kappa(\theta_0)^2A''(\theta_0)}$, then \[\ierror{\mathcal{A}_{opt}}{n}{\theta_0} \le \frac{6}{\sqrt{nA''(\theta_0)}}.\] 
\end{restatable}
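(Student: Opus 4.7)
The plan is to analyze the sample mean $\bar X = \frac{1}{n}\sum_{i=1}^n x_i$, use Chebyshev's inequality to control its deviation from $A'(\theta_0)$, and then transfer this bound back to $\theta$-space via the mean-value theorem, using $\kappa(\theta_0)$ to guarantee that $A''$ does not vary too much on the relevant interval.

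First I would recall the standard moment facts for exponential families: $\mathbb{E}_{\theta_0}[\bar X] = A'(\theta_0)$ and $\var_{\theta_0}(\bar X) = A''(\theta_0)/n$. Chebyshev's inequality then gives, with probability at least $0.75$,
\[
|\bar X - A'(\theta_0)| \le 2\sqrt{A''(\theta_0)/n}.
\]
Under the assumption $n \ge 36/(\kappa(\theta_0)^2 A''(\theta_0))$, the right-hand side is at most $\tfrac{1}{3}A''(\theta_0)\kappa(\theta_0) < \tfrac{1}{2}A''(\theta_0)\kappa(\theta_0)$. Combined with Lemma~\ref{belongsinkappa} (applied to $\theta = \theta_0$ and $\theta' = \mathcal{A}_{opt}(X) = A'^{-1}(\bar X)$, which makes sense because $A'$ is strictly increasing as $A''>0$), this shows that $\mathcal{A}_{opt}(X) \in [\theta_0 - \kappa(\theta_0), \theta_0 + \kappa(\theta_0)]$ on the same high-probability event.

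Next I would invert $A'$ using the mean-value theorem: there exists $\theta^*$ between $\theta_0$ and $\mathcal{A}_{opt}(X)$ such that
\[
|\mathcal{A}_{opt}(X) - \theta_0| \;=\; \frac{|\bar X - A'(\theta_0)|}{A''(\theta^*)}.
\]
By the previous step $\theta^* \in [\theta_0 - \kappa(\theta_0), \theta_0 + \kappa(\theta_0)]$, so the definition of $\kappa(\theta_0)$ yields $A''(\theta^*) \ge A''(\theta_0)/2$. Substituting the Chebyshev bound gives
\[
|\mathcal{A}_{opt}(X) - \theta_0| \;\le\; \frac{2 \cdot 2\sqrt{A''(\theta_0)/n}}{A''(\theta_0)} \;=\; \frac{4}{\sqrt{n\,A''(\theta_0)}} \;\le\; \frac{6}{\sqrt{n\,A''(\theta_0)}},
\]
which is the desired bound.

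The only delicate point is the circularity in step two: we want to apply Lemma~\ref{belongsinkappa} to the random point $\mathcal{A}_{opt}(X)$, but the lemma needs $|A'(\mathcal{A}_{opt}(X)) - A'(\theta_0)| = |\bar X - A'(\theta_0)|$, which is exactly what Chebyshev gives; so there is no genuine obstacle, just a matter of ordering. The strict monotonicity of $A'$ (guaranteed by $A''>0$) ensures $\mathcal{A}_{opt}$ is well-defined whenever $\bar X$ lies in the range of $A'$, which is automatic on the event that $\bar X$ is close to $A'(\theta_0)$. The rest is a routine substitution of constants, with the choice $n \geq 36/(\kappa(\theta_0)^2 A''(\theta_0))$ providing comfortable slack.
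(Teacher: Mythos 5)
Your proof is correct and follows essentially the same route as the paper's: Chebyshev on the sample mean, Lemma~\ref{belongsinkappa} to confine $\mathcal{A}_{opt}(X)$ to the interval $[\theta_0-\kappa(\theta_0), \theta_0+\kappa(\theta_0)]$, and the mean-value theorem together with the definition of $\kappa$ to translate the deviation of $\bar X$ from $A'(\theta_0)$ into a bound on $|\mathcal{A}_{opt}(X)-\theta_0|$. The only difference is the Chebyshev radius: the paper uses $3\sqrt{A''(\theta_0)/n}$ (success probability $8/9$) and lands exactly on the constant $6$, whereas you use $2\sqrt{A''(\theta_0)/n}$ (success probability $3/4$, the minimum required) and obtain the tighter constant $4$; both are $\le 6$ so either works.
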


\begin{proof}[Proof of Proposition~\ref{nonprivuniformachievability}]
Note that $\mathbb{E}_{\theta_0}[\frac{1}{n}\sum_{i=1}^n x_i]=nA'(\theta_0)$ and $\var_{\theta_0}[\frac{1}{n}\sum_{i=1}^n x_i]=\frac{A''(\theta_0)}{n}$. Thus, with probability 8/9, 
\begin{equation}\label{meanacc}
\left|\frac{1}{n}\sum_{i=1}^n x_i-A'(\theta_0)\right|\le 3\sqrt{\frac{A''(\theta_0)}{n}}.
\end{equation}
Now, 
$\frac{6}{\sqrt{nA''(\theta_0)}}\le\kappa(\theta_0)$ implies that $3\sqrt{\frac{A''(\theta_0)}{n}}\le\frac{1}{2}A''(\theta_0)\kappa(\theta_0)$. So, by Equation~\ref{belongsinkappa}, if Equation~\eqref{meanacc} holds then $\mathcal{A}_{opt}(X)\in\thetarange(\theta_0)$.
Therefore, with probability 8/9-0.1,
\begin{align*}
\left|A'^{-1}\left(\frac{1}{n}\sum_{i=1}^n x_i\right)-\theta_0\right|&\le \max_{t\in[\frac{1}{n}\sum_{i=1}^n x_i, A'(\theta_0)]} (A'^{-1})'(t)\left|\frac{1}{n}\sum_{i=1}^n x_i-A'(\theta_0)\right|\\
&\le \max_{\theta'\in[A'^{-1}(\frac{1}{n}\sum_{i=1}^n x_i), \theta_0]} \frac{1}{A''(\theta')} 3\sqrt{\frac{A''(\theta_0)}{n}}\\
&\le 6\frac{1}{\sqrt{nA''(\theta_0)}}. \qedhere
\end{align*}
\end{proof}

\subsection{Initial Estimator}

In both the high and low privacy settings, our first step will be to a get a crude estimate of $\mathbb{E}_{\theta_0}[x]$.
This initial estimate will then be used to obtain a more refined estimate of $\theta_0$. In both cases a sufficient initial estimate is given by a slight variation of the mean estimator given in \cite{Karwa:2018}. Note that we could use this estimate of $A'(\theta_0)=\mathbb{E}_{\theta_0}[x]$ to get an estimate of $\theta_0$ in the same way we did in $\mathcal{A}_{opt}$. However, the resulting estimator of $\theta$ is suboptimal by a factor of $\sqrt{\ln n}$. A full description of the initial estimator is given in Appendix~\ref{appendix:meanest}, we will denote it by $\initial$.

\begin{restatable}{theorem}{rinitialestthm}
\label{initialestthm}
There exists constants $c>0$ and $b>0$ such that for all $\epsilon>0$, $\delta\in[0,1]$, $\thirdmoment>0$, and $C>0$, there exists an $(\epsilon, \delta)$-DP algorithm, $\initial$, such that for all measures $\mu$ and $\theta_0\in\thetarange$ if 
\begin{itemize}
\item $\frac{\mathbb{E}_{\theta_0}[|x-A'(\theta_0)|^3]}{\sqrt{A''(\theta_0)}^3}\le \thirdmoment$
\item $\kappa(\theta_0)\ge \frac{1}{C}\frac{1}{\sqrt{A''(\theta_0)}}$
\end{itemize}
then for all $n\in\mathbb{N}$ such that $n\ge \frac{c\thirdmoment^2\ln(1/\delta)}{\epsilon}$, if $X\sim P_{\theta_0}^n$, then with probability $0.8$,  \[\left|\initial(X)-\frac{1}{n}\sum_{x\in X} x\right| \le b(6+C)\left(\frac{\sqrt{A''(\theta_0)}}{n\epsilon}\sqrt{\ln(n)}\right). \]
\end{restatable}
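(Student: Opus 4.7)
The plan is to adapt the Karwa--Vadhan mean estimator, originally designed for Gaussians, to the exponential-family setting by exploiting the sub-Gaussian-like concentration guaranteed by Lemma~\ref{concentrationexp}. The estimator $\initial$ proceeds in two phases: (i) a crude, privacy-preserving estimate of the location and scale of the data, and (ii) a clipped-mean release with Laplace noise calibrated to that scale. The essential insight is that we do not need to accurately estimate $\theta_0$ or $A''(\theta_0)$; we only need a scale $\hat\sigma$ that is within a constant factor of $\sqrt{A''(\theta_0)}$ and a center $\hat m$ that is within $O(\sqrt{A''(\theta_0)})$ of $A'(\theta_0)$.

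For phase (i), I would place geometrically spaced candidate bins: for each integer $k$, tile the real line with bins of width $2^k$, and use a stable (private) histogram primitive to identify a bin of some scale that contains at least a constant fraction of the $n$ data points. The assumption $\mathbb{E}_{\theta_0}|x-A'(\theta_0)|^3 / A''(\theta_0)^{3/2} \leq \thirdmoment$ combined with Chebyshev/Markov implies that a constant fraction of points lie in an interval of width $O(\thirdmoment \sqrt{A''(\theta_0)})$ centered at $A'(\theta_0)$, so some bin at scale $2^k = \Theta(\sqrt{A''(\theta_0)})$ will have $\Omega(n)$ points. The sample-size requirement $n \geq c\thirdmoment^2\ln(1/\delta)/\epsilon$ is exactly what is needed for the stable histogram to succeed with probability $\geq 0.9$, yielding the desired $\hat\sigma,\hat m$ while preserving $(\epsilon,\delta)$-DP.

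For phase (ii), set a clip interval $I = [\hat m - b'(6+C)\hat\sigma\sqrt{\ln n},\;\hat m + b'(6+C)\hat\sigma\sqrt{\ln n}]$ for an appropriate constant $b'$. Invoking Lemma~\ref{concentrationexp} with $\beta = 1/n^2$ and $\kappa(\theta_0)\geq \tfrac{1}{C\sqrt{A''(\theta_0)}}$ gives that the distribution is sub-Gaussian at scale $\sqrt{A''(\theta_0)}$, and a union bound over the $n$ samples shows that with probability $\geq 1-1/n$ every point lies in $I$ (using that $\hat m$ is $O(\sqrt{A''(\theta_0)})$ from $A'(\theta_0)$ and $\hat\sigma = \Theta(\sqrt{A''(\theta_0)})$). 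In this good event, clipping does not change the empirical mean at all. Release
\[
\initial(X) \;=\; \frac{1}{n}\sum_{i=1}^n \mathrm{clip}_I(x_i) + \Lap\!\left(\frac{|I|}{n\epsilon}\right).
\]
Standard tail bounds for the Laplace distribution give deviation $O(|I|/(n\epsilon)) = O((6+C)\sqrt{A''(\theta_0)}\sqrt{\ln n}/(n\epsilon))$ with probability $\geq 0.9$; composition with the phase-(i) privacy budget (and basic composition / advanced composition, or parallel composition since the two phases use disjoint noise) yields the claimed $(\epsilon,\delta)$-DP guarantee. A union bound across the success events of the two phases gives total success probability $\geq 0.8$.

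The main obstacle is privately identifying a scale $\hat\sigma$ within a constant factor of $\sqrt{A''(\theta_0)}$, since $A''(\theta_0)$ is unknown and may be arbitrarily small or large. This is where the third-moment assumption is crucial: without it, the data need not concentrate in a bin of scale $\Theta(\sqrt{A''(\theta_0)})$, and the stable histogram could fail to select any bin. The $\kappa(\theta_0)$ lower bound is secondary and is used only in phase (ii), to convert the exponential--Gaussian tail transition of Lemma~\ref{concentrationexp} into a clean sub-Gaussian tail at scale $\sqrt{A''(\theta_0)}$, which is what makes the clip interval have width $O(\sqrt{A''(\theta_0)}\sqrt{\ln n})$ rather than something worse.
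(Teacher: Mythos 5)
Your overall two-phase architecture (private scale estimation, then clipped-mean release with Laplace noise) matches the paper's, which is itself a direct adaptation of Karwa--Vadhan. Your phase (ii) is essentially correct: with $\hat\sigma = \Theta(\sqrt{A''(\theta_0)})$ in hand, Lemma~\ref{concentrationexp} plus a union bound gives a clip interval of width $\Theta((6+C)\sqrt{A''(\theta_0)\ln n})$ containing all samples, and the resulting Laplace noise is at the claimed scale.

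The gap is in phase (i), in the justification that the private histogram will identify the correct scale. You argue via Chebyshev/Markov that ``a constant fraction of points lie in an interval of width $O(\zeta\sqrt{A''(\theta_0)})$,'' and conclude that ``some bin at scale $2^k = \Theta(\sqrt{A''(\theta_0)})$ will have $\Omega(n)$ points.'' Chebyshev only provides an \emph{upper} bound on how far the points spread: it rules out the data being too dispersed, but does not rule out the data being far \emph{more} concentrated than scale $\sqrt{A''(\theta_0)}$ (for instance, nearly all the mass in a much narrower interval with a thin heavy tail contributing all the variance). In that case the histogram's argmax bin would live at a much finer scale, and the clip interval of phase (ii) would be too narrow, so clipping would bias the mean badly. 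To rule this out one needs an \emph{anti-concentration} statement --- a lower bound on tail mass at scale $\sqrt{A''(\theta_0)}$ --- and the third-moment assumption plus Chebyshev do not supply it. (A Paley--Zygmund argument would, but it requires a fourth-moment/kurtosis bound, which is what the paper uses \emph{elsewhere}, not here.)

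The paper closes this gap with a different idea you have not reproduced: it does not apply the histogram to the raw samples, but first forms block averages $Z_i = \frac{1}{\phi}\sum_{j}x_{(i-1)\phi+j}$ over blocks of size $\phi = \Theta(\zeta^2)$ and then pairwise differences $Y_i = Z_{2i}-Z_{2i-1}$, and applies the histogram learner to the $|Y_i|$. The Berry--Esseen theorem (Lemma~\ref{BE}) --- whose error term is controlled exactly by the normalized third moment $\rho/\sigma^3 \leq \zeta$ --- says each $Y_i$ is within $1/300$ in CDF distance of $\mathcal{N}(0, 2\sigma^2/\phi)$. This transfers Karwa--Vadhan's Gaussian argument (that the top bin is at the right scale and separated by a constant gap from the third-highest bin, Facts 1 and 2 in Lemma~\ref{highprobstd}) to the exponential-family setting. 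The third-moment hypothesis is thus used \emph{through} Berry--Esseen to furnish the needed two-sided scale identification, not as a tail bound. Without this block-averaging/CLT step, your phase (i) is not justified.
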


\begin{restatable}{corollary}{rallforalog}
\label{allforalog}
There exists a constant $c>0$ such that for all $\epsilon_n=\Omega(\frac{\ln n}{n})$, $\delta\in[0,1]$, $\thirdmoment>0$, $C>0$ there exists an $(\epsilon_n, \delta)$-DP algorithm, $\initial$ and constants $D>0$ and $N\in\mathbb{N}$ such that for all measures $\mu$ and $\theta_0\in\thetarange$ if 
\begin{itemize}
\item $\frac{\mathbb{E}_{\theta_0}[|x-A'(\theta_0)|^3]}{\sqrt{A''(\theta_0)}^3}\le \thirdmoment$
\item $\kappa(\theta_0)\ge \frac{1}{C}\frac{1}{\sqrt{A''(\theta_0)}}$
\end{itemize}
then for all $n\in\mathbb{N}$ such that $n\ge \max\{N, \frac{c\thirdmoment^2\ln(1/\delta)}{\epsilon_n}\}$, with probability at least 0.8, \[|A'^{-1}(\initial(X))-\theta_0|\le D\left(\frac{1}{\sqrt{nA''(\theta_0)}}+\frac{1}{n\epsilon_n\sqrt{A''(\theta_0)}}\sqrt{\ln(n)}\right).\]
\end{restatable}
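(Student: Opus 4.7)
The plan is to combine the mean-estimation guarantee of Theorem~\ref{initialestthm} with a sampling error bound and then push the resulting error on $A'(\theta_0)$ through $A'^{-1}$ using the smoothness provided by $\kappa(\theta_0)$. The privacy claim is immediate from Theorem~\ref{initialestthm} since $A'^{-1}$ is a deterministic post-processing of $\initial(X)$.

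First, I would apply Theorem~\ref{initialestthm} to obtain, with probability at least $0.9$ (taking the same algorithm and absorbing the constants),
\[
\left|\initial(X) - \tfrac{1}{n}\sum_{x\in X} x\right|\le b(6+C)\frac{\sqrt{A''(\theta_0)}}{n\epsilon_n}\sqrt{\ln n}.
\]
Next, I would bound the sampling error of the empirical mean. Since $\mathbb{E}_{\theta_0}[x] = A'(\theta_0)$ and $\var_{\theta_0}(x) = A''(\theta_0)$, Chebyshev's inequality gives $|\tfrac 1 n \sum x_i - A'(\theta_0)|\le \sqrt{10 A''(\theta_0)/n}$ with probability $0.9$. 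Combining these two bounds via the triangle inequality and a union bound yields, with probability at least $0.8$,
\[
|\initial(X) - A'(\theta_0)|\le D_0\left(\sqrt{\tfrac{A''(\theta_0)}{n}} + \tfrac{\sqrt{A''(\theta_0)}}{n\epsilon_n}\sqrt{\ln n}\right)
\]
for some absolute constant $D_0$ depending on $b$ and $C$.

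The second step is to transfer this error from $A'(\theta_0)$ to $\theta_0$ itself. This is where the $\kappa$-smoothness hypothesis is essential. Using Lemma~\ref{belongsinkappa}, it suffices to show that the right-hand side above is at most $\tfrac{1}{2}A''(\theta_0)\kappa(\theta_0)$, which, under the assumption $\kappa(\theta_0)\ge \tfrac{1}{C\sqrt{A''(\theta_0)}}$, reduces to requiring that $\tfrac{1}{\sqrt n} + \tfrac{\sqrt{\ln n}}{n\epsilon_n}\le \tfrac{1}{2CD_0}$. Because $\epsilon_n = \Omega(\ln n / n)$, the second term is $O(1/\sqrt{\ln n})$, which is the dominant one and tends to $0$; hence there exists $N\in\mathbb{N}$, depending only on $C, \thirdmoment, D_0$, such that the condition holds for all $n\ge N$. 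Once this holds, $A'^{-1}(\initial(X))$ lies in $\thetarange(\theta_0)=[\theta_0-\kappa(\theta_0),\theta_0+\kappa(\theta_0)]$, and on this interval $(A'^{-1})'(t) = 1/A''(A'^{-1}(t))\le 2/A''(\theta_0)$ by definition of $\kappa(\theta_0)$. A mean value theorem application then gives
\[
|A'^{-1}(\initial(X))-\theta_0|\le \frac{2}{A''(\theta_0)}\,|\initial(X)-A'(\theta_0)|\le D\left(\tfrac{1}{\sqrt{nA''(\theta_0)}} + \tfrac{\sqrt{\ln n}}{n\epsilon_n\sqrt{A''(\theta_0)}}\right),
\]
for $D:= 2D_0$, which is the stated bound.

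The only nontrivial bookkeeping is verifying that the required sample size $n\ge \max\{N,\, c\thirdmoment^2\ln(1/\delta)/\epsilon_n\}$ is consistent: the second quantity comes directly from Theorem~\ref{initialestthm} and the first from the smoothness-of-$A''$ argument above; both are needed simultaneously to guarantee (i) the DP mean estimate is accurate and (ii) the resulting point lies in the region where $A'^{-1}$ is 2-Lipschitz on the relevant scale. The mildly delicate point — the one I would flag as the main technical step — is precisely verifying that $n\epsilon_n \ge \Omega(\ln n)$ suffices to keep $\initial(X)$ inside $A'(\thetarange(\theta_0))$, since this is what makes the post-processing $A'^{-1}$ well-controlled without having to re-derive a global Lipschitz constant for $A'^{-1}$.
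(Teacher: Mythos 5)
Your proposal is correct and follows essentially the same strategy as the paper's proof: apply Theorem~\ref{initialestthm} to bound $|\initial(X)-\tfrac1n\sum x_i|$, control the empirical mean's deviation from $A'(\theta_0)$ by Chebyshev, use $\kappa$-smoothness together with Lemma~\ref{belongsinkappa} to certify $A'^{-1}(\initial(X))\in\thetarange(\theta_0)$, and then push the error through $A'^{-1}$ via the $2/A''(\theta_0)$ local Lipschitz bound. Your write-up is actually slightly more careful than the paper's on the probability bookkeeping and on spelling out why $n\epsilon_n=\Omega(\ln n)$ suffices to keep $\initial(X)$ in the controlled region; the only minor imprecision is the claim that $N$ depends only on $C,\thirdmoment,D_0$ — it also depends on the implicit constant in $\epsilon_n=\Omega(\ln n/n)$, which the quantifier order in the corollary statement permits.
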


\begin{proof}
By Theorem~\ref{nonprivupper} and Theorem~\ref{initialestthm}, there exists a constant $C>0$ such that for all $\theta_0\in\thetarange$ satisfying the two conditions and $n\ge \frac{c\thirdmoment^2\ln(1/\delta)}{\epsn}$ with probability 0.8 we have, 
\[|\initial(X)-A'(\theta_0)| = \frac{C}{2}\left( \frac{\sqrt{A''(\theta_0)}}{\sqrt{n}}+ \frac{\sqrt{A''(\theta_0)}}{n\epsn}\sqrt{\ln\left(n\right)}\right).\] 
Now, since $\kappa(\theta_0)\sqrt{A''(\theta_0)}\ge B$ and $\epsn=\Omega(\frac{\ln n}{n})$, there exists $N\in\mathbb{N}$ such that for all $n>N$, \[\frac{C}{2}\left(\frac{\sqrt{A''(\theta_0)}}{\sqrt{n}}+\frac{\sqrt{A''(\theta_0)}\sqrt{\ln n}}{\epsn n}\right)\le \frac{1}{2}A''(\theta_0)\kappa(\theta_0)\] combined with Lemma~\ref{belongsinkappa} implies that $A'^{-1}(\initial(X))\in\thetarange(\theta_0)$.
Therefore, 
\begin{align*}
|(A')^{-1}(\initial(X))-\theta_0| &\le \max_{t\in[\initial(X), A'(\theta_0)]}(A')^{-1})'(t)|\initial(X)-\theta_0|\\
&= \max_{t\in[\initial(X), A'(\theta_0)]}\frac{1}{A''(A'^{-1}(t))}|\initial(X)-\theta_0|\\
&\le 2\frac{1}{A''(\theta_0)}\frac{C}{2}\left( \frac{\sqrt{A''(\theta_0)}}{\sqrt{n}}+ \frac{\sqrt{A''(\theta_0)}}{n\epsn}\sqrt{\ln\left(n\right)}\right)\\
&= C\left( \frac{1}{\sqrt{nA''(\theta_0)}}+ \frac{1}{n\epsn\sqrt{A''(\theta_0)}}\sqrt{\ln\left(n\right)}\right)
\end{align*}
\end{proof}

\subsection{High Privacy Regime}

We begin with the high privacy regime. While the noisy clamped log-likelihood ratio test is optimal in general for private simple hypothesis testing, a simpler test works in the high privacy regime. This test, which informs our design of the private estimator in this section, is a simple noisy counting test, and looks very similar to the optimal test in the local DP setting, presented in \cite{Duchi:2018}. The form of the estimation rate is also simpler in this section since, as we saw in Lemma~\ref{smalleps}, the sample complexity of the differentially private simple hypothesis testing takes on a simpler form in this regime. 

\subsubsection{Characterising the Optimal Local Estimation Rate in High Privacy Regime}

Recall from Corollary~\ref{smallepslower} that the optimal local estimation rate in the high privacy regime is characterized by the $L_1$-information, defined in Equation~\eqref{L1info}: for $\beta\in[0,1]$
\[J_{\TV, \theta}^{-1}(\beta) = \sup\{|h|\;|\; \TV(P_{\theta}, P_{\theta+h})\le\beta\}.\]
The following lemma characterizes the $L_1$-information, and hence the optimal local estimation rate, in terms of properties of the one-parameter exponential family. The proof can be found in Appendix~\ref{aboundonmodulus}.

\begin{restatable}{lemma}{rboundonmodulus}
\label{boundonmodulus} For all $\fourthmoment>0$, there exists a constant $C$ such that for all measures $\mu$, $\theta_0\in\thetarange$, and $\beta\in[0,1]$, if $ \frac{\mathbb{E}_{P_{\theta}}(X-A'(\theta_0))^4}{A''(\theta_0)^2}\le \fourthmoment\le \frac{9}{128\beta}$ and $\kappa(\theta)\ge J_{\TV,\theta}^{-1}(\beta)$ then, \[J_{\TV, \theta}^{-1}(\beta)\in \left[\frac{1}{\sqrt{2}} \frac{\beta}{\sqrt{A''(\theta)}}, C \frac{\beta}{\sqrt{A''(\theta)}}\right].\]
\end{restatable}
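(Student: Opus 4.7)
The plan is to bound $\TV(P_\theta, P_{\theta+h})$ from above and below as a function of $|h|$, and then invert.

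For the lower bound $J_{\TV,\theta}^{-1}(\beta) \ge \beta/(\sqrt{2}\sqrt{A''(\theta)})$, I would apply Pinsker's inequality $\TV(P_\theta, P_{\theta+h}) \le \sqrt{\KL(P_\theta \| P_{\theta+h})/2}$. For an exponential family, $\KL(P_\theta \| P_{\theta+h}) = A(\theta+h) - A(\theta) - h A'(\theta)$, and Taylor's theorem with Lagrange remainder gives $\KL = \tfrac{h^2}{2}A''(\tilde\theta)$ for some $\tilde\theta \in [\theta,\theta+h]$. When $|h| \le \kappa(\theta)$, the definition of $\kappa$ yields $A''(\tilde\theta) \le 2A''(\theta)$, so $\TV \le |h|\sqrt{A''(\theta)/2}$. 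Picking $|h| = \beta/\sqrt{2A''(\theta)}$ gives $\TV \le \beta/2 \le \beta$, hence $J^{-1}(\beta) \ge \beta/(\sqrt{2}\sqrt{A''(\theta)})$. The assumption $\kappa(\theta) \ge J^{-1}(\beta)$, combined with the upper bound established below, ensures the chosen $h$ lies in the smoothness radius.

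For the matching upper bound $J^{-1}(\beta) \le C\beta/\sqrt{A''(\theta)}$, I would use the identity $\TV(P_\theta, P_{\theta+h}) = \tfrac{1}{2}E_{P_\theta}[|1 - e^{s(X)}|]$ with $s(x) = hx - (A(\theta+h) - A(\theta))$. The elementary inequality $|1 - e^s| \ge |s| - s^2/2$ on $\{|s| \le 1\}$, together with discarding the contribution from $\{|s|>1\}$, yields
\[\TV(P_\theta, P_{\theta+h}) \ge \tfrac{1}{2}\paren{E_{P_\theta}[|s|] - \tfrac{1}{2}E_{P_\theta}[s^2] - E_{P_\theta}[|s|\mathds{1}_{|s|>1}]}.\]
Each piece is controlled as follows: (a) $E[|s|] \ge |h|\,\mathrm{MAD}(P_\theta) - \KL$ by the triangle inequality, where $\mathrm{MAD}(P_\theta) \ge \sqrt{A''(\theta)}/(2\sqrt{2\fourthmoment})$ by the standard truncation trick (write $A''(\theta) = E[Z^2\mathds{1}_{|Z|\le t}] + E[Z^2\mathds{1}_{|Z|>t}]$ with $Z = X - A'(\theta)$, apply Cauchy--Schwarz and Markov, optimize at $t^2 = 2\fourthmoment A''(\theta)$); (b) $E[s^2] \le h^2 A''(\theta) + \KL^2 \le 2h^2 A''(\theta)$; and (c) $E[|s|\mathds{1}_{|s|>1}] \le \sqrt{E[s^2]\cdot E[s^4]}$ via Cauchy--Schwarz and Markov, with $E[s^4] \le O(h^4 \fourthmoment A''(\theta)^2)$ from the fourth-moment bound and $\KL \le h^2 A''(\theta)$. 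Setting $u := |h|\sqrt{A''(\theta)}$, the combined estimate has the form $\TV \ge c_1 u/\sqrt{\fourthmoment} - c_2 u^2 - c_3 \sqrt{\fourthmoment}\, u^3$. For $|h| \ge C\beta/\sqrt{A''(\theta)}$ with $C = O(\sqrt{\fourthmoment})$ large enough, the linear term dominates, so $\TV > \beta$ and hence $J^{-1}(\beta) \le C\beta/\sqrt{A''(\theta)}$.

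The main technical obstacle is controlling the quadratic and cubic error terms so that the leading term $E[|s|] \gtrsim |h|\,\mathrm{MAD}$ dominates uniformly up to $|h| = C\beta/\sqrt{A''(\theta)}$. The hypothesis $\fourthmoment \le 9/(128\beta)$ is precisely calibrated so that, for $u$ in the target range (at most of order $\sqrt{\fourthmoment}\,\beta$), the corrections $c_2 u^2$ and $c_3 \sqrt{\fourthmoment}\, u^3$ remain a bounded fraction of $c_1 u/\sqrt{\fourthmoment}$; this is what allows the polynomial inequality to be solved with a constant $C$ depending only on $\fourthmoment$ (not on $\beta$ or on the particular measure $\mu$).
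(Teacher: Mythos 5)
Your lower-bound argument (Pinsker $+$ Taylor $+$ the $\kappa$ smoothness condition) matches the paper exactly. The upper bound is where you diverge: the paper does not Taylor-expand $|1-e^{s}|$ at all. Instead it lower-bounds $\TV(P_{\theta},P_{\theta+h})$ by the contribution from the single tail event $\{x\ge \tfrac{A(\theta+h)-A(\theta)}{h}+\tfrac14\sqrt{A''(\theta)}\}\subseteq\{x\ge A'(\theta)+\tfrac12\sqrt{A''(\theta)}\}$, whose probability it controls with Paley--Zygmund ($\ge 9/16\fourthmoment$), and on which the likelihood ratio $e^{hx-(A(\theta+h)-A(\theta))}-1$ is at least $\tfrac14 h\sqrt{A''(\theta)}$. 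This gives $\TV \ge \tfrac{9}{64\fourthmoment}h\sqrt{A''(\theta)}$ with no quadratic or cubic correction to subtract, which is exactly why the constant $C$ can be chosen in $[\tfrac{16\fourthmoment}{9}, \tfrac{1}{8\beta}]$ (nonempty precisely when $\fourthmoment\beta\le 9/128$) and the argument closes cleanly.

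Your route is legitimate in spirit, but there is a genuine numerical gap in the bookkeeping, and the source is your MAD bound. The truncation trick you invoke gives $\mathrm{MAD}\ge \sqrt{A''(\theta)}/(2\sqrt{2\fourthmoment})$, i.e.\ a constant of about $0.35$ in front of $\sigma/\sqrt{\fourthmoment}$. Feeding this into your decomposition, the leading term of $2\TV$ is about $0.35\,u/\sqrt{\fourthmoment}$ and the quadratic error (from $\KL + E[s^2]/2$) is about $\tfrac32 u^2$, so $\TV \gtrsim 0.18\,u/\sqrt{\fourthmoment} - 0.75\,u^2$. Taking $u=C\beta$ with $C=c_0\sqrt{\fourthmoment}$ and the worst-case $\fourthmoment\beta=9/128$, you need $0.18\,c_0 - 0.053\,c_0^2 > 1$; the left side is maximized at $c_0\approx 1.7$ with value $\approx 0.15$, so no choice of $c_0$ works. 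Your claim that $\fourthmoment\le 9/(128\beta)$ is ``precisely calibrated'' for this expansion is not correct --- it is calibrated for the Paley--Zygmund route. The gap can be repaired: replace the truncation bound by the double Cauchy--Schwarz bound $\sigma^2 \le \sqrt{E|Z|\cdot E|Z|^3}$ with $E|Z|^3\le\sqrt{\fourthmoment}\sigma^3$, which yields the sharper $E|Z|\ge \sigma/\sqrt{\fourthmoment}$; with that, the inequality $\tfrac{c_0}{2}-\tfrac{3}{4}c_0^2(\tfrac{9}{128}) - \tfrac12 c_0^4(\tfrac{9}{128})^3 > 1$ is satisfiable (barely, at $c_0\approx 128/27$, with margin about $0.1$). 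So your approach can be made to work, but only with a sharper moment inequality than the one you cite and with tighter tracking of constants than the proposal indicates, whereas the paper's tail-event argument sidesteps the whole issue.
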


The following corollary follows immediately from Theorem~\ref{smallepslower} and Lemma~\ref{boundonmodulus}.

\begin{corollary}[Optimal Local Estimation Rate in the High Privacy Regime]\label{smallepslowerexp} For all constants $k$ and $\fourthmoment>0$, there exists a constants $C_1$, $C_2$ and $C_3$ such that for all measures $\mu$ and $\theta_0\in\thetarange$, if $ \frac{\mathbb{E}_{P_{\theta}}(X-A'(\theta_0))^4}{A''(\theta_0)^2}\le \fourthmoment$, $\epsilon_n\le \frac{k}{\sqrt{n}}$, $\kappa(\theta)\ge J^{-1}_{\TV,\theta}(\frac{C_3}{n\epsilon_n})$ and $\frac{C_3}{n\epsilon_n}\le  \frac{9}{128\fourthmoment}$ then for all $n\in\mathbb{N}$, \[\error^{\loc}_n(P_{\theta_0}, \mathcal{P_{\mu}}, \mathcal{Q}_{\rm est, \epsilon}, \theta) \in \left[\frac{C_1}{n\epsilon_n\sqrt{A''(\theta_0)}}, \frac{C_2}{n\epsilon_n\sqrt{A''(\theta_0)}}\right].\]
\end{corollary}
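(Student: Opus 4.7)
The plan is to combine the two results stated earlier in the section: Theorem~\ref{smallepslower}, which translates the optimal local estimation rate in the high-privacy regime into the $L_1$-information $J_{\TV,P_{\theta_0}}^{-1}$, and Lemma~\ref{boundonmodulus}, which evaluates this $L_1$-information for exponential families in terms of $A''(\theta_0)$. Both are statements about the same object evaluated at different arguments, so the work is primarily in lining up the hypotheses.

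First I would invoke Theorem~\ref{smallepslower} to obtain constants $C_1', C_2'$ such that, under the assumption $\epsn\le k/\sqrt{n}$,
\[
J_{\TV, P_{\theta_0}}^{-1}\!\left(\frac{C_1'}{n\epsn}\right) \le \errorlocall{n}{P_{\theta_0}}{\Qesteps}{\mathcal{P}_\mu}{\theta} \le J_{\TV, P_{\theta_0}}^{-1}\!\left(\frac{C_2'}{n\epsn}\right).
\]
This reduces the problem to sandwiching each of the two $J_{\TV}^{-1}$ values by expressions of the form $\Theta(1/(n\epsn\sqrt{A''(\theta_0)}))$.

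Next I would set $C_3 := \max(C_1', C_2')$ and apply Lemma~\ref{boundonmodulus} separately at $\beta_1 = C_1'/(n\epsn)$ and $\beta_2 = C_2'/(n\epsn)$. The hypothesis $\kappa(\theta_0)\ge J_{\TV,\theta_0}^{-1}(C_3/(n\epsn))$ of the corollary together with monotonicity of $J_{\TV,\theta_0}^{-1}$ in its argument (since increasing $\beta$ only enlarges the supremum set) guarantees $\kappa(\theta_0)\ge J_{\TV,\theta_0}^{-1}(\beta_i)$ for each $i$. Similarly, the hypothesis $C_3/(n\epsn)\le 9/(128\fourthmoment)$ implies the quantitative fourth-moment condition $\fourthmoment\le 9/(128\beta_i)$ needed to invoke Lemma~\ref{boundonmodulus} at each $\beta_i$. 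Applying the lemma then yields
\[
J_{\TV, P_{\theta_0}}^{-1}\!\left(\frac{C_i'}{n\epsn}\right) \in \left[\frac{C_i'}{\sqrt{2}}\cdot\frac{1}{n\epsn\sqrt{A''(\theta_0)}},\; C\cdot C_i'\cdot\frac{1}{n\epsn\sqrt{A''(\theta_0)}}\right]
\]
for the constant $C$ supplied by Lemma~\ref{boundonmodulus} (depending only on $\fourthmoment$).

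Finally I would chain the two sandwichings together to conclude
\[
\frac{C_1}{n\epsn\sqrt{A''(\theta_0)}} \le \errorlocall{n}{P_{\theta_0}}{\Qesteps}{\mathcal{P}_\mu}{\theta} \le \frac{C_2}{n\epsn\sqrt{A''(\theta_0)}}
\]
with $C_1 := C_1'/\sqrt{2}$ and $C_2 := C\cdot C_2'$, both depending only on $k$ and $\fourthmoment$. There is no serious obstacle here: the only subtlety is bookkeeping to ensure that a single choice of $C_3$ simultaneously ensures the smoothness and fourth-moment hypotheses of Lemma~\ref{boundonmodulus} at both $\beta_1$ and $\beta_2$, which is why we take the max of the two constants coming out of Theorem~\ref{smallepslower}.
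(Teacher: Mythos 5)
Your proposal is correct and matches the paper's approach exactly: the paper states that the corollary "follows immediately from Theorem~\ref{smallepslower} and Lemma~\ref{boundonmodulus}," which is precisely the combination you carry out, and your bookkeeping (taking $C_3$ to be the larger of the two constants from Theorem~\ref{smallepslower} and using monotonicity of $J_{\TV,\theta_0}^{-1}$) correctly handles the hypothesis matching.
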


\subsubsection{Uniform Achievability in High Privacy Regime}

In this section we show that in the high privacy regime, uniform achievability is achieved using a simple estimator based on estimating $\mathbb{P}_{\theta}(x>\mathbb{E}_{\theta}[x])$.
Let $f_t(X) = \frac{1}{n}\sum_{i=1}^n \mathds{1}_{x_i>t}$. Note that by Lemma~\ref{monotonelikelihood},
\[g_t(\theta):=\mathbb{E}_{\theta}[f_t(x)]=\mathbb{P}_{\theta}(x>t)\]

is monotone and invertible in $\theta$, so Algorithm~\ref{algo:estexphighprivstep1} is well-defined. Our estimator $\mathcal{A}_{\rm high}$ requires as input a DP mean estimator, so $\hat{t}$ is an estimate of $\mathbb{E}_{\theta_0}[x]$. 
We refine the estimate $\hat{t}$ by using an estimator with lower sensitivity. Any sufficiently accurate mean estimator can be used for $\mathcal{M}$, but we note that the estimator described in Theorem~\ref{initialestthm} (derived from \cite{Karwa:2018}) is sufficient.

\begin{algorithm} \caption{$\secondstephigh$}\label{algo:estexphighprivstep1}
\begin{algorithmic}[1]
\Require{Sample $X \sim P_{\theta_0}^{n}$ , $\hat{t}$}, $\epsilon$
\State $\hat{\theta} = g_{\hat t}^{-1}(f_{\hat{t}}(X)+\Lap\left(\frac{1}{\epsilon n}\right))$.
\end{algorithmic}
\end{algorithm}

\begin{algorithm} \caption{$\anyAhigh$}\label{algo:estexphighprivstep2}
\begin{algorithmic}[1]
\Require{Sample $X_1 \sim P_{\theta_0}^{n}$ and $X_2 \sim P_{\theta_0}^{n}$, an $(\epsilon/2, \delta)$-DP mean estimator $\anyinitial$}
\State $\hat{t}=\anyinitial(X_1)$.
\State $\hat{\theta} = \secondstephigh(X_2,\hat{t}, \epsilon/2)$.
\end{algorithmic}
\end{algorithm}

\begin{restatable}[Uniform Achievability in High Privacy Regime]{proposition}{rprophighprivupper} \label{prop:highprivupper} For any $\epsn>0$ and $\deltan\in[0,1]$, $\mathcal{A}_{high}$ is $(\epsn,\deltan)$-DP.
Further, there exists a constant $c>0$ such that for all constants $\fourthmoment>0$, $C>0$, and $\delta\in[0,1]$, there exists an estimator $\initial$ such that there exists constants $N\in\mathbb{N}$ and $C'>0$ such that for all exponential families (i.e. any measure $\mu$) and $\theta_0\in\thetarange$, if 
\begin{itemize}
\item $\Omega\left(\frac{\ln n}{n}\right)\le \epsn\le O\left(\frac{1}{\sqrt{n}}\right)$
\item $\frac{\mathbb{E}_{P_{\theta_0}}(X-A'(\theta_0))^4}{A''(\theta_0)^2}\le \fourthmoment$,
\item $\kappa(\theta_0)\ge \frac{1}{C}\frac{1}{\sqrt{A''(\theta_0)}}$
\end{itemize}
then for all $n\in\mathbb{N}$ such that $\kappa(\theta_0)\ge J^{-1}_{\TV,\theta_0}(\frac{1}{\epsilon_n n})$ and $n\ge \max\{N,  \frac{c\fourthmoment^2\ln(1/\deltan)}{\epsn}\}$,
\[\ierror{\Ahigh}{n}{\theta_0} \le C'\cdot J_{\TV, \theta_0}^{-1}\left(\frac{1}{\epsn n}\right)=O\left(\frac{1}{n\epsn\sqrt{A''(\theta_0)}}\right),\]
where $\Ahigh$ is $\anyAhigh$ with initial mean estimator $\initial$.
\end{restatable}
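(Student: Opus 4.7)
The plan is to handle privacy and accuracy separately. For privacy, $\initial$ is $(\epsn/2, \deltan)$-DP by Theorem~\ref{initialestthm}, and the second stage $\secondstephigh$ releases $f_{\hat t}(X_2) + \Lap(2/(\epsn n))$. Once $\hat t$ is fixed, $f_{\hat t}$ has $\ell_1$-sensitivity $1/n$ in $X_2$, so $\secondstephigh$ is $(\epsn/2, 0)$-DP. Since $X_1,X_2$ are disjoint, sequential composition yields $(\epsn, \deltan)$-DP for $\Ahigh$.

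For accuracy, I would condition on three high-probability events. First, Theorem~\ref{initialestthm} combined with a Chebyshev bound on the sampling error $|\bar X_1 - A'(\theta_0)|$ gives $|\hat t - A'(\theta_0)| \lesssim \sqrt{A''(\theta_0)}\bigl(1/\sqrt{n} + \sqrt{\ln n}/(n\epsn)\bigr)$; in the high privacy regime and for $n$ sufficiently large this is $o(\sqrt{A''(\theta_0)} \kappa(\theta_0))$, so by Lemma~\ref{belongsinkappa} we may freely use $A''(\theta) \asymp A''(\theta_0)$ on the interval of interest. Second, since $f_{\hat t}(X_2)$ (given $\hat t$) is an average of $n$ independent Bernoullis with mean $g_{\hat t}(\theta_0)$, Chebyshev gives $|f_{\hat t}(X_2) - g_{\hat t}(\theta_0)| = O(1/\sqrt{n})$. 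Third, $|Z| = O(1/(\epsn n))$ for $Z \sim \Lap(2/(\epsn n))$. Writing $\hat y := f_{\hat t}(X_2) + Z$ and using $\epsn \leq k/\sqrt n$, we obtain $|\hat y - g_{\hat t}(\theta_0)| = O(1/(\epsn n))$.

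Since $g_{\hat t}(\hat\theta) = \hat y$ by definition, the mean value theorem gives $|\hat\theta - \theta_0| = |\hat y - g_{\hat t}(\theta_0)| / g_{\hat t}'(\theta^*)$ for some $\theta^*$ between $\hat\theta$ and $\theta_0$, and the core task is the lower bound $g_{\hat t}'(\theta^*) \gtrsim \sqrt{A''(\theta_0)}/\sqrt{\fourthmoment}$. Differentiation under the integral gives $g_t'(\theta) = \mathbb{E}_\theta[(X - A'(\theta))\mathds{1}_{X > t}]$, which at $t = A'(\theta)$ equals $\tfrac12 \mathbb{E}_\theta|X - A'(\theta)|$; by H\"older applied as $\mathbb{E}[Y^2] \leq \mathbb{E}[|Y|]^{2/3}\mathbb{E}[Y^4]^{1/3}$ together with the fourth-moment bound, this is at least $\sqrt{A''(\theta)}/(2\sqrt{\fourthmoment})$. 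To promote this from $t=A'(\theta)$ to the actual $\hat t$, I would estimate the correction $\mathbb{E}_\theta[(X - A'(\theta))(\mathds{1}_{X > \hat t} - \mathds{1}_{X > A'(\theta)})]$ using the tail bound of Lemma~\ref{concentrationexp} and the fact that the density of $P_\theta$ cannot be too peaked; this correction is $o(\sqrt{A''(\theta)})$ whenever $|\hat t - A'(\theta)| \ll \sqrt{A''(\theta)}$, which holds in our regime.

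The main obstacle is that the derivative bound on $g_{\hat t}'(\theta^*)$ has a self-referential flavor: it requires $\theta^*$ to be close to $\theta_0$, while closeness of $\theta^*$ is precisely what the bound is meant to deliver. I would resolve this by first proving a cruder version of the derivative lower bound that holds uniformly on $[\theta_0 - \kappa(\theta_0), \theta_0 + \kappa(\theta_0)]$ (using only the factor-of-two control of $A''$ guaranteed by $\kappa$), which certifies $\hat\theta$ lies in that interval; the sharper bound then applies on the restricted neighborhood, and a second pass yields the final estimate. Combining the three pieces gives $|\hat\theta - \theta_0| = O\bigl(1/(\epsn n \sqrt{A''(\theta_0)})\bigr)$, and by Lemma~\ref{boundonmodulus} this equals $O\bigl(J_{\TV, \theta_0}^{-1}(1/(\epsn n))\bigr)$; the hypothesis $\kappa(\theta_0) \geq J_{\TV, \theta_0}^{-1}(1/(\epsn n))$ is exactly what is needed to invoke Lemma~\ref{boundonmodulus} at the relevant scale.
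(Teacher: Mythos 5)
Your proposal takes a genuinely different route from the paper. The paper proves Proposition~\ref{prop:highprivupper} by a testing-to-estimation reduction: Lemma~\ref{highprivexptest} shows that the noisy Scheff\'e-type statistic $f_{\hat t}(X)+\Lap(1/(\epsilon n))$ distinguishes $P_{\theta_0}$ from any $P_{\theta_1}$ with $|\theta_1-\theta_0|\ge C J^{-1}_{\TV,\theta_0}(1/(\epsn n))$ with a threshold $\tau$ sandwiched between the two expectations, and Corollary~\ref{highprivoracleest} then argues by contradiction that, since $\secondstephigh$ returns the $\theta$ whose expected statistic matches the observed one, returning any such $\theta_1$ would mean the tester erred, which has probability at most $0.25$. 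You instead argue directly through an inverse-function/mean-value estimate: you bound $|\hat y - g_{\hat t}(\theta_0)|$ and lower-bound $g_{\hat t}'$ near $\theta_0$. Your derivation of $g_t'(\theta)=\mathbb{E}_\theta[(X-A'(\theta))\mathds{1}_{X>t}]$, the identity $g_{A'(\theta)}'(\theta)=\tfrac12\mathbb{E}_\theta|X-A'(\theta)|$, and the H\"older step giving $\mathbb{E}_\theta|X-A'(\theta)|\ge\sqrt{A''(\theta)}/\sqrt{\fourthmoment}$ are all correct, and the observation that the correction from $A'(\theta)$ to $\hat t$ is at most $|\hat t-A'(\theta)|$ makes the local derivative bound clean. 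In this sense your argument is more elementary (it avoids invoking the testing machinery), while the paper's argument is more uniform in style with the low-privacy regime (both go through a testing lemma).

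The one step that does not work as written is the proposed resolution of the circularity: you claim a crude uniform lower bound on $g_{\hat t}'(\theta)$ over the entire interval $[\theta_0-\kappa(\theta_0),\theta_0+\kappa(\theta_0)]$. This is not in general true. For $\theta$ near the edge of the $\kappa$-interval one has $|A'(\theta)-\hat t|$ comparable to $A''(\theta_0)\kappa(\theta_0)\ge\sqrt{A''(\theta_0)}/C$, at which point your correction bound $|\mathbb{E}_\theta[(X-A'(\theta))(\mathds{1}_{X>\hat t}-\mathds{1}_{X>A'(\theta)})]|\le|\hat t-A'(\theta)|$ is no longer negligible relative to $\tfrac12\sqrt{A''(\theta)}/\sqrt{\fourthmoment}$, and indeed $g_{\hat t}'(\theta)$ can degenerate when $\hat t$ sits deep in the tail of $P_\theta$. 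Fortunately you do not need a bound on the full $\kappa$-interval: since $g_{\hat t}$ is strictly monotone, to certify $\hat\theta\in[\theta_0-r,\theta_0+r]$ it suffices to show $g_{\hat t}(\theta_0\pm r)$ straddles $\hat y$, and by the mean value theorem this only requires the derivative lower bound on $[\theta_0-r,\theta_0+r]$ for $r$ a suitable constant times $1/(\epsn n\sqrt{A''(\theta_0)})$. On that small interval $|\hat t-A'(\theta)|=o(\sqrt{A''(\theta_0)})$ for $n$ large, so the sharper bound holds directly, and the hypotheses $\kappa(\theta_0)\ge 1/(C\sqrt{A''(\theta_0)})$ and $\epsn=\Omega(\ln n/n)$ guarantee $r\le\kappa(\theta_0)$ for $n$ large. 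With this localization in place of the two-pass argument, the rest of your proof goes through.
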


Note that the upper bound in Proposition~\ref{prop:highprivupper} matches the characterization of the optimal local estimation rate given in Lemma~\ref{smallepslowerexp}. 
Thus, Proposition~\ref{prop:highprivupper} implies uniform achievability; there exists an algorithm that achieves the optimal local estimation rate for every $\theta_0\in\thetarange$. Note that many of the conditions required in Proposition~\ref{prop:highprivupper} were already present in Corollary~\ref{allforalog}. Indeed, we will primarily use these conditions to ensure that our initial estimate $\mathcal{M}(X_1)$ is sufficiently accurate. 

The proof proceeds by arguing that the test defined by the test statistic $f_{\hat{t}}(X)+\Lap\left(\frac{1}{\epsilon n}\right)$ is good enough to distinguish $P_{\theta_0}$ from $P_{\theta_1}$ provided $|\theta_0-\theta_1|\ge CJ_{\TV,\theta_0}^{-1}\left(\frac{1}{n\epsilon_n}\right)$, and thus the estimator inherited by this tester will, with high probability, not output such a $\theta_1$.
The main technical challenge in this section will be showing that $f_{\hat{t}}(X)+\Lap\left(\frac{1}{\epsilon n}\right)$ is a good test statistic for distinguishing between $P_{\theta_0}$ and $P_{\theta_1}$ when $|\theta_0-\theta_1|\ge CJ_{\TV,\theta_0}^{-1}\left(\frac{1}{n\epsilon_n}\right)$. We first show that if $\hat{t}$ is a good enough estimate for $A'(\theta_0)$, then $|\mathbb{P}_{\theta_0}(X>\hat{t})-\mathbb{P}_{\theta_1}(X>\hat{t})|\approx \TV(P_{\theta_0}, P_{\theta_1})$. Note that this would be obviously true if $\hat{t}=\frac{A(\theta_0)-A(\theta_1)}{\theta_0-\theta_1}$, so the majority of the work goes into proving that $\hat{t}$ is close enough to this ideal boundary point. Then, we show that the standard deviation of the statistic $f_{\hat{t}}(X)+\Lap\left(\frac{1}{\epsilon n}\right)\approx \frac{1}{\epsilon n}$, so the test will distinguish $P_{\theta_0}$ and $P_{\theta_1}$ provided $\TV(P_{\theta_0}, P_{\theta_1})=\Omega(\frac{1}{\epsilon n})$, as required. The following is the main technical lemma in this section, it is proved in Section~\ref{ahighprivexptest}.

\begin{restatable}{lemma}{rhighprivexptest}
\label{highprivexptest} For all positive constants $\fourthmoment$ and $b$, and $\Omega\left(\frac{\ln n}{n}\right)\le \epsn\le O\left(\frac{1}{\sqrt{n}}\right)$, there exists constants $N\in\mathbb{N}$ and $C>0$ such for all measures $\mu$, $\theta_0\in\thetarange$ and $n\in\mathbb{N}$ such that $n\ge N$, if 
\begin{enumerate}
\item $\max_{\theta\in\thetarange(\theta_0)}\frac{\mathbb{E}_{P_{\theta}}[(X-A'(\theta_0))^4]}{A''(\theta)^2}\le \fourthmoment$.
\item\label{initialacc} $|\hat{t}-A'(\theta_0)|\le \textcolor{black}{b\left(\sqrt{\frac{A''(\theta_0)}{n}}+\frac{\sqrt{A''(\theta_0)\ln\left(n\right)}}{\epsilon n}\right)}$
\item $\kappa(\theta_0)\ge J^{-1}_{\TV,\theta_0}\left(\frac{8\sqrt{2}}{\epsilon_n n}\right)$
\end{enumerate} 
then for all $\theta_1\in\thetarange(\theta_0)$ such that $|\theta_0-\theta_1|\ge CJ_{\TV, \theta_0}^{-1}\left(\frac{1}{\epsilon_nn}\right)$, then there exists a threshold $\tau$ such that the test \[\begin{cases}
P_{\theta_0} & \text{if } f_{\hat{t}}(X)+\Lap\left(\frac{1}{\epsilon n}\right)\le \tau\\
P_{\theta_1} & \text{if } f_{\hat{t}}(X)+\Lap\left(\frac{1}{\epsilon n}\right)\ge \tau
\end{cases}
\]
distinguishes between $\theta_0$ and $\theta_1$. Furthermore, $\mathbb{E}_{P_{\theta_0}}[f_{\hat{t}}(X)]\le \tau\le \mathbb{E}_{P_{\theta_1}}[f_{\hat{t}}(X)].$
\end{restatable}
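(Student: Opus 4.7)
The plan is to verify that the noisy statistic $S := f_{\hat t}(X) + \Lap(1/(\epsilon_n n))$ separates $P_{\theta_0}^n$ from $P_{\theta_1}^n$ by mean-gap $\Omega(1/(\epsilon_n n))$ with standard deviation $O(1/(\epsilon_n n))$; Chebyshev then produces a threshold $\tau$ strictly between the two means and the rest follows. I treat $\theta_1 > \theta_0$, the other case being symmetric. The key observation for an exponential family is the likelihood-ratio identity $p_{\theta_1}(x)/p_{\theta_0}(x) = \exp((\theta_1-\theta_0)(x - t^*))$ for $t^* := (A(\theta_1) - A(\theta_0))/(\theta_1-\theta_0)$, which identifies $\{x > t^*\}$ as the Scheff\'e set, so $\TV(P_{\theta_0}, P_{\theta_1}) = g_{t^*}(\theta_1) - g_{t^*}(\theta_0)$. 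Assuming $\hat t \ge t^*$ WLOG,
\[
  g_{\hat t}(\theta_1) - g_{\hat t}(\theta_0) \;=\; \TV(P_{\theta_0}, P_{\theta_1}) \;-\; \int_{t^*}^{\hat t} (p_{\theta_1} - p_{\theta_0}) \, d\mu,
\]
and the main task reduces to bounding the subtracted integral by $\tfrac12 \TV(P_{\theta_0}, P_{\theta_1})$.

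To control $|\hat t - t^*|$, the MVT together with $\kappa$-stability of $A''$ on $\thetarange(\theta_0)$ (valid since $\theta_1 \in \thetarange(\theta_0)$) gives $|t^* - A'(\theta_0)| \le 2A''(\theta_0)|\theta_1-\theta_0|$, and hypothesis (2) combined with $\epsilon_n \le k/\sqrt n$ gives $|\hat t - A'(\theta_0)| = O(\sqrt{A''(\theta_0)\ln n}/(\epsilon_n n))$. Hypothesis (3) then lets me apply Lemma~\ref{boundonmodulus} at $\theta_0$, turning the separation hypothesis $|\theta_1-\theta_0| \ge C J_{\TV, \theta_0}^{-1}(1/(\epsilon_n n))$ into a two-sided bound $\TV(P_{\theta_0}, P_{\theta_1}) \gtrsim |\theta_1-\theta_0|\sqrt{A''(\theta_0)}$ with constant depending only on $\fourthmoment$. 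When $|\theta_1-\theta_0|\sqrt{A''(\theta_0)}$ lies below a small absolute constant $C''$, the quantity $|(\theta_1-\theta_0)(x-t^*)|$ is at most $1$ on $[t^*,\hat t]$, so applying $|e^y - 1| \le 2|y|$ to the likelihood-ratio identity yields the pointwise bound $p_{\theta_1}(x) - p_{\theta_0}(x) \le 2 p_{\theta_0}(x)|\theta_1-\theta_0||x-t^*|$; integrating against $\mu$ and using $P_{\theta_0}([t^*,\hat t]) \le 1$ gives the subtracted integral $\le 2|\theta_1-\theta_0||\hat t - t^*|$, and unpacking with $\epsilon_n n = \Omega(\ln n)$ shows this is at most $\tfrac12 \TV$ once $C''$ is small enough and $n \ge N$. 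In the complementary regime $|\theta_1-\theta_0|\sqrt{A''(\theta_0)} \ge C''$, $\TV(P_{\theta_0}, P_{\theta_1})$ is bounded below by an absolute constant, so it suffices to show the mean gap is also bounded below by a constant; this follows from the concentration Lemma~\ref{concentrationexp} together with the MLR from Lemma~\ref{monotonelikelihood}, which together force $P_{\theta_1}(X > \hat t)$ to sit well above $P_{\theta_0}(X > \hat t) \approx 1/2$.

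For the variance bound, $\var_{P_{\theta_i}^n}(f_{\hat t}) \le 1/(4n)$ and the Laplace contribution is $2/(\epsilon_n n)^2$; under $\epsilon_n \le k/\sqrt n$ these combine to give SD $O(1/(\epsilon_n n))$ under both hypotheses (using the kurtosis bound and $\kappa$-stability to ensure comparable variance at $\theta_1$ as at $\theta_0$). Choosing $C$ large enough that the mean gap exceeds $8$ times this SD, setting $\tau$ to the midpoint, and applying Chebyshev yields success probability $\ge 3/4$ under each hypothesis, with $\tau$ lying in the required interval by construction. The main obstacle is the sharpness of the mean-separation bound at the threshold scale: the na\"ive estimate $|P_{\theta_1}(I) - P_{\theta_0}(I)| \le \TV(P_{\theta_0},P_{\theta_1})$ is useless, and the kurtosis hypothesis alone does not yield a pointwise density bound for $P_{\theta_0}$ near its mean. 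The exponential-family likelihood-ratio identity rescues the argument, letting one order of Taylor expansion extract a factor of $|\theta_1-\theta_0||x-t^*|$ so that only the trivial mass bound $P_{\theta_0}([t^*, \hat t]) \le 1$ is required in place of a density bound.
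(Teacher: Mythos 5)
Your core argument matches the paper's: both start from the exponential-family identity $p_{\theta_1}(x)/p_{\theta_0}(x) = e^{(\theta_1-\theta_0)(x - t^*)}$ with $t^* = (A(\theta_1)-A(\theta_0))/(\theta_1-\theta_0)$, decompose the mean gap $g_{\hat t}(\theta_1)-g_{\hat t}(\theta_0)$ as $\TV(P_{\theta_0},P_{\theta_1})$ minus the contribution of the interval between $t^*$ and $\hat t$, control that interval term via a first-order bound on $e^y-1$ together with the bounds on $|\hat t - t^*|$ supplied by hypothesis~(2) and $\kappa$-stability, invoke Lemma~\ref{boundonmodulus} to lower-bound $\TV$, and close with a Chebyshev/Markov argument using the variance bound $\le \tfrac1n + \tfrac{2}{\epsilon_n^2 n^2} = O(1/(\epsilon_n n)^2)$. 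That is exactly the paper's plan, and your first regime is correct.

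The gap is in your ``complementary regime'' $|\theta_1-\theta_0|\sqrt{A''(\theta_0)}\ge C''$. You assert that concentration plus MLR force $P_{\theta_1}(X>\hat t)$ to sit well above $P_{\theta_0}(X>\hat t)\approx 1/2$. But $P_{\theta_0}(X>\hat t)\approx 1/2$ is not justified by the stated hypotheses: the kurtosis bound controls the spread but does not pin the median of a (possibly skewed) $P_{\theta_0}$ near its mean, and Lemma~\ref{concentrationexp} is a one-sided tail upper bound, so it does not rule out $P_{\theta_0}(X>\hat t)$ being close to~$1$. Moreover, your Chebyshev/concentration bound on $P_{\theta_1}(X\le\hat t)$ only becomes small when $|\theta_1-\theta_0|\sqrt{A''(\theta_0)}$ is large, so there is a range just above the small constant $C''$ that neither regime cleanly covers. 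The paper sidesteps all of this: it proves the Chebyshev separation only at the boundary $|\theta_0-\theta_1| = C\,J_{\TV,\theta_0}^{-1}(1/(\epsilon_n n))$ (which lies squarely inside your first regime for $n\ge N$), then invokes the monotone likelihood ratio (Lemma~\ref{monotonelikelihood}): since $f_{\hat t}$ is nondecreasing in $x$, $\mathbb{E}_{\theta_1}[f_{\hat t}(X)]$ is nondecreasing in $\theta_1$, so for any larger $|\theta_0-\theta_1|$ the mean gap only grows while the variance bound is unchanged, and the same threshold $\tau$ still works. Replacing your complementary-regime argument with this monotonicity reduction would close the gap and make your proof match the paper's.
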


The following corollary translates from the above testing result to an estimation result. The intuition for this conversion is that if the test statistic $f_{\hat{t}}(X)+\Lap\left(\frac{1}{\epsilon n}\right)$ can distinguish $\theta_0$ from $\theta_1$, then the estimation algorithm $\secondstephigh(X,\hat{t})$ is unlikely to output $\theta_1$ when the data is drawn from $P_{\theta_0}$. 

\begin{restatable}{corollary}{rhighprivoracleest}
\label{highprivoracleest} For all positive constants $\fourthmoment$ and $b$, and $\Omega\left(\frac{\ln n}{n}\right)\le \epsilon\le O\left(\frac{1}{\sqrt{n}}\right)$, there exists constants $N\in\mathbb{N}$ and $C>0$ such for all measures $\mu$, $\theta_0\in\thetarange$ and $n\in\mathbb{N}$ such that $n\ge N$, if 
\begin{enumerate}
\item $\max_{\theta\in\thetarange(\theta_0)}\frac{\mathbb{E}_{P_{\theta}}[(X-A'(\theta_0))^4]}{A''(\theta)^2}\le \fourthmoment$,
\item\label{initialacc} $|\hat{t}-A'(\theta_0)|\le \textcolor{black}{b\left(\sqrt{\frac{A''(\theta_0)}{n}}+\frac{\sqrt{A''(\theta_0)\ln\left(n\right)}}{\epsilon n}\right)}$,
\item $\kappa(\theta_0)\ge J^{-1}_{\TV, \theta_0}\left(\frac{1}{\epsilon_n n}\right)$,
\end{enumerate} 
then we have with probability 0.75,
\[|\secondstephigh(X,\hat{t})-\theta_0| \le C\cdot J_{\TV, \theta_0}^{-1}\left(\frac{1}{\epsilon_n n}\right)=O\left(\frac{1}{n\epsilon_n\sqrt{A''(\theta_0)}}\right).\]
\end{restatable}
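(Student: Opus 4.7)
The estimator returns $\hat\theta = g_{\hat t}^{-1}(Y)$ where $Y = f_{\hat t}(X) + \Lap(1/(\epsilon n))$, and by Lemma~\ref{monotonelikelihood} the map $\theta \mapsto g_{\hat t}(\theta) = \mathbb{P}_{\theta}(x > \hat t)$ is strictly increasing in $\theta$. The plan is to exploit this monotonicity to turn two-sided tail bounds on $Y$ directly into two-sided estimation bounds on $\hat\theta$, and to obtain those tail bounds by applying the testing guarantee of Lemma~\ref{highprivexptest} at the two endpoints of a symmetric window around $\theta_0$.

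\textbf{The steps.} Let $C_{\text{test}}$ be the constant produced by Lemma~\ref{highprivexptest} and set $C = k \cdot C_{\text{test}}$ for a constant $k > 1$ to be chosen at the end. Define $\theta_1^\pm = \theta_0 \pm C \cdot J_{\TV, \theta_0}^{-1}(1/(\epsilon_n n))$. After verifying the hypotheses of Lemma~\ref{highprivexptest} for each of the pairs $(\theta_0, \theta_1^+)$ and $(\theta_0, \theta_1^-)$---using the fourth-moment bound (condition 1), the accuracy of $\hat t$ (condition 2), and condition~3 of the corollary (possibly after enlarging its constant) to ensure $\theta_1^\pm \in \thetarange(\theta_0)$---the lemma produces thresholds $\tau_+ \in [g_{\hat t}(\theta_0), g_{\hat t}(\theta_1^+)]$ and $\tau_- \in [g_{\hat t}(\theta_1^-), g_{\hat t}(\theta_0)]$ whose induced tests distinguish $P_{\theta_0}$ from $P_{\theta_1^\pm}$. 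By monotonicity of $g_{\hat t}^{-1}$, the event $\{\tau_- \le Y \le \tau_+\}$ implies $\theta_1^- \le \hat\theta \le \theta_1^+$, which is the desired bound $|\hat\theta - \theta_0| \le C \cdot J_{\TV, \theta_0}^{-1}(1/(\epsilon_n n))$.

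\textbf{The main obstacle.} A direct union bound over the two testing statements only yields probability $\ge 1/2$, not $3/4$. To close this gap, the proof re-opens the Chebyshev argument underlying Lemma~\ref{highprivexptest}: under $P_{\theta_0}$ the statistic satisfies $\var(Y) \le 1/(4n) + 2/(\epsilon n)^2 = O(1/(\epsilon n)^2)$ in the high privacy regime $\epsilon_n = O(1/\sqrt n)$, while the gaps $\tau_+ - g_{\hat t}(\theta_0)$ and $g_{\hat t}(\theta_0) - \tau_-$ both scale as $\Omega(C/(\epsilon n))$---this linear-in-$C$ gap is exactly what Lemma~\ref{highprivexptest} provides, because $\TV(P_{\theta_0}, P_{\theta_1^\pm})$ grows linearly in $|\theta_1^\pm - \theta_0|$ within $\thetarange(\theta_0)$. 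Chebyshev then gives $\mathbb{P}_{\theta_0}[|Y - g_{\hat t}(\theta_0)| \ge \lambda/(\epsilon n)] = O(\lambda^{-2})$, so choosing $k$ (hence $C$) large enough drives each one-sided failure probability below $1/8$, and the union bound delivers the claimed $0.75$. The translation to the explicit rate $O(1/(n\epsilon_n \sqrt{A''(\theta_0)}))$ then follows from Lemma~\ref{boundonmodulus} applied to $J_{\TV, \theta_0}^{-1}(1/(\epsilon_n n))$.
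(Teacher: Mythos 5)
Your approach is the same one the paper takes: use Lemma~\ref{highprivexptest} to control the probability that the noisy statistic $Y = f_{\hat t}(X) + \Lap(1/(\epsilon n))$ lands far from $g_{\hat t}(\theta_0)$, then convert to a bound on $|\hat\theta-\theta_0|$ via the monotonicity of $g_{\hat t}$. The paper's own proof is a three-line contradiction: if $\hat\theta=\theta_1$ with $|\theta_1-\theta_0|\ge C J^{-1}_{\TV,\theta_0}(1/(\epsilon_n n))$, then $Y = g_{\hat t}(\theta_1)$ lies past the threshold $\tau$ from Lemma~\ref{highprivexptest}, so the corresponding test would have rejected $P_{\theta_0}$, ``which only happens with probability $0.25$.''

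What you add is a legitimate refinement: you observe that the test of Lemma~\ref{highprivexptest} is a fixed test only after fixing the alternative $\theta_1$, so the bad event must be decomposed into the two fixed events $\{Y\ge g_{\hat t}(\theta_1^+)\}$ and $\{Y\le g_{\hat t}(\theta_1^-)\}$ at $\theta_1^\pm = \theta_0\pm C J^{-1}_{\TV,\theta_0}(1/(\epsilon_n n))$. Applying the lemma's guarantee separately to each side and union-bounding yields failure probability $0.5$, not $0.25$; the paper's proof elides this and treats the two directions as a single fixed test. Your fix --- replace $C_{\mathrm{test}}$ by $kC_{\mathrm{test}}$, use $\var_{\theta_0}(Y)=O(1/(\epsilon n)^2)$ for $\epsilon_n=O(1/\sqrt n)$ together with the linear-in-$C$ gap $\mathbb{E}_{\theta_1^\pm}[Y]-\mathbb{E}_{\theta_0}[Y]\ge \tfrac12\TV(P_{\theta_0},P_{\theta_1^\pm})$ (an inequality from inside the proof of Lemma~\ref{highprivexptest}, combined with Lemma~\ref{boundonmodulus}), and rerun Chebyshev so that each one-sided probability is below $1/8$ --- is sound and gives an honest derivation of the stated $0.75$. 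The cost is that you must re-open the proof of Lemma~\ref{highprivexptest} rather than apply it as a black box, and you need $\kappa(\theta_0)\ge C J^{-1}_{\TV,\theta_0}(1/(\epsilon_n n))$ so that $\theta_1^\pm$ stay inside $\thetarange(\theta_0)$, a strengthening of the corollary's condition~3 that you rightly flag. (Note in this regard that Lemma~\ref{highprivexptest} itself already imposes the stronger hypothesis $\kappa(\theta_0)\ge J^{-1}_{\TV,\theta_0}(8\sqrt2/(\epsilon_n n))$, folded into its choice of $N$, so such a strengthening is consistent with how the lemma is used.) In short: same route, but a more careful two-sided accounting that the paper's proof needs and does not spell out.
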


\begin{proof} Let $C$ and $N$ be as in Lemma~\ref{highprivexptest}.
Let $X\sim P_{\theta_0}^n$ and suppose for sake of contradiction that $\mathcal{A}_{\text{high}}(X)~=~\theta_1$ where $|\theta_1-\theta_0|\ge CJ_{\TV, \theta_0}^{-1}\left(\frac{1}{\epsilon_nn}\right)$. Then by definition, $f_{\hat{t}}(X)+\Lap\left(\frac{1}{\epsilon n}\right) = \mathbb{E}_{Y\sim P_{\theta_1}}[f_{\hat{t}}(Y)]$. Therefore, the test in Lemma~\ref{highprivexptest} would have rejected $P_{\theta_0}$, which only happens with probability $0.25$.
\end{proof}

The proof of Proposition~\ref{prop:highprivupper} follows directly from Corollary~\ref{highprivoracleest} and Theorem~\ref{initialestthm}. Note that the conditions in Proposition~\ref{prop:highprivupper} are sufficient to ensure that the mean estimator from Theorem~\ref{initialestthm} is accurate enough that $\hat{t}$ satisfies the required condition in Corollary~\ref{highprivoracleest} with high probability.

\subsection{Low Privacy Regime}

In the low privacy regime, $\epsilon=\Omega\left(\frac{1}{\sqrt{n}}\right)$, we claim that the optimal local estimation rate for privacy is asymptotically the same as the non-private rate. The estimator that achieves the optimal local estimation rate is derived from the noisy clamped log-likelihood test outlined in Section~\ref{cllr}, the optimal algorithm for privately distinguishing two distributions.

\subsubsection{Characterising the Optimal Local Estimation Rate in Low Privacy Regime}

A main component of this claim is that for exponential families, the modulus of continuity of the non-private sample complexity $\SC$ is equal to the modulus of continuity of the private sample complexity, $\SC_{\epsilon_n}$ in this parameter regime. 
The proof of the following proposition is found in Section~\ref{alowprivmodcontinuity}.

\begin{restatable}{proposition}{rlowprivmodcontinuity}
\label{lowprivmodcontinuity} For all $\fourthmoment>0$, there exists positive constants $k$, $C_1$, $C_2$ and $N$ such that for all measures $\mu$, $\theta\in\thetarange$ and $n\in\mathbb{N}$ such that $n\ge N$, if 
\begin{itemize}
\item $\epsn\ge \frac{k}{\sqrt{n}}$, 
\item $\max_{\theta'\in\thetarange(\theta)}\frac{\mathbb{E}_{\theta'}((x-A'(\theta'))^4)}{A''(\theta')^2}\le \fourthmoment$  
\item $\kappa(\theta)\ge \frac{C_2}{\sqrt{n A''(\theta)}}$
\end{itemize} 
then, \[\frac{C_1}{\sqrt{nA''(\theta)}}\le\omega_n(P_{\theta}, \Qtesteps)\le \frac{C_2}{\sqrt{nA''(\theta)}}.\]
This implies $\omega_n(P_{\theta}, \Qtesteps) =\Theta\left(\omega_n(P_{\theta}, \Qtest)\right)$.
\end{restatable}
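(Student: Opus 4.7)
The plan is to match upper and lower bounds on $\omega_n(P_\theta, \Qtesteps)$, each reduced to $A''(\theta)$ via Taylor expansion of the log-partition function $A$ in the window $\theta' \in [\theta - C_2/\sqrt{nA''(\theta)},\, \theta + C_2/\sqrt{nA''(\theta)}]$. The hypothesis $\kappa(\theta) \geq C_2/\sqrt{nA''(\theta)}$ places this window inside $\thetarange(\theta)$, where $A''$ is stable up to a factor of $2$, so every Taylor-type estimate holds uniformly across it.

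For the lower bound, the containment $\Qtesteps \subseteq \Qtest$ immediately gives $\omega_n(P_\theta, \Qtesteps) \geq \omega_n(P_\theta, \Qtest) = \Theta(J^{-1}_{\Hell, P_\theta}(1/\sqrt{n}))$ by Corollary~\ref{nonprivLER}. I will then use the exponential-family identity $\int \sqrt{p_\theta p_{\theta+h}}\, d\mu = \exp\!\bigl(A(\theta + h/2) - \tfrac12 (A(\theta) + A(\theta+h))\bigr)$ and Taylor-expand the exponent to get $\Hell^2(P_\theta, P_{\theta+h}) = \tfrac{1}{8} A''(\theta) h^2 + O(|h|^3 \max_{\xi \in \thetarange(\theta)} |A'''(\xi)|)$. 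The $\kappa$ hypothesis absorbs the cubic remainder, and inverting the leading term yields $J^{-1}_{\Hell, P_\theta}(1/\sqrt{n}) = \Theta(1/\sqrt{n A''(\theta)})$.

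For the upper bound, I will show that for every $\theta' \in \thetarange$ with $|\theta - \theta'| \geq C_2/\sqrt{n A''(\theta)}$, the noisy clamped log-likelihood ratio test $\NCLLR_{-\epsilon}^{\epsilon}$ of Proposition~\ref{robustthresholds} distinguishes $P_\theta$ from $P_{\theta'}$ with $n$ samples. Since $\LLR(x) = (\theta-\theta')x - (A(\theta) - A(\theta'))$, a Taylor expansion of $A$ gives $\mathbb{E}_\theta[\LLR] - \mathbb{E}_{\theta'}[\LLR] = A''(\theta)(\theta-\theta')^2 + O(|\theta-\theta'|^3)$, so the summed unclamped gap over $n$ samples is $\Theta(C_2^2)$; the variance of a single centered LLR is at most $A''(\theta)(\theta-\theta')^2 = O(C_2^2/n)$, giving sum variance $O(C_2^2)$, and the Laplace draw $\Lap(2)$ contributes only $O(1)$. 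A Chebyshev argument in the style of the proof of Lemma~\ref{smalleps} then produces a threshold $\tau$ with $\mathbb{E}_\theta[\NCLLR] \leq \tau \leq \mathbb{E}_{\theta'}[\NCLLR]$ at which the test succeeds, provided $C_2$ is sufficiently large. For $\theta' \notin \thetarange(\theta)$, monotonicity (Corollary~\ref{monotonecontinuous}) makes the gap only larger, and the clamped sum is then concentrated near $\pm n\epsilon \gg O(1)$ in the low-privacy regime, so the same test succeeds a fortiori.

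The delicate point is controlling the clamping on the nearby case. The per-sample unclamped LLR is of order $(\theta-\theta')(x - A'(\theta)) = O(C_2/\sqrt{n})$ for typical $x$ by Lemma~\ref{concentrationexp}, and the hypothesis $\epsn \geq k/\sqrt{n}$ ensures that once $k$ is chosen sufficiently large compared to $C_2$, clamping at $\pm\epsilon$ is inactive on a constant fraction of the sample mass, so the clamped and unclamped means agree up to constants. The fourth-moment hypothesis $\mathbb{E}_\theta[(x - A'(\theta))^4] \leq \fourthmoment A''(\theta)^2$ then bounds $\mathbb{E}_\theta[\LLR^2 \mathds{1}\{|\LLR| > \epsilon\}]$ by a Markov-type argument, showing the tail contribution to both the expectation and variance is dominated by the $\Theta(C_2^2)$ gap. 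This is exactly where the low-privacy constant $k$ and the kurtosis bound $\fourthmoment$ enter in an essential way; everything else reduces to direct expansion of $A$.
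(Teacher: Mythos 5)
Your lower bound is fine, and the overall Chebyshev skeleton for the upper bound (analyze the noisy clamped log-likelihood ratio statistic, bound the gap in means below and the variances above, choose a threshold) matches the paper's strategy in Lemma~\ref{lownopriv}. But the specific way you propose to control clamping does not close, and the paper avoids the issue by a genuinely different argument.

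The step that breaks is the claim that the clamped and unclamped mean gaps ``agree up to constants'' because clamping is inactive on a constant mass fraction, with the residual controlled by a Markov bound on $\mathbb{E}_\theta[\LLR^2\mathds{1}\{|\LLR|>\epsilon\}]$. Write the per-sample unclamped gap as $g := \mathbb{E}_{\theta'}[\LLR]-\mathbb{E}_\theta[\LLR] \asymp h^2 A''(\theta) = C_2^2/n$ where $h=|\theta-\theta'| = C_2/\sqrt{nA''(\theta)}$, and let $\mathrm{Tail} := g - (\text{clamped gap}) \geq 0$. You need $\mathrm{Tail} \leq (1-c)g$ for some constant $c>0$. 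But $\mathrm{Tail}$ is a first-moment quantity, $\int (\LLR-[\LLR]_{-\epsilon}^{\epsilon})(P_{\theta'}-P_\theta)\,d\mu$, and the natural bounds from the kurtosis hypothesis lose a factor of $\sqrt{n}$: Markov with the fourth moment gives $\mathbb{E}_\theta\!\big[|\widetilde{\LLR}|\,\mathds{1}\{|\widetilde{\LLR}|>\epsilon/2\}\big] \lesssim \zeta h^4 A''(\theta)^2/\epsilon^3 \lesssim \zeta C_2^4 / (k^3\sqrt{n})$ per sample, which is \emph{larger} than $g \asymp C_2^2/n$ once $n \gtrsim k^6/(\zeta^2 C_2^4)$. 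Bounding via $\mathbb{E}_\theta[\LLR^2\mathds{1}]$ and Cauchy--Schwarz fares no better. The difficulty is real: $\mathrm{Tail}$ need not be small relative to $g$, and the only reason the test still works is that the clamped gap has a separate direct lower bound.

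The paper's Lemma~\ref{lownopriv} does exactly that. Rather than comparing to the unclamped gap at all, it lower-bounds the clamped gap by restricting the defining integral to the interval $I = \big[M+\tfrac14\sqrt{A''(\theta)},\, M + C_2\sqrt{A''(\theta)}\big]$, where $M = (A(\theta+h)-A(\theta))/h$. On $I$ the integrand $[\,h(x-M)\,]_{-\epsilon}^{\epsilon}\,(P_{\theta+h}-P_\theta)$ is nonnegative, the argument $h(x-M)$ lies in $[0, C_2 h\sqrt{A''(\theta)}] = [0, C_2 C/\sqrt{n}]$ so the clamp is \emph{identically inactive} once $k = CC_2$ and $\epsilon \geq k/\sqrt{n}$, and $h(x-M)\big(e^{h(x-M)}-1\big) \geq \tfrac{1}{16}h^2A''(\theta)$. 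The fourth-moment hypothesis enters not through a Markov tail bound on $\LLR$ but through a Paley--Zygmund lower bound and a Chebyshev upper bound that pin $\mathbb{P}_\theta(x\in I) \geq 9/(32\zeta)$, giving a clamped gap of at least $\tfrac{9}{512\zeta}\,h^2A''(\theta)$ with no reference to the unclamped statistic. Your sketch can in fact be salvaged by a similar direct lower bound --- observe that the clamped gap is at least $(1-\epsilon/2)\,\mathbb{E}_\theta[\LLR^2\mathds{1}\{|\LLR|\leq\epsilon\}]$ since the $\{|\LLR|>\epsilon\}$ contribution is signed positively, then peel off $\mathbb{E}_\theta[\LLR^2\mathds{1}\{|\LLR|>\epsilon\}] \lesssim \zeta C_2^4/(k^2n)$ from $\mathbb{E}_\theta[\LLR^2] \asymp C_2^2/n$ --- but that bridge is the missing idea, and it is a different (if equally valid) route from the paper's interval-restriction argument.

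Two small additional points. First, the paper never invokes Proposition~\ref{robustthresholds} in this proof; it directly analyzes $\NCLLR_{-\epsilon}^{\epsilon}$ from scratch, so your appeal to that proposition is an inessential reframing. Second, your handling of the far case $\theta'\notin\thetarange(\theta)$ via ``the clamped sum is concentrated near $\pm n\epsilon$'' is looser than needed; the paper instead notes that $[\ln(P_{\theta+h}/P_\theta)]_{-\epsilon}^{\epsilon}$ is monotone in $x$, so by the MLR property (Lemma~\ref{monotonelikelihood}) the same threshold that distinguishes $\theta$ from $\theta+h$ a fortiori distinguishes $\theta$ from any $\theta+h'$ with $h'>h$.
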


The following corollary follows immediately from Theorem~\ref{lowerbound} and Proposition~\ref{lowprivmodcontinuity}.

\begin{corollary}[Lower Bound in the Low Privacy Regime]\label{largeepslowerexp}
For all $\fourthmoment>0$, there exists positive constants $k$ and constants $C_1$, $C_2$ and $N$ such that under the same conditions as Proposition~\ref{lowprivmodcontinuity}, \[\error^{\loc}_n(P_{\theta_0}, \mathcal{P_{\mu}}, \Qesteps, \theta) \in \left[\frac{C_1}{2\sqrt{nA''(\theta_0)}}, \frac{C_2}{2\sqrt{nA''(\theta_0)}}\right]. \]
\end{corollary}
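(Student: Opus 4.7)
The plan is to observe that this corollary is essentially a direct application of Theorem~\ref{lowerbound} to the bounds on the modulus of continuity established in Proposition~\ref{lowprivmodcontinuity}. Specifically, Theorem~\ref{lowerbound} tells us that for any class $\Qest$ that is closed under post-processing and contains all constant functions, we have the exact identity $\errorlocall{n}{P}{\Qest}{\mathcal{P}}{\theta} = \tfrac{1}{2}\MOCall{n}{P}{\Qtest}{\mathcal{P}}{\theta}$. The first step is therefore to note that $\Qesteps$, the class of $\epsilon$-DP estimators, satisfies both hypotheses: post-processing of a DP estimator remains DP (this is the standard post-processing property of differential privacy), and every constant function is trivially $\epsilon$-DP for any $\epsilon \geq 0$. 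Hence the theorem applies to give $\error^{\loc}_n(P_{\theta_0}, \mathcal{P_\mu}, \Qesteps, \theta) = \tfrac{1}{2}\omega_n(P_{\theta_0}, \Qtesteps)$.

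Next, under the hypotheses inherited from Proposition~\ref{lowprivmodcontinuity}, namely $\epsn \geq k/\sqrt{n}$, the bounded standardized fourth moment assumption on $P_\theta$ for $\theta \in \thetarange(\theta_0)$, and the smoothness condition $\kappa(\theta_0) \geq C_2/\sqrt{nA''(\theta_0)}$, Proposition~\ref{lowprivmodcontinuity} provides matching upper and lower bounds
\[
\frac{C_1}{\sqrt{nA''(\theta_0)}} \leq \omega_n(P_{\theta_0}, \Qtesteps) \leq \frac{C_2}{\sqrt{nA''(\theta_0)}}.
\]
The final step is simply to substitute these two-sided bounds into the identity from Theorem~\ref{lowerbound}, pulling out the factor of $1/2$, which yields
\[
\error^{\loc}_n(P_{\theta_0}, \mathcal{P_\mu}, \Qesteps, \theta) \in \left[\frac{C_1}{2\sqrt{nA''(\theta_0)}}, \frac{C_2}{2\sqrt{nA''(\theta_0)}}\right],
\]
as desired.

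There is no real obstacle here; the genuine mathematical content lies entirely in Proposition~\ref{lowprivmodcontinuity} (which characterizes the modulus of continuity by relating the noisy clamped log-likelihood ratio test statistic to a local quadratic expansion of the KL divergence, showing that once $\epsn \gtrsim 1/\sqrt{n}$ privacy comes at no asymptotic cost) and in the general lower bound of Theorem~\ref{lowerbound} (the Donoho--Liu style testing-to-estimation reduction). The corollary merely packages these into the local estimation rate statement by verifying the post-processing and constant-function hypotheses for $\Qesteps$ and substituting.
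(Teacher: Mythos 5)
Your proposal is correct and matches the paper's own proof, which simply asserts that the corollary "follows immediately from Theorem~\ref{lowerbound} and Proposition~\ref{lowprivmodcontinuity}." You spell out exactly the two steps the paper has in mind: verifying that $\Qesteps$ is closed under post-processing and contains constants so the identity $\error^{\loc}_n = \tfrac{1}{2}\omega_n$ applies, then substituting the two-sided bounds on $\omega_n(P_{\theta_0}, \Qtesteps)$ from Proposition~\ref{lowprivmodcontinuity}.
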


\subsubsection{Uniform Achievability in the Low Privacy Regime}

Our estimator in the low privacy regime is based on the $\NCLLR$ test described in Section~\ref{DPtestingsec}. Recall that the probability density function of $P_{\theta}$ has the form $p_{\theta}(x)=e^{\theta x-A(\theta)}$. Intuitively, the idea is that if we appropriately clamp the test statistic $x-\hat{t}$ then it contains roughly the same amount of information as the clamped log-likelihood ratio. Suppose $|\theta_1-\theta_0|=\omega_n(P_{\theta_0}, \Qtesteps)$ then
\begin{align*}
(\theta_1-\theta_0)[x-\hat{t}]_{-\epsilon/\omega_n(P_{\theta_0}, \Qtesteps)}^{\epsilon/\omega_n(P_{\theta_0}, \Qtesteps)} &= [(\theta_1-\theta_0)x-(\theta_1-\theta_0)\hat{t}]_{-\epsilon}^{\epsilon} \\
&= \left[\ln\frac{P_{\theta_1}(x)}{P_{\theta_0}(x)}+A(\theta_1)-A(\theta_0)-(\theta_1-\theta_0)\hat{t}\right]_{-\epsilon}^{\epsilon} \\
&\approx \left[\ln\frac{P_{\theta_1}(x)}{P_{\theta_0}(x)}\right]_{-\epsilon}^{\epsilon}
\end{align*}
where the final approximation holds since $\hat{t}\approx \mathbb{E}_{\theta_0}[x]=A'(\theta)$. The final term is the optimal test for distinguishing the two distributions. The main technical difficulty is in showing that the approximations do not affect the sample complexity of the test too much. As in the high privacy setting, we need an initial mean estimator to estimate $\hat{t}\approx\mathbb{E}_{\theta_0}[x]$ and $\tilde{\alpha}\approx \omega_n(P_{\theta_0}, \Qtesteps)$. Again the estimator from Theorem~\ref{initialestthm} will suffice. For ease of notation, let $\alpha_n(\theta) = \frac{1}{\sqrt{nA''(\theta)}}$.

\begin{algorithm} \caption{$\secondsteplow$}\label{algo:estexp}
\begin{algorithmic}[1]
\Require{Sample $X_1 = (x_1, \cdots, x_n)\sim P_{\theta_0}^n$, $\hat{t}$, C, $\epsilon$}
\State $\tilde{\alpha} = \alpha_n(A'^{-1}(\hat{t}))$. 
\State $f_{\tilde{\alpha}}(X_1) = \frac{1}{n}\sum_{i=1}^n [x_i-\hat{t}]_{\frac{-\epsilon}{C\tilde{\alpha}}}^{\frac{\epsilon}{C\tilde{\alpha}}}$
\State $\widehat{f} = f_{\tilde{\alpha}}(X_1) + \Lap\left(\frac{2}{\epsilon n C\tilde{\alpha}})\right)$
\State \Return $\hat{\theta} = \arg\min_{\theta} \left|\widehat{f}-\mathbb{E}_{X\sim P_{\theta}^n}\left(f_{\tilde{\alpha}}(X)\right)\right|$
\end{algorithmic}
\end{algorithm}

\begin{algorithm} \caption{$\anyAlow$, estimating exponential family parameters}\label{algo:estexp}
\begin{algorithmic}[1]
\Require{Sample $X_1\sim P_{\theta_0}^n$, $X_2\sim P_{\theta_0}^n$, and an $(\epsilon/2, \delta)$-DP mean estimator $\anyinitial$, C}
\State $\hat{t} = \anyinitial(X_2)$. 
\State \Return $\hat{\theta} = \secondsteplow(X_1, \hat{t}, C, \epsilon/2)$
\end{algorithmic}
\end{algorithm}

\begin{restatable}{proposition}{rmainexp}{\em [Uniform Achievability in Low Privacy Regime]}\label{mainexp}
For any $\epsn>0, \deltan\in[0,1], C>0$, $\mathcal{A}_{low}$ is $(\epsn, \deltan)$-DP. Further, for all $\fourthmoment>0$, $C>0$, $\epsn>0$, and $\deltan\in[0,1]$, there exists an initial estimator $\initial$ such that there exists constants $N\in\mathbb{N}$, $k\ge 0$, $C'>0$, $D>0$ such that for all exponential families (i.e. any measure $\mu$) and $\theta_0\in\thetarange$ if 
\begin{itemize}
\item $\epsn\ge\frac{k}{\sqrt{n}}$ 
\item $\frac{\mathbb{E}_{\theta}((x-\mathbb{E}_{\theta}(x))^4)}{A''(\theta)^2}\le \fourthmoment$, 
\item $\kappa(\theta)\ge \frac{1}{C}\frac{1}{\sqrt{A''(\theta)}}$
\end{itemize}
then for all $n\in\mathbb{N}$ such that $n\ge \max\left\{N, \frac{c\fourthmoment^2\ln(1/\deltan)}{\epsn}, \frac{2\ln\left(\frac{1}{(DA''(\theta_0))^2}\right)}{(DA''(\theta_0))^2}\right\}$ then we have 
\[\ierror{\Alow}{n}{\theta_0}\le C'\omega_n(P_{\theta_0}, \Qtesteps),\]
where $\Alow$ is $\anyAlow$ with initial mean estimator $\initial$.
\end{restatable}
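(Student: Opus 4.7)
The plan is to establish $(\epsn,\deltan)$-DP by sample-splitting composition, and to prove the accuracy bound by reducing the behaviour of $\widehat f$ to the optimal private hypothesis test of Proposition~\ref{robustthresholds} via a linearisation of the log-likelihood ratio around $\theta_0$.

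For privacy, $\initial$ on the disjoint subsample $X_2$ is $(\epsn/2,\deltan)$-DP by hypothesis, so $\hat t$ (and hence $\tilde\alpha$) acts as a public input to the second stage. Each summand $[x_i-\hat t]^{\epsn/(C\tilde\alpha)}_{-\epsn/(C\tilde\alpha)}$ has range at most $2\epsn/(C\tilde\alpha)$, giving $f_{\tilde\alpha}$ an $\ell_1$-sensitivity of at most $2\epsn/(Cn\tilde\alpha)$; Laplace noise at the scale specified then yields $\epsn/2$-DP, and $\hat\theta=\arg\min_\theta|\widehat f-\mathbb{E}_{P_\theta^n}[f_{\tilde\alpha}]|$ is post-processing. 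Basic composition across the two disjoint samples closes the privacy argument.

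For accuracy, first apply Corollary~\ref{allforalog} (valid since $\epsn\ge k/\sqrt n=\Omega(\log n / n)$ for $n$ large) to conclude that with probability $\ge 0.8$, $|A'^{-1}(\hat t)-\theta_0|=O(1/\sqrt{nA''(\theta_0)})$; together with the $\kappa$-smoothness hypothesis this places $A'^{-1}(\hat t)\in\thetarange(\theta_0)$ and forces $\tilde\alpha\in[\tfrac12,2]\cdot\alpha_n(\theta_0)$. The core technical identity is
\[
(\theta_1-\theta_0)\bigl[x-\hat t\bigr]_{-\epsn/(C\tilde\alpha)}^{\epsn/(C\tilde\alpha)}
=\Bigl[\ln\tfrac{p_{\theta_1}(x)}{p_{\theta_0}(x)}+R(\theta_0,\theta_1,\hat t)\Bigr]_{-a}^{b},\qquad a,b=\Theta(\epsn),
\]
where the deterministic remainder $R=A(\theta_1)-A(\theta_0)-(\theta_1-\theta_0)\hat t$ is $O((\theta_1-\theta_0)^2 A''(\theta_0)+|\theta_1-\theta_0|\cdot|\hat t-A'(\theta_0)|)=O(1/n)$ whenever $|\theta_1-\theta_0|=O(\omega_n)$, which puts $R$ strictly inside the clamp window. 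Consequently $(\theta_1-\theta_0)\cdot n\cdot f_{\tilde\alpha}(X)$ agrees with $\CLLR_{-a}^{b}(X)$ (computed relative to $P_{\theta_0},P_{\theta_1}$) up to a sample-independent additive bias, and substituting into Proposition~\ref{robustthresholds} shows that the noisy statistic $\widehat f$ distinguishes $P_{\theta_0}$ from $P_{\theta_1}$ for every $\theta_1$ with $|\theta_1-\theta_0|\ge C''\omega_n(P_{\theta_0},\Qtesteps)$.

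The accuracy conclusion is then by inversion: $g(\theta):=\mathbb{E}_{X\sim P_\theta^n}[f_{\tilde\alpha}(X)]$ is monotone in $\theta$ by Lemma~\ref{monotonelikelihood}, with derivative at $\theta_0$ of order $n\tilde\alpha A''(\theta_0)$, so an $O(1/(\epsn n \tilde\alpha))$ Laplace fluctuation in $\widehat f$ maps to an $O(\alpha_n(\theta_0))=O(\omega_n)$ $\theta$-fluctuation. If $\hat\theta=\theta_1$ with $|\theta_1-\theta_0|\ge C''\omega_n$, then $\widehat f=g(\theta_1)$ by construction; but the distinguishability guarantee above implies this event has probability $<0.25$ on $X_1\sim P_{\theta_0}^n$, and combining with the $\ge 0.8$ event for the initial estimate (after the standard rescaling of success probabilities used in the high-privacy counterpart) yields the claimed $\ge 0.75$ bound with error $O(\omega_n(P_{\theta_0},\Qtesteps))$. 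The main obstacle is uniformity: the linearisation must be controlled simultaneously over every $\theta_1$ in an $\omega_n$-neighbourhood of $\theta_0$ while $\hat t$ and $\tilde\alpha$ are themselves random, and in particular $R(\theta_0,\theta_1,\hat t)$ must be kept strictly inside the clamp window throughout this neighbourhood so that it acts as a deterministic bias rather than interacting with the clipping nonlinearity and breaking the reduction to $\CLLR$.
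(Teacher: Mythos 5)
Your proposal follows essentially the same route as the paper, which delegates the hard work to Lemma~\ref{mainlemmaexp} and Corollary~\ref{maincorlowpriv} and then invokes Corollary~\ref{allforalog} to verify that $\initial$ supplies a good enough $\hat t$. You inline that chain: privacy by sample-splitting and post-processing, the initial estimate from Corollary~\ref{allforalog}, the identity $(\theta_1-\theta_0)[x-\hat t]_{-\epsn/(C\tilde\alpha)}^{\epsn/(C\tilde\alpha)} = [\ln(p_{\theta_1}/p_{\theta_0}) + R]_{-a}^{b}$ with $a,b=\Theta(\epsn)$, the remainder bound $R=O(1/n)=o(\epsn)$, the appeal to Proposition~\ref{robustthresholds} (really Lemma~\ref{threshrobust}), and the observation that if $\hat\theta=\theta_1$ then $\widehat f$ equals $\mathbb{E}_{\theta_1}[f_{\tilde\alpha}]$ so the implied test would have rejected $\theta_0$. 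That final ``$\arg\min$ $\Rightarrow$ test rejected'' step is exactly the paper's Corollary~\ref{maincorlowpriv}. So the structure and key ingredients match.

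Two points to tighten. First, your heuristic derivative claim is off: $g(\theta)=\mathbb{E}_{x\sim P_\theta}\big[[x-\hat t]_{a}^{b}\big]$ has $g'(\theta)=\mathrm{Cov}_\theta\big(x,[x-\hat t]_a^b\big)\approx A''(\theta_0)$ (the clamp window has width $\Theta(\epsn\sqrt{nA''(\theta_0)})\gg\sqrt{A''(\theta_0)}$ in the low-privacy regime, so the truncation is nearly inactive), not $n\tilde\alpha A''(\theta_0)$. The paper never computes this derivative; it instead maps the algorithm's statistic to $\NCLLR_{-a}^{b}$ and invokes the $\NCLLR$ sample-complexity characterization, thereby bypassing the need for a first-order expansion of $g$. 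Since you also give the $\arg\min$ argument, which is what actually closes the proof, the derivative slip does not sink the proposal, but as stated that sentence is incorrect. Second, you control $R$ only for $|\theta_1-\theta_0|=O(\omega_n)$, yet the $\arg\min$ argument requires the test to reject $\theta_0$ for \emph{every} $\theta_1$ with $|\theta_1-\theta_0|\ge C''\omega_n$, including ones far enough away that the remainder bound breaks. The paper handles this by monotone likelihood ratio (Lemma~\ref{monotonelikelihood}): once the test distinguishes $\theta_0$ from $\theta_1'$ at distance exactly $C\omega_n$, monotonicity of the (clamped, monotone-in-$x$) statistic under $P_{\theta_1}\succeq P_{\theta_1'}$ extends the guarantee to all farther $\theta_1$ without re-linearizing there. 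You acknowledge uniformity as ``the main obstacle'' but never supply this monotonicity step; it is the missing piece of the reduction.
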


The main technical part of our proof is the following lemma, whose proof we defer to Section~\ref{amainlemmaexp}.

\begin{restatable}{lemma}{rmainlemmaexp}
\label{mainlemmaexp} 
For all $\fourthmoment>0$, there exists constants $N\in\mathbb{N}$, $k\ge 0$, $C>0$, $b>0$ and $D>0$ such that for all measures $\mu$ and $\theta_0\in\thetarange$ if 
\begin{itemize}
\item $\epsn\ge\frac{k}{\sqrt{n}}$ 
\item $\frac{\mathbb{E}_{\theta}((x-\mathbb{E}_{\theta}(x))^4)}{A''(\theta)^2}\le \fourthmoment$, 
\item $|\theta_0-A'^{-1}(\hat{t})|\le \min\{\kappa(\theta_0), b\epsn\sqrt{nA''(\theta_0)}\}$, 
\end{itemize}
then for all $n\in\mathbb{N}$ such that $n\ge \max\{N, \frac{D}{A''(\theta_0)(\kappa(\theta_0))^2}\}$ and all $\theta_1$ such that $|\theta_1-\theta_0|\ge C\omega_n(P_{\theta_0}, \Qtesteps)$, there exists a threshold $ \tau$ such that the test
\begin{equation}\label{testworks}
\begin{cases}
P_{\theta_0} & \text{if } f_{\tilde{\alpha}}(X)+\Lap\left(\frac{2}{\epsn nC\tilde{\alpha}}\right) \le\tau\\
P_{\theta_1} & \text{if } f_{\tilde{\alpha}}(X)+\Lap\left(\frac{2}{\epsn n}C\tilde{\alpha}\right) \ge\tau
\end{cases}
\end{equation} distinguishes between $P_{\theta_0}$ and $P_{\theta_1}$ with $n$ samples. 
Furthermore, $\tau$ can be chosen so $|\mathbb{E}_{X\sim P_{\theta_0}^n}[\hat{f}_{\tilde{\alpha}}(X)]-\tau|\le |\mathbb{E}_{X\sim P_{\theta_1}^n}[\hat{f}_{\tilde{\alpha}}(X)]-\tau|$. 
\end{restatable}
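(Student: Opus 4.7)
The plan is a Chebyshev-style argument on the noisy clamped statistic $\hat f := f_{\tilde\alpha}(X) + \Lap\bigl(\tfrac{2}{\epsn n C\tilde\alpha}\bigr)$. Let $\mu_j := \mathbb{E}_{X\sim P_{\theta_j}^n}[\hat f]$ and $\sigma_j^2 := \mathrm{Var}_{X\sim P_{\theta_j}^n}[\hat f]$ for $j \in \{0,1\}$, and set $\tau := \tfrac12(\mu_0 + \mu_1)$. The test in~\eqref{testworks} then succeeds with probability $\ge 3/4$ under each hypothesis provided $\mu_1 - \mu_0 \ge 4\max(\sigma_0, \sigma_1)$; the monotone-likelihood-ratio property (Lemma~\ref{monotonelikelihood}), applied to the nondecreasing clamp $x \mapsto [x - \hat t]_{-w}^{w}$ with $w := \epsn/(C\tilde\alpha)$, yields $\mu_0 \le \mu_1$, and the ``furthermore'' claim is automatic because $\tau$ is the midpoint.

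The key step is a lower bound on the mean gap. Differentiating $p_\theta(x) = e^{\theta x - A(\theta)}$ gives $\partial_\theta p_\theta(x) = (x - A'(\theta)) p_\theta(x)$, and Fubini produces
\[\mu_1 - \mu_0 = \int_{\theta_0}^{\theta_1} \mathrm{Cov}_{P_\theta}\bigl([x - \hat t]_{-w}^{w},\; x\bigr)\, d\theta.\]
I would then prove that the integrand is $\Omega(A''(\theta_0))$ uniformly on $[\theta_0, \theta_1]$, so that $\mu_1 - \mu_0 = \Omega(A''(\theta_0)|\theta_1-\theta_0|) = \Omega\bigl(C\sqrt{A''(\theta_0)/n}\bigr)$. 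Two ingredients are needed. First, in the low-privacy regime $\epsn \ge k/\sqrt n$ combined with $\tilde\alpha \asymp 1/\sqrt{nA''(\theta_0)}$, the clamp half-width is $w = \Theta(\epsn\sqrt{nA''(\theta_0)}/C)$, which for sufficiently large $k$ is at least a constant number of standard deviations of $P_\theta$, and which also dominates $|\hat t - A'(\theta)|$ (using the accuracy bound $|\theta_0 - A'^{-1}(\hat t)| \le b\epsn\sqrt{nA''(\theta_0)}$ together with the $\kappa$-smoothness of $A''$). Second, a direct second-moment calculation using the fourth-moment bound $\mathbb{E}_\theta[(x-A'(\theta))^4] \le \fourthmoment A''(\theta)^2$ shows that truncating $x - \hat t$ to $[-w, w]$ erases only a constant fraction of its variance and of its correlation with $x$, so the clamped covariance stays within a constant factor of the unclamped covariance $A''(\theta) \asymp A''(\theta_0)$.

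The variance bounds are straightforward: clamping cannot increase variance, so $\mathrm{Var}_{P_{\theta_j}^n}[f_{\tilde\alpha}(X)] \le A''(\theta_j)/n = O(A''(\theta_0)/n)$, and the Laplace noise contributes $8/(\epsn n C\tilde\alpha)^2 \asymp A''(\theta_0)/(\epsn^2 n C^2) = O(A''(\theta_0)/C^2)$ in the low-privacy regime. Thus $\max(\sigma_0, \sigma_1) = O\bigl(\sqrt{A''(\theta_0)} \cdot \max(1/\sqrt n,\, 1/C)\bigr)$, and choosing $C$ larger than some absolute constant depending on $\fourthmoment, k, b, D$ forces $\mu_1 - \mu_0 \ge 4\max(\sigma_0, \sigma_1)$ whenever $|\theta_1 - \theta_0| \ge C\,\omega_n(P_{\theta_0}, \Qtesteps)$.

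The main obstacle is the clamped-covariance lower bound in the previous paragraph: the covariance degrades if $\hat t$ drifts too far from $A'(\theta)$ relative to $w$, or if $w$ shrinks below a constant multiple of $\sqrt{A''(\theta_0)}$. All of the quantitative hypotheses of the lemma---the low-privacy floor $\epsn \ge k/\sqrt n$, the accuracy requirement on $\hat t$, the $\kappa$-smoothness of $A''$, and the sample-size condition $n \ge \max\{N,\, D/(A''(\theta_0)\kappa(\theta_0)^2)\}$---are calibrated precisely so that both the drift and the relative clamping width stay simultaneously under control. With these in hand, the second-moment estimate and Chebyshev's inequality close the argument.
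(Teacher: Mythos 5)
Your strategy (Chebyshev on the noisy clamped statistic, using the covariance/Fubini identity $\mu_1-\mu_0=\int_{\theta_0}^{\theta_1}\mathrm{Cov}_{P_\theta}([x-\hat t]_{-w}^{w},x)\,d\theta$) is genuinely different from the paper's proof. The paper instead rewrites $(\theta_1-\theta_0)[x-\hat t]_{-\epsilon/C\tilde\alpha}^{\epsilon/C\tilde\alpha}$ as $\CLLR_a^b(x)+\Gamma$ for suitably shifted clamps, bounds the shift $\Gamma$ so that $a,b=\Theta(\epsilon)$, and then invokes Lemma~\ref{threshrobust} together with Proposition~\ref{lowprivmodcontinuity} to inherit the sample-complexity bound already proved for $\NCLLR_a^b$. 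So the paper outsources the second-moment calculation to the $\NCLLR$ machinery, whereas you redo it directly. Conceptually both routes are fine, and yours is arguably more transparent; but your execution has a quantitative gap.

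The problem is the Laplace noise contribution. Taking the scale parameter $\tfrac{2}{\epsn n C\tilde\alpha}$ at face value, you compute a Laplace variance of order $A''(\theta_0)/(\epsn^2 n C^2)$ and then discard the $\epsn^2 n$ using $\epsn^2 n\ge k^2$ to conclude it is $O(A''(\theta_0)/C^2)$. But that bound does not vanish as $n\to\infty$, whereas your mean gap $\mu_1-\mu_0=\Omega(C\sqrt{A''(\theta_0)/n})$ does. Hence $\mu_1-\mu_0\ge 4\max(\sigma_0,\sigma_1)$ forces $C^2\gtrsim\sqrt n$, contradicting the requirement that $C$ be a constant; the Chebyshev argument cannot close as written. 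What saves the argument (and, implicitly, the paper's proof: the scaled noise $(\theta_1-\theta_0)\,n\cdot\Lap(\cdot)$ must match the $\Lap(\Theta(1))$ appearing in $\NCLLR$, not $\Lap(\Theta(1/\epsilon))$) is that the correct Laplace scale for $\epsilon$-DP of $f_{\tilde\alpha}$, whose per-record sensitivity is $2w/n=\tfrac{2\epsilon}{nC\tilde\alpha}$, is $\tfrac{2}{nC\tilde\alpha}$ — the extra $\epsilon$ in the denominator as printed is an overcount. With this scale the Laplace variance is $\asymp A''(\theta_0)/(nC^2)$, it decays at the same $1/n$ rate as the clamped sampling variance, and choosing $C$ a large absolute constant (with $k$ then taken $\gtrsim C$ so the clamp width stays $\Omega(\sqrt{A''(\theta_0)})$ and your covariance lower bound is justified) closes the inequality $\mu_1-\mu_0\ge 4\max(\sigma_0,\sigma_1)$. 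You should redo that variance computation with the correct sensitivity-to-scale ratio; the rest of your outline then goes through.
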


As in the previous section, in the translation from testing result in Lemma~\ref{mainlemmaexp} to the bound on the estimation rate we argue that algorithm $\secondsteplow$ is unlikely to return $\theta$ such that $|\theta_0-\theta|\ge D\omega_{n, \SC_{\epsilon}}(\theta_0)$ since this would result in the induced test failing, which is unlikely to occur.

\begin{restatable}{corollary}{rmaincorlowpriv}\label{maincorlowpriv}
For all $\fourthmoment>0$, there exists constants $N\in\mathbb{N}$, $k\ge 0$, $C>0$, $b>0$ and $D>0$ such that for all measures $\mu$ and $\theta_0\in\thetarange$ if 
\begin{itemize}
\item $\epsn\ge\frac{k}{\sqrt{n}}$ 
\item $\frac{\mathbb{E}_{\theta}((x-\mathbb{E}_{\theta_0}(x))^4)}{A''(\theta_0)^2}\le \fourthmoment$, 
\item $|\theta_0-A'^{-1}(\hat{t})|\le \min\{\kappa(\theta_0), b\epsn\sqrt{nA''(\theta_0)}\}$, 
\end{itemize}
then for all $n\in\mathbb{N}$ such that $n\ge \max\{N, \frac{D}{A''(\theta_0)(\kappa(\theta_0))^2}\}$ we have with probability 0.75
\[|\secondsteplow(X,\hat{t}, C)-\theta_0|\le C\omega_n(P_{\theta_0}, \Qtesteps)\]
\end{restatable}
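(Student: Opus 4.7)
The plan is to translate the testing guarantee of Lemma~\ref{mainlemmaexp} into an estimation bound for $\secondsteplow$, exactly as Corollary~\ref{highprivoracleest} does for the high-privacy algorithm. Let $g(\theta) := \mathbb{E}_{X \sim P_\theta^n}[f_{\tilde\alpha}(X)]$, so that $\secondsteplow(X,\hat t,C)$ returns
\[\hat\theta = \arg\min_\theta \bigl|\hat f - g(\theta)\bigr|, \qquad \hat f = f_{\tilde\alpha}(X) + \Lap\bigl(2/(\epsn n C\tilde\alpha)\bigr).\]
Because the clamped shift $x \mapsto [x-\hat t]_{-\epsilon/(C\tilde\alpha)}^{\epsilon/(C\tilde\alpha)}$ is nondecreasing in $x$, Lemma~\ref{monotonelikelihood} implies that $g$ is nondecreasing in $\theta$; on the range where it is strictly increasing, $\hat\theta = g^{-1}(\hat f)$, and flat sub-intervals can be handled by arbitrary tie-breaking without affecting the bound.

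Write $\omega := \omega_n(P_{\theta_0}, \Qtesteps)$ and let $C,k,b,N,D$ denote the constants produced by Lemma~\ref{mainlemmaexp}. I would decompose the failure event as
\[\{|\hat\theta - \theta_0| \ge C\omega\} = \mathcal{E}_+ \cup \mathcal{E}_-, \qquad \mathcal{E}_\pm = \{\pm(\hat\theta - \theta_0) \ge C\omega\}.\]
For $\mathcal{E}_+$, set $\theta_1 := \theta_0 + C\omega$. Using $\omega = O(1/\sqrt{n\,A''(\theta_0)})$ from Corollary~\ref{largeepslowerexp}, the sample-size assumption $n \ge D/(A''(\theta_0)\kappa(\theta_0)^2)$ guarantees $\theta_1 \in \thetarange(\theta_0)$, so all hypotheses of Lemma~\ref{mainlemmaexp} hold at $(\theta_0,\theta_1)$. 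The lemma supplies a threshold $\tau$ satisfying $g(\theta_0) \le \tau \le \tfrac12\bigl(g(\theta_0)+g(\theta_1)\bigr)$ such that $\Pr_{X \sim P_{\theta_0}^n}[\hat f \ge \tau] \le 1/4$.

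The key observation is that on $\mathcal{E}_+$ we must have $\hat f \ge \tau$: if $\hat\theta \ge \theta_1$, then monotonicity of $g$ gives $g(\hat\theta) \ge g(\theta_1)$, and the minimality of $\hat\theta$ in the argmin forces $\hat f \ge \tfrac12\bigl(g(\theta_0)+g(\theta_1)\bigr) \ge \tau$ (otherwise $\theta_0$ would be strictly closer to $\hat f$ than $\hat\theta$, contradicting the argmin). Hence $\Pr[\mathcal{E}_+] \le 1/4$, and a symmetric argument with $\theta_1 = \theta_0 - C\omega$ and the dual inequality on the threshold gives $\Pr[\mathcal{E}_-] \le 1/4$. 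A union bound then yields the stated high-probability estimation error; the exact success probability $3/4$ is recovered by tightening the distinguishability guarantee in Lemma~\ref{mainlemmaexp} to $7/8$, which only inflates the constant $C$ by an absolute factor.

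The main obstacle is establishing the implication $\mathcal{E}_+ \Rightarrow \hat f \ge \tau$, which hinges on (i) monotonicity of $g$ via Lemma~\ref{monotonelikelihood} and (ii) the lemma's specific control on the placement of $\tau$ relative to the midpoint of $g(\theta_0)$ and $g(\theta_1)$. All heavy probabilistic work -- concentration of the truncated statistic, accuracy of $\tilde\alpha$ as a surrogate for $\omega$, and accuracy of $\hat t$ as a surrogate for $A'(\theta_0)$ -- is already isolated inside Lemma~\ref{mainlemmaexp}, so no fresh tail bound or coupling argument is needed here.
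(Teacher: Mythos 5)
Your proof is correct and follows essentially the same contradiction-via-testing route as the paper: the paper's own proof (appearing under the label ``Proof of Theorem~\ref{mainexp}'') argues that if $\secondsteplow$ outputs some $\theta_1$ with $|\theta_1-\theta_0|\ge C\omega$, then by the argmin condition $|\hat f - g(\theta_1)|\le|\hat f - g(\theta_0)|$, so the test from Lemma~\ref{mainlemmaexp} would reject $\theta_0$, an event of probability at most $0.25$. Your write-up is in fact more careful than the paper's in two places the paper elides: you make explicit the monotone-likelihood-ratio argument showing why the argmin condition forces $\hat f$ past the threshold $\tau$ (using the ``Furthermore'' clause of Lemma~\ref{mainlemmaexp} placing $\tau$ no farther from $g(\theta_0)$ than from $g(\theta_1)$), and you flag that a two-sided union bound over $\mathcal{E}_+$ and $\mathcal{E}_-$ only delivers probability $1/2$ unless the testing error in Lemma~\ref{mainlemmaexp} is tightened (or the two tests are observed to be one-sided), which is a genuine gap in the paper's terse argument.
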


\begin{proof}[Proof of Theorem~\ref{mainexp}] 
First, we claim that the estimator from Theorem~\ref{initialestthm} is sufficient to satisfy the conditions of Lemma~\ref{mainlemmaexp}.
Suppose that $\mathcal{A}_{\text{low}}(X) = \theta_1$ and $|\theta_1-\theta_0|\ge C\omega_{n, \SC_{\epsilon}}(\theta_0)$. Then, $|\hat{f}-\mathbb{E}_{\theta_1}(f_{\tilde{\alpha}})| \le |\hat{f}-\mathbb{E}_{\theta_0}(f_{\tilde{\alpha}})|,$ which implies that the test in Lemma~\ref{mainlemmaexp} would have rejected $\theta_0$, which only happens with probability 0.25.
\end{proof}
Finally, as in the previous section, Proposition~\ref{mainexp} follows by showing that the estimator from Corollary~\ref{allforalog} satisfies the conditions of Corollary~\ref{maincorlowpriv}. 

\begin{proof}[Proof of Proposition~\ref{mainexp}] 

The proposition follows from a combination of Corollary~\ref{maincorlowpriv} and Corollary~\ref{allforalog}. Let $N, k , C_1, D_1$ and $D_2$ be as in Corollary~\ref{maincorlowpriv}. Note that since $\frac{D_2}{A''(\theta_0)(\kappa(\theta_0))^2}\le \frac{D_2}{B^2}$ we can assume that $N\ge \frac{D_2}{A''(\theta_0)(\kappa(\theta_0))^2}$. By Corollary~\ref{allforalog}, there exists a constant $C_2$ such that
\[|A'^{-1}(\initial(X))-\theta_0|\le C_2\left(\frac{1}{\sqrt{nA''(\theta_0)}}+\frac{1}{n\epsilon\sqrt{A''(\theta_0)}}\sqrt{\ln(n)}\right).\]
Again since $\kappa(\theta_0)\sqrt{A''(\theta_0)}\ge B$, there exists constants $N_1$ and $D$ such that for all $n>N_1$ if $\frac{\sqrt{\ln n}}{\sqrt{n}}\le D A''(\theta_0)$ then, \[C_2\left(\frac{1}{\sqrt{nA''(\theta_0)}}+\frac{1}{n\epsilon\sqrt{A''(\theta_0)}}\sqrt{\ln(n)}\right)\le \min\{\kappa(\theta_0), D_1\epsilon\sqrt{nA''(\theta_0)}.\] Thus, the estimator from Corollary~\ref{allforalog} satisfies the requirements of Corollary~\ref{maincorlowpriv} and so we are done.
\end{proof}

\section{Nonparametric Estimation of Functionals}
\label{sec:noparametric}

In the previous section, the statistic of interest fully characterised the distribution. In this section, we will study the problem of estimating a statistic that does not characterise the distribution. While one can still define the local estimation rate as in Equation~\eqref{localerrorrate} in this setting, we will follow the standard set by \cite{Donoho:1991} by focusing on a slightly different notion of local estimation rate and modulus of continuity. Given a family of distributions $\mathcal{P}$ and a statistic $\theta:\mathcal{P}\to\mathbb{R}$, we define the \emph{modulus of continuity with respect to $\theta$} at any value $t\in\mathbb{R}$ as 
\begin{align}
\nonumber\MOCallt{n}{t}{\Qtest}{\mathcal{P}}{\theta} &= \sup\left\{|\theta(P)-\theta(Q)|\;\Big| \; \QSC(P,Q)\ge n, \theta(P)=t, P, Q\in\mathcal{P}\right \}\\
&= \max_{P\in\mathcal{P}, \theta(P)=t} \MOCallt{n}{P}{\Qtest}{\mathcal{P}}{\theta}. \label{modulust}
\end{align}
The quantity is the worst case modulus of any distribution $P$ in the family $\mathcal{P}$ such that $\theta(P)=t$. \cite{Donoho:1991} showed that for some estimation problems, in the non-private setting, one can design an algorithm that can universally achieve error, $\max_{\theta(P)=t}\ierror{\est}{n}{P}\le \MOCallt{n}{t}{\Qest}{\mathcal{P}}{\theta}$.  

Rather than using of simple hypothesis tests, \cite{Donoho:1991} turn to the problem of distinguishing 
\begin{equation}
    \mathcal{P}_{\le t}=\{f|\theta(f)\le t\}\;\;\;\text{ and }\;\;\; \mathcal{P}_{\ge t+\Delta}=\{f|\theta(f)\ge t+\Delta\}.
    \label{eq:t-Delta-tests}
\end{equation}
Using this test, given the promise that $t\in[t_{\min},t_{\max}]$ and setting $\Delta=|t_{\max}-t_{\min}|/3$, we can rule out the true parameter lying in either $[t_{\min}, t_{\min}+\Delta]$ or $[t_{\max}-\Delta, t_{\max}]$, reducing the search space by a factor of $2/3$ for the next round. We will call this ternary search. 
If we run this algorithm for  $\lceil\frac{\log{\left(|t_1-t_0|/\omega\right)}}{\log(3/2)}\rceil$ steps, then the resulting error on the final estimate is at most $\omega$. The total sample complexity of the estimator is the sum of the sample complexities of the tests performed at each step. This is at most a logarithmic factor times the sample complexity of the most stringent (final) test; however, in many cases it is quite a bit lower than that, since the exponential decrease in $\Delta$ can mean that 
the sample complexity of the final test dominates the overall the sample complexity of the estimator.

\begin{algorithm}[t] \caption{Ternary Search for Functional Estimation, $\hat{\theta}$}\label{algo:BS}
\begin{algorithmic}[1]
\Require{Sample oracle for sample from $P$, $t_0<t_1$, number of rounds $k$}
\State $t_{\min} = t_0$, $t_{\max} = t_1$, and
$\Delta_1=|t_1-t_0|/3$
\For{$i\in[k]$}
\State Let $X_i$ be a sample from $P$ large enough to distinguish between $\mathcal{P}_{\le t_{\min}+\Delta_i}$ and $\mathcal{P}_{\ge t_{\min}+2\Delta_i}$ with probability $1-\frac{1}{3k}$.
\If{$T^*_{t_{\min}+\Delta_i,\Delta_i}(X_i)=\mathcal{P}_{\le t_{\min}+\Delta_i}$}
\State $t_{\max} = t_{\max}-\Delta_i$
\Else
\State $t_{\min} = t_{\min}+\Delta_i$
\EndIf
\State $\Delta_{i+1} = |t_{\max}-t_{\min}|/3$
\EndFor
\State \Return $t_{\min}$
\end{algorithmic}
\end{algorithm}

Algorithm~\ref{algo:BS} describes the algorithm that uses ternary search to estimate $\theta(P)$. In it, $T^*_{t, \Delta}$ denotes the optimal test for distinguishing the hypotheses in Eq.~\eqref{eq:t-Delta-tests}. The algorithm works for essentially any one-dimensional estimation problem. Under some conditions, we can compare its error on a given total sample size to the modulus of continuity (Equation~\ref{modulust} above).  The main condition is that  the difficulty of distinguishing between the two compound hypotheses in Eq.~\ref{eq:t-Delta-tests} should be captured by the difficulty of a \textit{simple} hypothesis test: that is, 
for $t\in[t_0,t_1]$ and $\Delta>0$, there should exist distributions $P_0^*$ and $P_1^*$ such that the number of samples needed to distinguish $\mathcal{P}_{\le t}$ and $\mathcal{P}_{\ge t+\Delta}$ is equal to the number of samples needed to distinguish between $P_0^*$ and $P_1^*$. Suppose we would like a test which competes with the modulus of continuity at some target sample size $\ntarget$. Given this condition, if we run Algorithm~\ref{algo:BS} for $\lceil\frac{\log{\left(\MOCallt{\ntarget}{t}{\Qtest}{\mathcal{P}}{\theta}/|t_1-t_0|\right)}}{\log(2/3)}\rceil$ rounds then it achieves error at most $ \MOCallt{\ntarget}{t}{\Qtest}{\mathcal{P}}{\theta}$. The final, most stringent test requires a sample of size $\ntarget$, by definition.\footnote{This discussion elides the dependency on the tests' error probability, which must be set sufficiently low to ensure that the decisions made at every round are correct; see Algorithm~\ref{algo:BS}.} The overall sample size is thus at most a logarithmic factor larger than $\ntarget$, and in some cases even closer to $\ntarget$.

Algorithm~\ref{algo:BS} can be adapted to the private setting by letting $T^*_{t,\Delta}$ be the optimal $\epsilon$-DP test. In the following section, we adapt an example of this framework from \cite{Donoho:1991} to the private setting. The key difficulty is  showing that the sample complexity of distinguishing $\mathcal{P}_{\le t}$ and $\mathcal{P}_{\ge t+\Delta}$ is characterised by the hardest simple test in the private setting.

\subsection{Tail Rates}\label{tailrates}

In this section, we will consider estimation of the tail decay rate of a certain class of distributions. We will show that we can use the framework of Algorithm~\ref{algo:BS} to design an algorithm for tail decay rate estimation that is instance optimal up to a logarithmic factor.
This section is a private analogue of Section 5 of \cite{Donoho:1991}. The fact that much of Donoho and Liu's argument can be immediately adapted to the private setting is largely due to the fact that the optimal private test is similar to the optimal non-private test, which the optimal local estimator is built from. However, \cite{Donoho:1991} are able to alter Algorithm~\ref{algo:BS} to eliminate the logarithmic factor increase in the sample complexity. This alteration can not be adapted to the private setting.
As a result, unlike in the non-private setting, where the resulting algorithm is instance optimal, our private analogue will be instance optimal up to a logarithmic factor. 

As in Donoho and Liu, rather than estimate the rate at which the tail of the density approaches 0 as $x\to\infty$, we will consider a transformation of the problem to observations $Y_i=1/X_i$. This leads us to estimating the rate at which a density approaches 0 as $x \to 0^+$.  Let $\mathcal{P}~=~\textbf{Tails}(C_-, C_+, \delta, t_0, t_1, \gamma, p)$ be the set of distributions defined on $[0,\infty)$ with densities satisfying:
\[f(x) = Cx^t(1+h(x))\;\; 0\le x\le\delta<1,\]
where
\begin{align*}
0< t_0\le t\le t_1<\infty, \hspace{0.1in}
0< C_-\le C\le C_+<\infty, \hspace{0.1in} \text{ and } \hspace{0.1in}
|h(x)|&\le \gamma x^p.
\end{align*}
For such a density function, the \emph{tail rate} is given by $\theta(f)=t$. The statistic $\theta(f)$ is one dimensional and lies in the bounded interval $[t_0,t_1]$. 

\begin{theorem}\label{maintailtheorem-overall} For every positive integer $n$: 
\begin{enumerate}
    \item For every $t \in [t_0,t_1]$ there exists a density $f$ with $\theta(f)=t$ such that every differentially private estimator has error at least $\MOCallt{n}{t}{\Qtest_{\epsilon}}{\mathcal{P}}{\theta}$ on some distribution in a neighborhood of $f$. That is, 
\[\errorlocall{n}{f}{\Qest_{\epsilon}}{\mathcal{P}}{\theta}\ge \frac{1}{2}\MOCallt{n}{t}{\Qtest_{\epsilon}}{\mathcal{P}}{\theta}\]

 \item There exists an $\varepsilon$-differentially private estimator $\hat{\theta}$ with the following property. For all $t\in [t_0,t_1]$, if $k^*(n)= \lceil\log_{3/2}(\frac{|t_1-t_0|}{ \MOCallt{n}{t}{\Qtest_{\epsilon}}{\mathcal{P}}{\theta}})\rceil$ and $N=n\cdot k^*(n)\cdot \lceil\log k^*(n)\rceil $, then $\hat{\theta}$ has error at most $\MOCallt{n}{t}{\Qtest_{\epsilon}}{\mathcal{P}}{\theta}$ when run on $N$ samples. That is,  \[\ierror{\hat{\theta}}{N}{f}\le \MOCallt{n}{t}{\Qtest_{\epsilon}}{\mathcal{P}}{\theta}\]

\end{enumerate} 
\end{theorem}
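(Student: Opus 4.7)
The plan is to prove the two parts separately, in both cases leveraging the testing-to-estimation reduction of Proposition~\ref{lowerbound} together with the refined private simple hypothesis test of Proposition~\ref{robustthresholds}.

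Part~1 is immediate from Proposition~\ref{lowerbound}. For any $t\in[t_0,t_1]$, the definition of $\MOCallt{n}{t}{\Qtesteps}{\mathcal{P}}{\theta}$ as a supremum of $\MOCall{n}{P}{\Qtesteps}{\mathcal{P}}{\theta}$ over $P\in\mathcal{P}$ with $\theta(P)=t$ lets us pick a density $f$ (or a sequence approaching the supremum) with $\theta(f)=t$ achieving it. Proposition~\ref{lowerbound} applies since $\Qesteps$ is closed under post-processing and contains constants, giving $\errorlocall{n}{f}{\Qesteps}{\mathcal{P}}{\theta}=\tfrac12\MOCall{n}{f}{\Qtesteps}{\mathcal{P}}{\theta}=\tfrac12\MOCallt{n}{t}{\Qtesteps}{\mathcal{P}}{\theta}$, which is exactly the claim; a routine compactness/limiting argument handles the case where the supremum is only approached.

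For Part~2, the estimator $\hat\theta$ is Algorithm~\ref{algo:BS} run with $T^*_{t,\Delta}$ equal to the $\epsilon$-DP compound test built from the $\NCLLR_a^b$ statistic of Section~\ref{DPtestingsec} with thresholds $\pm\Theta(\epsilon)$. The main preparatory step is to reduce the compound test to a simple one: for each $(t,\Delta)$ we exhibit distributions $(P_0^*,P_1^*)\in\mathcal{P}_{\le t}\times\mathcal{P}_{\ge t+\Delta}$, essentially the extremal power laws $Cx^t$ and $Cx^{t+\Delta}$ on $[0,\delta]$ (completed to probability densities with matching constants), and show that (a) $\SC_\epsilon(P_0^*,P_1^*)$ captures the compound sample complexity up to constants and hence is of order $\MOCallt{n}{t}{\Qtesteps}{\mathcal{P}}{\theta}^{-1}$ in $n$, and (b) the noisy clamped log-likelihood statistic for $(P_0^*,P_1^*)$ is monotone in the tail rate, so a single threshold correctly classifies every member of each compound hypothesis with only constant-factor overhead in samples. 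This mirrors the non-private reduction of Donoho and Liu; the flexibility in the clamping thresholds given by Proposition~\ref{robustthresholds} is crucial because the extremal pair changes from round to round.

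Using this $T^*$ in Algorithm~\ref{algo:BS}, each round is boosted to success probability $1-1/(3k^*(n))$ by repeating the base test $O(\lceil\log k^*(n)\rceil)$ times on independent subsamples and majority voting. By definition of the modulus, the final (most stringent) round, with gap $\Delta_{k^*}\asymp\MOCallt{n}{t}{\Qtesteps}{\mathcal{P}}{\theta}$, needs exactly $n$ base samples; earlier rounds have strictly larger gaps and, by the monotonicity from the reduction, need no more than $n$ base samples each. Summing $k^*(n)$ rounds, each using $n\cdot\lceil\log k^*(n)\rceil$ samples, yields $N\le n\cdot k^*(n)\cdot\lceil\log k^*(n)\rceil$. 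A union bound gives overall success probability at least $2/3$, and in that event the interval $[t_{\min},t_{\max}]$ shrinks by a factor $2/3$ per round, so after $k^*(n)$ rounds its width is at most $\MOCallt{n}{t}{\Qtesteps}{\mathcal{P}}{\theta}$ and the returned $t_{\min}$ is within that of $\theta(f)$. The main obstacle is part~(b) of the reduction: showing that the clamped, Laplace-noised LLR built from the extremal pair inherits the monotone-likelihood behavior of the classical Neyman--Pearson test on power-law tails, so that sliding the true distribution through $\mathcal{P}_{\le t}$ or $\mathcal{P}_{\ge t+\Delta}$ only pushes the statistic's expectation in the correct direction; the perturbation $|h(x)|\le\gamma x^p$ must be shown negligible on the scale of $\Delta$ that matters, and the constants hidden by the reduction must be kept independent of the round.
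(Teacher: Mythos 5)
Your Part~1 matches the paper: it really is just Proposition~\ref{lowerbound} applied at the maximizing $P$ (or a near-maximizer) with $\theta(P)=t$, using that $\Qesteps$ is closed under post-processing and contains constants. Your Part~2 also takes the same overall route as the paper: Algorithm~\ref{algo:BS} with the $\NCLLR$ test as $T^*_{t,\Delta}$, reduce each compound test $(\mathcal{P}_{\le t}, \mathcal{P}_{\ge t+\Delta})$ to a simple two-point test with extremal densities, amplify each round's error probability to $1/(3k^*(n))$ via a $\lceil\log k^*(n)\rceil$-factor subsample repetition, note that the last round is the most expensive, and union bound. That accounting is exactly the paper's.

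However, your description of how to carry out the compound-to-simple reduction (what you call part~(b)) does not match the paper's Lemma~\ref{worstcasedists} and, as phrased, would lead you astray. You propose to work with ``extremal power laws $Cx^t$ and $Cx^{t+\Delta}$'' and then argue that the perturbation $|h(x)|\le \gamma x^p$ is \emph{negligible} on the scale of $\Delta$. The paper does not show the perturbation is negligible; it is not negligible in general, since for small $\Delta$ the difference $x^{t+\Delta}-x^t$ is itself small and the perturbation can be comparable. Instead the paper builds the perturbation \emph{into} the extremal pair: $f_0^*(x) = C_- x^t(1-\gamma x^p)$ and $f_1^*(x) = C_+ x^{t+\Delta}(1+\gamma x^p)$ near $0$, with the likelihood ratio held constant past a crossing point $a_1(t,\Delta)$ chosen to satisfy a normalization matching condition. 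This makes $(f_0^*, f_1^*)$ genuine extreme points of the two compound classes. The reduction then proceeds via \emph{stochastic dominance}, not an expectation comparison: one needs Lemma~\ref{stochord} to propagate the Donoho--Liu stochastic ordering of $L_{t,\Delta}(x)$ through the (monotone) clamping and then through summation and Laplace noise, yielding that $\widetilde{L^{\epsilon}_{n,t,\Delta}}(X)$ is stochastically largest at $f_0^*$ over $\mathcal{P}_{\le t}$ and stochastically smallest at $f_1^*$ over $\mathcal{P}_{\ge t+\Delta}$. Your phrasing ``pushes the statistic's expectation in the correct direction'' is strictly weaker and would not give you the pointwise failure-probability bounds $\sup_{f\in \mathcal{P}_{\le t}}\mathbb{P}_f(\texttt{rejects}) = \mathbb{P}_{f_0^*}(\texttt{rejects})$ that the algorithm's analysis requires. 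So the overall architecture is right, but the two load-bearing ideas in the reduction --- incorporating the worst-case $h$ into $f_0^*, f_1^*$ rather than waving it away, and using stochastic ordering rather than expectations --- are missing.
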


Typically, we expect $N$ is $O(n \cdot\log (n|t_1-t_0|)\cdot\log\log (n|t_1-t_0|))$ for $\epsilon \leq 1$ when\\ $\MOCallt{n/\log k^*(n)}{t}{\Qtest_{\epsilon}}{\mathcal{P}}{\theta}~\approx~1/poly(n)$.
As in Donoho et al., the first aspect of the proof is to show that the corresponding testing problem has the property that the difficulty of distinguishing between two intervals in $[t_0,t_1]$ is captured by the difficulty of the hardest two-point testing problem. That is, for $t\in[t_0,t_1]$ and $\Delta>0$, there exists distributions $f_0^*$ and $f_1^*$ such that the number of samples needed to distinguish $\mathcal{P}_{\le t}$ and $\mathcal{P}_{\ge t+\Delta}$ is equal to the number of samples needed to distinguish between $f_0^*$ and $f_1^*$. \cite{Donoho:1991} showed that in the non-private case, the distributions $f_0^*$ and $f_1^*$ satisfy the following conditions:
\begin{align*}
f_0^*(x) &= C_-x^t(1-\gamma x^p), \;\;\; x\le a_1(t,\Delta),\\
f_1^*(x) &= C_+x^{t+\Delta}(1+\gamma x^p), \;\;\; x\le a_1(t,\Delta),
\end{align*}
and 
\[\frac{f_0^*(x)}{f_1^*(x)}=\frac{f_0^*(a_1(t,\Delta))}{f_1^*(a_1(t,\Delta))}, \;\;\; x>a_1(t,\Delta),\] and \[\frac{f_0^*(a_1(t,\Delta))}{f_1^*(a_1(t,\Delta))} = \frac{1-\int_0^{a_1(t,\Delta)}f_0^*(v)dv}{1-\int_0^{a_1(t,\Delta)}f_1^*(v)dv}.\]
In the following lemma we mirror their proof to show that the same distributions also satisfy this condition in the private case.

We say a real random variable $X$ is stochastically less than a random variable $Y$, denoted $X\preceq Y$, if \[\mathbb{P}(X>x)\le \mathbb{P}(Y>x) \;\;\text{ for all } x\in(-\infty, \infty).\]

\begin{lemma}\label{stochord}
If $u$ is non-decreasing and $X\preceq Y$ then $u(X)\preceq u(Y)$. If $X_i\preceq Y_i$ for $i=1,\cdots, n$ then $\sum_{i=1}^n X_i\preceq \sum_{i=1}^n Y_i$.
\end{lemma}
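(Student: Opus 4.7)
The plan is to use the coupling (Strassen) characterization of stochastic dominance on $\mathbb{R}$: two real random variables satisfy $X \preceq Y$ if and only if there exist $\tilde X, \tilde Y$ on a common probability space with $\tilde X \stackrel{d}{=} X$, $\tilde Y \stackrel{d}{=} Y$, and $\tilde X \le \tilde Y$ almost surely. Once this coupling is in hand, both claims reduce to pointwise comparisons, after which monotonicity does all of the work. I would establish the coupling concretely via the quantile transform: if $F_X, F_Y$ are the CDFs of $X$ and $Y$ and $U \sim \text{Unif}(0,1)$, set $\tilde X = F_X^{-1}(U)$ and $\tilde Y = F_Y^{-1}(U)$. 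The hypothesis $X \preceq Y$ is equivalent to $F_X(t) \ge F_Y(t)$ for all $t$, which implies $F_X^{-1}(u) \le F_Y^{-1}(u)$ for all $u \in (0,1)$, so $\tilde X \le \tilde Y$ pointwise, while the marginals are correct by the usual inverse-CDF argument.

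For the first claim, let $(\tilde X, \tilde Y)$ be a monotone coupling. Since $u$ is non-decreasing, $u(\tilde X) \le u(\tilde Y)$ almost surely. Then for every threshold $z$,
\[
\Pr(u(X) > z) = \Pr(u(\tilde X) > z) \le \Pr(u(\tilde Y) > z) = \Pr(u(Y) > z),
\]
which is exactly $u(X) \preceq u(Y)$.

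For the second claim, I read the statement in its standard form in which $(X_1,\ldots,X_n)$ are mutually independent, $(Y_1,\ldots,Y_n)$ are mutually independent, and $X_i \preceq Y_i$ for each $i$ (this matches how the lemma will be used later, where the $X_i$'s and $Y_i$'s arise as i.i.d.\ samples from two distributions). Couple each pair $(X_i, Y_i)$ via the quantile construction above using an independent uniform $U_i$, so that $\tilde X_i \le \tilde Y_i$ a.s.\ and the pairs $\{(\tilde X_i, \tilde Y_i)\}_{i=1}^n$ are mutually independent. Then $(\tilde X_1,\ldots,\tilde X_n) \stackrel{d}{=} (X_1,\ldots,X_n)$ and likewise for the $Y$'s, so in particular $\sum_i \tilde X_i \stackrel{d}{=} \sum_i X_i$ and $\sum_i \tilde Y_i \stackrel{d}{=} \sum_i Y_i$. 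Since $\sum_i \tilde X_i \le \sum_i \tilde Y_i$ pointwise, the same argument as in the first part gives $\sum_i X_i \preceq \sum_i Y_i$.

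There is no serious obstacle here: the only point requiring a bit of care is verifying that the quantile coupling indeed yields $\tilde X \le \tilde Y$ whenever $F_X \ge F_Y$ pointwise, and handling the possibility that $u^{-1}((z,\infty))$ is a half-open versus closed half-line in the first part (which is absorbed cleanly by working on the coupling space rather than directly with CDFs). The lemma is a standard stochastic-ordering fact, included to justify manipulations in the subsequent analysis of the tail-rate testing problem.
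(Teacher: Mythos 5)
The paper states Lemma~\ref{stochord} without proof, treating it as a standard fact about stochastic ordering, so there is no authorial proof to compare against. Your proof via the quantile (Strassen) coupling is the standard argument and is correct: $F_X \ge F_Y$ pointwise does give $F_X^{-1}(u) \le F_Y^{-1}(u)$ for all $u$, after which monotonicity of $u$ and of addition do the rest on the coupled space. You also correctly flag that the second claim is false without an independence hypothesis (e.g.\ take $X_1 = X_2 = Z$ versus $Y_1, Y_2$ i.i.d.\ copies of $Z$), that the paper's statement silently omits this hypothesis, and that it is satisfied in the only place the lemma is invoked (Lemma~\ref{worstcasedists}, where the summands $L^{\epsilon}_{t,\Delta}(x_i)$ come from i.i.d.\ draws). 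No gaps; if anything your write-up is more careful than the paper's statement.
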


\begin{lemma}\label{worstcasedists}
For the distribution $f_0^*$ and $f_1^*$ described above, we have \[\SC_{\epsilon}(\mathcal{P}_{\le t}, \mathcal{P}_{\ge t+\Delta}) = \SC_{\epsilon}(f_0^*, f_1^*).\] Furthermore, the test statistic for distinguishing $\mathcal{P}_{\le t}$ and $\mathcal{P}_{\ge t+\Delta}$ is the clamped log-likelihood ratio between $f_0^*$ and $f_1^*$. 
\end{lemma}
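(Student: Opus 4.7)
The plan is to prove the two inequalities $\SC_\epsilon(\mathcal{P}_{\le t}, \mathcal{P}_{\ge t+\Delta}) \ge \SC_\epsilon(f_0^*, f_1^*)$ and $\SC_\epsilon(\mathcal{P}_{\le t}, \mathcal{P}_{\ge t+\Delta}) \le \SC_\epsilon(f_0^*, f_1^*)$ separately. The first is immediate: since $f_0^* \in \mathcal{P}_{\le t}$ and $f_1^* \in \mathcal{P}_{\ge t+\Delta}$ by construction, any $\epsilon$-DP test that distinguishes the compound hypotheses must in particular distinguish this specific pair with the same number of samples. The content of the lemma is the reverse inequality, and the ``furthermore'' clause will drop out of the proof since the test I exhibit is exactly the clamped log-likelihood ratio between $f_0^*$ and $f_1^*$.

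For the reverse inequality I use the optimal $\epsilon$-DP simple hypothesis test for $(f_0^*, f_1^*)$ guaranteed by Proposition~\ref{robustthresholds}: set $L(x) = [\log(f_1^*(x)/f_0^*(x))]_{-\epsilon}^{\epsilon}$, form $S(X) = \sum_{i=1}^n L(x_i) + \Lap(2/\epsilon)$, and threshold at an appropriate $\tau$. I then show that this same test distinguishes the compound hypotheses by proving $(f_0^*, f_1^*)$ is the worst case for both error types. Two structural properties drive the argument. First, $L$ is non-decreasing in $x$: on $[0, a_1(t,\Delta)]$ the unclamped log-ratio equals $\log(C_+/C_-) + \Delta \log x + \log\bigl((1+\gamma x^p)/(1-\gamma x^p)\bigr)$, each term non-decreasing in $x$, while on $[a_1,\infty)$ the ratio is constant by construction, so the strictly increasing part of $L$ is supported in $[0, a_1]$. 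Second, for every $f \in \mathcal{P}_{\le t}$, writing $f(x) = C x^{t'}(1+h(x))$ with $t' \le t$, $C \ge C_-$, and $h(x) \ge -\gamma x^p$, the inequality $x^{t'} \ge x^t$ on $(0,1)$ yields $f(x) \ge f_0^*(x)$ on $(0, \delta)$, and integrating gives $F(c) \ge F_0^*(c)$ for all $c \in [0, \delta]$.

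Combining these two properties (and assuming $a_1(t,\Delta) \le \delta$), I conclude that for every $\ell \in (-\epsilon, \epsilon)$ the event $\{L(X) > \ell\}$ equals $\{X > L^{-1}(\ell)\}$ with $L^{-1}(\ell) \le a_1 \le \delta$, so $\Pr_f[L(X) > \ell] = 1 - F(L^{-1}(\ell)) \le 1 - F_0^*(L^{-1}(\ell)) = \Pr_{f_0^*}[L(X) > \ell]$; the endpoint cases $\ell \ge \epsilon$ and $\ell < -\epsilon$ are trivial. Hence $L(X) \sim f$ is stochastically dominated by $L(X) \sim f_0^*$, so by Lemma~\ref{stochord} the same holds for $\sum_i L(x_i)$, and coupling the Laplace noise preserves the ordering. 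The Type~I error of the test on any $f \in \mathcal{P}_{\le t}$ is therefore at most its Type~I error on $f_0^*$. A symmetric argument, starting from $g(x) \le C_+ x^{t+\Delta}(1+\gamma x^p) = f_1^*(x)$ on $(0, \delta)$ for $g \in \mathcal{P}_{\ge t+\Delta}$, gives the analogous bound on Type~II error via $f_1^*$. Consequently, whenever $n \ge \SC_\epsilon(f_0^*, f_1^*)$ the test distinguishes the compound hypotheses, proving the desired inequality.

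The main obstacle I expect is verifying that $a_1(t,\Delta)$ produced by the normalization equation $f_0^*(a_1)/f_1^*(a_1) = (1 - \int_0^{a_1} f_0^*)/(1 - \int_0^{a_1} f_1^*)$ is well-defined and satisfies $a_1(t,\Delta) \le \delta$, and that $L$ is actually non-decreasing across the transition at $a_1$ (i.e., the constant value on $[a_1,\infty)$ equals the left-limit at $a_1$). This should follow by tracking the behaviour of the normalization as $\Delta \to 0$ and, if necessary, restricting to $\Delta$ below a threshold determined by $C_\pm, \gamma, p, \delta$, which is harmless for the ternary-search application in Algorithm~\ref{algo:BS} since the $\Delta$'s used there shrink geometrically. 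Once these technicalities are pinned down, the stochastic-ordering machinery above is routine and mirrors Donoho and Liu's non-private analysis, with the clamped log-likelihood ratio of~\citet{Canonne:2019} substituting for the unclamped one.
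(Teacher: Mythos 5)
Your proposal is correct and takes essentially the same approach as the paper: both use the noisy clamped log-likelihood-ratio test guaranteed optimal by Proposition~\ref{robustthresholds}, establish that the clamped LLR statistic is stochastically extremal over the compound classes at $f_0^*$ and $f_1^*$, and then invoke Lemma~\ref{stochord} and monotonicity under Laplace noise to conclude. The only difference is expository: where the paper cites Donoho and Liu for the stochastic ordering of $L_{t,\Delta}(X)$ under $f\in\mathcal{P}_{\le t}$ and then transfers it to the clamped version via monotonicity of clamping, you re-derive that ordering from scratch by combining monotonicity of the clamped log-ratio in $x$ with the pointwise density domination $f\ge f_0^*$ on $[0,a_1]$, which is a valid (and somewhat more self-contained) way to fill in the step.
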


\begin{proof} To simplify notation, we will let $a=a_1(t,\Delta)$.
Recall from Proposition~\ref{robustthresholds}, the optimal test statistic for distinguishing between $f_0^*$ and $f_1^*$ is given by the noisy clamped log-likelihood ratio. Now, let \[L_{t,\Delta}(x) = \begin{cases}
\ln\left(\frac{C_+}{C_-}x^{\Delta}\frac{1+\gamma x^p}{1-\gamma x^p}\right), & 0<x<a,\\
\ln\left(\frac{C_+}{C_-}a^{\Delta}\frac{1+\gamma a^p}{1-\gamma a^p}\right), & x\ge a,\\
\end{cases}\]
be the log likelihood ratio and let $L^{\epsilon}_{t,\Delta}(x)=[L_{t,\Delta}(x)]_{-\epsilon}^{\epsilon}$ be the clamped log likelihood ratio (where $L_{t,\Delta}(x)$ is projected onto the domain $[-\epsilon,\epsilon]$). \cite{Donoho:1991}, showed that the distribution of $L_{t,\Delta}(x)$ where $x$ is distributed according to $f_0^*$ is stochastically larger than $L_{t,\Delta}(x)$ where $x$ is distributed according to $f$ for any $f\in\mathcal{P}_{\le t}$. Since clamping is a monotone function, this implies that the same property holds for $L^{\epsilon}_{t,\Delta}$. 
Now, for $X=(x_1, \cdots, x_n)$, \[L^{\epsilon}_{n,t,\Delta}(X) =  \sum_{i=1}^n L^{\epsilon}_{t,\Delta}(x_i)\] and $\widetilde{L^{\epsilon}_{n,t,\Delta}}(X) = L^{\epsilon}_{n,t,\Delta}(X)+\Lap(2)$. Again by Lemma \ref{stochord}, we have that among all the distributions in $\mathcal{P}_{\le t}$, $\widetilde{L^{\epsilon}_{n,t,\Delta}}(X)$ is stochastically largest at $f_0^*$. Similarly, we can show that among all the distributions in $\mathcal{P}_{\ge t+\Delta}$, $\widetilde{L^{\epsilon}_{n,t,\Delta}}(X)$ is stochastically smallest at $f_1^*$. Therefore, if we design a test which \texttt{accepts} (corresponding to choosing $\mathcal{P}_{\le t}$) if $\widetilde{L_{n,t,\Delta}}(X)\le 1$ and \texttt{rejects} otherwise. Then we must have 
\[\sup_{f\in \mathcal{P}_{\le t}}\mathbb{P}_f(\texttt{rejects}) = \mathbb{P}_{f_0^*}(\texttt{rejects}),\text{ and }
\sup_{f\in \mathcal{P}_{\ge t+\Delta}}\mathbb{P}_f(\texttt{accepts}) = \mathbb{P}_{f_1^*}(\texttt{accepts}).\]
It follows that if $n\ge \SC_{\epsilon}(f_0^*, f_1^*)$ then $ \mathbb{P}_{f_0^*}(\texttt{rejects})\le 1/3$ and $ \mathbb{P}_{f_1^*}(\texttt{accepts})\le 1/3$ since the clamped log-likelihood test is optimal for distinguishing $f_0^*$ and $f_1^*$. Thus this distinguishes $\mathcal{P}_{\le t}$ and $\mathcal{P}_{\ge t+\Delta}$ with the same number of samples.
\end{proof}

Lemma~\ref{worstcasedists} gives us the tools we need to use Algorithm~\ref{algo:BS} to obtain a near instance optimal (up to logarithmic factors) differentially private algorithm for tail bound estimation. 

\begin{restatable}{theorem}{rmaintailtheorem}\label{maintailtheorem} 
For all $n\in\mathbb{N}$, let $k^*(n)= \lceil\log_{3/2}(\frac{|t_1-t_0|}{ \MOCallt{n}{t}{\Qtest_{\epsilon}}{\mathcal{P}}{\theta}})\rceil$, $N=n\cdot k^*(n)\cdot \lceil\log k^*(n)\rceil $
and $\hat{\theta}$ be the output of Algorithm~\ref{algo:BS} run for $k^*(n)$ rounds. Then \[\ierror{\hat{\theta}}{N}{f}\le \MOCallt{n}{t}{\Qtest_{\epsilon}}{\mathcal{P}}{\theta}.\]
\end{restatable}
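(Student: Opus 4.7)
The plan is to analyze Algorithm~\ref{algo:BS} as a ternary search whose rounds are executed with the optimal $\epsilon$-DP compound test guaranteed by Lemma~\ref{worstcasedists}. Writing $\omega := \MOCallt{n}{t}{\Qtest_\epsilon}{\mathcal{P}}{\theta}$, the argument decomposes into three pieces: (i) a correctness claim that if every round produces a valid answer, the surviving interval has width $\le \omega$ after $k^*(n)$ rounds, so $|\hat\theta - t| \le \omega$; (ii) a per-round sample-complexity claim that $O(n)$ samples suffice for constant-error testing at each round; and (iii) amplification plus a union bound to guarantee overall success.

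For step (i), I would show by induction that, conditioned on all tests behaving correctly, $t \in [t_{\min}, t_{\max}]$ throughout the search. A round $i$ test is ``valid'' if, whenever $\theta(f) \le t_{\min}+\Delta_i$ it outputs the left hypothesis and whenever $\theta(f) \ge t_{\min}+2\Delta_i$ it outputs the right one; if $\theta(f)$ lies in the middle third, either output keeps $t$ in the surviving interval. Since the interval shrinks by a factor $2/3$ each round, after $k^*(n) = \lceil \log_{3/2}(|t_1-t_0|/\omega) \rceil$ rounds it has width at most $\omega$.

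For step (ii), the round $i$ compound test distinguishes $\mathcal{P}_{\le s_i}$ from $\mathcal{P}_{\ge s_i+\Delta_i}$ where $s_i = t_{\min}^{(i)}+\Delta_i$. Lemma~\ref{worstcasedists} equates its $\epsilon$-DP sample complexity (at constant error) with that of the noisy clamped log-likelihood test between explicit worst-case tail-rate densities $f_{0,i}^*, f_{1,i}^*$; call this sample size $n_i^*$. I would argue $n_i^* = O(n)$ uniformly in $i \le k^*(n)$ using the explicit form of $f_{0,i}^*, f_{1,i}^*$: for $i < k^*(n)$ one has $\Delta_i > \omega$, so monotonicity of sample complexity in $\Delta$ (immediate from the form of $f_0^*, f_1^*$) combined with the modulus definition gives $n_i^* \le n$; for $i = k^*(n)$, $\Delta_i \in (\omega/3, \omega/2]$, and direct inspection of how $f_0^*, f_1^*$ scale with $\Delta$ yields $n_{k^*(n)}^* = O(n)$. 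Standard amplification (repetition and majority vote) boosts the per-round failure probability to $1/(4 k^*(n))$ at an $O(\log k^*(n))$ sample overhead per round, and using disjoint sample splits across rounds preserves $\epsilon$-differential privacy.

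Summing across rounds, the total sample usage is $\sum_{i=1}^{k^*(n)} O(n_i^* \log k^*(n)) = O(n \cdot k^*(n) \cdot \log k^*(n)) = O(N)$, and a union bound gives overall success probability at least $3/4$. The main obstacle is step (ii): the modulus $\omega$ is defined at the true parameter $t$, but intermediate tests center on values $s_i$ that may be far from $t$, so one must leverage the explicit structure of $f_0^*, f_1^*$ from Lemma~\ref{worstcasedists}, specifically their regular scaling in $\Delta$ and mild dependence on the center $s_i$, to transfer the modulus-based bound at $t$ into uniform sample bounds across all intermediate centers.
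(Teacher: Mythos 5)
Your proposal follows essentially the same structure as the paper's proof: a ternary search whose rounds use the optimal private compound test, each round boosted to failure probability $\Theta(1/k^*(n))$ at a $\log k^*(n)$ sample overhead, with a final union bound. The paper's own proof is very terse; it states in three sentences that $k^*(n)$ rounds shrink the interval to width $\omega$, that $n\cdot\lceil\log k^*(n)\rceil$ samples suffice at the last round, and that the last test is the most expensive, so the total $N$ suffices. You fill in more of the reasoning — the induction for correctness, the invocation of Lemma~\ref{worstcasedists}, the monotonicity of the compound test's sample complexity in $\Delta$, privacy via disjoint sample splits — all of which is consistent with and implicit in the paper. You also correctly flag a genuine subtlety that the paper's proof silently skips: the modulus $\omega$ is defined at the true parameter $t$, while the intermediate tests are centered at $s_i\neq t$, and nothing in the definition of the modulus immediately bounds $\SC_\epsilon(f^*_{0,i}, f^*_{1,i})$ when $\theta(f^*_{0,i})\neq t$. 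Neither you nor the paper fully closes this; the paper's earlier remark that "the final, most stringent test requires a sample of size $n$, by definition" is doing silent work here, and the justification would have to come, as you say, from the explicit form and regular scaling of the extremal pair $f_0^*, f_1^*$ from Lemma~\ref{worstcasedists}. So your approach is the same as the paper's and, if anything, more careful about where the argument needs support — but, like the paper, it leaves that support as a sketch rather than a proof.
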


\begin{proof}[Proof of Theorem~\ref{maintailtheorem}] We can think of Algorithm~\ref{algo:BS} as at each step dividing the distance between $t_{\min}$ and $t_{\max}$ by $2/3$ and concluding that the true value $t^*$ lies between $t_{\min}$ and $t_{\max}$. Thus, in order to show that $\ierror{\hat{\theta}}{N}{f}\le \MOCallt{n}{t}{\Qtest_{\epsilon}}{\mathcal{P}}{\theta}$, it suffices to show that in order to run for $k^*(n)=\left\lceil\log_{\frac{3}{2}}\left  (\frac{|t_1-t_0|}{\MOCallt{n}{t}{\Qtest_{\epsilon}}{\mathcal{P}}{\theta}}\right)\right\rceil$ iterations, it suffices to have at least $N=n\cdot \lceil \log k^*(n)\rceil \cdot k^*(n)$ samples. In order to make the correct decision with probability $1/3k^*(n)$, it suffices for the last iteration to use $n\cdot \lceil \log k^*(n)\rceil$ samples. Since the hypothesis test at the last step has the largest sample size, $n\cdot\lceil \log k^*(n)\rceil\cdot k^*(n)$ samples is sufficient to run $k^*(n)$ rounds.
\end{proof}

\cite{Donoho:1991} are able to alter Algorithm~\ref{algo:BS} to remove the logarithmic factor.
Given $\Delta>0$, they define the following estimator 
\begin{equation}\label{donohoestimator}
\theta^*_{n, \Delta}(X) = \frac{\Delta}{2}+\sup\{t\;|\; L_{n,t,\Delta}(X)\le0\},
\end{equation} which outputs the largest $t$ for such the hypothesis $\mathcal{P}_{\le t}$ would be accepted. This estimator is well-defined since $L_{n,t,\Delta}$ is deterministic, a crucial distinction when we move to the private setting. Donoho et al. show that if $\Delta$ is sufficiently small then $L_{n,t,\Delta}$ is monotonically decreasing in $t$ for a given $x$, which implies that given input distribution $f$ such that $\theta(f)=t$, the estimation algorithm $T^*_{n, \Delta}$ has error rate $\MOCallt{n}{t}{\Qtest_{\epsilon}}{\mathcal{P}}{\theta}.$

The estimator $\theta^*_{n, \Delta}(X)$ in eqn~\eqref{donohoestimator} can be viewed as performing the test $L_{n,t,\Delta}(X)$ on every $t$ value and outputting the threshold point, where the test flips from \texttt{accept} to \texttt{reject}. We can not replicate this directly in the private setting both because the private test is stochastic (so there is likely to be some false negatives and false positives), and because performing the test on every $t$ value would result in an unreasonably large privacy cost.

\addcontentsline{toc}{section}{References}
\section*{Acknowledgments}

We thank Clément Canonne and John Duchi for helpful conversations and comments. This work was started while the authors were visiting the Simons Institute for the Theory of Computing. Part of this work was done while AM was at Boston University and Northeastern University, where she was supported by BU's Hariri Institute for Computing, NSF award CCF-1763786, and Northeastern's Cybersecurity and Privacy Institute.  
JU's work on this project was supported by NSF awards CCF-1750640 and CNS-2120603. Part of this work was done while JU was visiting Apple. 
AS was supported in part by NSF award CCF-1763786 and a Sloan Foundation Research Award.

\addcontentsline{toc}{section}{References}

\bibliographystyle{abbrvnat}

\bibliography{refs}

\addcontentsline{toc}{section}{Appendix}
\addtocontents{toc}{\setcounter{tocdepth}{-1}}
\appendix

\newpage

\section{Proofs for Section~\ref{DPtestingsec}}\label{DPtestingappendix}

\subsection{Proof of Proposition~\ref{robustthresholds}}\label{arobustthresholds}

\restaterobustthresholds*

As in \cite{Canonne:2019}, define \[\tau = \tau(P,Q) = \max\left\{ \int \max\{P(x)-e^{\epsilon}Q(x), 0\}dx,  \int \max\{Q(x)-e^{\epsilon}P(x), 0\}dx\right\}\] and assume without loss of generality that $\tau = \int \max\{P(x)-e^{\epsilon}Q(x), 0\}dx$. Let $0\le\epsilon'\le\epsilon$ be the smallest value such that $\tau = \int \max\{Q(x)-e^{\epsilon'}P(x), 0\}dx$.
Define $P'=\frac{1}{1-\tau}\min\{P,e^{\epsilon} Q\}$ and $Q'=\frac{1}{1-\tau}\min\{Q,e^{\epsilon'} P\}$.

\begin{lemma} \label{getridofepsprime} For any $\epsilon\in[0, 1]$, and distributions $P$ and $Q$ with the same support,
\[\SC_{\NCLLR_{-\epsilon'}^{\epsilon}}(P,Q)=\Theta(\SC_{\NCLLR_{-\epsilon}^{\epsilon}}(P,Q)).\]
\end{lemma}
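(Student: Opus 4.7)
The plan is to establish both inequalities of the $\Theta$ separately. The direction $\SC_{\NCLLR_{-\epsilon'}^{\epsilon}}(P,Q) \le O(\SC_{\NCLLR_{-\epsilon}^{\epsilon}}(P,Q))$ comes essentially for free: $\NCLLR_{-\epsilon}^{\epsilon}$ is an $\epsilon$-DP test (its Laplace noise scale $2$ matches sensitivity $2\epsilon$), so its sample complexity is at least $\SC_\epsilon(P,Q)$, and \citet{Canonne:2019} have already shown $\SC_{\NCLLR_{-\epsilon'}^{\epsilon}}(P,Q) = \Theta(\SC_\epsilon(P,Q))$. The heart of the proof is the reverse direction, for which I would directly show that $\NCLLR_{-\epsilon}^{\epsilon}$ also achieves the optimal $\epsilon$-DP sample complexity by pointwise-comparing its statistic to the narrower-clamp statistic.

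Writing $L(x) = \ln(P(x)/Q(x))$ and $\Delta(x) = [L(x)]_{-\epsilon'}^{\epsilon} - [L(x)]_{-\epsilon}^{\epsilon}$, I would first note that $\Delta$ is non-negative, bounded by $\epsilon - \epsilon'\le\epsilon$, and supported on the set $A = \{x : L(x) \le -\epsilon'\} = \{x : Q(x) \ge e^{\epsilon'} P(x)\}$. The first key observation is that the mean gap only \emph{grows} when we widen the clamp: since $Q \ge e^{\epsilon'} P \ge P$ on $A$, one has $\mathbb{E}_Q[\Delta] \ge \mathbb{E}_P[\Delta]$, which rearranges to $M^\epsilon \ge M^{\epsilon'}$, where $M^\eta := \mathbb{E}_P[L^{-\eta,\epsilon}] - \mathbb{E}_Q[L^{-\eta,\epsilon}]$ denotes the mean gap at clamp $[-\eta,\epsilon]$. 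The second observation controls the variance: since $L^{-\epsilon',\epsilon} \equiv -\epsilon'$ on the support of $\Delta$, direct expansion yields $\mathbb{E}[(L^{-\epsilon,\epsilon})^2] - \mathbb{E}[(L^{-\epsilon',\epsilon})^2] = 2\epsilon'\mathbb{E}[\Delta] + \mathbb{E}[\Delta^2] \le 3\epsilon\,\mathbb{E}[\Delta]$, so the variances of the per-sample statistic under $P$ and $Q$ each grow by at most $O(\epsilon \cdot \mathbb{E}[\Delta])$. The Laplace noise variances of the two tests agree up to a factor of two.

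To conclude, I would plug these comparisons into the Chebyshev-style test analysis of \citet{Canonne:2019}: the sample complexity of $\NCLLR_{-\epsilon'}^{\epsilon}$ is essentially $O(\max\{\mathrm{Var}^{\epsilon'}/(M^{\epsilon'})^2,\, 1/M^{\epsilon'}\})$ (folding in the Laplace contribution to $\mathrm{Var}^{\epsilon'}$), and the same expression with $\epsilon$ in place of $\epsilon'$ would be a valid upper bound for $\NCLLR_{-\epsilon}^{\epsilon}$. Since $M^\epsilon \ge M^{\epsilon'}$, the crux is to absorb the excess variance $O(\epsilon \cdot \mathbb{E}[\Delta])$. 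I expect this to be the main obstacle, particularly when $\epsilon'$ is much smaller than $\epsilon$. In the regime $\epsilon' = \Theta(\epsilon)$, the identity $M^\epsilon - M^{\epsilon'} \ge (1-e^{-\epsilon'})\mathbb{E}_Q[\Delta]$ (from $\mathbb{E}_P[\Delta] \le e^{-\epsilon'}\mathbb{E}_Q[\Delta]$) gives $\mathbb{E}_Q[\Delta] = O(M^\epsilon/\epsilon)$, and the excess $\epsilon \cdot \mathbb{E}_Q[\Delta] = O(M^\epsilon)$ is absorbed directly into the sample-complexity bound. When $\epsilon'$ is small, I would instead bound $\mathbb{E}[\Delta] \le \epsilon\,\Pr(A)$ and invoke the $\tau$-dependent piece of the Canonne sample-complexity formula (recalling the definitional identity $\Pr_Q(A) - e^{\epsilon'}\Pr_P(A) = \tau$) to check that the resulting excess variance is already subsumed by the baseline bound for $\NCLLR_{-\epsilon'}^{\epsilon}$.
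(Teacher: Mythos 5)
Your overall skeleton is right, and the first two observations match the paper's proof almost exactly (with opposite sign convention: your $\Delta$ is the paper's $-g$). The monotonicity of the mean gap is the same algebraic fact the paper records, and your identity for the second-moment increase, $\mathbb{E}[(L^{-\epsilon,\epsilon})^2]-\mathbb{E}[(L^{-\epsilon',\epsilon})^2]=2\epsilon'\mathbb{E}[\Delta]+\mathbb{E}[\Delta^2]\le 3\epsilon\,\mathbb{E}[\Delta]$, is equivalent to the paper's expansion $\mathbb{E}[\CLLR_{-\epsilon}^{\epsilon}{}^2]\le\mathbb{E}[\CLLR_{-\epsilon'}^{\epsilon}{}^2]+2\epsilon\mathbb{E}[-g]+(\epsilon-\epsilon')\mathbb{E}[-g]$. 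So the crux in both cases is: how large can $\mathbb{E}[\Delta]$ be?

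Here is where you and the paper diverge, and where your plan has a genuine gap. The paper proves the single clean inequality $\mathbb{E}_P[\Delta]\le\tau$ (and then $\mathbb{E}_Q[\Delta]=O(\tau)$) by a concavity argument: on the region $B=\{-\epsilon\le L\le -\epsilon'\}$ one uses $\ln x\le e^{-\epsilon'}x+\epsilon'-1$ to get $P(x)\Delta(x)\le Q(x)-e^{\epsilon'}P(x)$, and on $C=\{L\le-\epsilon\}$ one uses $\epsilon-\epsilon'\le e^{\epsilon}-e^{\epsilon'}$; integrating over $B\cup C$ gives $\mathbb{E}_P[\Delta]\le\tau$ by the very definition of $\tau$. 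This bound holds for all $\epsilon'$, and it is precisely of the right form to be absorbed into the $\tau\epsilon$ piece of the Canonne--Kamath--McMillan--Smith--Ullman bound $O(\tau\epsilon+(1-\tau)H^2(P',Q'))$.

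Your case split cannot substitute for this. Your case $\epsilon'=\Theta(\epsilon)$ is fine, but your case for small $\epsilon'$ bounds $\mathbb{E}[\Delta]$ only by $\epsilon\Pr(A)$, and $\Pr(A)$ is not $O(\tau/\epsilon)$ in general: e.g.\ when $\epsilon'=0$, $A=\{Q>P\}$ can have $\Pr_P(A)$ a fixed constant while $\tau=\Pr_Q(A)-\Pr_P(A)$ is arbitrarily small, so $\epsilon\,\mathbb{E}[\Delta]\le\epsilon^2\Pr(A)$ is not dominated by $\tau\epsilon$ or $(1-\tau)H^2(P',Q')$. The appeal to ``the $\tau$-dependent piece of the Canonne sample-complexity formula'' does not by itself yield the missing relation between $\Pr(A)$ and $\tau$. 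To close the argument you need the pointwise concavity comparison $P\Delta\le Q-e^{\epsilon'}P$ on $B$ (and its analogue on $C$); once you have $\mathbb{E}_P[\Delta]\le\tau$ the case split becomes unnecessary and the rest of your plan goes through.
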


\begin{proof} First note that since $\NCLLR_{-\epsilon'}^{\epsilon}$ is an optimal test up to a constant factor \citep{Canonne:2019}, \[\SC_{\NCLLR_{-\epsilon'}^{\epsilon}}(P,Q)=O(\SC_{\NCLLR_{-\epsilon}^{\epsilon}}(P,Q)).\]

\cite{Canonne:2019} show that the following two inequalities are sufficient to prove that $\SC_{\NCLLR_{-\epsilon'}^{\epsilon}}(P,Q) = \Theta\left(\frac{1}{\tau\epsilon+(1-\tau)H^2(P',Q')}\right)$:
\[\mathbb{E}_{P^n}[\CLLR_{-\epsilon'}^{\epsilon}]-\mathbb{E}_{Q^n}[\CLLR_{-\epsilon'}^{\epsilon}]\ge \Omega(n(\tau\epsilon+(1-\tau)H^2(P',Q')))\]
and
\[\max\left\{\mathbb{E}_{P}\left[\left[\ln\frac{P(x)}{Q(x)}\right]_{-\epsilon'}^{\epsilon}\right]^2, \mathbb{E}_{Q}\left[\left[\ln\frac{P(x)}{Q(x)}\right]_{-\epsilon'}^{\epsilon}\right]^2\right\}\le O(\tau\epsilon+(1-\tau)H^2(P',Q'))\]

We first note that the gap between the expectations increases when we move from $\CLLR_{-\epsilon'}^{\epsilon}$ to $\CLLR_{-\epsilon}^{\epsilon}$.
If $P(x)<Q(x)$, then $\CLLR_{-\epsilon}^{\epsilon}(x)<\CLLR_{-\epsilon'}^{\epsilon}(x)$, and otherwise, $\CLLR_{-\epsilon}^{\epsilon}(x)=\CLLR_{-\epsilon'}^{\epsilon}(x)$. Thus, $(P(x)-Q(x))\CLLR_{-\epsilon}^{\epsilon}(x)\ge (P(x)-Q(x))\CLLR_{-\epsilon'}^{\epsilon}(x)$ and
\begin{align*}
\mathbb{E}_{P^n}[\CLLR_{-\epsilon}^{\epsilon}]-\mathbb{E}_{Q^n}[\CLLR_{-\epsilon}^{\epsilon}] &= n\int (P(x)-Q(x))\CLLR_{-\epsilon}^{\epsilon}(x) dx\\
& \ge n\int (P(x)-Q(x))\CLLR_{-\epsilon'}^{\epsilon}(x) dx \\
&= \mathbb{E}_P[\CLLR_{-\epsilon'}^{\epsilon}]-\mathbb{E}_Q[\CLLR_{-\epsilon'}^{\epsilon}]\\
&\ge \Omega(n(\tau\epsilon+(1-\tau)H^2(P',Q')))
\end{align*}

The next step is to bound the second moment of $[\ln\frac{P(x)}{Q(x)}]_{-\epsilon}^{\epsilon}$ under both $P$ and $Q$. Define the following three regions:
\[A = \left\{x\;| \ln\frac{P(x)}{Q(x)}\ge -\epsilon'\right\} \;\;\; B=\left\{x\;|\ln\frac{P(x)}{Q(x)}\in[-\epsilon, -\epsilon']\right\}\;\;\; \text{and} \;\;\; C=\left\{x\;|\ln\frac{P(x)}{Q(x)}\le-\epsilon\right\},\]
so $\CLLR_{-\epsilon}^{\epsilon}(x)=\CLLR_{-\epsilon'}^{\epsilon}(x)+g(x)$ where \[g(x) = \begin{cases} 
0 & \text{if } x\in A\\
\ln\frac{P(x)}{Q(x)}+\epsilon' & \text{if } x\in B\\
-\epsilon+\epsilon' & \text{if } x\in C
\end{cases}\]
Let us first bound the expectation of $-g(x)$ under $P$. Note that since $\ln x$ is concave, $\ln x\le e^{-\epsilon'} x+\epsilon'-1$, since the right hand side is the tangent at $x=e^{\epsilon'}$.
\begin{align*}
\int_{x\in B} P(x)(-g(x))dx &= \int_{x\in B} P(x)\left(\ln\frac{Q(x)}{P(x)}-\epsilon'\right)dx \\
&\le \int_{x\in B} P(x)\left(e^{-\epsilon'}\frac{Q(x)}{P(x)}+\epsilon'-1-\epsilon'\right)dx\\
&= \int_{x\in B} e^{-\epsilon'} Q(x)-P(x) dx\\
&= e^{-\epsilon'} \int_{x\in B} (Q(x)-e^{\epsilon'} P(x)) dx\\
&\le \int_{x\in B} (Q(x)-e^{\epsilon'} P(x)) dx.
\end{align*}
Note also that since $\epsilon>\epsilon'>0$, and $\epsilon-\epsilon'\le e^{\epsilon}-e^{\epsilon'}$ so 
\begin{align*}
\int_{x\in C} P(x)(-g(x))dx &= \int_{x\in C} P(x)(\epsilon-\epsilon')dx\\
&\le \int_{x\in C} P(x)(e^{\epsilon}-e^{\epsilon'})dx\\
&\le \int_{x\in C} Q(x)-e^{\epsilon'}P(x) dx.
\end{align*}
where the last inequality follows from the fact that $e^{\eps}P(x)<Q(x)$ for $x\in C$.
Therefore, $\mathbb{E}_P[-g(x)] \le \int_{x\in B\cup C} (Q(x)-e^{\epsilon'} P(x)) dx=\tau$.
Now, \begin{align*}
\mathbb{E}_{P}\left[\left[\ln\frac{P(x)}{Q(x)}\right]_{-\epsilon}^{\epsilon}\right]^2 &= \int P(x)\left[\left[\ln\frac{P(x)}{Q(x)}\right]_{-\epsilon'}^{\epsilon}\right]^2dx+2\int P(x)g(x)\left[\ln\frac{P(x)}{Q(x)}\right]_{-\epsilon'}^{\epsilon}dx\\
&\hspace{1in}+\int P(x)g(x)^2dx.\\
\end{align*}
Now, whenever $g(x)\neq 0$, $\left[\ln\frac{P(x)}{Q(x)}\right]_{-\epsilon'}^{\epsilon}\in[-\eps, -\eps']$ so \[\int P(x)g(x)\left[\ln\frac{P(x)}{Q(x)}\right]_{-\epsilon'}^{\epsilon}dx\le \eps\int P(x)(-g(x)) dx.\] Thus,
\begin{align*}
\mathbb{E}_{P}\left[\left[\ln\frac{P(x)}{Q(x)}\right]_{-\epsilon}^{\epsilon}\right]^2 
&\le \mathbb{E}_{P}\left[\left[\ln\frac{P(x)}{Q(x)}\right]_{-\epsilon'}^{\epsilon}\right]^2+2\epsilon\int P(x)(-g(x))dx + \int P(x)g(x)^2dx\\
&\le O(\tau\epsilon+(1-\tau)H^2(P',Q'))+2\epsilon \mathbb{E}_P[-g(x)]+(\epsilon-\epsilon')\mathbb{E}_P[-g(x)]\\
&= O(\tau\epsilon+(1-\tau)H^2(P',Q'))
\end{align*}

Also on the support of $g(x)$ ($B\cup C$), $Q(x)=e^{\epsilon'}P(x)+F(x)$ where $\int_{B\cup C}F(x)=\tau$. So, \[\mathbb{E}_Q[-g(x)]=\int (e^{\epsilon'}P(x)+F(x)) (-g(x))dx \le e^{\epsilon'} \mathbb{E}_P[-g(x)]+\epsilon\int_{B\cup C} F(x)dx = O(\tau),\] since $\epsilon=O(1).$ Then, by the same argument as above, $\mathbb{E}_{Q}\left[\left[\log\frac{P(x)}{Q(x)}\right]_{-\epsilon}^{\epsilon}\right]^2=O(\tau\epsilon+(1-\tau)H^2(P',Q'))$, and so we are done.

\end{proof}

\begin{lemma}\label{threshrobust} For all constants $C_1, C_2$, there exists constants $C_3, C_4$ such that for all $\epsilon\in[0,1]$, distributions $P$ and $Q$, if $-a, b\in[C_1\epsilon, C_2\epsilon]$ then
\[\SC_{\NCLLR_a^b}(P,Q)\in[C_3\SC_{\epsilon}(P,Q), C_4\SC_{\epsilon}(P,Q)].\]
\end{lemma}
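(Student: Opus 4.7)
The plan is to show that passing from the baseline thresholds $(-\epsilon, \epsilon)$, already analyzed in Lemma~\ref{getridofepsprime}, to arbitrary $(a, b)$ with $-a, b \in [C_1\epsilon, C_2\epsilon]$ preserves, up to constants depending only on $C_1, C_2$, all three quantities that govern the sample complexity of a noisy clamped LLR test: the Laplace noise scale, the single-sample expectation gap $\mathbb{E}_P[[\ln(P/Q)]_a^b] - \mathbb{E}_Q[[\ln(P/Q)]_a^b]$, and the single-sample second moments $\mathbb{E}_P([\ln(P/Q)]_a^b)^2$ and $\mathbb{E}_Q([\ln(P/Q)]_a^b)^2$. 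Combined with Lemma~\ref{getridofepsprime} and the characterisation from \cite{Canonne:2019}, this will give $\SC_{\NCLLR_a^b}(P,Q) = \Theta(\SC_{\NCLLR_{-\epsilon}^{\epsilon}}(P,Q)) = \Theta(\SC_{\epsilon}(P,Q))$.

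The noise scale is immediate: $\CLLR_a^b$ has sensitivity $b - a \in [2 C_1 \epsilon, 2 C_2 \epsilon]$, so $\NCLLR_a^b$ uses Laplace noise of scale $(b-a)/\epsilon \in [2 C_1, 2 C_2]$, within a constant factor of the scale-$2$ noise used by $\NCLLR_{-\epsilon}^{\epsilon}$. For the two integral quantities I would compare pointwise to $[\ln(P/Q)]_{-\epsilon}^{\epsilon}$. Since both clamping intervals contain $0$ and have endpoints of magnitude $\Theta(\epsilon)$, the pointwise difference $[\ln(P/Q)]_a^b - [\ln(P/Q)]_{-\epsilon}^{\epsilon}$ is supported on at most four sub-intervals of $\mathbb{R}$ determined by the cuts $\{-\epsilon, a\}$ on the lower side and $\{b, \epsilon\}$ on the upper side, and is uniformly bounded in absolute value by $(C_2+1)\epsilon$. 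On each such region, the $P$-mass and $Q$-mass differ by $O(\tau) + O(\epsilon)\cdot\min(P\text{-mass},Q\text{-mass})$ by the definition of $\tau$ together with $e^\epsilon - 1 = O(\epsilon)$; so the correction to the expectation gap is $O_{C_1, C_2}(\tau \epsilon)$, and a Cauchy--Schwarz-style computation mimicking the ``$g(x)$'' analysis in the proof of Lemma~\ref{getridofepsprime} shows that the correction to each second moment is likewise $O_{C_1, C_2}(\tau\epsilon)$. Both corrections are absorbed into the baseline $\Theta(\tau\epsilon + (1-\tau) H^2(P', Q'))$.

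With all three quantities preserved up to constants, Chebyshev's inequality (as applied in \cite{Canonne:2019} and re-used in Lemma~\ref{getridofepsprime}) carries through verbatim and yields the claimed two-sided bound. The main obstacle is the case analysis in the expectation-gap step: one must check separately whether $a < -\epsilon$ or $a > -\epsilon$ (resp.\ whether $b < \epsilon$ or $b > \epsilon$), and verify that in each of the resulting subcases the $O(\tau\epsilon)$ correction has either the same sign as the baseline gap or else is dominated in magnitude by it, since a correction of size $\Omega(\tau\epsilon)$ of the wrong sign could in principle cancel the baseline when $\tau\epsilon$ is the dominant term. In each subcase this reduces to the same monotonicity of clamping and the same concavity trick $\ln x \le e^{-\epsilon'} x + \epsilon' - 1$ that drive Lemma~\ref{getridofepsprime}'s proof, so the extension is tedious but routine.
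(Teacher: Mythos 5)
Your approach is genuinely different from the paper's, and as written it has a gap. The paper's proof does not estimate a ``correction'' from changing thresholds at all. Instead it exploits two monotonicity facts that hold \emph{pointwise}: widening the clamping interval can only increase $(P(x)-Q(x))\CLLR_a^b(x)$ and hence the expectation gap, while narrowing it can only decrease the variance. So the gap for $\CLLR_a^b$ with $-a,b\ge C_1\epsilon$ is at least the gap for $\CLLR_{-C_1\epsilon}^{C_1\epsilon}$, and the variance for $\CLLR_a^b$ with $-a,b\le C_2\epsilon$ is at most that of $\CLLR_{-C_2\epsilon}^{C_2\epsilon}$. Each of those two statistics is analyzed by applying Lemma~\ref{getridofepsprime} (and the Canonne et al.\ Theorem 2.5 bounds) at the \emph{scaled} privacy level $C_1\epsilon$ resp.\ $C_2\epsilon$; the resulting lower bound $D_3 n/f(C_1\epsilon)$ and upper bound $D_4 n/f(C_2\epsilon)$ are then converted into bounds against $n/f(\epsilon)$ via the secrecy-of-the-sample relation $f(C\epsilon)\in[\min\{1,1/C\}f(\epsilon),\max\{1,1/C\}f(\epsilon)]$. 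That rescaling step, which you do not use anywhere, is what eliminates the sign-cancellation concern you raise; no subtraction is ever performed.

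Two concrete problems with your subtraction-based plan. First, the claim that the correction to the expectation gap is $O_{C_1,C_2}(\tau\epsilon)$ is not correct as stated: on the region where $C_1\epsilon<|\ln(P/Q)|\le\epsilon$, the pointwise difference $[\ln(P/Q)]_a^b-[\ln(P/Q)]_{-\epsilon}^{\epsilon}$ is $\Theta(\epsilon)$ and $|P-Q|$ is only controlled by $(e^\epsilon-1)\min(P,Q)=O(\epsilon)\min(P,Q)$, so the contribution is $O(\epsilon^2)\cdot(\text{mass})$, which is a term of the same order as the Hellinger piece $(1-\tau)H^2(P',Q')$ and is in general \emph{not} $O(\tau\epsilon)$. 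Second, when the new thresholds are strictly inside $(-\epsilon,\epsilon)$, the gap genuinely shrinks, by an amount comparable to the baseline. You acknowledge this and appeal to ``the same monotonicity of clamping,'' but monotonicity compares $\CLLR_a^b$ to $\CLLR_{-C_1\epsilon}^{C_1\epsilon}$, not to $\CLLR_{-\epsilon}^{\epsilon}$; once you make that comparison you have left the subtraction framework and must still relate the quantity $f(C_1\epsilon)$ appearing in Canonne et al.'s bound to $f(\epsilon)$. That step requires a privacy-amplification-by-subsampling argument (the paper cites \citet{Balle:2020}), which is the missing ingredient in your proof sketch.
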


\begin{proof} Define $f(\epsilon)=\SC_{\epsilon}(P,Q)$.
Let $C$ be a constant, and let us first establish some simple bounds on $f(C\epsilon)$. Suppose $C\le 1$. Then since $f(\epsilon)$ is the optimal sample complexity, we have $f(\epsilon)\le f(C\epsilon)$. Also, by the secrecy-of-the-sample lemma for DP algorithms \citep{Balle:2020}, we have that $f(C\epsilon)\le \frac{1}{C}f(\epsilon)$. If $C\ge1$ then by the same arguments we have $f(C\epsilon)\in[\frac{1}{C}f(\epsilon), f(\epsilon)]$.

Now, note that by Lemma~\ref{getridofepsprime} and the proof of Theorem 2.5 in \cite{Canonne:2019}, there exist constants $D_1, D_2, D_3, D_4$ such that for all $\epsilon$ and $P,Q$, $\SC_{\NCLLR_{-\epsilon}^{\epsilon}}(P,Q)\in[D_1\SC_{\epsilon}(P,Q), D_2\SC_{\epsilon}(P,Q)]$ and  \[|\mathbb{E}_P^n(\CLLR_{-\epsilon}^{\epsilon}(X))-\mathbb{E}_Q^n(\CLLR_{-\epsilon}^{\epsilon}(X))|\ge D_3n\frac{1}{f(\epsilon)}\] and \[\var_P(\CLLR_{\epsilon}^{\epsilon}(X))\le D_4n\frac{1}{f(\epsilon)}.\]

Note that 
\begin{align*}
|\mathbb{E}_P^n(\CLLR_a^b(X))-\mathbb{E}_Q^n(\CLLR_a^b(X))|&\ge |\mathbb{E}_P^n(\CLLR_{-C_1\epsilon}^{C_1\epsilon}(X))-\mathbb{E}_Q^n(\CLLR_{-C_1\epsilon}^{C_1\epsilon}(X))|\\
&\ge D_3n\frac{1}{f(C_1\epsilon)}\\
&\ge D_3\min\{1, C_1\}n\frac{1}{f(\epsilon)}
\end{align*}
where the first inequality follows by definition.  Also, 
\begin{align*}
\var_P(\CLLR_a^b(X))\le \var_P(\CLLR_{-C_2\epsilon}^{C_2\epsilon}(X))\le D_4n \frac{1}{f(C_2\epsilon)}\le D_4\max\{1, C_2\}n\frac{1}{f(\epsilon)}
\end{align*}
where again the first inequality follows by definition. Therefore, there exist constants $C_3, C_4$, depending on $C_1, C_2, D_1, D_2, D_3, D_4$ such that \[\SC_{\NCLLR_a^b}(P,Q)\in[C_3f(\epsilon), C_4f(\epsilon)]\]
\end{proof}

\section{Proofs about Exponential Families}

\subsection{Proof of Corollary~\ref{monotonecontinuous}}\label{amonotonecontinuous}

\rmonotonecontinuous*

\begin{proof}[Proof of Corollary~\ref{monotonecontinuous}]
Recall from Lemma~\ref{monotonelikelihood} that one-parameter exponential families have monotone likelihood ratio so for any $\theta<\theta'$ and any $t$, $\mathbb{P}_{\theta}(x>t)<\mathbb{P}_{\theta'}(x>t)$. Suppose $\theta_1\le\theta_2\le\theta_3$ then 
\begin{align*}
\TV(P_{\theta_1}, P_{\theta_2}) &= \mathbb{P}_{\theta_2}\left(x\ge \frac{A(\theta_2)-A(\theta_1)}{\theta_2-\theta_1}\right)- \mathbb{P}_{\theta_1}\left(x\ge \frac{A(\theta_2)-A(\theta_1)}{\theta_2-\theta_1}\right)\\
&\le \mathbb{P}_{\theta_3}\left(x\ge \frac{A(\theta_2)-A(\theta_1)}{\theta_2-\theta_1}\right)- \mathbb{P}_{\theta_1}\left(x\ge \frac{A(\theta_2)-A(\theta_1)}{\theta_2-\theta_1}\right)\\
&\le \TV(P_{\theta_1}, P_{\theta_3}).
\end{align*}  
Thus, monotonicity holds. To prove continuity, note that 
\begin{align*}
\KL(P_{\theta}\|P_{\theta+h}) &= \int p_{\theta}(x)\ln\frac{p_{\theta}(x)}{p_{\theta+h}(x)} d\mu\\
&= \int p_{\theta}(x)\left(\theta x-A(\theta)-(\theta+h) x+A(\theta+h)\right) d\mu\\
&= -h\mathbb{E}_{\theta}[x]+A(\theta+h)-A(\theta)\\
&= A(\theta+h)-A(\theta)-hA'(\theta)\\
&\le \max_{\theta'\in[\theta, \theta+h]}A''(\theta')h^2.
\end{align*} 
Therefore, by Pinsker's inequality, \begin{equation}\label{upperboundonTV}
\TV(P_{\theta}, P_{\theta+h})\le |h|\sqrt{\max_{\theta'\in[\theta, \theta+h]}A''(\theta')}.
\end{equation} Therefore,
\[\TV(P_{\theta}, P_{\theta+h_1})-\TV(P_{\theta}, P_{\theta+h_2})\le \TV(P_{\theta+h_1}, P_{\theta+h_2}) \le |h_1-h_2|\sqrt{\max_{\theta'\in[\theta+h_1, \theta+h_2]}A''(\theta')}.\] Since $A''$ is continuous, there exists $\gamma$ such that if $|h_1-h_2|\le\gamma$ then $\sqrt{\max_{\theta'\in[\theta+h_1, \theta+h_2]}A''(\theta')}\le 2\sqrt{A''(\theta+h_1)}$. Thus for any $\rho\ge0$, if $|h_1-h_2|\le \min\left\{\gamma,\frac{\rho}{2\sqrt{A''(\theta+h_1)}}\right\}$ then $\TV(P_{\theta}, P_{\theta+h_1})-\TV(P_{\theta}, P_{\theta+h_2})\le\rho$. Therefore, $h\to \TV(P_{\theta}, P_{\theta+h})$ is continuous and monotone.
\end{proof}

\subsection{Proof of Lemma~\ref{concentrationexp}}\label{aconcentrationexp}

\rconcentrationexp*

\begin{proof}[Proof of Lemma~\ref{concentrationexp}] 
Recall that $\int e^{\theta x}d\mu = e^{A(\theta)}.$
Let $\lambda\le\kappa(\theta)$ then 
\begin{align*}
\mathbb{E}_{P_{\theta}}\left[e^{\lambda x}\right] &= \int e^{\lambda x}e^{\theta x-A(\theta)} d\mu\\
&= e^{-A(\theta)} \int e^{(\lambda +\theta) x} d\mu\\
&= e^{A(\theta+\lambda)-A(\theta)}.
\end{align*}
Now, \begin{align*}
\mathbb{E}_{P_{\theta}}\left[e^{|\lambda (x-A'(\theta))|}\right] &= \mathbb{E}_{P_{\theta}}\left[e^{\lambda (x-A'(\theta))}\textbf{1}_{\lambda (x-A'(\theta))\ge0}\right]+ \mathbb{E}_{P_{\theta}}\left[e^{-\lambda (x-A'(\theta))}\textbf{1}_{\lambda (x-A'(\theta))\le0}\right]\\
&\le \mathbb{E}_{P_{\theta}}\left[e^{\lambda x-\lambda A'(\theta)}\right]+ \mathbb{E}_{P_{\theta}}\left[e^{-\lambda x+\lambda A'(\theta)}\right]\\
&= e^{A(\theta+\lambda)-A(\theta)-\lambda A'(\theta)}+e^{A(\theta-\lambda)-A(\theta)+\lambda A'(\theta)}\\
&\le e^{\frac{\lambda^2}{2}\max_{\theta'\in[\theta, \theta+\lambda]}A''(\theta')}+e^{\frac{\lambda^2}{2}\max_{\theta'\in[\theta-\lambda, \theta]}A''(\theta')}\\
&\le 2e^{\lambda^2 A''(\theta)}
\end{align*}
where the last inequality follows since $\lambda\le\kappa(\theta)$. Therefore for any $u>0$,
\begin{align*}
\mathbb{P}_{P_{\theta}}(|x-A'(\theta)|\ge u)
&= \mathbb{P}_{P_{\theta}}(e^{\lambda|x-A'(\theta)|}\ge e^{\lambda u})\\
&\le \frac{\mathbb{E}_{P_{\theta}}\left[e^{|\lambda (x-A'(\theta))|}\right]}{e^{\lambda u}}\\
&\le \frac{2e^{\lambda^2 A''(\theta)}}{e^{\lambda u}}
\end{align*}
where the first inequality follows from Markov's inequality.
Let $u=2\sqrt{A''(\theta)}\sqrt{\ln(2/\beta)}+\frac{\ln(2/\beta)}{\kappa(\theta)}$ and $\lambda=\min\left\{\kappa(\theta), \frac{\sqrt{\ln(2/\beta)}}{\sqrt{A''(\theta)}}\right\}$, so
$u \ge \lambda A''(\theta)+\frac{\ln(2/\beta)}{\lambda}$ and
\begin{align*}
\mathbb{P}_{P_{\theta}}\left(|x-A'(\theta)|\ge 2\sqrt{A''(\theta)}\sqrt{\log(2/\beta)}+\frac{\ln(2/\beta)}{\kappa(\theta)}\right)
&\le \mathbb{P}_{P_{\theta}}\left(|x-A'(\theta)|\ge \lambda A''(\theta)+\frac{\ln(2/\beta)}{\lambda}\right)\\
&\le 2e^{\lambda^2A''(\theta)-\lambda^2A''(\theta)-\ln(2/\beta)}\\
&= 2e^{-\ln(2/\beta)}\\
&=\beta.
\end{align*}
The second statement follows immediately.

\end{proof}

\subsection{Initial Estimator - Proof of Theorem~\ref{initialestthm}}\label{appendix:meanest}

\rinitialestthm*

In this section we slightly generalise the algorithm and analysis from \cite{Karwa:2018} beyond Gaussian distributions. We will show that their algorithm provides accurate estimates of the mean of sufficiently nice exponential families. This algorithm first estimates the variance of the distribution, then estimates the mean. Both steps of the estimation are performed using differentially private histogram queries.

Let $\rho = \mathbb{E}_{P}[|X-\mathbb{E}_P(x)|^3]$ be the absolute third moment of $P$ and $\sigma$ be the standard deviation. Since the algorithm of \cite{Karwa:2018} is designed for Gaussian distributions we will use the following lemma that describes the rate of convergence of the central limit theorem.

\begin{lemma}[Berry-Esseen theorem]\label{BE}
Let $n\in\mathbb{N}$ and $X_1, \cdots, X_n$ be iid samples from a distribution $P$, and $\rho = \mathbb{E}_{P}[|X-\mathbb{E}_P(x)|^3]$. Set $S_n =\frac{1}{n} \sum_{j=1}^n X_j$, $\mu=\mathbb{E}_P[x]$ and $\sigma^2=\var(P)$, and let $Y\sim \mathcal{N}(\mu, \frac{\sigma^2}{n})$ then for some absolute constant $\berryesseen>0$,
\begin{itemize}
\item (Uniform) For all $a>0$, \[|\mathbb{P}[S_n\le a]-\mathbb{P}[Y\le a]|\le \frac{\berryesseen\rho}{\sigma^3\sqrt{n}}\]
\item (Non-uniform) For all $a>0$, \[|\mathbb{P}[S_n\le a]-\mathbb{P}[Y\le a]|\le \frac{\berryesseen\rho}{(1+|a|)^3\sigma^3\sqrt{n}}.\]
\end{itemize}
\end{lemma}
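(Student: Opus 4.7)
The Berry-Esseen theorem is a classical result; in context, the authors almost certainly invoke it as a black box (citing Feller or Petrov) rather than reproving it. For completeness, my plan would be to sketch the standard Fourier-analytic proof based on Esseen's smoothing inequality, treating the uniform bound first and then addressing the non-uniform refinement.

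First I would standardize: let $Z_j = (X_j - \mu)/\sigma$, so the $Z_j$ are i.i.d.\ with mean $0$, variance $1$, and third absolute moment $\tau := \rho/\sigma^3$. Write $T_n = n^{-1/2}\sum_{j=1}^n Z_j$ and let $F_n$ denote its CDF and $\Phi$ the standard normal CDF. By a change of variables it suffices to bound $\sup_a |F_n(a) - \Phi(a)|$ by $C\tau/\sqrt{n}$ for an absolute constant $C$; rescaling recovers both claims.

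Next I would apply Esseen's smoothing inequality: for any $T > 0$,
\[
\sup_a |F_n(a) - \Phi(a)| \;\le\; \frac{1}{\pi}\int_{-T}^{T}\left|\frac{\varphi_n(t) - e^{-t^2/2}}{t}\right| dt \;+\; \frac{24}{T\pi\sqrt{2\pi}},
\]
where $\varphi_n$ is the characteristic function of $T_n$ and the second term uses $\|\Phi'\|_\infty = 1/\sqrt{2\pi}$. The main estimate then comes from writing $\varphi_n(t) = \psi(t/\sqrt{n})^n$ with $\psi$ the characteristic function of $Z_1$, and using the standard remainder bound $|\psi(s) - 1 + s^2/2| \le \tau |s|^3/6$. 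A Taylor expansion of $\log \psi$ (valid on $|s| \le c/\tau$) yields, for $|t| \le c'\sqrt{n}/\tau$,
\[
|\varphi_n(t) - e^{-t^2/2}| \;\le\; C\,\frac{\tau}{\sqrt{n}}\,|t|^3\, e^{-t^2/4}.
\]
Plugging this into Esseen's inequality and choosing $T \asymp \sqrt{n}/\tau$ yields $\sup_a |F_n(a) - \Phi(a)| = O(\tau/\sqrt{n})$, which is the uniform bound.

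The hard part is the non-uniform bound, where the $(1+|a|)^{-3}$ decay must be extracted. The classical route (due to Nagaev and Bikyalis) is exponential tilting: introduce the conjugate measure $Q_a$ with density proportional to $e^{h_a z}$ on $Z_1$'s law, and pick $h_a$ so that under $Q_a^{\otimes n}/\sqrt{n}$ the sum has mean $a$. Apply the uniform bound to the tilted sum and carefully transfer back; the Radon-Nikodym factor produces an $e^{-h_a a} \asymp (1+|a|)^{-k}$ decay, and tracking third moments under tilting produces the cubic power. An alternative route is Stein's method applied to a test function supported near $a$, which yields the non-uniform rate more directly. The main obstacle in either approach is bookkeeping: ensuring the resulting constant $\nu$ is genuinely absolute (independent of the underlying distribution $P$) and that the tilted moments remain uniformly controlled, which is where the argument becomes delicate and is the reason this theorem is usually just cited.
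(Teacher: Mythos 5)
Your assessment is correct: the paper states this as a classical result and gives no proof, invoking it as a black box when applying it in Lemma~\ref{highprobstd} and Algorithm~\ref{algo:varianceestimate}. Your sketch --- standardization, Esseen's smoothing inequality with the characteristic-function Taylor remainder for the uniform bound, and exponential tilting (Nagaev--Bikyalis) or Stein's method for the non-uniform refinement --- is a faithful outline of the standard textbook proof, so there is nothing to compare against.

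One small remark on the statement itself rather than your proof: since $S_n$ is the unscaled sample mean and $a$ is measured on the same scale as $\mu$, the classical non-uniform bound would have denominator $\bigl(1 + |a-\mu|\sqrt{n}/\sigma\bigr)^3$ rather than $(1+|a|)^3$; your tilting argument would in fact produce the former, so if you carried the sketch through you would want to flag this discrepancy with the paper's phrasing.
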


\begin{lemma}[Histogram Learner \cite{DworkMNS06, Bun:2016, Vadhan:2016}]\label{histlearn} For all $K\in\mathbb{N}$ and domain $\Omega$, for any collection of disjoint bins $B_1, \cdots, B_K$ defined on $\Omega, n\in\mathbb{N}, \epsilon\ge0, \delta\in(0,1/n), \lambda>0$ and $\beta\in(0,1)$ there exists an $(\epsilon,\delta)$-DP algorithm $M:\Omega^n\to\mathbb{R}^K$ such that for every distribution $D$ on $\Omega$, if 
\begin{enumerate}
\item $X_1, \cdots, X_N\sim D$ and $p_k=\mathbb{P}(X_i\in B_k)$
\item $(\tilde{p_1}, \cdots, \tilde{p_K})=M(X_1, \cdots, X_n)$ and 
\item \[n\ge \max\left\{\min\left\{\frac{8}{\epsilon\lambda}\ln\left(\frac{2K}{\beta}\right), \frac{8}{\epsilon\lambda}\ln\left(\frac{4}{\beta\delta}\right)\right\}, \frac{1}{2\lambda^2}\ln\left(\frac{4}{\beta}\right)\right\}\]
\end{enumerate}
then, \[\mathbb{P}_{X\sim D, M}(\max_k|\tilde{p_k}-p_k|\le\lambda)\ge 1-\beta\;\;\;\text{ and },\]
\[\mathbb{P}_{X\sim D, M}(\arg\max_k\tilde{p_k}=j)\le \begin{cases}
np_j+2e^{-(\epsilon n/8)\cdot(\max_kp_k)} & \text{ if } K< 2/\delta\\
np_j & \text{ if } K\ge 2/\delta
\end{cases}\]
where the probability is taken over the randomness of $M$ and the data $X_1, \cdots, X_n$.
\end{lemma}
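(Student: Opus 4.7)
The plan is to construct $M$ via a standard private histogram mechanism and to analyze accuracy and argmax-stability separately using Chernoff and Laplace concentration. Two algorithmic variants are needed to match the minimum in the sample-complexity condition: a plain Laplace histogram when $K$ is small, and a stable histogram when $K$ is large relative to $1/\delta$.

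Let $c_k = |\{i : X_i \in B_k\}|$. The plain variant releases $\tilde p_k = (c_k + Z_k)/n$ with independent noises $Z_k \sim \Lap(2/\epsilon)$. Because the bins are disjoint, changing one record perturbs at most two counts by $1$ each, giving $\ell_1$-sensitivity $\le 2$ and hence $(\epsilon,0)$-DP by the Laplace mechanism. For accuracy I would split
\[
|\tilde p_k - p_k| \le |Z_k|/n + |c_k/n - p_k|,
\]
bound the first term uniformly by $O(\log(K/\beta)/(\epsilon n))$ via a union bound over $K$ Laplace tails, and bound the second uniformly by $O(\sqrt{\log(K/\beta)/n})$ via Hoeffding plus a union bound; forcing each piece below $\lambda/2$ yields the $\log(2K/\beta)$ branch of the sample complexity. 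The stable variant instead outputs $\tilde p_k = 0$ for any bin whose noisy count falls below a threshold $\tau = \Theta(\log(1/(\beta\delta))/\epsilon)$; a standard argument shows this is $(\epsilon,\delta)$-DP because any bin with true count $0$ survives the threshold with probability at most $\delta$, while on the complement the mechanism matches the plain Laplace release. The same accuracy decomposition then replaces $\log K$ with $\log(1/\delta)$ (only surviving bins contribute to the union bound), giving the other branch inside the min.

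For the argmax-stability claim, observe that $c_j \ge 1$ occurs with probability at most $\mathbb{E}[c_j] = n p_j$ by Markov. In the plain-Laplace regime ($K < 2/\delta$), if instead $c_j = 0$ and $j$ still wins the noisy argmax, then $Z_j$ must exceed $c_{k^*} + Z_{k^*}$ for $k^* = \arg\max_k p_k$; by concentration of the difference of two independent $\Lap(2/\epsilon)$ variables, this happens with probability at most $2 e^{-(\epsilon n / 8)\max_k p_k}$, yielding the first bullet after summing the two cases. In the stable regime ($K \ge 2/\delta$), any reported index $j$ must have noisy count above $\tau$, which forces $c_j \ge 1$ except on a tiny probability event absorbed by the $\delta$ term in the privacy guarantee, so the $n p_j$ bound alone suffices. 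The main obstacle is calibrating the threshold $\tau$ in the stable variant so as to simultaneously guarantee $(\epsilon,\delta)$-DP, the accuracy bound, and the sharpened argmax bound; once $\tau$ is chosen, the remainder of the proof is standard Laplace and Binomial concentration combined with union bounds and careful tracking of constants.
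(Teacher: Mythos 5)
The paper does not actually prove Lemma~\ref{histlearn} --- it is stated as a citation to \citet{DworkMNS06, Bun:2016, Vadhan:2016} (and implicitly to the private histogram machinery of Karwa and Vadhan), so there is no paper proof to compare against. Evaluating your proposal on its own merits: the high-level architecture is right (Laplace histogram when $K$ is small, stability-based histogram when $K$ is large, and the $\min$ in the sample complexity corresponds exactly to choosing whichever variant needs less noise), and the decomposition of accuracy into a Laplace-noise term and a sampling term is the standard route. But two steps as written do not match the lemma you are trying to prove.

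First, your sampling-error bound would give the wrong form. You propose bounding $\max_k|c_k/n - p_k|$ by Hoeffding plus a union bound over the $K$ bins, which yields a requirement of order $\log(K/\beta)/\lambda^2$; the lemma's $\max\{\cdot,\cdot\}$ structure requires a $K$-free sampling term $\tfrac{1}{2\lambda^2}\ln(4/\beta)$, which is decoupled from the $\min\{\log K, \log(1/\delta)\}$ that governs only the Laplace noise. This matters: the paper invokes the lemma with effectively $K=\infty$ (the mean estimator $\initial$ is run with $\sigma_{\min}=0$, $\sigma_{\max}=\infty$, $R=\infty$), so a $\log K$ in the sampling bound is not just a constant-factor loss --- it breaks the application. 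A union bound over all bins is not the right tool here; you need to exploit that low-probability bins have low variance (e.g.\ a Bernstein-type tail so that bins with $p_k \ll \lambda$ contribute negligibly without a $\log K$ penalty), or restrict the union to the $O(1/\lambda)$ bins with $p_k\ge\lambda$ plus a separate argument that no small-mass bin can overshoot.

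Second, the argmax-stability argument is hand-wavy where it matters. You correctly split on $c_j\ge 1$ (Markov gives $np_j$) versus $c_j=0$, but in the latter case your claim that ``$Z_j$ must exceed $c_{k^*}+Z_{k^*}$'' and that this event has probability $2e^{-(\epsilon n/8)\max_k p_k}$ glosses over the fact that $c_{k^*}$ is itself random. You need to condition on $c_{k^*}$ being large (e.g.\ $c_{k^*}\ge n p_{k^*}/2$ via multiplicative Chernoff, a $e^{-n p_{k^*}/8}$ failure event) and then bound the Laplace-difference tail at that scale, using $\epsilon\le 1$ to fold the Chernoff failure into the same exponent; both contributions must land at or below $e^{-(\epsilon n/8)p_{k^*}}$. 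Also, for the stable-histogram branch your phrasing ``absorbed by the $\delta$ term in the privacy guarantee'' conflates privacy and accuracy: the correct point is that the stable mechanism deterministically releases $\tilde p_j=0$ whenever $c_j=0$, so $j$ can only win the argmax if $c_j\ge 1$ (with the usual care about ties when all bins are empty), giving the clean $np_j$ bound with no $\delta$ appearing at all.
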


\begin{algorithm}[ht] \caption{Variance estimator}\label{algo:varianceestimate}
\begin{algorithmic}[1]
\Require{Sample $X = (x_{1},\dots,x_{n})\sim P, \epsilon, \delta, \sigma_{\min}, \sigma_{\max}, \beta, \thirdmoment$.}
\State Let $\phi= \lceil(600\berryesseen\thirdmoment)^2\rceil$, where $\berryesseen$ is the absolute constant from Lemma~\ref{BE}.
\State If \[n<c\phi\min\left\{\frac{1}{\epsilon}\ln\left(\frac{\ln\left(\frac{\sigma_{\max}}{\sigma_{\min}}\right)}{\beta}\right), \frac{1}{\epsilon}\ln\left(\frac{1}{\delta\beta}\right)\right\},\] output $\bot$, where $c$ is an absolute constant whose existence is ensured by Lemma~\ref{histlearn}.
\State Divide $[\sigma_{\min}, \sigma_{\max}]$ into bins of exponentially increasing length. The bins are of the form $B_j = (2^j, 2^{j+1}]$ for $j=j_{\min}, \cdots, j_{\max},$ where $j_{\max} = \lceil \ln_2\frac{\sigma_{\max}}{\sqrt{\phi}}\rceil+1$ and $j_{\min}=\lfloor\ln_2\frac{\sigma_{\min}}{\sqrt{\phi}}\rfloor-2.$
\State Let $Z_i = \frac{1}{\phi}\sum_{j=1}^{\phi} x_{(i-1)\phi+j}$ for $i=1, \cdots, \lfloor n/\phi\rfloor$.
\State Let $Y_i = Z_{2i}-Z_{2i-1}$ for $i=1, \cdots, \lfloor n/2\phi \rfloor$
\State Run the histogram learner of Lemma~\ref{histlearn} with privacy parameters $(\epsilon, \delta)$ and bins $B_{j_{\min}}, \cdots, B_{j_{\max}}$ on input $|Y_1|, \cdots, |Y_n|$ to obtain noisy estimates $\tilde{p_{j_{\min}}}, \cdots, \tilde{p_{j_{\max}}}$. Let \[\hat{l}=\arg\max\tilde{p_j}\]
\State Output $\hat{\sigma} = 2^{\hat{l}+2}\sqrt{\phi}$.
\end{algorithmic}
\end{algorithm}

Note in particular that the use of approximate $(\epsilon, \delta)$-DP allows us to set the $K=\infty$, while the sample complexity remains finite. 
The following lemma states that provided $\rho/\sigma^3$ is bounded, Algorithm~\ref{algo:varianceestimate} can estimate the standard deviation up to a multiplicative constant.

\begin{lemma}\label{highprobstd} For all $n\in\mathbb{N}$, $\sigma_{\min}<\sigma_{\max}\in[0, \infty], \epsilon>0, \delta\in(0,\frac{1}{n}], \beta\in(0,1/2), \thirdmoment>0,$ Algorithm~\ref{algo:varianceestimate} is $(\epsilon, \delta)$-DP and satisfies that if $X_1, \cdots, X_n$ are iid draws from $P$, where $P$ has standard deviation $\sigma\in[\sigma_{\min}, \sigma_{\max}]$ and  \textcolor{black}{$\frac{\rho}{\sigma^3}\le \thirdmoment$} then if \[n\ge c \thirdmoment^2\min\left\{\frac{1}{\epsilon}\ln\left(\frac{\ln\left(\frac{\sigma_{\max}}{\sigma_{\min}}\right)}{\beta}\right), \frac{1}{\epsilon}\ln\left(\frac{1}{\delta\beta}\right)\right\},\] (where $c$ is a universal constant), we have \[\mathbb{P}_{X\sim P, M} (\sigma\le\hat{\sigma}\le 8\sigma)\ge 1-\beta.\]
\end{lemma}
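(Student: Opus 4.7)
The data is partitioned into disjoint chunks of size $\phi$ to form $Z_1,\dots,Z_{\lfloor n/\phi\rfloor}$, which are in turn paired into differences $Y_i$, so each record $x_i$ appears in exactly one entry of the list $(|Y_1|,\dots,|Y_{\lfloor n/(2\phi)\rfloor}|)$ that is fed to the histogram learner. This list is therefore 1-sensitive, and $(\epsilon,\delta)$-DP of $\hat\sigma$ follows immediately from Lemma~\ref{histlearn} together with post-processing.

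\textbf{Reduction to Gaussian mode-finding.} The accuracy argument proceeds by approximating the distribution of $|Y_i|$ by a half-normal and then reasoning about which bin captures the most mass. Each $Z_i$ is the mean of $\phi$ i.i.d.\ samples with absolute third moment $\rho\le\thirdmoment\sigma^3$, so by the uniform Berry--Esseen bound (Lemma~\ref{BE}) the CDF of $Z_i$ differs from that of $N(\mu,\sigma^2/\phi)$ by at most $\berryesseen\thirdmoment/\sqrt\phi$. The choice $\phi=\lceil(600\berryesseen\thirdmoment)^2\rceil$ pushes this Kolmogorov distance below $1/600$. Since Kolmogorov distance is preserved under convolution and reflection, the law of $Y_i=Z_{2i}-Z_{2i-1}$ is within $2/600$ of $N(0,2\sigma^2/\phi)$, and hence the probability that $|Y_i|$ lands in any bin $B_j=(2^j,2^{j+1}]$ differs from its value under the half-normal of scale $\sigma\sqrt{2/\phi}$ by at most $4/600=1/150$.

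\textbf{Bin analysis and conclusion.} Let $\ell^*=\lfloor\log_2(\sigma/\sqrt\phi)\rfloor$, so $2^{\ell^*}\sqrt\phi\in[\sigma/2,\sigma)$, and define the three-bin window $W=\{\ell^*-1,\ell^*,\ell^*+1\}$. The algorithm's output $\hat\sigma=2^{\hat\ell+2}\sqrt\phi=4\cdot 2^{\hat\ell}\sqrt\phi$ satisfies $\hat\sigma\in[\sigma,8\sigma)$ whenever $\hat\ell\in W$, by direct substitution. It therefore suffices to show $\hat\ell\in W$ with probability $\ge 1-\beta$. A direct computation with the standard half-normal, stratified by the fractional part $r$ of $\log_2(\sigma/\sqrt\phi)$ in $[1/2,1)$, shows that each bin in $W$ carries mass at least a universal constant $c_0>0$, with the modal bin always inside $W$ and carrying mass at least $c_1>0.24$; conversely, every bin outside $W$ has mass at most $c_2<0.16$, and bins far from $W$ decay geometrically (below) or super-exponentially (above). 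After accounting for the $\le 1/150$ Berry--Esseen slack on each bin, a universal gap $\Delta_0:=c_1-c_2-2/150>0$ separates the modal in-window probability from the largest out-of-window probability. We apply Lemma~\ref{histlearn} with $\lambda<\Delta_0/3$: the hypothesis $n\ge c\thirdmoment^2\min\{\cdots\}$, combined with $\phi=\Theta(\thirdmoment^2)$, is exactly what the lemma needs after the $\lfloor n/(2\phi)\rfloor$ reduction in effective sample size and a suitable choice of $c$, so with probability $\ge 1-\beta$ all noisy estimates $\tilde p_j$ are within $\lambda$ of the truth and the argmax lies in $W$. In the infinite-bin branch ($K\ge 2/\delta$ when $\sigma_{\max}=\infty$), this is supplemented by the second guarantee of Lemma~\ref{histlearn}---$\mathbb P(\arg\max=j)\le np_j$---to union-bound over the super-exponentially decaying tail bins and rule out spurious far-tail selection.

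\textbf{Main obstacle.} The delicate technical point is verifying the constant $\Delta_0>0$ uniformly over $r$: the modal bin shifts from $B_{\ell^*}$ to $B_{\ell^*+1}$ as $r$ varies in $[1/2,1)$, and for $r$ near $1/2$ the bin $B_{\ell^*+2}$ approaches the low end of the in-window plateau while for $r$ near $1$ the bin $B_{\ell^*-2}$ approaches it. One must identify, at each value of $r$, the minimum in-window probability and the maximum out-of-window probability, and show their difference does not vanish. This bookkeeping is precisely what forces the $+2$ shift in $\hat\sigma=2^{\hat\ell+2}\sqrt\phi$ and the three-bin width of $W$, and is what must be checked to get the factor-of-$8$ bound rather than some larger constant multiple.
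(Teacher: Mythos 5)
The overall skeleton matches the paper's proof (Berry--Esseen to reduce $|Y_i|$ to a half-normal, then bin analysis, then the histogram learner), but there is a genuine gap at the heart of your accuracy argument. The paper does \emph{not} re-derive the Gaussian mode-finding facts; it cites Theorem~3.2 of \citet{Karwa:2018}, which already establishes that for the half-normal the two largest bin masses $q_{(1)},q_{(2)}$ sit at $\{l,l-1\}$, $\{l,l+1\}$, or $\{l+1,l\}$ (where $\sigma/\sqrt\phi\in(2^l,2^{l+1}]$) and that $q_{(1)}-q_{(3)}>1/100$; the paper then only needs the $\le 1/300$ Berry--Esseen perturbation to conclude the perturbed top-two set is unchanged and $p_{(1)}-p_{(3)}>1/300$. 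You, in contrast, assert specific numerical constants ($c_1>0.24$, $c_2<0.16$, a gap $\Delta_0>0$ uniform over the fractional part $r$) and then explicitly label the verification of these constants ``the main obstacle'' and ``what must be checked'' --- i.e.\ you acknowledge that the load-bearing computation is not done. As written, that is a hole, not a proof; either carry out the stratified half-normal computation or cite it as the paper does.

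Two secondary issues. First, your bookkeeping ``at most $4/600=1/150$'' for the per-bin error is off: a bin probability for $|Y_i|$ involves two CDF evaluations of $|Y_i|$, each of which costs $2\,d_K(Y_i,W_i)$, giving roughly $4\,d_K(Y_i,W_i)\le 8/600$ under your convolution/reflection chain (the paper avoids this by applying Berry--Esseen to $Y_i$ directly as a sum of $2\phi$ terms, getting $|p_j-q_j|\le 1/300$); the constant in $\phi$ is tuned so these numbers clear the Karwa--Vadhan gap of $1/100$, so you cannot be cavalier here. Second, your restriction ``the fractional part $r$ of $\log_2(\sigma/\sqrt\phi)$ in $[1/2,1)$'' is unexplained and appears to be an error --- $r$ ranges over all of $[0,1)$, and the case analysis must cover that full range (this is consistent with your own worry that the modal bin ``shifts from $B_{\ell^*}$ to $B_{\ell^*+1}$''). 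The three-bin window and the factor-of-$8$ conclusion from $\hat\sigma=2^{\hat\ell+2}\sqrt\phi$ are correct once $\hat\ell$ is pinned to $\{l-1,l,l+1\}$, and your handling of the infinite-bin branch via the second guarantee of Lemma~\ref{histlearn} is a reasonable addition the paper elides, but the core bin-mass claims need to be justified rather than asserted.
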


\begin{proof}[Proof of Lemma~\ref{highprobstd}]
This proof follows almost directly from Theorem 3.2 of \cite{Karwa:2018}. Note that each $Y_i$ is sampled from a distribution with mean 0 and variance $\frac{2\sigma^2}{\phi}$, and in addition is the sum of $\phi$ independent random variables. As in \cite{Karwa:2018}, there exists a bin $B_l$ with label $l\in(\lfloor \ln_2\frac{\sigma_{\min}}{\sqrt{\phi}}\rfloor-1, \lceil \ln_2\frac{\sigma_{\max}}{\sqrt{\phi}}\rceil)$ such that $\frac{\sigma}{\sqrt{\phi}}\in(2^l, 2^{l+1}]=B_l$. Define, \[p_j = \mathbb{P}(|Y_i|\in B_j).\] Sort the $p_j$'s as $p_{(1)}\ge p_{(2)}\ge\cdots$ and let $j_{(1)}, j_{(2)}, \cdots$ be the corresponding bins. Then, the following two facts imply the result (as in \cite{Karwa:2018}).

\noindent \textbf{Fact 1:} The bins corresponding to the largest and second largest mass $p_{(1)}, p_{(2)}$ are $(j_{(1)}, j_{(2)})\in\{(l,l-1), (l, l+1), (l+1,l)\}$.

\noindent \textbf{Fact 2:} $p_{(1)}-p_{(3)}>1/300$.

Now, let $W_i\sim N(0, 2\frac{\sigma^2}{\phi})$ and let $q_i, q_{(i)}$ be the corresponding probabilities for $W_i$. Then \cite{Karwa:2018} showed that: 
\begin{itemize}
\item The bins corresponding to the largest and second largest mass $q_{(1)}, q_{(2)}$ are $(j_{(1)}, j_{(2)})\in\{(l,l-1), (l, l+1), (l+1,l)\}$.
\item $q_{(1)}-q_{(3)}>1/100$.
\end{itemize}

By Lemma~\ref{BE}, since $\phi= \lceil(600\berryesseen\thirdmoment)^2\rceil$, for all $j$, $|p_j-q_j|\le 1/300$. Therefore, $\{p_{(1)}, p_{(2)}\}=\{q_{(1)}, q_{(2)}\}$, which implies both Fact 1 and Fact 2. 
\end{proof}

\begin{algorithm}[ht] \caption{Range estimator}\label{rangeestalg}
\begin{algorithmic}[1]
\Require{Sample $X = (x_{1},\dots,x_{n})\sim P, \epsilon> 0, \delta\in[0,1], \beta\in (0,1/2), R\in(0,\infty), \sigma>0, B>0$.}
\State Let $r=\left\lceil \frac{R}{2\sigma}\right\rceil$. Divide $[-R-\sigma/2, R+\sigma/2]$ into $2r+1$ bins of length at most $2\sigma$ each in the following manner - bin $B_j$ equals $(2(j-0.5)\sigma, 2(j+0.5)\sigma]$, for $j\in\{-r,\cdots, r\}$. 
\State Run the histogram learner of Lemma~\ref{histlearn} with privacy parameters $(\epsilon, \delta)$ and bins $B_{-r}, \cdots, B_{r}$ on input $x_1, \cdots, x_n$ to obtain noisy estimates $\tilde{p_{-r}}, \cdots, \tilde{p_r}$. Let \[\hat{l} = \arg\max_{j=-r,\cdots, r} \tilde{p_j}.\]
\State Output $(x_{\min}, x_{\max})$, where \[x_{\min} = 2\sigma\hat{l}-\sigma(6+C)\sqrt{\ln(4n/\beta)}, \;\;\; x_{\max}=2\sigma\hat{l}+\sigma(6+C)\sqrt{\ln(4n/\beta)}.\]
\end{algorithmic}
\end{algorithm}

\begin{theorem}\label{rangeest} 
For all $n\in\mathbb{N}$, $\sigma>0, \epsilon>0, \delta\in[0,1], \beta\in(0,1/2), R\in(0,\infty]$, $C\ge 0$, Algorithm~\ref{rangeestalg} is $(\epsilon, \delta)$-DP. For all measures $\mu$ and $\theta\in\thetarange$, if $x_1, \cdots, x_n$ are sampled from $P_{\theta}$ where $A'(\theta)\in(-R,R)$, $A''(\theta)\le \sigma^2$,
$\kappa(\theta)\ge \frac{1}{C}\frac{\sqrt{\log(2/\beta)}}{\sqrt{A''(\theta)}}$,
and \[n\ge c\min\left\{\frac{1}{\epsilon}\ln\left(\frac{R}{\sigma\beta}\right), \frac{1}{\epsilon}\ln\left(\frac{1}{\delta\beta}\right)\right\},\] (where $c$ is a universal constant), then we have \[\mathbb{P}_{x\sim P_{\theta}, M}(\forall i,\;\; x_{min}\le x_i\le x_{\max})\ge 1-\beta\] and , \[|x_{\max}-x_{\min}|=\textcolor{black}{\sigma(6+C)\sqrt{\ln(4n/\beta)}}.\]
\end{theorem}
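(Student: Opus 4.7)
Privacy is immediate. Algorithm~\ref{rangeestalg} runs the histogram learner of Lemma~\ref{histlearn} with parameters $(\epsilon,\delta)$ on the samples to produce noisy counts $(\tilde p_{-r},\dots,\tilde p_r)$, and then the index $\hat l$ and the outputs $x_{\min},x_{\max}$ are deterministic functions of those counts. So the overall algorithm inherits the $(\epsilon,\delta)$-DP guarantee by post-processing.

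For utility, the plan is to combine a concentration bound on the samples with an accuracy bound for the histogram learner. By Lemma~\ref{concentrationexp} applied with per-sample failure probability $\beta/(2n)$, a union bound gives that with probability at least $1-\beta/2$ every observation satisfies $|x_i - A'(\theta)|\le d := (2+C)\sigma\sqrt{\log(4n/\beta)}$, using $A''(\theta)\le\sigma^2$ and absorbing a $\sqrt{\log n}$ factor into the constant multiplying $C$. Let $W$ denote the set of bin indices whose bin intersects $[A'(\theta)-d, A'(\theta)+d]$, so that $|W| = O(d/\sigma)$ and $\sum_{j\in W} p_j\ge 1-\beta/(2n)$. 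Additionally, Chebyshev's inequality gives $\Pr(|x - A'(\theta)|\le 2\sigma)\ge 3/4$, and since this interval is covered by at most three bins of width $2\sigma$, some bin $j^*\in W$ has $p_{j^*}\ge 1/4$.

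Next, invoke Lemma~\ref{histlearn} with constant accuracy parameter $\lambda = 1/16$. Since $K=2r+1 = O(R/\sigma)$ and $\lambda$ is constant, the histogram learner's sample requirement reduces to $n = \Omega(\epsilon^{-1}\min\{\log(R/\sigma\beta), \log(1/\delta\beta)\})$, which matches the theorem's hypothesis on $n$ up to the universal constant $c$. On the event $\max_k |\tilde p_k - p_k|\le \lambda$ (which holds with probability $\ge 1-\beta/2$), we have $\tilde p_{\hat l}\ge \tilde p_{j^*}\ge p_{j^*} - \lambda$, so $p_{\hat l}\ge p_{j^*} - 2\lambda \ge 1/8$. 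But every $j\notin W$ has $p_j\le \beta/(2n) < 1/8$, so $\hat l\in W$ and therefore $|2\sigma\hat l - A'(\theta)|\le d + \sigma$. Combining with the concentration bound via the triangle inequality yields, with total failure probability at most $\beta$,
\[
|x_i - 2\sigma\hat l|\;\le\; |x_i - A'(\theta)| + |A'(\theta) - 2\sigma\hat l|\;\le\; 2d + \sigma\;\le\; \sigma(6+C)\sqrt{\log(4n/\beta)}
\]
for all sufficiently large $n$ (the constant $6+C$ absorbing $2(2+C)$ together with the lower-order $\sigma$ term), so that $x_i\in[x_{\min},x_{\max}]$ as claimed.

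The main obstacle is the careful bookkeeping of constants and logarithmic factors. The hypothesis on $\kappa(\theta)$ is stated in terms of $\sqrt{\log(2/\beta)}$, but the union bound over $n$ samples needs the sub-Gaussian tail to hold at level $\beta/(2n)$, so one has to either slightly inflate $C$ or invoke the first (two-term) form of Lemma~\ref{concentrationexp} and verify that both terms combine to give a bound of the desired shape. A secondary subtlety is that the $\tfrac{1}{\lambda^2}\log(4/\beta)$ term in Lemma~\ref{histlearn} is constant in $n$ once $\lambda$ is fixed, so the only $n$-dependent constraint is the differential privacy one, and the universal constant $c$ in the theorem can be chosen large enough to absorb everything cleanly.
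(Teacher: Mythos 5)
Your overall structure matches the paper's: concentration on the samples, histogram-learner accuracy, then a triangle inequality. But there is a genuine gap in the step that localizes $\hat l$. You conclude only that $\hat l\in W$, i.e.\ that bin $B_{\hat l}$ intersects $[\mu-d,\mu+d]$ with $d=(2+C)\sigma\sqrt{\ln(4n/\beta)}$, and hence $|2\sigma\hat l-\mu|\le d+\sigma$. Plugging into the triangle inequality gives $|x_i-2\sigma\hat l|\le 2d+\sigma=(4+2C)\sigma\sqrt{\ln(4n/\beta)}+\sigma$, and $(4+2C)>6+C$ whenever $C>2$, so the claimed bound $\sigma(6+C)\sqrt{\ln(4n/\beta)}$ does not follow for general $C\ge0$. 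The ``for all sufficiently large $n$'' hedge does not save this: the excess is a multiplicative constant in front of $\sqrt{\ln(4n/\beta)}$, not a lower-order term.

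The paper avoids this by proving the much tighter, $n$-independent localization $|\mu-2\sigma\hat l|\le 4\sigma$. The idea (as in Karwa--Vadhan) is that on the histogram learner's success event, $p_{\hat l}$ is within $2\lambda$ of $p_{j^*}\ge 3/8$, so $p_{\hat l}$ is bounded below by a fixed constant. By Chebyshev, any bin whose distance from $\mu$ exceeds roughly $3\sigma$ has probability less than that constant, so $B_{\hat l}$ must lie within a constant number of bins of $\mu$. This is exactly the step you're missing: your own bound $p_{\hat l}\ge 1/8$ already implies, via $\Pr[|x-\mu|>t]\le\sigma^2/t^2$ with $t=3\sigma$, that $B_{\hat l}$ intersects $[\mu-3\sigma,\mu+3\sigma]$, giving $|2\sigma\hat l-\mu|\le 4\sigma$ and thus $|x_i-2\sigma\hat l|\le d+4\sigma\le(6+C)\sigma\sqrt{\ln(4n/\beta)}$ (using $\ln(4n/\beta)\ge1$). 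Replace the $\hat l\in W$ deduction with this Chebyshev localization and your proof goes through with the right constant. Your remarks on privacy-by-post-processing and on the $\sqrt{\log(2/\beta)}$ versus $\sqrt{\log(4n/\beta)}$ bookkeeping are fine.
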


\begin{proof}
By Lemma~\ref{concentrationexp} and a union bound,  with probability $1-\beta/2$, we have \[\forall i\;:\; |x_i-\mu|\le \sigma(2+C)\sqrt{\ln(4n/\beta)}.\] Next, as in the proof of Theorem 3.1 from \cite{Karwa:2018}, we want to show that with probability $1-\beta/2$, we have \[|\mu-\hat{l}\sigma|\le 2\sigma.\] Note that by Chebyshev's inequality $\mathbb{P}[|x-\mu|\le 2\sigma]\ge 3/4$ so there exists a pair of neighbouring bins $B_j, B_{j+1}$ such that $\mathbb{P}[x\in B_j\cup B_{j+1}]\ge 3/4$ and $\mu\in B_j\cup B_{j+1}$. Also, for all $i\notin\{j,j+1\}$, $\mathbb{P}[x\in B_i]\le 1/4$. Let $j^*=\arg\max_{k=j,j+1} \mathbb{P}[x\in B_k]$. Then $\mathbb{P}[x\in B_{j^*}]\ge 3/8$, and $\mathbb{P}[x\in B_{j^*}]-\mathbb{P}[x\in B_{i}]\ge 1/8$ for all $i\notin\{j,j+1\}$. Then by Lemma~\ref{histlearn}, setting $\lambda=1/8$, $n$ is large enough that with probability $1-\beta/2$, $\hat{l}\in\{j,j+1\}$. Therefore, $|\mu-2\hat{l}\sigma|\le 4\sigma$. Therefore, with probability $1-\beta$, for all $i$, \[|x_i-2\hat{l}\sigma|\le |x_i-\mu|+|\mu-2\hat{l}\sigma|\le \sigma(2+C)\sqrt{\ln(4n/\beta)}+4\sigma \le \sigma(6+C)\sqrt{\ln(4n/\beta)}.\]
\end{proof}

As in \cite{Karwa:2018} combining these two algorithms gives us an estimator of the range with unknown variance. Since this range contains all the data points with high probability, we can clamp the data to this range, and add noise proportional to the width of the range. Note that we can remove the dependence on the range $[0,R]$ and $[\sigma_{\min}, \sigma_{\max}]$ in the sample complexity since $n\ge \frac{c\thirdmoment}{\epsilon}\ln(\frac{1}{\delta\beta})$ is sufficient to ensure that the bounds required in both Theorem~\ref{rangeest} and Lemma~\ref{highprobstd} hold.
This completes the proof of Theorem~\ref{initialestthm}.

\begin{algorithm}[ht] \caption{Initial mean estimator, $\initial$}\label{meanest}
\begin{algorithmic}[1]
\Require{$X_1, \cdots, X_n, \beta, \epsilon, \delta, \thirdmoment, B$.}
\State If \[n<\frac{c\max\{\thirdmoment^2, 1\} }{\epsilon}\ln\left(\frac{1}{\delta\beta}\right),\] output 0.
\State Run Algorithm~\ref{algo:varianceestimate} to obtain an estimate $\hat{\sigma}$ of the variance with privacy parameters $(\epsilon, \delta)$, $\sigma_{\min}=0$ and $\sigma_{\max}=\infty$.
\State Run Algorithm~\ref{rangeestalg} with privacy parameters $(\epsilon, \delta)$, $R=\infty$, and standard deviation $\hat{\sigma}$ to obtain a range $[X_{\min}, X_{\max}]$.
\State Let \[Y_i = \begin{cases}
X_i & \text{if } X_i\in[X_{\min}, X_{\max}]\\
X_{\max} & \text{if } X_i>X_{\max}\\
X_{\min} & \text{if } X_i>X_{\min}
\end{cases}\]
\State Let $Z$ be a Laplace random variable with mean 0 and scale parameter $\frac{X_{\max}-X_{\min}}{\epsilon n}$.
\State Output \[\frac{\sum_{i=1}^n Y_i}{n}+Z.\]
\end{algorithmic}
\end{algorithm}

\subsection{Proof of Corollary~\ref{allforalog}}

\rallforalog*

\begin{proof}
By Theorem~\ref{nonprivupper} and Theorem~\ref{initialestthm}, there exists a constant $C>0$ such that for all $\theta_0\in\thetarange$ satisfying the two conditions and $n\ge \frac{c\thirdmoment^2\ln(1/\delta)}{\epsilon}$ with probability 0.8 we have, 
\[|\initial(X)-A'(\theta_0)| = \frac{C}{2}\left( \frac{\sqrt{A''(\theta_0)}}{\sqrt{n}}+ \frac{\sqrt{A''(\theta_0)}}{n\epsilon}\sqrt{\ln\left(n\right)}\right).\] 
Now, since $\kappa(\theta)\ge \frac{1}{C}\frac{\sqrt{\log(2/\beta)}}{\sqrt{A''(\theta)}}$
and $\epsilon=\Omega(\frac{\ln n}{n})$, there exists $N\in\mathbb{N}$ such that for all $n>N$, \[\frac{C}{2}\left(\frac{\sqrt{A''(\theta_0)}}{\sqrt{n}}+\frac{\sqrt{A''(\theta_0)}\sqrt{\ln n}}{\epsilon n}\right)\le \frac{1}{2}A''(\theta_0)\kappa(\theta_0)\] combined with Lemma~\ref{belongsinkappa} implies that $A'^{-1}(\initial(X))\in\thetarange(\theta_0)$.
Therefore, 
\begin{align*}
|(A')^{-1}(\initial(X))-\theta_0| &\le \max_{t\in[\initial(X), A'(\theta_0)]}(A')^{-1})'(t)|\initial(X)-\theta_0|\\
&= \max_{t\in[\initial(X), A'(\theta_0)]}\frac{1}{A''(A'^{-1}(t))}|\initial(X)-\theta_0|\\
&\le 2\frac{1}{A''(\theta_0)}\frac{C}{2}\left( \frac{\sqrt{A''(\theta_0)}}{\sqrt{n}}+ \frac{\sqrt{A''(\theta_0)}}{n\epsilon}\sqrt{\ln\left(n\right)}\right)\\
&= C\left( \frac{1}{\sqrt{nA''(\theta_0)}}+ \frac{1}{n\epsilon\sqrt{A''(\theta_0)}}\sqrt{\ln\left(n\right)}\right)
\end{align*}
\end{proof}

\subsection{Proof of Lemma~\ref{boundonmodulus}}\label{aboundonmodulus}

\rboundonmodulus*

\begin{proof}
\textbf{Lower Bound:}
Recall from the proof of Corollary~\ref{monotonecontinuous} that $\TV(P_{\theta}, P_{\theta+h})\le |h|\sqrt{\max_{\theta'\in[\theta, \theta+h]}A''(\theta')}$. Now, let $h$ be such that $\TV(P_{\theta}, P_{\theta+h})=\beta$, so \[|h|\ge\frac{\beta}{\sqrt{\max_{\theta'\in[\theta, \theta+h]}A''(\theta')}}.\] By assumption, 
$\kappa(\theta)\ge J^{-1}_{\TV,\theta}(\beta)\ge |h|$
 and $\max_{\theta'\in[\theta, \theta+h]}A''(\theta')\le2A''(\theta_0)$. Therefore $|h|\ge  \frac{\beta}{\sqrt{2A''(\theta)}}$ 
so $J_{\TV, \theta}^{-1}(\beta)~\ge~\frac{\beta}{\sqrt{2A''(\theta)}}$. 

\noindent\textbf{Upper Bound:}
Since $\beta<\frac{9}{128\fourthmoment}$, there exists a constant $C$ such that $\frac{16\fourthmoment}{9}\le C\le \frac{1}{8\beta}$. Let $h=\frac{C\beta}{\sqrt{A''(\theta)}}$. If $h\ge\kappa(\theta)$ then we are done since $J_{\TV, \theta}^{-1}(\beta)\le\kappa(\theta)\le h$, so assume that $h\le\kappa(\theta)$.
It suffices to prove that $\TV(P_{\theta}, P_{\theta+h})\ge\beta$ since then again by monotonicity and continuity of $h\to \TV(P_{\theta}, P_{\theta+h})$, we are done. By the Paley-Zygmund inequality, \[\mathbb{P}_{P_{\theta}}\left(X\ge A'(\theta)+\frac{1}{2}\sqrt{A''(\theta)}\right)\ge \left(1-\frac{1}{4}\right)^2\frac{(\mathbb{E}(X-\mathbb{E}(X))^2)^2}{\mathbb{E}[(X-\mathbb{E}(X))^4]}\ge \frac{9}{16\fourthmoment}.\] Then,
\begin{align*}
TV(P_{\theta}, P_{\theta+h}) &= \int_{\frac{A(\theta+h)-A(\theta)}{h}}^{\infty} \left(e^{(\theta+h)x-A(\theta+h)}-e^{\theta x-A(\theta)}\right)d\mu\\
&\ge \int_{\frac{A(\theta+h)-A(\theta)}{h}+\frac{1}{4}\sqrt{A''(\theta)}}^{\infty} \left(e^{(\theta+h)x-A(\theta+h)}-e^{\theta x-A(\theta)}\right)d\mu\\
&= \int_{\frac{A(\theta+h)-A(\theta)}{h}+\frac{1}{4}\sqrt{A''(\theta)}}^{\infty} \left(e^{hx-(A(\theta+h)-A(\theta))}-1\right)P_{\theta}(x)d\mu\\
&\ge \left(e^{\frac{1}{4}h\sqrt{A''(\theta)}}-1\right) \mathbb{P}_{P_{\theta}}\left(X\ge \frac{A(\theta+h)-A(\theta)}{h}+\frac{1}{4}\sqrt{A''(\theta)}\right).
\end{align*}
Now, 
\begin{align*}
\frac{A(\theta+h)-A(\theta)}{h}+\frac{1}{4}\sqrt{A''(\theta)}&\le A'(\theta)+h\max_{\theta'\in[\theta, \theta+h]}A''(\theta)+\frac{1}{4}\sqrt{A''(\theta)}\\
&\le A'(\theta)+\frac{1}{8\sqrt{A''(\theta)}}2A''(\theta)+\frac{1}{4}\sqrt{A''(\theta)}\\
&\le A'(\theta)+\frac{1}{2}\sqrt{A''(\theta)},
\end{align*}
where the second inequality holds since $h\ge \frac{1}{8}\frac{1}{\sqrt{A''(\theta)}}$.
Thus,
\begin{align*}
TV(P_{\theta}, P_{\theta+h})
&\ge \left(e^{\frac{1}{4}h\sqrt{A''(\theta)}}-1\right) \mathbb{P}_{P_{\theta}}\left(X\ge A'(\theta)+\frac{1}{2}\sqrt{A''(\theta)}\right)\\
&\ge \frac{1}{4}\frac{9}{4\fourthmoment} h\sqrt{A''(\theta)}\\
&\ge \beta,
\end{align*}
where the second inequality follows from the fact that $e^x-1\ge x$ for all $x>0$, and the final inequality follows from the definition of $h$, and the assumptions on $C$.
\end{proof}

\subsection{Proof of Lemma~\ref{highprivexptest}}\label{ahighprivexptest}

\rhighprivexptest*

\begin{proof}[Proof of Lemma~\ref{highprivexptest}] 
By assumption there exists constant $A_1$ and $A_2$ such that $A_1\frac{\ln n}{n}\le\epsilon\le A_2\frac{1}{\sqrt{n}}$.
Set $N=\frac{8\sqrt{2}}{\epsilon_n}\max\{\frac{1}{J_{TV,\theta_0}(\kappa(\theta_0))},  \frac{128\fourthmoment}{9}\}$ then $n\ge N$ implies
$\frac{8\sqrt{2}}{\epsilon n}\le \frac{9}{128\fourthmoment}$ and $\kappa(\theta_0)\ge J_{\TV,\theta}^{-1}(\frac{8\sqrt{2}}{\epsilon n})$.
Combined with the first assumption and  Lemma~\ref{boundonmodulus}, this implies that that there exists constants $C_1$ and $C_2$ (depending only on $\fourthmoment$) such that for all $\beta\le\frac{8\sqrt{2}}{\epsilon n}$, \[J_{\TV, \theta_0}^{-1}(\beta)\in\left[\frac{C_1\beta}{ \sqrt{A''(\theta_0)}}, \frac{C_2\beta}{\sqrt{A''(\theta_0)}}\right].\] Let $C=\frac{C_2 8\sqrt{2}}{C_1}$. Then \[|\theta_0-\theta_1|\ge CJ_{\TV, \theta_0}^{-1}\left(\frac{1}{\epsilon n}\right) \ge \frac{C_28\sqrt{2}}{C_1}\frac{C_1}{\epsilon n\sqrt{A''(\theta_0)}}\ge J_{\TV, \theta_0}^{-1}\left(\frac{8\sqrt{2}}{\epsilon n}\right).\]
Thus, $\TV(P_{\theta_0}, P_{\theta_1})\ge \frac{8\sqrt{2}}{\epsilon n}$.

Next, assume that $|\theta_0-\theta_1|=CJ_{\TV, \theta_0}^{-1}\left(\frac{1}{\epsilon_nn}\right)$. Assume without loss of generality that $\theta_0\le\theta_1$. Note that by Markov's inequality, it is sufficient to show that 
\begin{align*}
&\mathbb{E}_{\theta_1}\left[f_{\hat{t}}(X)+\Lap\left(\frac{1}{\epsilon n}\right)\right]-\mathbb{E}_{\theta_0}\left[f_{\hat{t}}(X)+\Lap\left(\frac{1}{\epsilon n}\right)\right] \\
&\hspace{1in}\ge \frac{1}{4}\min\left\{\sqrt{\var_{\theta_0}\left(f_{\hat{t}}(X)+\Lap\left(\frac{1}{\epsilon n}\right)\right)}, \sqrt{\var_{\theta_1}\left(f_{\hat{t}}(X)+\Lap\left(\frac{1}{\epsilon n}\right)\right)}\right\}
\end{align*}
Let us first analyze the gap in expectations in the test statistic $f_{\hat{t}}(\cdot)+\Lap(\frac{1}{\epsilon n})$.
Note, \[\mathbb{P}_{\theta_1}\left[T(X)>\frac{A(\theta_0)-A(\theta_1)}{\theta_0-\theta_1}\right]-\mathbb{P}_{\theta_0}\left[T(X)>\frac{A(\theta_0)-A(\theta_1)}{\theta_0-\theta_1}\right] = TV(P_{\theta_0}, P_{\theta_1}).\] Therefore, 
\begin{align*}
&\mathbb{E}_{\theta_1}\left[f_{\hat{t}}(X)+\Lap\left(\frac{1}{\epsilon n}\right)\right]-\mathbb{E}_{\theta_0}\left[f_{\hat{t}}(X)+\Lap\left(\frac{1}{\epsilon n}\right)\right]\\
&\hspace{1in}= TV(P_{\theta_0}, P_{\theta_1})+\mathbb{P}_{\theta_1}[T(x)\in I]-\mathbb{P}_{\theta_0}[T(x)\in I],
\end{align*} where $I$ has endpoints $\hat{t}$ and $\frac{A(\theta_0)-A(\theta_1)}{\theta_0-\theta_1}$. Assume, for ease of notation, that $\hat{t}\ge \frac{A(\theta_0)-A(\theta_1)}{\theta_0-\theta_1}$ so $\hat{t}=\frac{A(\theta_0)-A(\theta_1)}{\theta_0-\theta_1}+\Gamma$ where 
\begin{align*}
\Gamma &= \hat{t}-\frac{A(\theta_0)-A(\theta_1)}{\theta_0-\theta_1}\\
&= \hat{t}-A'(\theta_0)+A'(\theta_0)-\frac{A(\theta_0)-A(\theta_1)}{\theta_0-\theta_1}\\
&\le \textcolor{black}{b\left(\sqrt{\frac{A''(\theta_0)}{n}}+\frac{\sqrt{A''(\theta_0)\ln\left(n\right)}}{\epsilon n}\right)}+\max_{\theta'\in[\theta_0, \theta_1]}A''(\theta')|\theta_0-\theta_1|.
\end{align*}
Now,  
\begin{align*}
\mathbb{P}_{\theta_1}[T(x)\in I]-\mathbb{P}_{\theta_0}[T(x)\in I] &= \int_{a\in I} e^{a\theta_1-A(\theta_1)} d\mu - \int_{a\in I} e^{a\theta_0-A(\theta_0)} d\mu\\
&= \int_{a\in I} e^{a(\theta_1-\theta_0)+A(\theta_0)-A(\theta_1)}e^{a\theta_0-A(\theta_0)} d\mu - \int_{a\in I} e^{a\theta_0-A(\theta_0)} d\mu\\
&\le \max_{a\in I}(e^{a(\theta_1-\theta_0)+A(\theta_0)-A(\theta_1)}-1)\mathbb{P}_{\theta_0}[T(X)\in I]\\
&\le \max_{a\in I}(e^{a(\theta_1-\theta_0)+A(\theta_0)-A(\theta_1)}-1)\\
&= e^{\hat{t}(\theta_1-\theta_0)+A(\theta_0)-A(\theta_1)}-1\\
&= e^{\Gamma(\theta_1-\theta_0)}-1. 
\end{align*}
Now $|\theta_0-\theta_1|=CJ_{\TV, \theta_0}^{-1}\left(\frac{1}{\epsilon_nn}\right)\le\frac{C_2C}{\epsilon n \sqrt{A''(\theta_0)}}$ and thus there exists a constant $C_3$ (depending on $\fourthmoment, C_1, C_2, A_1$ and $A_2$) such that
\begin{align*}
\mathbb{P}_{\theta_0}[T(x)\in I]-\mathbb{P}_{\theta_1}[T(x)\in I] 
&\le e^{\left[\textcolor{black}{b\left(\sqrt{\frac{A''(\theta_0)}{n}}+\frac{\sqrt{A''(\theta_0)\ln\left(n\right)}}{\epsilon n}\right)}+\max_{\theta'\in[\theta_0, \theta_1]}A''(\theta')|\theta_0-\theta_1|\right](\theta_1-\theta_0)}-1\\
&\le e^{\frac{bC_2C}{\epsilon n^{1.5}}+\frac{bC_2C\sqrt{\ln n}}{\epsilon^2 n^2}+\frac{2C_2^2C^2}{\epsilon^2 n^2}}-1\\
&= e^{\frac{C_3\sqrt{\ln n}}{\epsilon^2 n^{2}}}-1
\end{align*}
where the second inequality follows from $\epsilon=O(1/\sqrt{n})$, which implies that $\epsilon n^{1.5}=\Omega(\epsilon^2n^2)$. Now $\epsilon=\Omega(\ln n/n)$ implies $e^{\frac{C_3\sqrt{\ln n}}{\epsilon^2 n^{2}}}-1=o(\frac{1}{\epsilon n})$, thus since $\TV(P_{\theta_0}, P_{\theta_1})\ge \frac{8\sqrt{2}}{\epsilon n}$, there exists $N$ (depending on $C, B, A_1$ and $A_2$) such that if $n\ge \max\{N, \frac{8\sqrt{2}}{\epsilon_nJ_{TV,\theta_0}(\kappa(\theta_0))},  \frac{8\sqrt{2}}{\epsilon_n}\frac{128}{9\fourthmoment}\}$ then \[\mathbb{E}_{\theta_1}\left[f_{\hat{t}}(X)+\Lap\left(\frac{1}{\epsilon n}\right)\right]-\mathbb{E}_{\theta_0}\left[f_{\hat{t}}(X)+\Lap\left(\frac{1}{\epsilon n}\right)\right] \ge \frac{1}{2}\left(\TV(P_{\theta_0}, P_{\theta_1}\right))\]
Also, \[\min\left\{\var_{\theta_0}\left(f_{\hat{t}}(X)+\Lap\left(\frac{1}{\epsilon n}\right)\right), \var_{\theta_1}\left(f_{\hat{t}}(X)+\Lap\left(\frac{1}{\epsilon n}\right)\right)\right\}\le \frac{1}{n}+\frac{1}{\epsilon^2n^2}\le 2\left(\frac{1}{\epsilon^2n^2}\right),\] where the second inequality holds since $\epsilon\le1/\sqrt{n}$. 
Thus, since $\TV(P_{\theta_0}, P_{\theta_1})]\ge\frac{8\sqrt{2}}{\epsilon n}$, we have \[\sqrt{\var_{\theta_0}\left(f_{\hat{t}}(X)+\Lap\left(\frac{1}{\epsilon n}\right)\right)}\le \frac{1}{4}\left( \mathbb{E}_{\theta_0}\left[f_{\hat{t}}(X)+\Lap\left(\frac{1}{\epsilon n}\right)\right]-\mathbb{E}_{\theta}\left[f_{\hat{t}}(X)+\Lap\left(\frac{1}{\epsilon n}\right)\right]\right)\] Thus, the test distinguishes between $P_{\theta_0}$ and $P_{\theta_1}$.

Next, assume that $|\theta_0-\theta_1| \ge CJ_{\TV, \theta_0}^{-1}\left(\frac{1}{\epsilon_nn}\right)$. Let $\theta_1'$ be such that $|\theta_0-\theta_1'|= CJ_{\TV, \theta_0}^{-1}\left(\frac{1}{\epsilon_nn}\right)$ and let $n>\max\{N, \frac{8\sqrt{2}}{\epsilon_nJ_{TV,\theta_0}(\kappa(\theta_0))},  \frac{8\sqrt{2}}{\epsilon_n}\frac{128}{9\fourthmoment}\}$. Then by Lemma~\ref{monotonelikelihood},
\begin{align*}
&\mathbb{E}_{\theta_1}\Big[f_{\hat{t}}(X)+\Lap\left(\frac{1}{\epsilon n}\right)\Big]-\mathbb{E}_{\theta_0}\Big[f_{\hat{t}}(X)+\Lap\left(\frac{1}{\epsilon n}\right)\Big]\\
&\hspace{1in}\ge \mathbb{E}_{\theta_1'}\Big[f_{\hat{t}}(X)+\Lap\left(\frac{1}{\epsilon n}\right)\Big]-\mathbb{E}_{\theta_0}\Big[f_{\hat{t}}(X)+\Lap\left(\frac{1}{\epsilon n}\right)\Big]\\
&\hspace{1in}\ge \frac{1}{2}\TV\left(P_{\theta_0}, P_{\theta_1'}\right)\\
\end{align*}
and as above \[\min\left\{\var_{\theta_0}\left(f_{\hat{t}}(X)+\Lap\left(\frac{1}{\epsilon n}\right)\right), \var_{\theta_1}\left(f_{\hat{t}}(X)+\Lap\left(\frac{1}{\epsilon n}\right)\right)\right\}\le \frac{1}{n}+\frac{1}{\epsilon^2n^2}\] so we are done.
\end{proof}

\subsection{Proof of Proposition~\ref{lowprivmodcontinuity}}\label{alowprivmodcontinuity}

\rlowprivmodcontinuity*

\begin{lemma}\label{lownopriv}
For all constants $\fourthmoment>0$,  there exists constants $k, C, N>0 $ such that for all $\theta_0\in\thetarange$ if
\begin{itemize}
\item  $\epsn\ge\frac{k}{\sqrt{n}}$ 
\item  $\frac{\mathbb{E}_{\theta}((X-A'(\theta))^4)}{A''(\theta)^2}\le \fourthmoment$ 
\end{itemize}
then for all $h\ge \frac{C}{\sqrt{nA''(\theta)}}$ and $n$ such that $\kappa(\theta)\ge \frac{C}{\sqrt{n A''(\theta)}}$ and $n\ge N$, we have \[\SC_{\epsn}(P_{\theta}, P_{\theta+h}) \le n.\]
\end{lemma}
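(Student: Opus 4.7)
The strategy is to exhibit an explicit $\epsn$-DP test with sample complexity at most $n$ via the noisy Scheff\'e statistic---the same construction used in the upper-bound half of Lemma~\ref{smalleps}. Let $E := \{x : p_{\theta+h}(x) \ge p_\theta(x)\}$ be the Scheff\'e set for the pair and define
\[
f_E(X) \;:=\; \frac{1}{n}\sum_{i=1}^n \mathbf{1}\{x_i \in E\} \;+\; \Lap\!\left(\frac{1}{\epsn n}\right),
\]
which is $\epsn$-DP because the empirical frequency has global sensitivity $1/n$. By construction, $\mathbb{E}_{P_{\theta+h}^n}[f_E] - \mathbb{E}_{P_\theta^n}[f_E] = \TV(P_\theta, P_{\theta+h})$; and using $\epsn \ge k/\sqrt n$ with $k \ge 1$, the variance under either hypothesis is at most $\frac{1}{4n} + \frac{2}{\epsn^2 n^2} \le (1/4 + 2/k^2)/n = O(1/n)$, so the standard deviation is $O(1/\sqrt n)$.

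The crux is to lower-bound $\TV(P_\theta, P_{\theta+h})$ by $\Omega(1/\sqrt n)$. Set $\beta := C_4/\sqrt n$ for a universal constant $C_4$ to be fixed momentarily. Lemma~\ref{boundonmodulus} gives $J^{-1}_{\TV,\theta}(\beta) \le C_3\, \beta / \sqrt{A''(\theta)}$, where $C_3$ depends only on $\fourthmoment$, provided (i) $\beta \le 9/(128\fourthmoment)$, which holds once $N \ge (128\, C_4\, \fourthmoment/9)^2$, and (ii) $\kappa(\theta) \ge J^{-1}_{\TV,\theta}(\beta)$, which follows from the hypothesis $\kappa(\theta) \ge C/\sqrt{n A''(\theta)}$ as long as $C \ge C_3 C_4$. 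Consequently $J^{-1}_{\TV,\theta}(\beta) \le C/\sqrt{n A''(\theta)}$, so by the monotonicity of $h \mapsto \TV(P_\theta, P_{\theta+h})$ from Corollary~\ref{monotonecontinuous}, every $h \ge C/\sqrt{n A''(\theta)}$ satisfies $\TV(P_\theta, P_{\theta+h}) \ge \beta = C_4/\sqrt n$.

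Combining, the signal-to-noise ratio of $f_E$ is at least $\Omega(C_4)$, uniformly in $\theta$ and $h$. Choosing $C_4$ large enough that Chebyshev (as in Lemma~2.6 of \cite{Canonne:2019}) yields failure probability below $1/4$ on each side---exactly as in the proof of Lemma~\ref{smalleps}---produces a threshold $\tau$ for which $f_E$ distinguishes $P_\theta$ from $P_{\theta+h}$ with $n$ samples, whence $\SC_{\epsn}(P_\theta, P_{\theta+h}) \le n$. The constants then propagate cleanly: $C_4$ is a universal constant, $C := C_3 C_4$ depends only on $\fourthmoment$, $N := \lceil (128\, C_4\, \fourthmoment/9)^2 \rceil$ depends only on $\fourthmoment$, and $k := 1$ suffices. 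The main ``obstacle'' is essentially bookkeeping: carefully chaining the conditions of Lemma~\ref{boundonmodulus} to the constants of the present lemma. Notably, the Scheff\'e-based argument sidesteps any analysis of the clamped log-likelihood (Proposition~\ref{robustthresholds}) entirely, since the indicator $\mathbf{1}_E$ is already bounded, making the proof considerably lighter than the NCLLR-based arguments elsewhere in Section~\ref{DPtestingsec}.
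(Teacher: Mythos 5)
Your route is genuinely different from the paper's. The paper's proof instantiates the noisy clamped log-likelihood ratio test $\NCLLR_{-\eps}^{\eps}$ for the pair $(P_\theta, P_{\theta+h})$, directly bounding its mean gap from below via the kurtosis/Paley--Zygmund argument (re-deriving a chunk of the computation in Lemma~\ref{boundonmodulus}) and its variance from above via $\var_\theta(\LLR) \lesssim nH^2(P_\theta,P_{\theta+h}) \lesssim nh^2A''(\theta)$; the constraint $\epsn\ge k/\sqrt n$ enters there to guarantee the clamping at $\pm\eps$ is inactive on the dominant part of the integral. You instead reuse the noisy Scheff\'e statistic from the high-privacy Lemma~\ref{smalleps}. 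That is a legitimate and notably lighter path, and it works here precisely because for exponential families with bounded kurtosis one has $\TV(P_\theta,P_{\theta+h}) \asymp h\sqrt{A''(\theta)} \asymp \Hell(P_\theta,P_{\theta+h})$ in the local regime --- Lemma~\ref{boundonmodulus} is exactly the statement that makes Scheff\'e's generic $\Theta(1/\TV^2)$ sample complexity coincide with the Hellinger rate. In this regime $\epsn\ge k/\sqrt n$ makes the Laplace noise $O(1/\sqrt n)$, the same order as the binomial fluctuation, so no clamping argument is needed. The conceptual payoff of your version is that it explains cleanly why ``privacy comes for free'' in the low-privacy regime: the noisy Scheff\'e test is already rate-optimal there. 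The paper's $\NCLLR$-based version is more faithful to the test statistic that actually powers the estimator of Algorithm~\ref{algo:estexp}, so it does a bit more work that is later amortized.

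One bookkeeping step, however, is circular as you wrote it. You verify hypothesis (ii) of Lemma~\ref{boundonmodulus}, namely $\kappa(\theta)\ge J^{-1}_{\TV,\theta}(\beta)$, by invoking the \emph{conclusion} of Lemma~\ref{boundonmodulus} (that $J^{-1}_{\TV,\theta}(\beta)\le C_3\beta/\sqrt{A''(\theta)}$) --- but that conclusion is only available once (ii) is known. The repair is easy and worth making explicit: the upper-bound argument inside the proof of Lemma~\ref{boundonmodulus} shows directly that $\TV(P_\theta,P_{\theta+h})\ge\beta$ at $h = C_3\beta/\sqrt{A''(\theta)}$ \emph{provided only} $h\le\kappa(\theta)$, which your hypothesis $\kappa(\theta)\ge C/\sqrt{nA''(\theta)}$ supplies once $C\ge C_3 C_4$. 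So skip the detour through $J^{-1}_{\TV,\theta}$ entirely; conclude $\TV(P_\theta,P_{\theta+h})\ge\beta$ at $h=C_3\beta/\sqrt{A''(\theta)}$ from the internal argument of Lemma~\ref{boundonmodulus}, and then extend to all larger $h$ by the monotonicity in Corollary~\ref{monotonecontinuous}. With that rewiring, the rest of the proof --- sensitivity $1/n$, variance $\le (1/4 + 2/k^2)/n$, Chebyshev-style threshold as in \citep[Lemma~2.6]{Canonne:2019}, and the observation that a test at $n$ samples also works at any $n'\ge n$ so $\SC_\epsn\le n$ --- is correct, and the stated dependencies of $k$, $C$, $N$ on $\fourthmoment$ alone go through.
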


\begin{proof} Let $C$ and $N$ be constants to be specified later with the relationship that $\frac{C}{\sqrt{N}}\le \frac{1}{8}$. Let $n\ge \max\{N, \frac{C^2}{A''(\theta)(\kappa(\theta))^2}\}$.
First, assume that $h=\frac{C}{\sqrt{nA''(\theta)}}$. 
Let \[\CLLR(X) = \sum_{i=1}^{n} \left[\ln\frac{P_{\theta+h}(x_i)}{P_{\theta}(x_i)}\right]_{-\epsilon}^{\epsilon}\] and \[\NCLLR_{\epsilon}(X) = \CLLR(X) + \Lap(2).\]
Our goal is to show that under the conditions outlined in the lemma statement,  
\[\sqrt{\var_{\theta}(\NCLLR)}\le (1/4)|
\mathbb{E}_{\theta+h}(\NCLLR)-\mathbb{E}_{\theta}(\NCLLR)|\] and \[\sqrt{\var_{\theta+h}(\NCLLR)}\le (1/4)|
\mathbb{E}_{\theta+h}(\NCLLR)-\mathbb{E}_{\theta}(\NCLLR)|.\]

Firstly, note that since for any $x$ and $y$, \[\left|\left[\ln\frac{P_{\theta+h}(x)}{P_{\theta}(x)}\right]_{-\epsilon}^{\epsilon}-\left[\ln\frac{P_{\theta+h}(y)}{P_{\theta}(y)}\right]_{-\epsilon}^{\epsilon}\right|\le \left|\ln\frac{P_{\theta+h}(x)}{P_{\theta}(x)}-\ln\frac{P_{\theta+h}(y)}{P_{\theta}(y)}\right|.\] 
Further, $\var_{\theta}(LLR) \le 18 n H^2(P_{\theta}, P_{\theta+h})\le 18n\TV(P_{\theta}, P_{\theta+h})\le 36nh^2A''(\theta),$ where the first inequality follows from a simple manipulation (see the proof of Theorem 2.5 in \cite{Canonne:2019} for details), the second is a standard inequality, and the third follows from eqn~\eqref{upperboundonTV}.
Thus, we have that $\var_{\theta}(\NCLLR)\le \var_{\theta}(\LLR)+2=36nh^2A''(\theta)+2$ and $\var_{\theta+h}(\NCLLR)\le \var_{\theta+h}(\LLR)+2=nh^2A''(\theta+h)+2\le 72nh^2A''(\theta)+2$.
Now, also as in the proof of Lemma~\ref{boundonmodulus}, since \textcolor{black}{$|h|\le\frac{1}{8\sqrt{A''(\theta)}}$} and $|h|\le\kappa(\theta_0)$, we have 
\begin{align*}
\mathbb{P}_{P_{\theta}}\left(x\ge \frac{A(\theta+h)-A(\theta)}{h}+\frac{1}{4}\sqrt{A''(\theta)}\right)&\ge \mathbb{P}_{P_{\theta}}\left(x\ge A'(\theta)+h\max_{\theta'\in[\theta, \theta+h]}A''(\theta')+\frac{1}{4}\sqrt{A''(\theta)}\right)\\
&\ge \mathbb{P}_{P_{\theta}}\left(x\ge A'(\theta)+\frac{1}{2}\sqrt{A''(\theta)}\right)\\
&\ge \frac{9}{16\fourthmoment}
\end{align*}
Let $C_2=\sqrt{\frac{32\fourthmoment}{9}}+\frac{1}{16}$, by Chebyshev's's inequality we have 
\begin{align*}
\mathbb{P}_{P_{\theta}}\left(x\ge \frac{A(\theta+h)-A(\theta)}{h}+C_2\sqrt{A''(\theta)}\right)&\le \mathbb{P}_{P_{\theta}}\left(x\ge A'(\theta)-h\max_{\theta'\in[\theta, \theta+h]}A''(\theta)+C_2\sqrt{A''(\theta)}\right)\\
&\le  \mathbb{P}_{P_{\theta}}\left(x\ge A'(\theta)+\left(C_2-\frac{1}{16}\right)\sqrt{A''(\theta)}\right)\\
&\le \frac{1}{(C_2-\frac{1}{16})^2}\\
&\le \frac{9}{32\fourthmoment}
\end{align*}
where the second inequality follows since $h\le \frac{1}{8\sqrt{A''(\theta)}}$. Therefore, \[\mathbb{P}_{P_{\theta}}\left( \frac{A(\theta+h)-A(\theta)}{h}+\frac{1}{4}\sqrt{A''(\theta)}\le x\le \frac{A(\theta+h)-A(\theta)}{h}+C_2\sqrt{A''(\theta)}\right)\ge \frac{9}{32\fourthmoment}.\]
Now, 
\begin{align*}
&\frac{1}{n}(\mathbb{E}_{\theta+h}(\NCLLR)-\mathbb{E}_{\theta}(\NCLLR))= \int_{-\infty}^{\infty}\left[\ln\frac{P_{\theta+h}(x_i)}{P_{\theta}(x_i)}\right]_{-\epsilon}^{\epsilon} (P_{\theta+h}(x)-P_{\theta}(x))d\mu(x)\\
&\hspace{1in}= \int_{-\infty}^{\infty}\left[hx-A(\theta+h)+A(\theta)\right]_{-\epsilon}^{\epsilon}(P_{\theta+h}(x)-P_{\theta}(x)) d\mu(x)\\
&\hspace{1in}\ge \int_{\frac{A(\theta+h)-A(\theta)}{h}+\frac{1}{4}\sqrt{A''(\theta)}}^{\frac{A(\theta+h)-A(\theta)}{h}+C_2\sqrt{A''(\theta)}} \left[hx-A(\theta+h)+A(\theta)\right]_{-\epsilon}^{\epsilon}(P_{\theta+h}(x)-P_{\theta}(x)) d\mu(x)\\
\end{align*}
Now, if $x\in\left[{\frac{A(\theta+h)-A(\theta)}{h}+\frac{1}{4}\sqrt{A''(\theta)}}, {\frac{A(\theta+h)-A(\theta)}{h}+C_2\sqrt{A''(\theta)}}\right]$ then \[hx-A(\theta+h)+A(\theta)\in \left[\frac{1}{4}h\sqrt{A''(\theta)}, hC_2\sqrt{A''(\theta)}\right]\subset\left[0, \frac{C_2C}{\sqrt{n}}\right],\]
where the subset relationship holds since $h=\frac{C}{\sqrt{nA''(\theta)}}$.
Thus, letting $k=CC_2$, if \textcolor{black}{$\epsilon\ge \frac{k}{\sqrt{n}}$} then the truncation has no impact inside this region and
\begin{align*}
&\frac{1}{n}(\mathbb{E}_{\theta+h}(\NCLLR)-\mathbb{E}_{\theta}(\NCLLR)) \\
&\hspace{0.1in}\ge \int_{\frac{A(\theta+h)-A(\theta)}{h}+\frac{1}{4}\sqrt{A''(\theta)}}^{\frac{A(\theta+h)-A(\theta)}{h}+C_2\sqrt{A''(\theta)}} \left(hx-A(\theta+h)+A(\theta)\right)(P_{\theta+h}(x)-P_{\theta}(x)) d\mu(x)\\
&\hspace{0.1in}= \int_{\frac{A(\theta+h)-A(\theta)}{h}+\frac{1}{4}\sqrt{A''(\theta)}}^{\frac{A(\theta+h)-A(\theta)}{h}+C_2\sqrt{A''(\theta)}} \left(hx-A(\theta+h)+A(\theta)\right)P_{\theta}(x)\left(e^{hx-A(\theta+h)+A(\theta)}-1\right) d\mu(x)\\
&\hspace{0.1in}\ge  \frac{1}{4}h\sqrt{A''(\theta)}\left(e^{ \frac{1}{4}h\sqrt{A''(\theta)}}-1\right) \mathbb{P}_{P_{\theta}}\left( \frac{1}{4}\sqrt{A''(\theta)}\le x- \frac{A(\theta+h)-A(\theta)}{h}\le C_2\sqrt{A''(\theta)}\right)\\
&\hspace{0.1in}\ge \frac{1}{16}\frac{9}{32\fourthmoment} h^2A''(\theta).
\end{align*}
Therefore, recalling that $h=\frac{C}{\sqrt{n A''(\theta)}}$ and $\max\{\var_{\theta}(\NCLLR), \var_{\theta+h}(\NCLLR)\}\le 72nh^2A''(\theta)$, there exists a large enough $C$ (where large enuogh depends only on $\zeta$) such that 
 $\max\left\{\sqrt{\var_{\theta}(\NCLLR)}, \sqrt{\var_{\theta+h}(\NCLLR)}\right\}\le(1/4)(\mathbb{E}_{\theta+h}(\NCLLR)-\mathbb{E}_{\theta}(\NCLLR))$, as required.

Next, assume that $h'\ge \frac{C}{\sqrt{nA''(\theta)}}$. Note that $ \left[\ln\frac{P_{\theta+h}(x)}{P_{\theta}(x)}\right]_{-\epsilon}^{\epsilon}$ is monotone increasing in $x$, so for any threshold $\tau$, $\mathbb{P}_{X\sim P_{\theta+h'}^n}(\NCLLR(X)\ge\tau)\ge \mathbb{P}_{X\sim P_{\theta+h}^n}(\NCLLR(X)\ge\tau)$. Therefore, if $\NCLLR$ distinguishes $\theta$ and $\theta+h$ with $n$ samples then it also distinguishes $\theta$ and $\theta+h'$ with $n$ samples. 
\end{proof}

\begin{proof}[Proof of Proposition~\ref{lowprivmodcontinuity}]
First note that by definition $\omega_n(P_{\theta}, \Qtesteps)\ge\omega_n(P_{\theta}, \Qtest)$ and $\omega_n(P_{\theta}, \Qtest)=\Theta\left(\frac{1}{\sqrt{nA''(\theta)}}\right)$. By Lemma~\ref{lownopriv}, there exists $k,C_2\ge0$ such that if $n$ is sufficiently large, $\epsilon\ge\frac{k}{\sqrt{n}}$ and $h\ge \frac{C_2}{\sqrt{nA''(\theta)}}$ then $\SC_{\epsilon}(P_{\theta+h}, P_{\theta})\le n$. Therefore, $\omega_n(P_{\theta}, \Qtesteps)\le \frac{C_2}{\sqrt{nA''(\theta)}}.$
\end{proof}

\subsection{Proof of Proposition~\ref{mainexp}}

\rmainexp*

\begin{proof} 

The proposition follows from a combination of Corollary~\ref{maincorlowpriv} and Corollary~\ref{allforalog}. Let $N, k , C_1, D_1$ and $D_2$ be as in Corollary~\ref{maincorlowpriv}. Note that since $\frac{D}{A''(\theta_0)(\kappa(\theta_0))^2}\le DC^2=O(1)$ we can set $N$ large enough that $N\ge \frac{D_2}{A''(\theta_0)(\kappa(\theta_0))^2}$. By Corollary~\ref{allforalog}, there exists a constant $C_2$ such that
\[|A'^{-1}(\initial(X))-\theta_0|\le C_2\left(\frac{1}{\sqrt{nA''(\theta_0)}}+\frac{1}{n\epsilon\sqrt{A''(\theta_0)}}\sqrt{\ln(n)}\right).\]
Again since $\kappa(\theta_0)\sqrt{A''(\theta_0)}\ge \frac{1}{C}$, there exists constants $N_1$ and $D$ such that for all $n>N_1$ if $\frac{\sqrt{\ln n}}{\sqrt{n}}\le D A''(\theta_0)$ then, \[C_2\left(\frac{1}{\sqrt{nA''(\theta_0)}}+\frac{1}{n\epsilon\sqrt{A''(\theta_0)}}\sqrt{\ln(n)}\right)\le \min\{\kappa(\theta_0), D_1\epsilon\sqrt{nA''(\theta_0)}.\] The condition $\frac{\sqrt{\ln n}}{\sqrt{n}}\le D A''(\theta_0)$ is implied by the assumption that $n\ge \frac{2\ln(1/(DA''(\theta_0))^2)}{(DA''(\theta_0))^2}$.
Thus, the estimator from Corollary~\ref{allforalog} satisfies the requirements of Corollary~\ref{maincorlowpriv} and so we are done.
\end{proof}

\subsection{Proof of Lemma~\ref{mainlemmaexp}}\label{amainlemmaexp}

\rmainlemmaexp*

\begin{proof}[Proof of Lemma~\ref{mainlemmaexp}] Let $\tilde{\theta} = A'^{-1}(\hat{t})$ and recall that for ease of notation we let $\alpha_n(\theta) = \frac{1}{\sqrt{nA''(\theta)}}$. 
Since $|\theta_0-A'^{-1}(\hat{t})|\le\kappa(\theta_0)$, we have $\alpha_n(\theta_0)\in[\frac{1}{2}\tilde{\alpha}, 2\tilde{\alpha}]$. Thus by Proposition~\ref{lowprivmodcontinuity}, there exists constants $k$, $C_1$, and $C_2$ (depending only on $\fourthmoment$) such that $\omega_n(P_{\theta}, \Qtesteps)\in[C_1\alpha_n(\theta_0), C_2\alpha_n(\theta_0)]$. Further, by Lemma~\ref{threshrobust}, there exist a constant $C_3$ (depending on $C_1$ and $C_2$) such that if $a, b\in[C_1/4\epsilon, 4C_2\epsilon]$ then \[\SC_{\NCLLR_a^b}(P,Q)\le C_3\cdot\SC_{\epsilon}(P,Q).\] Now, set $C=\sqrt{C_3}\frac{C_2}{C_1}$ and assume first that $|\theta_0-\theta_1|=C\cdot\omega_n(P_{\theta}, \Qtesteps)$. Then 
\begin{align*}
|\theta_0-\theta_1|&= C\omega_n(P_{\theta}, \Qtesteps)\\
&\ge\frac{CC_1}{\sqrt{nA''(\theta_0)}}\\
&=\frac{C_2}{\sqrt{(C_2/C_1C)^2nA''(\theta_0)}}\\
&\ge \omega_{(C_2/C_1C)^2n, \SC_{\epsilon}}(P_{\theta_0}).
\end{align*}
Therefore, $\SC_{\epsilon}(P_{\theta_0}, P_{\theta_1})\le (\frac{C_2}{C_1C})^2n$. Thus, if $a,b\in[C_1/4, 4C_2]$ then \begin{equation}\label{equalSC}
\SC_{\NCLLR_a^b}(P_{\theta_0}, P_{\theta_1})\le C_3\left(\frac{C_2}{C_1C}\right)^2n\le n.
\end{equation}

Now, recall that $P_{\theta}(x)=e^{\theta x-A(\theta)}$ and $|\theta_0-\theta_1|=C\cdot \omega_n(P_{\theta}, \Qtesteps)$ so
\begin{align*}
(\theta_1-\theta_0)[x-\hat{t}]_{-\epsilon/C\tilde{\alpha}}^{\epsilon/C\tilde{\alpha}} &= \left[(\theta_1-\theta_0)(x-\hat{t})\right]_{-\epsilon\frac{ \omega_{n,\SC_{\epsilon}}(\theta_0)}{\tilde{\alpha}}}^{\epsilon\frac{ \omega_{n,\SC_{\epsilon}}(\theta_0)}{\tilde{\alpha}}}\\
&=\left[\ln\frac{P_{\theta_1}(x)}{P_{\theta_0}(x)}+(A(\theta_1)-A(\theta_0))- (\theta_1-\theta_0)\hat{t}\right]_{-\epsilon\frac{ \omega_{n,\SC_{\epsilon}}(\theta_0)}{\tilde{\alpha}}}^{\epsilon\frac{ \omega_{n,\SC_{\epsilon}}(\theta_0)}{\tilde{\alpha}}}\\
&= \left[\ln\frac{P_{\theta_1}(x)}{P_{\theta_0}(x)}\right]_{\epsilon\frac{ \omega_{n,\SC_{\epsilon}}(\theta_0)}{\tilde{\alpha}}-\Gamma}^{\epsilon\frac{ \omega_{n,\SC_{\epsilon}}(\theta_0)}{\tilde{\alpha}}-\Gamma}+\Gamma
\end{align*}
where 
\begin{align*}
\Gamma &= A(\theta_1)-A(\theta_0)-(\theta_1-\theta_0)\hat{t} \\
&\le A(\theta_1)-A(\theta_0)-(\theta_1-\theta_0)A'(\theta_0)+(\theta_1-\theta_0)(A'(\theta_0)-\hat{t})\\
&\le \left(\max_{\theta'\in[\theta_1, \theta_0]}A''(\theta')\right)(\theta_1-\theta_0)^2+|\theta_1-\theta_0||A'(\theta_0)-\hat{t}|\\
\end{align*}
Now, if we let $D_2=C_2^2C^2$ then $n\ge \frac{D_2}{A''(\theta_0)(\kappa(\theta_0))^2}$ implies that \[|\theta_0-\theta_1|=C\omega_{n, \SC_{\epsilon}}(\theta_0)\le C\frac{C_2}{\sqrt{n A''(\theta_0)}}= \frac{\sqrt{D_2}}{\sqrt{nA''(\theta_0)}}\le\kappa(\theta_0).\] 
Therefore, \[\Gamma\le 2A''(\theta_0)(\theta_1-\theta_0)^2+|\theta_1-\theta_0||A'(\theta_0)-\hat{t}|\le 2C^2C_2^2\frac{1}{n}+C_2C\alpha_n(\theta_0)|A'(\theta_0)-\hat{t}|.\] 
Thus, noting that $1/n\ll\epsilon$, and setting $b=\frac{C_1}{8CC_2}$ then if $|\theta_0-A'^{-1}(\hat{t})|\le \frac{b\epsilon}{\alpha_n(\theta_0)}=b\epsilon\sqrt{nA''(\theta_0)}$ then there exists $N$ such that for all $n\ge \max\{N, \frac{\nB}{A''(\theta_0)(\kappa(\theta_0))^2}\}$, $\Gamma\le \frac{C_1\epsilon}{4}$.
Therefore, for all $n\ge N$,  the truncation parameters $\epsilon\frac{ \omega_{n,\SC_{\epsilon}}(\theta_0)}{\tilde{\alpha}}-\beta, \epsilon\frac{ \omega_{n,\SC_{\epsilon}}(\theta_0)}{\tilde{\alpha}}+\beta\in[C_1/4, 4C_2]$. 
So, by eqn~\eqref{equalSC}, $\SC_{\NCLLR_{-a}^b}(P_{\theta_0}, P_{\theta_1})\le n$ which implies $n$ samples are sufficient for the test statistic $[x-\hat{t}]_{-\epsilon/C\tilde{\alpha}}^{\epsilon/C\tilde{\alpha}}$ to distinguish between $P_{\theta_0}$ and $P_{\theta_1}$. The threshold $\tau$ can be chosen as the midpoint between $\mathbb{E}_{X\sim P_{\theta_0}^n}[\hat{f}_{\tilde{\alpha}}(X)]$ and $\mathbb{E}_{X\sim P_{\theta_1}^n}[\hat{f}_{\tilde{\alpha}}(X)]$.

Now, assume that $|\theta_0-\theta_1|\ge C\cdot\omega_n(P_{\theta}, \Qtesteps)$. Let $\theta_1'$ be such that $\theta_0<\theta_1'<\theta_1$ and $|\theta_0-\theta_1|=C\cdot\omega_n(P_{\theta}, \Qtesteps)$. Then, by the previous argument, there exists a threshold $\tau$ such that $n$ samples are sufficient for the test statistic $[x-\tilde{t}]_{-\epsilon/C\tilde{\alpha}}^{\epsilon/C\tilde{\alpha}}$ to distinguish between $P_{\theta_0}$ and $P_{\theta_1'}$. Noting that this test statistic is monotone in $x$, we have by Lemma~\ref{monotonelikelihood} (the fact that $P_{\theta_1}$ stochastically dominates $P_{\theta_1'}$) that this test statistic also distinguishes between $P_{\theta_0}$ and $P_{\theta_1}$ with $n$ samples. Additionally, since $\mathbb{E}_{\theta_1'}[\NCLLR(X)]\ge\mathbb{E}_{\theta_1}[\NCLLR(X)]\ge \mathbb{E}_{\theta_0}[\NCLLR(X)] $, we maintain that $|\mathbb{E}_{X\sim P_{\theta_0}^n}[\hat{f}_{\tilde{\alpha}}(X)]-\tau|\le |\mathbb{E}_{X\sim P_{\theta_1}^n}[\hat{f}_{\tilde{\alpha}}(X)]-\tau|$.
\end{proof}

\section{Proofs for Section~\ref{tailrates}}

\subsection{Proof of Theorem~\ref{maintailtheorem}}

\rmaintailtheorem*

\begin{proof}[Proof of Theorem~\ref{maintailtheorem}] We can think of Algorithm~\ref{algo:BS} as at each step dividing the distance between $t_{\min}$ and $t_{\max}$ by $2/3$ and concluding that the true value $t^*$ lies between $t_{\min}$ and $t_{\max}$. Thus, in order to show that $\ierror{\hat{\theta}}{N}{f}\le \MOCallt{n}{t}{\Qtest_{\epsilon}}{\mathcal{P}}{\theta}$, it suffices to show that it is possible to run for $k^*(n)=\left\lceil\log_{\frac{3}{2}}\left  (\frac{|t_1-t_0|}{\MOCallt{n}{t}{\Qtest_{\epsilon}}{\mathcal{P}}{\theta}}\right)\right\rceil$ iterations with $N=n\cdot \lceil \log k^*(n)\rceil \cdot k^*(n)$ samples. In order to make the correct decision with probability $1/3k^*(n)$, it suffices for the last iteration to use $n\cdot \lceil \log k^*(n)\rceil$ samples. Since the hypothesis test at the last step has the largest sample size, $n\cdot\lceil \log k^*(n)\rceil\cdot k^*(n)$ samples is sufficient to run $k^*(n)$ rounds.
\end{proof}


\end{document}